	\setlist[enumerate]{label=(\roman*)}  
	\setlist[enumerate,2]{label=(\alph*)}  
\colorlet{dgray}{black!45}
\colorlet{cgray}{dgray!64}
\colorlet{mgray}{dgray!37}
\colorlet{lgray}{mgray!40}
\tikzset{
	cbox/.style={
		rectangle, 
		thin, 
		draw=mgray, 
		fill=lgray, 
		inner sep=0mm,
	},
	setlabel/.style={
		anchor=south west,
		font=\scriptsize,
		color=cgray, 
		inner sep=2pt, 
	},
	catlabel/.style={
		anchor=south west,
		font=\small,
		color=cgray, 
		inner sep=3pt, 
	}
}
\definecolor{fillcolor}{HTML}{FFFFFF}
\definecolor{light}{HTML}{FFFFFF}
\definecolor{dark}{HTML}{000000}
\tikzstyle{none}=[text=dark]
\tikzstyle{bn}=[text=dark, fill=dark, draw=dark, shape=circle, inner sep=1.5pt]
\tikzstyle{morphism}=[text=dark, fill=light, draw=dark, shape=rectangle, tikzit fill=white]
\tikzstyle{state}=[text=dark, fill=light, draw=dark, regular polygon, regular polygon sides=3, minimum width=0.8cm, shape border rotate=180, inner sep=0pt, tikzit fill=white]
\tikzstyle{horiz state}=[text=dark, fill=light, draw=dark, regular polygon, regular polygon sides=3, minimum height=0.9cm, shape border rotate=90, inner sep=0pt, tikzit fill=white]
\tikzstyle{medium box}=[text=dark, fill=light, draw=dark, shape=rectangle, minimum width=0.7cm, minimum height=0.7cm, tikzit fill=white]
\tikzstyle{medium state}=[text=dark, fill=light, draw=dark, regular polygon, regular polygon sides=3, minimum width=1.3cm, inner sep=0pt, shape border rotate=180, tikzit fill=white]
\tikzstyle{large morphism}=[text=dark, fill=light, draw=dark, shape=rectangle, minimum width=1.7cm, minimum height=1cm, tikzit fill=white]
\tikzstyle{large state}=[text=dark, fill=light, draw=dark, regular polygon, regular polygon sides=3, minimum width=2.2cm, shape border rotate=180, inner sep=0pt, tikzit fill=white]
\tikzstyle{wide state}=[text=dark, fill=light, draw=dark, shape=isosceles triangle, minimum width=0.8cm, shape border rotate=270, inner ysep=5pt, inner xsep=1.4pt, minimum height=0.5cm, isosceles triangle apex angle=80, tikzit fill=white]
\tikzstyle{effect}=[text=dark, fill=light, draw=dark, regular polygon, regular polygon sides=3, minimum width=0.5cm, shape border rotate=0, inner sep=0pt, tikzit fill=white]
\tikzstyle{horiz effect}=[text=dark, fill=light, draw=dark, regular polygon, regular polygon sides=3, minimum width=0.5cm, shape border rotate=270, inner sep=0pt, tikzit fill=white]
\tikzstyle{wn}=[text=dark, fill=light, draw=dark, shape=circle, inner sep=1.5pt, tikzit fill={rgb,255: red,250; green,250; blue,250}]
\tikzstyle{ds morphism}=[text=dark, dash pattern=on 2pt off 2pt, fill=light, draw=dark, shape=rectangle, tikzit fill=white, tikzit draw={rgb,255: red,191; green,191; blue,191}]
\tikzstyle{ds state}=[text=dark, dash pattern=on 2pt off 2pt, fill=light, draw=dark, regular polygon, regular polygon sides=3, minimum width=0.5cm, shape border rotate=180, inner sep=0pt, tikzit fill=white, tikzit draw={rgb,255: red,191; green,191; blue,191}]
\tikzstyle{ds horiz state}=[text=dark, dash pattern=on 2pt off 2pt, fill=light, draw=dark, regular polygon, regular polygon sides=3, minimum width=0.5cm, shape border rotate=90, inner sep=0pt, tikzit fill=white, tikzit draw={rgb,255: red,191; green,191; blue,191}]
\tikzstyle{ds medium box}=[text=dark, dash pattern=on 2pt off 2pt, fill=light, draw=dark, shape=rectangle, minimum width=0.7cm, minimum height=0.7cm, tikzit fill=white]
\tikzstyle{ds horiz wide state}=[text=dark, dash pattern=on 2pt off 2pt, fill=light, draw=dark, shape=isosceles triangle, minimum width=0.8cm, shape border rotate=270, inner ysep=5pt, inner xsep=1.4pt, minimum height=0.5cm, isosceles triangle apex angle=80, tikzit fill=white]
\tikzstyle{ds effect}=[text=dark, dash pattern=on 2pt off 2pt, fill=light, draw=dark, regular polygon, regular polygon sides=3, minimum width=0.5cm, shape border rotate=0, inner sep=0pt, tikzit fill=white, tikzit draw={rgb,255: red,191; green,191; blue,191}]
\tikzstyle{ds horiz effect}=[text=dark, dash pattern=on 2pt off 2pt, fill=light, draw=dark, regular polygon, regular polygon sides=3, minimum width=0.5cm, shape border rotate=270, inner sep=0pt, tikzit fill=white, tikzit draw={rgb,255: red,191; green,191; blue,191}]
\tikzstyle{wire}=[draw=dark]
\tikzstyle{ds}=[-, dash pattern=on 2pt off 2pt, draw=dark, tikzit draw={rgb,255: red,191; green,191; blue,191}]
\tikzstyle{arrow}=[->, draw=dark]
\tikzstyle{mapsto}=[{|->}, draw=dark]
\tikzstyle{curly brace}=[-, draw=dark, decorate, decoration={brace,amplitude=5pt}]
\tikzstyle{dashed box}=[-, dashed, draw=dark]
\newtheorem{theorem}{Theorem}[section]
\newtheorem{proposition}[theorem]{Proposition}
\newtheorem{lemma}[theorem]{Lemma}
\newtheorem{corollary}[theorem]{Corollary}
\newtheorem{definition}[theorem]{Definition}
\theoremstyle{definition}
\newtheorem{example}[theorem]{Example}
\newtheorem{remark}[theorem]{Remark}
\numberwithin{equation}{section}
\let\originalleft\left
\let\originalright\right
\renewcommand{\left}{\mathopen{}\mathclose\bgroup\originalleft}
\renewcommand{\right}{\aftergroup\egroup\originalright}
\newcommand{\R}{\mathbb{R}}
\newcommand{\cat}[1]{{\mathbf{#1}}} 
\newcommand{\op}{\mathrm{op}}
\newcommand{\id}{\mathrm{id}} 		
\DeclareMathOperator*{\colim}{colim}
\tikzset{pullback/.style={minimum size=1.2ex,path picture={	
			\draw[opacity=1,black,-,#1] (-0.5ex,-0.5ex) -- (0.5ex,-0.5ex) -- (0.5ex,0.5ex);%
}}}
\newcommand{\Yon}{\mathrm{Yon}} 
\newcommand{\El}{\cat{El}} 
\newcommand{\funtimes}{\boldsymbol{\times}} 
\newcommand{\funto}{\boldsymbol{\rightarrow}} 
\newcommand{\ffunto}{\boldsymbol{\hookrightarrow}} 
\newcommand{\profunto}{\boldsymbol{\nrightarrow}} 
\tikzset{functor/.style={line width=0.6pt,-{>[scale=1]}}} 
\tikzset{profunctor/.style={line width=0.6pt,-{>[scale=1]},"\boldsymbol{/}"{description}}} 
\tikzset{virtual/.style={ds,->}} 
\newcommand{\funcat}[2]{\boldsymbol{[}#1,#2\boldsymbol{]}}
\newcommand{\sfuncat}[1]{\funcat{#1}{\cat{Set}}} 
\newcommand{\End}[1]{\displaystyle\int_{#1}} 
\newcommand{\Coend}[1]{\displaystyle\int^{#1}} 
\newcommand{\newterm}[1]{\textbf{#1}}
\title{\bf How to Represent Non-Representable Functors}
\author{Paolo Perrone}
\affil{University of Oxford}
\date{2024}
\begin{document}

\maketitle

\begin{abstract}
	The arrows of a category are elements of particular sets, the hom-sets. These sets are functorial, and their functoriality specifies how to compose the arrows with other arrows of the same category. In particular, it allows to form diagrams, making many abstract concepts graphically visible.
	
	Presheaves and set-valued functors, in general, are not representable, and so their elements are not arrows in the usual sense. They can however still be seen as ``arrow-like structures'', which can be post-composed but not pre-composed (for the case of set functors), or pre-composed but not post-composed (for the case of presheaves). 
	Therefore, we can still represent their structure graphically. 
	
	In this exposition we show how to draw and interpret these generalized diagrams, and how to use them to prove theorems. 
	We will then study in detail, and represent graphically, a few concepts of category theory which are often considered hard to visualize: representability, weighted limits and colimits, and Cauchy completion (for unenriched categories). 
	
	We also sketch how to interpret the more general case of profunctors, and the Day convolution of presheaves.
\end{abstract}

{
\tableofcontents
}

\section{Introduction}

One of the most useful features of category theory and related fields is \emph{diagrams}, and the technique of \emph{diagram chasing}. 
Diagrams are a way to make abstract relations graphically evident, and to employ our visual or spatial skills to aid our reasoning.

The main idea behind diagrams is to \emph{compose things of a certain class to create more complex things of the same class}. 
Many structures in mathematics are instances of this pattern, for example groups, monoids, and categories. Diagrams for these structures, and for their refinements, such as monoidal categories, arise naturally from this idea.
\[
\begin{tikzcd}
	X \ar{r}{f} & Y \ar{r}{g} & Z
\end{tikzcd}
\]

However, these are not the only type of structure that we use in abstract mathematics. Of at least equal importance we have the more general idea of \emph{things of type $A$ which act on things of type $B$ to give us new things of type $B$}. Group representations, monoid actions and modules are instances of this pattern. 
One may then wonder, \emph{can we draw these as well?}

The good news is that we can.
However, diagrams for this type of structures are less known and less established. Because of that, while they are useful, they tend to be less accessible to newcomers. 
This exposition aims at bridging this gap, with two canonical examples of such structures: set-valued functors (which we will simply call \emph{set functors}) and presheaves.

The main idea is that \emph{we can represent graphically any functorial action on sets}, and not just the ones given by categorical composition.

For example, given a functor $F:\cat{C}\funto\cat{Set}$, we have sets $FX$ which depend functorially on an object $X$.
This is similar to what happens with the hom-functor, with entries $\cat{C}(A,X)$, but this time we have a single argument, $X$.
In a certain sense, the functorial action of $F$ is similar to the one of $\cat{C}(A,-):\cat{C}\funto\cat{Set}$, in its second variable.
This suggests to interpret the elements $f\in FX$ as of being \emph{like arrows, but which can only be post-composed, not pre-composed}. A possible way to draw them is then as arrows to the object $X$, coming from a ``virtual object'' which is not part of the category:
\[
\begin{tikzcd}
	\bullet \ar[virtual]{r}{f} & X
\end{tikzcd}
\]
The functorial action on a morphism $g:X\to Y$ can then be represented as follows:
\[
\begin{tikzcd}
	\bullet \ar[virtual]{r}{f} & X
\end{tikzcd}
\qquad{\color{dgray}\longmapsto}\qquad
\begin{tikzcd}
	\bullet \ar[virtual]{r}{f} & X \ar{r}{g} & Y
\end{tikzcd}
\]
Presheaves can be represented similarly, and dually. 

While this idea is simple, and while it is a straightforward generalization of usual categorical composition, it seems to be less standard both in the literature and in introductions. 
A reason could be that for many basic concepts of category theory, the usual diagrams are enough.
There are however concepts, such as weighted limits and Cauchy completion, which \emph{do} benefit from these more general diagrams. Indeed, these structures are rather hard to ``draw'' otherwise, and probably because of that, they are often considered much harder to understand than, for example, ordinary limits. 
Still, they are quite important, especially as stepping stones towards their generalization to the enriched context.

In this exposition we will address precisely these two concepts, weighted limits and Cauchy completion. We will first establish our more general diagrammatic formalism (\Cref{diagrams}) and start by representing universal properties. We will then define and study weighted limits and colimits (\Cref{wlim}), and turn to Cauchy completions in their many forms (\Cref{cauchy}). Finally, at the end, we will give some intuition on how to use this technique in more advanced situations, such as with profunctors, and with the Day convolution (\Cref{further}).

\paragraph{Where these ideas have appeared before.}
The idea of set functors and presheaves as arrows from and to ``extra objects'' has always been present in the category theory folklore at least since Grothendieck, and has had a strong influence on higher category theory, see for example \cite{htt} and the nLab page on \emph{motivation for sheaves, cohomology and higher stacks}.\footnote{\url{https://ncatlab.org/nlab/show/motivation+for+sheaves\%2C+cohomology+and+higher+stacks}}

The first recorded use of \emph{diagram chasing} with these ideas seems to be \cite{pareigis}, see Section 2.2 therein (where what here we call  ``virtual arrows'' are depicted as normal arrows). 
The first explicit, systematic use of these techniques in applications seems to be Leinster's work on self-similarity \cite{leinster-similarity1,leinster-similarity2,leinster-similarity3,leinster-similarity4} (where   ``virtual arrows'' are written with the symbol $\nrightarrow$). 
The most recent one seems to be Di Liberti and Rogers' work \cite{topoiwithenoughpoints} (where ``virtual arrows'' are depicted as crossing a dotted line).

A deep analysis of these ideas and their pervasive, yet elusive presence in the category theory literature can be found in Ellerman's writings \cite{ellerman1,ellerman2,ellerman3} (where ``virtual arrows'' are called ``chimera morphisms'' or ``heteromorphisms''---see \Cref{profunctors}---and are sometimes written with the symbol $\Rightarrow$). 

Finally, I learned from personal communication that in a few category theory courses taught around the world, for example in Cambridge and in Tallinn, this point of view is sometimes used, at least informally, to describe concepts such as representability and adjunctions.

\paragraph{Prerequisites.}
The material presented here should be accessible to anyone with a background in basic category theory, as contained for example in \cite{basiccats}, \cite{riehl2016category}, or \cite{startingcats}. 
Knowledge of some enriched category theory may give an additional perspective, but it is not required. 

\paragraph{Acknowledgements.}
I learned many of the ideas presented here through fruitful discussions with Bartosz Milewski, David Jaz Myers, Mario Román, Walter Tholen, and with the ItaCa collective, in particular Ivan Di Liberti and Fosco Loregian.

I also want to thank Nathanael Arkor, John Baez, David Corfield, Tom Leinster, Morgan Rogers, and Mike Shulman, for pointing out literature sources where these ideas are used.

This work was developed to provide infrastructure for my work on probability, funded at the time of writing by Sam Staton's ERC grant ``BLaSt -- A Better Language for Statistics''. 

\section{Diagrams for set functors and presheaves}\label{diagrams}

Throughout this paper, let $\cat{C}$ be a locally small category.

Consider a set functor on $\cat{C}$, i.e.\ a functor $F:\cat{C}\funto\cat{Set}$. On objects, it maps an object $A$ of $\cat{C}$ to a set, which we denote $FA$. On morphisms, it maps a morphism $g:A\to B$ to a function $FA\to FB$ which we denote as follows.
\[
\begin{tikzcd}[row sep=0]
	FA \ar{r}{g_*} & FB \\
	\color{dgray} f \ar[mapsto,dgray ]{r} & \color{dgray} g_*f
\end{tikzcd}
\]
Particular set functors are the \emph{representable} ones. Those are functors $\cat{C}\funto\cat{Set}$ in the form (up to natural isomorphism) 
\[
\begin{tikzcd}[row sep=0]
	\cat{C}(R,A) \ar{r}{g\circ-} & \cat{C}(R,B) \\
	\color{dgray} \left(R\xrightarrow{f} A\right) \ar[mapsto,dgray]{r} & \color{dgray} \left(R\xrightarrow{f} A\xrightarrow{g} B\right)
\end{tikzcd}
\]
for some object $R$ of $\cat{C}$.

Similarly, and dually, consider a presheaf $P:\cat{C}^\op\funto\cat{Set}$. On objects, it again maps $A$ to a set $PA$. On morphisms, it maps $g:A\to B$ to a function in the opposite direction, $g^*:PB\to PA$, which we denote as follows.
\[
\begin{tikzcd}[row sep=0]
	PB \ar{r}{g^*} & PA \\
	\color{dgray} p \ar[mapsto,dgray]{r} & \color{dgray} g^*f
\end{tikzcd}
\]
Particular presheaves are the \emph{representable} ones. Those are presheaves $\cat{C}^\op\funto\cat{Set}$ in the form (up to natural isomorphism)
\[
\begin{tikzcd}[row sep=0]
	\cat{C}(B,R) \ar{r}{-\circ g} & \cat{C}(A,R) \\
	\color{dgray} \left(B\xrightarrow{p} R\right) \ar[mapsto,dgray]{r} & \color{dgray} \left(A\xrightarrow{g} B\xrightarrow{p} B\right)
\end{tikzcd}
\]
for some object $R$ of $\cat{C}$.

In other words: 
\begin{itemize}
	\item In general, set functors and presheaves map objects to generic sets, and morphisms to generic functions (in a functorial way).
	\item \emph{Representable} set functors and presheaves map objects to \emph{sets of arrows of $\cat{C}$}, and morphisms to the operations of pre- and postcomposition of these arrows.
\end{itemize}
The main idea that we want to present here is to \emph{interpret all set functors and presheaves in terms of ``arrows'', but of a more general kind}.

\subsection{Virtual objects and virtual arrows}\label{virtual}

Let $R$ be an object of $\cat{C}$, and consider the representable functor $\cat{C}(R,-):\cat{C}\funto\cat{Set}$. 
We can visualize its action as a \emph{cone-like diagram over $\cat{C}$} in the following way. 
\begin{itemize}
	\item On objects, it assigns to an object $A$ to the set of all arrows $R\to A$:
	\[
	\begin{tikzcd}[row sep=small, column sep=tiny,
		blend group=multiply,
		/tikz/execute at end picture={
			\node [cbox, fit=(A) (R1)] (FA) {};
			\node [setlabel] at (FA.south west) {$\cat{C}(R,A)$};
			\node [cbox, fit=(B) (R2)] (FB) {};
			\node [setlabel] at (FB.south west) {$\cat{C}(R,B)$};
		}]
		& A \ar[mapsto, shorten >=3.5mm, color=cgray]{dd} 
		&&&&&&&& B \ar[mapsto, shorten >=3.5mm , color=cgray]{dd} \\ \\ 
		|[alias=R1]| R \ar[shift left=1.5]{ddrr} 
		\ar[shift right=1.5]{ddrr} 
		&\; &&&&&&&
		|[alias=R2]| R \ar[shift left=1.5]{ddrr} 
		\ar{ddrr}
		\ar[shift right=1.5]{ddrr}  &\; \\ \\
		&& |[alias=A]| A &&&&&&&& |[alias=B]| B
	\end{tikzcd}
	\]
	(Here we draw sets of two or three arrows, but these sets could also be empty, or infinite, etc.)
	\item On morphisms, it assigns to an arrow $g:A\to B$ the postcomposition function:
	\[
	\begin{tikzcd}[row sep=small, column sep=tiny,
		blend group=multiply,
		/tikz/execute at end picture={
			\node [cbox, fit=(A) (R1)] (FA) {};
			\node [cbox, fit=(B) (R2)] (FB) {};
		}]
		& A \ar[mapsto, shorten >=3.5mm, color=cgray]{dd} \ar{rrrrrrrr}{g} 
		&&&& \; \ar[mapsto, shorten >=-3mm, color=cgray]{dd} 
		&&&& B \ar[mapsto, shorten >=3.5mm , color=cgray]{dd} \\ \\ 
		|[alias=R1]| R \ar[shift left=1.5, "f"{near end, name=F}]{ddrr} 
		\ar[shift right=1.5, "f'"'{near start, name=FP}]{ddrr} 
		&\; &&&& \; &&& 
		|[alias=R2]| R \ar[shift left=1.5, "g\circ f"{near end, name=GF}]{ddrr} 
		\ar{ddrr}
		\ar[shift right=1.5, "g\circ f'"'{near start, name=GFP}]{ddrr}  &\; \\ \\
		&& |[alias=A]| A &&&&&&&& |[alias=B]| B
		\ar[mapsto, color=dgray, from=F, to=GF]
		\ar[mapsto, color=dgray, from=FP, to=GFP]
	\end{tikzcd}
	\]
	(Not every arrow $R\to B$ may be in the form $g\circ f$ for some $f:R\to A$. In the diagram above, and in the examples that follow, we assume that the unlabeled middle arrow $R\to B$ is not of this form.)
\end{itemize} 
The postcomposition function, equivalently, is a way of telling us which triangles in the diagram below commute:
\[
 \begin{tikzcd}[row sep=large]
 	& R \ar[shift right=1.5]{dl}[swap]{f} 
 	\ar[shift left=1.5]{dl}{f'} 
 	\ar[shift right=1.5]{dr}[swap]{g\circ f} 
 	\ar{dr}
 	\ar[shift left=1.5]{dr}{g\circ f'} \\
 	A \ar{rr}[swap]{g} && B
 \end{tikzcd}
\]
For example, the triangle formed by $f$, $g$ and $g\circ f$ commutes. (The unlabeled arrow does not make any triangle with $g$ commute.)
 
In general there may be several arrows $A\to B$. So there are many possible triangles as follows, and the functor $\cat{C}(R,-):\cat{C}\funto\cat{Set}$ tells us which ones are commutative.
\[
\begin{tikzcd}[row sep=5em, column sep=3em]
	& R \ar[shift right=1.5, shorten=3mm]{dl}[swap]{f} 
	\ar[shift left=1.5, shorten=3mm]{dl}{f'} 
	\ar[shift right=3, shorten=2mm]{dr}[swap,inner sep=0mm]{g\circ f} 
	\ar[shift right=0, shorten=2mm, "h\circ f"{description, inner sep=0}]{dr}
	\ar[shift left=3, shorten=2mm]{dr}[inner sep=0mm]{g\circ f'=h\circ f'} \\
	A \ar[shift left=1.5, shorten=3mm]{rr}{g}
	\ar[shift right=1.5, shorten=3mm]{rr}[swap]{h} && B
\end{tikzcd}
\]
(For example, in the diagram above, there are four commutative triangles.)

So we can view the representable functor $\cat{C}(R,-):\cat{C}\funto\cat{Set}$ as forming a large conical diagram where the base of the cone is the entire category $\cat{C}$, the vertex of the cone is the object $R$, and sets of arrows connect the vertex to the objects at the base. 
Note that:
\begin{itemize}
	\item In a traditional cone, one usually wants a single arrow from the tip to each point of the base. Our conical diagram is more general, there is an entire \emph{set} instead (possibly infinite, possibly empty);
	\item In a traditional cone, one usually wants all triangles to commute. Here instead the rules for which triangles commute are specified by the functor (or equivalently, by composition of arrows in $\cat{C}$); 
	\item The object $R$ appears both at the vertex and at the base, and the identity is one of the connecting arrows.
\end{itemize}

Similarly, and dually, consider a representable presheaf $\cat{C}(-,R):\cat{C}^\op\funto\cat{Set}$. 
\[
\begin{tikzcd}[row sep=small, column sep=tiny,
	blend group=multiply,
	/tikz/execute at end picture={
		\node [cbox, fit=(A) (R1)] (FA) {};
		\node [cbox, fit=(B) (R2)] (FB) {};
	}]
	& A \ar[mapsto, shorten >=3.5mm, color=cgray]{dd} \ar{rrrrrrrr}{g} 
	&&&& \; \ar[mapsto, shorten >=-3mm, color=cgray]{dd} 
	&&&& B \ar[mapsto, shorten >=3.5mm , color=cgray]{dd} \\ \\ 
	|[alias=R1]| A \ar[shift left=1.5, "p\circ g"{near end, name=F}]{ddrr} 
	\ar{ddrr}
	\ar[shift right=1.5, "p'\circ g"'{near start, name=FP}]{ddrr} 
	&\; &&&& \; &&& 
	|[alias=R2]| B \ar[shift left=1.5, "p"{near end, name=GF}]{ddrr} 
	\ar[shift right=1.5, "p'"'{near start, name=GFP}]{ddrr}  &\; \\ \\
	&& |[alias=A]| R &&&&&&&& |[alias=B]| R
	\ar[mapsto, color=dgray, from=GF, to=F]
	\ar[mapsto, color=dgray, from=GFP, to=FP]
\end{tikzcd}
\]
We can view it as forming an upside-down conical shape (``co-conical'') as follows.
\[
\begin{tikzcd}[row sep=5em, column sep=3em]
	A \ar[shift left=1.5, shorten=3mm]{rr}{g}
	\ar[shift right=1.5, shorten=3mm]{rr}[swap]{h} 
	\ar[shift left=3, shorten=2mm]{dr}[inner sep=0mm]{p\circ g}
	\ar[shift left=0, shorten=2mm, "p\circ h"{description, inner sep=0}]{dr}
	\ar[shift right=3, shorten=2mm]{dr}[swap,inner sep=-0.4mm]{p'\circ g=p'\circ h}
	&& B \ar[shift right=1.5, shorten=3mm]{dl}[swap]{p} 
	\ar[shift left=1.5, shorten=3mm]{dl}{p'} \\
	& R 
\end{tikzcd}
\]
Once again, the action on morphisms (precomposition) specifies which triangles commute.

Let's now introduce the main idea of this work: we can interpret non-representable set functors as forming cones like the ones above, but where \emph{the tip is not an object of our category}. 
We will call it a \emph{virtual object}.\footnote{This is not related to \emph{virtual double categories} as far as I know. Since I'm not using any double categories here, this terminology should not cause any confusion.}
More precisely, let $F:\cat{C}\funto\cat{Set}$ be any set functor. For every object $A$, $FA$ is a set. We draw the elements of the set $FA$ as dashed arrows as follows,
\[
\begin{tikzcd}[row sep=small, column sep=tiny,
	blend group=multiply,
	/tikz/execute at end picture={
		\node [cbox, fit=(A) (R1)] (FA) {};
		\node [setlabel] at (FA.south west) {$FA$};
		\node [cbox, fit=(B) (R2)] (FB) {};
		\node [setlabel] at (FB.south west) {$FB$};
	}]
	& A \ar[mapsto, shorten >=2.8mm, color=cgray]{dd}
	&&&&&&&& B \ar[mapsto, shorten >=2.8mm , color=cgray]{dd} \\ \\ 
	|[alias=R1]| \bullet \ar[virtual, shift left=1.5]{ddrr} 
	\ar[virtual, shift right=1.5]{ddrr} 
	&\; &&&& \; &&& 
	|[alias=R2]| \bullet \ar[virtual, shift left=1.5]{ddrr} 
	\ar[virtual]{ddrr}
	\ar[virtual, shift right=1.5]{ddrr}  &\; \\ \\
	&& |[alias=A]| A &&&&&&&& |[alias=B]| B
\end{tikzcd}
\]
and call them \emph{virtual arrows} from the \emph{virtual object} $\bullet$ to $A$.
(When one is considering many functors at once, one may want to distinguish the virtual point of each functor by labeling it for example as $\bullet_F$. See also \Cref{catpsh}.)

On morphisms, $F$ assigns to an arrow $g:A\to B$ a function as follows:
\[
\begin{tikzcd}[row sep=small, column sep=tiny,
	blend group=multiply,
	/tikz/execute at end picture={
		\node [cbox, fit=(A) (R1)] (FA) {};
		\node [cbox, fit=(B) (R2)] (FB) {};
	}]
	& A \ar[mapsto, shorten >=2.8mm, color=cgray]{dd} \ar{rrrrrrrr}{g} 
	&&&& \; \ar[mapsto, shorten >=-3mm, color=cgray]{dd} 
	&&&& B \ar[mapsto, shorten >=2.8mm , color=cgray]{dd} \\ \\ 
	|[alias=R1]| \bullet \ar[virtual, shift left=1.5, "f"{near end, name=F}]{ddrr} 
	\ar[virtual, shift right=1.5, "f'"'{near start, name=FP}]{ddrr} 
	&\; &&&& \; &&& 
	|[alias=R2]| \bullet \ar[virtual, shift left=1.5, "g_* f"{near end, name=GF}]{ddrr} 
	\ar[virtual]{ddrr}
	\ar[virtual, shift right=1.5, "g_* f'"'{near start, name=GFP}]{ddrr}  &\; \\ \\
	&& |[alias=A]| A &&&&&&&& |[alias=B]| B
	\ar[mapsto, color=dgray, from=F, to=GF]
	\ar[mapsto, color=dgray, from=FP, to=GFP]
\end{tikzcd}
\]
We can view it as a \emph{rule to post-compose a virtual arrow into $A$ with $g$, and obtain a virtual arrow into $B$}:
\[
\begin{tikzcd}
	\bullet \ar[virtual]{dr}{f} \\ & A
\end{tikzcd}
\qquad{\color{dgray}\longmapsto}\qquad
\begin{tikzcd}[sep=small]
	\bullet \ar[virtual]{dr}{f} \\ & A \ar{dr}{g} \\ && B
\end{tikzcd}
=
\begin{tikzcd}[sep=small]
	\bullet \ar[virtual]{ddrr}{g_*f} \\ & \phantom{A} \\ && B
\end{tikzcd}
\]

In other words, the virtual arrows are things that we can postcompose (with arrows of $\cat{C}$), but not precompose.
Just as for representable functors, the action of $F$ on morphism tells us which triangles in the following diagram commute:
\[
\begin{tikzcd}[row sep=large]
	& \bullet \ar[virtual, shift right=1.5]{dl}[swap]{f} 
	\ar[virtual, shift left=1.5]{dl}{f'} 
	\ar[virtual, shift right=1.5]{dr}[swap]{g_* f} 
	\ar[virtual]{dr}
	\ar[virtual, shift left=1.5]{dr}{g_* f'} \\
	A \ar{rr}[swap]{g} && B
\end{tikzcd}
\]
In summary, can view a set functor as forming a large conical diagram, which we call \emph{virtual cone}, where the base of the cone is the entire category $\cat{C}$, the vertex of the cone is the virtual object $\bullet$, and sets of virtual arrows connect the vertex to the objects at the base:
\[
\begin{tikzcd}[row sep=5em, column sep=3em,
	blend group=multiply,
	/tikz/execute at end picture={
		\node [cbox, fit=(A) (B), inner sep=5mm] (C) {};
		\node [catlabel] at (C.south west) {$\cat{C}$};
	}]
	& \bullet \ar[virtual, shift right=1.5, shorten=3mm]{dl}[swap]{f} 
	\ar[virtual, shift left=1.5, shorten=3mm]{dl}{f'} 
	\ar[virtual, shift right=3, shorten=2mm]{dr}[swap,inner sep=0mm]{g_*\!f} 
	\ar[virtual, shift right=0, shorten=2mm, "h_*\!f"{description, inner sep=0}]{dr}
	\ar[virtual, shift left=3, shorten=2mm]{dr}[inner sep=0mm]{g_*\!f'=h_*\!f'} \\
	|[alias=A]| A \ar[shift left=1.5, shorten=3mm]{rr}{g} 
	\ar[shift right=1.5, shorten=3mm]{rr}[swap]{h} && |[alias=B]| B 
\end{tikzcd}
\]
The functor action on morphisms tells us which triangles in the cone commute. 
Note moreover that by functoriality of $F$,
\begin{itemize}
	\item For all $A$ and $f\in FA$, $(\id_A)_*f=f$, meaning that identities indeed behaves like (left) identities on virtual arrows;
	\item For all $g:A\to B$, $g:B\to C$ and $f\in FA$, $(g'\circ g)_*f=g'_*(g_*f)$, meaning that the composition between real and virtual arrows is associative. 
\end{itemize}
In other words, virtual arrows behave ``as much as possible like actual arrows of a category''.
Possible interpretations are:
\begin{itemize}
	\item The virtual object ``behaves like an object of $\cat{C}$, but we don't have access to it, we can only see its image projected onto the other objects''.
	\item The virtual object is ``an object we would like to add to $\cat{C}$ \emph{from the left}, or \emph{from above}, or \emph{from the in-direction},  without breaking the category structure''.
	\item The virtual object, together with its arrows, is an ``entry point to $\cat{C}$'', or ``a \emph{consistent} way to arrive into some object of $\cat{C}$ from elsewhere''.  
\end{itemize}
\emph{Consistent} means, for example, that we can enter $A$ using the virtual arrow $f$ and then move to $B$ using the real arrow $g$, or we can equivalently enter $B$ directly, using the virtual arrow $g_* f$, and these two ways are equivalent.

In order to make rigorous arguments about virtual arrows, it is helpful, in what follows, to promote $\bullet$ to an actual object of a category, an extension of $\cat{C}$. To do so, we have to add its identity map (the functor $F$ does not specify any arrows \emph{into} $\bullet$). So let's define the following category.
\begin{definition}\label{CplusF}
	Let $\cat{C}$ be a category, and let $F:\cat{C}\funto\cat{Set}$ be a functor. 
	The category $\cat{C}^{+F}$ has 
	\begin{itemize}
		\item As objects, the ones of $\cat{C}$ plus an extra object $E$;
		\item As morphisms between the objects coming from $\cat{C}$, the ones of $\cat{C}$;
		\item As morphisms $E\to A$, where $A$ is an object of $\cat{C}$, the elements of $FA$ (``virtual arrows'');
		\item The identity of $E$ as the unique morphism with codomain $E$.
	\end{itemize}
	The composition of morphisms of $\cat{C}$ is as in $\cat{C}$, and between morphisms $E\to A$ and $A\to B$ is as specified by functoriality of $F$.
\end{definition}
In $\cat{C}^{+F}$, all arrows are real arrows, of course. But it is still helpful to keep track of which arrows come from $\cat{C}$ and which from $F$. Therefore we will draw morphisms out of $E$ still as dashed, and refer to them still as ``virtual arrows''. 

Let's now turn to presheaves. We can represent them similarly, and dually, as \emph{virtual co-cones}, where the virtual arrows now point \emph{to} a virtual object:
\[
\begin{tikzcd}[row sep=5em, column sep=3em,
	blend group=multiply,
	/tikz/execute at end picture={
		\node [cbox, fit=(A) (B), inner sep=5mm] (C) {};
		\node [catlabel] at (C.south west) {$\cat{C}$};
	}]
	|[alias=A]| A \ar[shift left=1.5, shorten=3mm]{rr}{g}
	\ar[shift right=1.5, shorten=3mm]{rr}[swap]{h} 
	\ar[virtual, shift left=3, shorten=2mm]{dr}[inner sep=0.1mm]{g^*\!p}
	\ar[virtual, shift left=0, shorten=2mm, "h^*\!p"{description, inner sep=0}]{dr}
	\ar[virtual, shift right=3, shorten=2mm]{dr}[swap,inner sep=0mm]{g^*\!p'=h^*\!p'}
	&& |[alias=B]| B \ar[virtual, shift right=1.5, shorten=3mm]{dl}[swap]{p} 
	\ar[virtual, shift left=1.5, shorten=3mm]{dl}{p'} \\
	& \bullet
\end{tikzcd}
\]
This time, the virtual arrows are things that we can precompose (with arrows of $\cat{C}$), but not postcompose.
Possible interpretations are: 
\begin{itemize}
	\item The virtual object ``behaves like an object of $\cat{C}$, but we don't have access to it, we can only probe it by mapping the other objects into it''.
	\item The virtual object is ``an object we would like to add to $\cat{C}$ \emph{from the right}, or \emph{from below}, or \emph{from the out-direction},  without breaking the category structure''.
	\item The virtual object, together with its arrows, is an ``exit point of $\cat{C}$'', or ``a \emph{consistent} way to leave the objects of $\cat{C}$ and go elsewhere''.  
\end{itemize}
Again, \emph{consistent} means for example that we could leave $A$ equivalently by using the virtual arrow $g^*p$, or by moving from $A$ to $B$ using the (real) arrow $g$ and then leaving $B$ using $p$. 

Note that the idea that ``set functors are things that go \emph{into} our objects'' and ``presheaves are things that go \emph{out} of our objects'' is implicitly used very often in category theory:

\begin{example}
	Let $F:\cat{Top}\funto\cat{Set}$ the functor mapping a topological space to the set of its path-connected components. (Readers less familiar with topology may want to think of graphs, or even categories, instead.)
	This functor is not representable. 
	It is however quite similar to other representable functors $\cat{Top}\funto\cat{Set}$ which ``probe our spaces using particular shapes'', such as points, open curves, or loops. (These functors are represented by the one-point space, by $\R$, and by the circle $S^1$ respectively.)
	Indeed, $F$ can be seen as entering a space $A$, and instead of ``looking'' for points or curves, it ``looks'' for entire path components of $A$, no matter their shape. No actual space exists that has such a general, shifting shape, and so we can view elements of $FA$ only as \emph{virtual} arrows into $A$. (While no such object exists, using virtual arrows we can preserve the ``going into $A$'' flavor of this functor.)
\end{example}

\begin{example}\label{tuple_out}
	Suppose that the category $\cat{C}$ does not have all products. 
	Given objects $X$ and $Y$, the presheaf $P=\cat{C}(-,X)\funtimes\cat{C}(-,Y)$ may fail to be representable. 
	Now given an object $A$, the virtual arrows out of $A$, i.e.\ the elements of $PA = \cat{C}(A,X)\funtimes\cat{C}(A,Y)$ are \emph{pairs} of (real) arrows out of $A$, namely $A\to X$ and $A\to Y$:
	\[
	\begin{tikzcd}[sep=small]
		& A \ar{dr} \ar{dl} \\
		X && Y
	\end{tikzcd}
	\]
	Of course, \emph{pairs} of arrows are not (single) arrows, and if the product $X\times Y$ does not exist, these pairs don't even \emph{correspond} to single arrows in a natural way. But still, each one of these pairs is a \emph{thing that goes out of $A$}. The elements of $PA$, for each $A$, (and for each presheaf $P$), still have a sense of ``leaving $A$''.
	In this sense it is useful to see elements of non-representable presheaves as \emph{things that go out}, as if they were (single) arrows, but potentially more general.
	(This example is particularly nice, in general not every presheaf has tuples of real arrows as elements. But the interpretation of ``things that go out'' can still be used in general.)
\end{example}

\begin{example}\label{equalizer}
	Suppose that the category $\cat{C}$ does not have all equalizers. 
	Given parallel arrows $g,h:X\to Y$, the presheaf 
	$P=\cat{C}(-,X)\funtimes\cat{C}(-,Y)$ 
	\[
	A \quad{\color{dgray}\longmapsto}\quad PA = \{f:A\to X : g\circ f = h\circ f\} \subseteq \cat{C}(A,X)
	\]
	may fail to be representable. 
	In this case, one could see the virtual arrows $f\in PA$ as \emph{some} real arrows $A\to X$, since $PA$ is a subset of $\cat{C}(A,X)$. The ``virtual'' part is however that these are not \emph{all} arrows $A\to X$. That is, there is no object, $X$ or any other, such that the arrows $A\to X$ are exactly the elements of $PA$. 
\end{example}

\begin{definition}\label{CplusP}
	Let $\cat{C}$ be a category, and let $P:\cat{C}^\op\funto\cat{Set}$ be a presheaf. 
	The category $\cat{C}_{+P}$ has 
	\begin{itemize}
		\item As objects, the ones of $\cat{C}$ plus an extra object $E$;
		\item As morphisms between the objects coming from $\cat{C}$, the ones of $\cat{C}$;
		\item As morphisms $A\to E$, where $A$ is an object of $\cat{C}$, the elements of $PA$ (``virtual arrows'');
		\item The identity of $E$ as the unique morphism with domain $E$.
	\end{itemize}
	The composition of morphisms of $\cat{C}$ is as in $\cat{C}$, and between morphisms $A\to B$ and $B\to E$ is as specified by functoriality of $P$.
\end{definition}

This idea of virtual cones and co-cones is quite simple, but as we will see, it can help our intuition with several advanced category theory concepts.

\subsection{Representable functors and universal properties}\label{representable}

The first categorical concept that we analyze using our point of view on set functors and presheaves is representability.
We use the definition of representable functor given for example in \cite[Chapter~2]{riehl2016category}, which we recall:
\begin{definition}
	A set functor $F:\cat{C}\funto\cat{Set}$ is called \newterm{representable} if there exists a natural isomorphism $\phi:F\xrightarrow{\cong}\cat{C}(R,-)$ for some object $R$ of $\cat{C}$. 
	
	Similarly, a presheaf $P:\cat{C}^\op\funto\cat{Set}$ is called \newterm{representable} if there exists a natural isomorphism $\phi:P\xrightarrow{\cong}\cat{C}(-,R)$ for some object $R$ of $\cat{C}$. 
	
	In either case we call $R$ the \newterm{representing object}, and $\phi$ its \newterm{universal property}.
\end{definition}

We can redefine representability using our virtual arrows: a functor is representable if and only if \emph{its virtual object and arrows correspond to some real objects and arrows, inside our category}.
More precisely:
\begin{theorem}\label{repr_virtual}
	A functor $F:\cat{C}\funto\cat{Set}$ is represented by an object $R$ if and only if the inclusion functor $I:\cat{C}\ffunto\cat{C}^{+F}$ admits a retraction functor $\Pi:\cat{C}^{+F}\funto\cat{C}$ mapping $\cat{C}\subseteq\cat{C}^{+F}$ to itself and $E$ to $R$, and satisfying any of the following equivalent conditions:
	\begin{enumerate}
		\item $\Pi$ (and not just $I$) is fully faithful, meaning that for every $A$ of $\cat{C}$, $\Pi$ maps each ``virtual arrow'' $E\to A$ bijectively to a ``real arrow'' $R\to A$;
		\[
		\begin{tikzcd}[sep=small,
			blend group=multiply,
			/tikz/execute at end picture={
				\node [cbox, fit=(A1) (B1) (C1) (R1), inner sep=1.2mm] (CL) {};
				\node [catlabel] at (CL.south west) {$\cat{C}$};
				\node [cbox, fit=(A2) (B2) (C2) (R2), inner sep=1.2mm] (CR) {};
				\node [catlabel] at (CR.south west) {$\cat{C}$};
			}]
			& |[alias=E, xshift=10mm]| E 
			 &&&&&& |[xshift=5mm]|\phantom{E} \\ \\ \\ \\ 
			&& |[alias=R1]| R \ar[shorten <=0.8mm]{dll} \ar[shift left, shorten <=-0.8mm, shorten >=0.1mm]{ddl} \ar[shift right]{ddl} \ar{dr}
		 	 &&&&&& |[alias=R2]| R \ar[virtual, shorten <=0.5mm]{dll} \ar[virtual, shift left, shorten <=-0.8mm, shorten >=0.1mm]{ddl} \ar[virtual, shift right]{ddl} \ar[virtual]{dr} \ar[virtual,loop, in=0,out=45,looseness=3] \\
			|[alias=A1]| A \ar{dr} &&& |[alias=B1]| B \ar[shorten >=1mm]{dll} 
			 &&& |[alias=A2]| A \ar{dr} &&& |[alias=B2]| B \ar[shorten >=1mm]{dll} \\
			& |[alias=C1]| C 
			 &&&&&& |[alias=C2]| C
			\ar[virtual, from=E, to=A1]
			\ar[virtual, from=E, to=B1]
			\ar[virtual, from=E, to=C1, shift left, shorten <=-0.2mm, shorten >=1mm]
			\ar[virtual, from=E, to=C1, shift right, shorten >=0.8mm]
			\ar[virtual, from=E, to=R1]
			\ar[mapsto,color=cgray, out=0, in=150, from=E, to=R2]
		\end{tikzcd}
		\]
		(Sometimes, as in the diagram above, we will write the resulting ``real arrows'' still as dashed. See \Cref{dashed}.)
		\item $\Pi$ is left-adjoint to the inclusion $I$ (i.e.\ it is a reflector).
	\end{enumerate}
\end{theorem}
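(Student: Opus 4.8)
The plan is to package a retraction functor as a natural transformation and then to recognize each of (i) and (ii) as an invertibility condition on it. First I would observe that a functor $\Pi\colon\cat{C}^{+F}\funto\cat{C}$ with $\Pi\circ I=\id_\cat{C}$ and $\Pi(E)=R$ is forced to be the identity on the full subcategory $\cat{C}$ and to send $\id_E$ to $\id_R$, so its only remaining data is, for each object $A$ of $\cat{C}$, the function $\phi_A\colon FA=\cat{C}^{+F}(E,A)\to\cat{C}(R,A)$ given by the action of $\Pi$ on virtual arrows. The one compositional axiom for $\Pi$ that is not automatic is preservation of the composites $E\xrightarrow{f}A\xrightarrow{g}B$, i.e.\ $\phi_B(g_*f)=g\circ\phi_A(f)$, which is exactly naturality of $\phi=(\phi_A)_A$. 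Hence retractions $\Pi$ of the prescribed form are literally the same data as natural transformations $\phi\colon F\Rightarrow\cat{C}(R,-)$.

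Condition (i), read through the gloss, says exactly that each $\phi_A$ is bijective, i.e.\ that $\phi$ is a natural isomorphism; and some such $\phi$ exists if and only if $F$ is represented by $R$. This already yields the biconditional of the statement (via (i)). I would also note that $\Pi$ is never fully faithful in the literal sense --- $\cat{C}^{+F}(A,E)=\varnothing$ while $\cat{C}(A,R)$ is usually inhabited --- so ``fully faithful'' has to be read, as the statement does, as bijectivity on the hom-sets out of $E$.

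For (i)$\iff$(ii), fix a retraction $\Pi$ and its $\phi$. If $\phi$ is a natural isomorphism, declaring the bijection $\cat{C}^{+F}(X,IA)\cong\cat{C}(\Pi X,A)$ to be the identity for $X\in\cat{C}$ and $\phi_A$ for $X=E$ assembles --- using naturality of $\phi$ to settle the one nontrivial naturality square in $X$, which pairs a virtual arrow $E\to A$ with a morphism $A\to B$ of $\cat{C}$ --- into the hom-set form of an adjunction $\Pi\dashv I$, so (ii) holds. Conversely, if $\Pi\dashv I$ then, $I$ being a full subcategory inclusion and hence fully faithful, the counit $\epsilon\colon\Pi I\Rightarrow\id_\cat{C}$ is a natural isomorphism; since the transpose of $f\in\cat{C}^{+F}(E,A)$ under $\cat{C}^{+F}(E,IA)\cong\cat{C}(\Pi E,A)$ is $\epsilon_A\circ\Pi(f)=\epsilon_A\circ\phi_A(f)$, each $\phi_A$ is the composite of that transpose bijection with post-composition by $\epsilon_A^{-1}$, hence bijective; so (i) holds. (Equivalently, one can normalize the counit to the identity, so that the adjunction is carried by a single virtual arrow $u=\eta_E\in FR$; the triangle identities together with naturality of $\eta$ then say precisely that $h\mapsto h_*u$ is a two-sided inverse of $\phi$, i.e.\ that $u$ is a universal element of $F$ --- Yoneda's reformulation of representability.)

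The step I expect to require the most care is not a single computation but the bookkeeping around the two degenerate features of $\cat{C}^{+F}$: there are no non-identity morphisms into $E$, so ``fully faithful'' cannot be taken literally, and one cannot shortcut (ii) by quoting ``a left adjoint is fully faithful iff its unit is invertible'' --- the unit $\eta_E\colon E\to R$ is never invertible. The usable substitutes are the fully faithfulness of $I$ (which forces the counit to be an isomorphism) in one direction and a direct check of the mixed naturality square in the other; both are routine once set up but easy to get wrong.
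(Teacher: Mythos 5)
Your proposal is correct and follows essentially the same route as the paper: identify a retraction $\Pi$ with a natural transformation $\phi\colon F\Rightarrow\cat{C}(R,-)$ (functoriality on composites $E\to A\to B$ being exactly naturality), read (i) as invertibility of $\phi$, and establish (i)$\iff$(ii) by assembling the hom-set bijections case by case in $X$. Your handling of the converse of (i)$\iff$(ii) is in fact slightly more careful than the paper's, which simply declares the counit to be the identity, whereas you derive its invertibility from full faithfulness of $I$ and compose with $\epsilon_A^{-1}$; your remark that literal full faithfulness of $\Pi$ fails on hom-sets into $E$ is also a fair and accurate reading of the theorem's gloss.
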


(The categories $\cat{C}^{+F}$ and $\cat{C}_{+P}$ are defined in \Cref{CplusF,CplusP}.)

\begin{corollary}
	Dually, a presheaf $P:\cat{C}^\op\funto\cat{Set}$ is representable if and only if the inclusion functor $\cat{C}\ffunto\cat{C}_{+P}$ admits a retraction which is either fully faithful, or equivalently, right-adjoint (a co-reflector). 
\end{corollary}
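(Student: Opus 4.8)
The plan is to obtain this immediately from \Cref{repr_virtual} by passing to opposite categories, so that essentially no new work is needed beyond bookkeeping. The starting observation is that a presheaf $P\colon\cat{C}^\op\funto\cat{Set}$ is literally the same data as a set functor on the category $\cat{C}^\op$, and that it is representable as a presheaf on $\cat{C}$ --- i.e.\ $P\cong\cat{C}(-,R)$ for some object $R$ --- exactly when it is representable as a set functor on $\cat{C}^\op$, since $\cat{C}(-,R)=\cat{C}^\op(R,-)$. So it suffices to translate every ingredient of \Cref{repr_virtual}, applied to the category $\cat{C}^\op$ and the functor $P$, across the duality $(-)^\op$.

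First I would verify the identification $(\cat{C}^\op)^{+P}=(\cat{C}_{+P})^\op$. Unwinding \Cref{CplusF} for $P\colon\cat{C}^\op\funto\cat{Set}$: the objects are those of $\cat{C}^\op$ together with an extra object $E$, the morphisms $E\to A$ are the elements of $PA$, the only morphism with codomain $E$ is $\id_E$, and the composite of a virtual arrow with a morphism of $\cat{C}^\op$ is prescribed by functoriality of $P$. Forming the opposite category turns ``morphisms $E\to A$'' into ``morphisms $A\to E$'', ``codomain $E$'' into ``domain $E$'', and morphisms of $\cat{C}^\op$ into morphisms of $\cat{C}$; comparing with \Cref{CplusP}, this is precisely $\cat{C}_{+P}$. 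Under this identification the inclusion $I\colon\cat{C}^\op\ffunto(\cat{C}^\op)^{+P}$ appearing in \Cref{repr_virtual} is the opposite of the inclusion $J\colon\cat{C}\ffunto\cat{C}_{+P}$.

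The remaining steps are the standard behaviour of $(-)^\op$, which I would simply invoke: a functor $\Pi$ is a retraction of $I$ if and only if $\Pi^\op$ is a retraction of $J$, with $\Pi(E)=R$ iff $\Pi^\op(E)=R$, and $\Pi$ restricts to the identity on $\cat{C}^\op$ iff $\Pi^\op$ restricts to the identity on $\cat{C}$; full faithfulness is self-dual, so $\Pi$ is fully faithful iff $\Pi^\op$ is; and $\Pi\dashv I$ iff $I^\op\dashv\Pi^\op$, i.e.\ iff $\Pi^\op$ is right adjoint to $J$, which is the definition of a co-reflector. Substituting these equivalences into \Cref{repr_virtual} --- which asserts that $P$ is represented by $R$ if and only if such a $\Pi$ exists --- yields exactly the claim that $P$ is representable if and only if $J\colon\cat{C}\ffunto\cat{C}_{+P}$ admits a retraction sending $E$ to a representing object and which is fully faithful, equivalently right adjoint to $J$.

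The only place that calls for care --- the step I would expect to present the main (if mild) difficulty --- is checking the composition law in the identification $(\cat{C}^\op)^{+P}=(\cat{C}_{+P})^\op$: that the composite of a virtual arrow $A\to E$ with a morphism $B\to A$ of $\cat{C}$ as defined in \Cref{CplusP} agrees with the opposite of the composite prescribed by functoriality of $P$ in \Cref{CplusF}. This is immediate once both are written out, so no genuinely new argument beyond \Cref{repr_virtual} is needed.
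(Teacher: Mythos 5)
Your proposal is correct and is exactly the argument the paper intends: the corollary is stated as the formal dual of \Cref{repr_virtual}, and your explicit verification that $(\cat{C}^\op)^{+P}\cong(\cat{C}_{+P})^\op$ (including the composition law) together with the self-duality of full faithfulness and the reversal of adjunctions under $(-)^\op$ is precisely the bookkeeping the paper leaves implicit. Nothing further is needed.
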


As we will see in the proof, we don't just have an equivalence of \emph{properties} (there exists a bijection, etc.), but even of \emph{structures} (the bijections themselves correspond).

\begin{proof}
	First let's prove the equivalence with condition (i).	
	Note that, since any retraction $\Pi$ must restrict to the identity on $\cat{C}\subseteq\cat{C}^{+F}$, we only need to specify its action on $E$ and on the arrows with domain $E$ (i.e.\ the ``virtual object and arrows'').
			
	First suppose that $F$ is represented by $R$. Then each object $A$ of $\cat{C}$ there an isomorphism $\phi_A:FA\xrightarrow{\cong}\cat{C}(R,A)$ (natural in $A$). 
	Recall that the hom-set $\cat{C}^{+F}(E,A)$ is defined to be exactly the set $FA$, so equivalently we have an isomorphism $\phi_A:\cat{C}^{+F}(E,A)\to \cat{C}(R,A)$.
	Define now $\Pi(E)\coloneqq R$ and $\Pi(\id_E)\coloneqq\id_R$, and for every $f:E\to A$, $\Pi(f)\coloneqq \phi_A(f)$. (On the subcategory $\cat{C}$, we take $\Pi$ to be the identity.)
	This is functorial: for every $f:E\to A$ and $g:A\to B$, we have that 
	\[
	\Pi(g\circ f) \;=\; \phi_B(g_* f) \;=\; g\circ\phi_A(f) \;=\; \Pi(g)\circ\Pi(f) ,
	\]
	where in the second equality we used naturality of $\phi$, i.e.~commutativity of the following diagram:
	\[
	\begin{tikzcd}
		FA \ar{d}{g_*} \ar{r}{\phi_A} & \cat{C}(R,A) \ar{d}{g\circ-} \\
		FB \ar{r}{\phi_B} & \cat{C}(R,B)
	\end{tikzcd}
	\]
	By construction $\Pi$ is a retract of the inclusion $\cat{C}\ffunto\cat{C}_{+P}$.
	Moreover, since $\phi_A$ is a bijection, we have that the mapping $\Pi$ gives a bijection between ``virtual arrows'' $E\to A$ and ``real arrows'' $R\to A$.
	
	Conversely, suppose that $\Pi:\cat{C}^{+F}\funto\cat{C}$ is fully faithful. Again, on $\cat{C}$ it must necessarily be the identity. The action of $\Pi$ on arrows gives an assignment
	\[
	\begin{tikzcd}[row sep=0]
		\cat{C}^{+F}(E,A) = FA \ar{r}{\phi_A} & \cat{C}(R,A) \\
		\color{dgray} f \ar[mapsto, dgray]{r} & \color{dgray} \Pi(f)
	\end{tikzcd}
	\]
	which is natural in $A$ by functoriality: for every $g:A\to B$, 
	\[
	\phi(g\circ f) \;=\; \Pi(g\circ f) \;=\; \Pi(g)\circ\Pi(f) \;=\; g\circ\Pi(f) .
	\]
	Moreover, this mapping $\phi_A$ is a bijection by hypothesis. Therefore we have a natural isomorphism $\phi:F\to \cat{C}(R,-)$. 
	
	For the equivalence of (i) and (ii), notice that any retraction $\Pi:\cat{C}^{+F}\funto\cat{C}$ gives a bijection
	\begin{equation}\label{caseE}
	\begin{tikzcd}[row sep=0]
		\cat{C}(\Pi(E),A) = \cat{C}(R,A) \ar[leftrightarrow]{r}{\cong} & \cat{C}^{+F} (E,A) = \cat{C}^{+} (E,I(A)) \\
		\color{dgray} \Pi(f) & \color{dgray} f \ar[mapsto,dgray]{l}
	\end{tikzcd}
	\end{equation}
	natural in $A$, as above, if and only if it is left-adjoint to the inclusion $I$. The latter means that we have bijections
	\[
	\begin{tikzcd}[row sep=0]
		\cat{C}(\Pi(X),A) \ar[leftrightarrow]{r}{\cong} & \cat{C}^{+F} (X,I(A)) \\
		\color{dgray} \epsilon_A\circ\Pi(f) = \Pi(f) & \color{dgray} f \ar[mapsto,dgray]{l}
	\end{tikzcd}
	\]
	for all objects $A$ of $\cat{C}$ and $X$ of $\cat{C}^{+}$, natural in $X$ and $A$. 
	(Here we can take the counit $\epsilon$ of the adjunction to be the identity, since $\Pi$ is a retraction.)
	The case $X=E$ is exactly \eqref{caseE}, and for $X\in\cat{C}$, $\Pi$ acts as the identity,
	\[
	\begin{tikzcd}[row sep=0]
		\cat{C}(\Pi(X),A) = \cat{C}(X,A) \ar[leftrightarrow]{r}{\id} & \cat{C}(X,A) = \cat{C}^{+F} (X,I(A)) \\
		\color{dgray} \Pi(f) = f & \color{dgray} f \ar[mapsto,dgray]{l}
	\end{tikzcd}
	\]
	naturally in $A$.
	For naturality in $X$, consider a morphism $h:X\to Y$ of $\cat{C}^{+F}$, and the naturality diagram as follows.
	\[
	\begin{tikzcd}
		\cat{C}(\Pi(Y),A) \ar{d}{-\circ \Pi(h)} \ar[leftrightarrow]{r}{\cong} & \cat{C}^{+F} (Y,I(A)) \ar{d}{-\circ h} \\
		\cat{C}(\Pi(X),A) \ar[leftrightarrow]{r}{\cong} & \cat{C}^{+F} (X,I(A))
	\end{tikzcd}
	\]
	We have a few cases:
	\begin{itemize}
		\item If $X,Y\in\cat{C}$, the square commutes since $\Pi(g)=g$ and the horizontal arrows are identities;
		\item If $X=E$, and $Y\in\cat{C}$, the diagram reduces to the following,
		\[
		\begin{tikzcd}
			\cat{C}(Y,A) \ar{d}{-\circ \Pi(h)} \ar[leftrightarrow]{r}{\cong} & \cat{C}^{+F} (Y,A) \ar{d}{-\circ h} \\
			\cat{C}(R,A) \ar[leftrightarrow]{r}{\cong} & \cat{C}^{+F} (E,A)
		\end{tikzcd}
		\]
		which commutes by functoriality of $\Pi$;
		\item For the identity $E\to E$, the condition is trivial;
		\item There are no morphisms $A\to E$ for $A\in\cat{C}$. \qedhere
	\end{itemize}
\end{proof}

To summarize, representable set functors and presheaves are those for which ``virtual arrows are not-so-virtual''.
In general, the virtual arrows of functors (or presheaves) can serve as ``blueprint'' for how actual arrows to (or from) a representing object, if it exists, look like. 

\begin{remark}\label{dashed}
Usually, in category theory, those ``not-so-virtual arrows'' are represented as dashed or dotted arrows.
For example, suppose that the product $X\times Y$ exists. Then by definition, for every object $A$ and every pair of arrows $A\to X$ and $A\to Y$ there exists a unique arrow $A\to X\times Y$ making the following diagram commute:
\[
\begin{tikzcd}
	& A \ar{dr} \ar{dl} \ar[virtual]{d} \\
	X & \ar{l} X\times Y \ar{r} & Y
\end{tikzcd}
\]
Let's now see why the dashed arrow above is exactly the ``not-so-virtual arrow'' in the sense of \Cref{repr_virtual}.
The product $X\times Y$ is a representing object of the presheaf $P=\cat{C}(-,X)\funtimes\cat{C}(-,Y)$.
As we saw in \Cref{tuple_out}, the elements of $PA$, i.e.~the virtual arrows, are exactly pairs of real arrows $A\to X$ and $A\to Y$.
The universal property of the product says exactly that to each such pair there exists a (real, single) arrow $A\to X\times Y$.  
In the terminology of \Cref{repr_virtual}, this arrow $A\to X\times Y$ (or $A\to R$) is the one obtained by the functor $\Pi:\cat{C}^{+F}\funto\cat{C}$.

Now, in our formalism, these ``not-so-virtual arrows'' are real arrows of $\cat{C}$, not virtual, and so, technically, we should depict them as solid arrows, not dashed. 
However, they are exactly the real representatives of virtual arrows (in the sense of \Cref{repr_virtual}), and so, writing them as dashed, while technically an abuse of notation, is not too far a stretch. 
Somewhat conversely, one can see our dashed, virtual arrows as a generalization of \emph{those dashed arrows which appear in universal property diagrams}. (Generalization to the case where no object actually satisfies the universal property.)
\end{remark}

\subsection{Natural transformations and the Yoneda embedding}\label{catpsh}

Virtual arrows also allow us to visualize natural transformations between set functors or presheaves. Let's see how, starting with presheaves this time. 

Consider two presheaves $P,Q:\cat{C}^\op\funto\cat{Set}$. 
A natural transformation $\alpha:P\Rightarrow Q$ amounts to functions $\alpha_A:PA\to QA$ for all objects $A$, and natural in $A$.
We can view a natural transformation $\alpha$ as a \emph{virtual arrow between virtual objects}.
First of all, we draw now two virtual objects $\bullet_P$ and $\bullet_Q$:
\[
\begin{tikzcd}[sep=huge,
	blend group=multiply,
	/tikz/execute at end picture={
		\node [cbox, fit=(A) (B), inner sep=5mm] (CL) {};
		\node [catlabel] at (CL.south west) {$\cat{C}$};
	}]
	|[alias=A]| A \ar{r}{g} & |[alias=B]| B \\
	|[xshift=-5mm,alias=F]| \bullet_P & |[xshift=5mm,alias=G]| \bullet_Q
	\ar[virtual, from=A, to=F]
	\ar[virtual, from=B, to=F]
	\ar[virtual, from=A, to=G]
	\ar[virtual, from=B, to=G]
\end{tikzcd}
\]
Now we can view $\alpha$ as an arrow between $\bullet_P$ and $\bullet_Q$: 
\[
\begin{tikzcd}[sep=huge,
	blend group=multiply,
	/tikz/execute at end picture={
		\node [cbox, fit=(A) (B), inner sep=5mm] (CL) {};
		\node [catlabel] at (CL.south west) {$\cat{C}$};
	}]
	|[alias=A]| A \ar{r}{g} & |[alias=B]| B \\
	|[xshift=-5mm,alias=F]| \bullet_P & |[xshift=5mm,alias=G]| \bullet_Q
	\ar[virtual, from=A, to=F]
	\ar[virtual, from=B, to=F]
	\ar[virtual, from=A, to=G]
	\ar[virtual, from=B, to=G]
	\ar[virtual, from=F, to=G, "\alpha"']
\end{tikzcd}
\]
Now, the assignment $\alpha_A:PA\to QA$ should be a mapping from virtual arrows $A\dashrightarrow\bullet_P$ to virtual arrows $A\dashrightarrow\bullet_Q$.
It can be seen as a ``virtual'' postcomposition with $\alpha$. That is, the map $\alpha_A$ tells us that the following triangle commutes:
\[
\begin{tikzcd}[row sep=0]
	PA \ar{r}{\alpha_A} & QA \\
	\color{dgray} p' \ar[mapsto,dgray]{r} & \color{dgray} \alpha_A(p')
\end{tikzcd}
\qquad\qquad\qquad
\begin{tikzcd}[sep=huge,
	blend group=multiply,
	/tikz/execute at end picture={
		\node [cbox, fit=(A) (B), inner sep=5mm] (CL) {};
		\node [catlabel] at (CL.south west) {$\cat{C}$};
	}]
	|[alias=A]| A \ar{r}{g} & |[alias=B]| B \\
	|[xshift=-5mm,alias=F]| \bullet_P & |[xshift=5mm,alias=G]| \bullet_Q
	\ar[virtual, from=A, to=F, "p'"{swap,pos=0.8,inner sep=0.5mm}]
	\ar[virtual, from=B, to=F, mgray]
	\ar[virtual, from=A, to=G, "\alpha_A(p')"{swap,pos=0.6,inner sep=0}]
	\ar[virtual, from=B, to=G, mgray]
	\ar[virtual, from=F, to=G, "\alpha"']
\end{tikzcd}
\]
Moreover, naturality in $A$ says that this ``virtual composition'' is associative,
\[
\begin{tikzcd}[sep=small]
	\color{dgray} p \ar[mapsto,dgray]{ddddd} \ar[mapsto,dgray]{rrrrr} &&&&& |[xshift=5mm, overlay]| \color{dgray} \alpha_A(p) \ar[mapsto,dgray]{ddddd} \\
	& PB \ar{rrr}{\alpha_A} \ar{ddd}{g^*} &&& QB \ar{ddd}{g^*} \\ \\ \\ 
	& PA \ar{rrr}{\alpha_B} &&& QA \\
	\color{dgray} g^*p \ar[mapsto,dgray]{rrrrr} &&&&& |[xshift=5mm, overlay]| \color{dgray} \alpha_B(g^*p) = g^*(\alpha_A(p)) \qquad\qquad\quad
\end{tikzcd}
\qquad\qquad\qquad
\begin{tikzcd}
	A \ar{r}{g} \ar[virtual,bend right]{rr}[swap,inner sep=0.3mm]{g^*p} \ar[virtual,bend left=50]{rrr}{g^*(\alpha_A(p))} \ar[virtual,bend right=50]{rrr}[swap]{\alpha_B(g^*p)} & B \ar[virtual]{r}[inner sep=0.1mm]{p} \ar[virtual,bend left]{rr}[inner sep=0.1mm]{\alpha_A(p)} & \bullet_P \ar[virtual]{r}[swap]{\alpha} & \bullet_Q 
\end{tikzcd}
\]
i.e.\ that the following tetrahedron commutes:
\[
\begin{tikzcd}[sep=2cm,
	blend group=multiply,
	/tikz/execute at end picture={
		\node [cbox, fit=(A) (B), inner sep=5mm] (CL) {};
		\node [catlabel] at (CL.south west) {$\cat{C}$};
	}]
	|[alias=A]| A \ar{r}{g} & |[alias=B]| B \\
	|[xshift=-5mm,alias=F]| \bullet_P & |[xshift=5mm,alias=G]| \bullet_Q
	\ar[virtual, from=A, to=F, "g^*p"{swap,inner sep=0.2mm},cgray]
	\ar[virtual, from=B, to=F, "p"{swap,pos=0.6, inner sep=0.2mm},cgray]
	\ar[virtual, from=A, to=G, "\alpha_B(g^*p) = g^*(\alpha_A(p))"{near end, inner sep=0mm}]
	\ar[virtual, from=B, to=G, "\alpha_A(p)"{inner sep=0.2mm},cgray]
	\ar[virtual, from=F, to=G, "\alpha"',cgray]
\end{tikzcd}
\]
(One could even form a category with two extra objects analogous to $\cat{C}_{+P}$.)

Let's now turn to set functors. Given $F,G:\cat{C}\funto\cat{Set}$, we can similarly view a natural transformation 
as an arrow between virtual points, but crucially, \emph{in the opposite direction}.
Let's see how. First of all, a natural transformation $\alpha:F\Rightarrow G$ amounts to functions $\alpha_A:FA\to GA$ for all objects $A$.
This would have to map virtual arrows $\bullet_F\dashrightarrow A$ to virtual arrows $\bullet_G\dashrightarrow A$. We see immediately that composition with a virtual arrow $\bullet_F\dashrightarrow\bullet_G$ would not work:
\[
\begin{tikzcd}
	& |[xshift=-5mm]| \bullet_G \\
	\bullet_F \ar[virtual]{ur}{\alpha} \ar[virtual]{r}{f} & A &\, \ar[mapsto]{r}{???} &\, & \bullet_G \ar[virtual]{r}{\alpha_A(f)} & A 
\end{tikzcd}
\]
Instead, we visualize a natural transformation $\alpha:F\Rightarrow G$ as precomposition with a virtual arrow $\bullet_G\dashrightarrow\bullet_F$:
\[
\begin{tikzcd}
	\bullet_G \ar[virtual]{r}{\alpha^\op} & \bullet_F \ar[virtual]{r}{f} & A &\, \ar[mapsto]{r} &\, & \bullet_G \ar[virtual]{r}{\alpha_A(f)} & A 
\end{tikzcd}
\]
In other words, it's a way to say that the following triangle commutes:
\[
\begin{tikzcd}[row sep=0]
	FA \ar{r}{\alpha_A} & GA \\
	\color{dgray} f \ar[mapsto,dgray]{r} & \color{dgray} \alpha_A(f)
\end{tikzcd}
\qquad\qquad\qquad
\begin{tikzcd}[sep=huge,
	blend group=multiply,
	/tikz/execute at end picture={
		\node [cbox, fit=(A) (B), inner sep=5mm] (CL) {};
		\node [catlabel] at (CL.south west) {$\cat{C}$};
	}]
	|[xshift=-5mm,alias=G]| \bullet_G & |[xshift=5mm,alias=F]| \bullet_F \\
	|[alias=A]| A \ar{r}{g} & |[alias=B]| B 
	\ar[virtual, from=F, to=A, "f"{swap,pos=0.4,inner sep=0.2mm}]
	\ar[virtual, from=F, to=B, mgray]
	\ar[virtual, from=G, to=A, "\alpha_A(f)"{swap,pos=0.2,inner sep=0.2mm}]
	\ar[virtual, from=G, to=B, mgray]
	\ar[virtual, from=G, to=F, "\alpha^\op"]
\end{tikzcd}
\]
Just as for the case of presheaves, naturality in $A$ says that this composition is associative, i.e.\ that the tetrahedron above commutes.

Let's now look at these natural transformations from a larger perspective.
The second-most famous result of category theory is the \emph{Yoneda embedding theorem}, which says that the embedding of a category $\cat{C}$ into its category of presheaves
\[
\begin{tikzcd}[row sep=0, column sep=large]
	\cat{C} \ar[functor]{r}{\Yon} & \sfuncat{\cat{C}^\op} \\
	\color{dgray} A \ar[dgray]{dd}[swap]{g} \ar[mapsto,dgray,shorten=2mm]{r} & \color{dgray} \cat{C}(-,A) \ar[dgray]{dd}{g_*=\,g\circ-} \\
	\phantom{A} \ar[mapsto,dgray,shorten=2mm]{r} & \phantom{\cat{C}(-,A)} \\
	\color{dgray} B \ar[mapsto,dgray,shorten=2mm]{r} & \color{dgray} \cat{C}(-,B)
\end{tikzcd}
\]
is full and faithful.
Replacing $\cat{C}$ by its opposite, we get the embedding on the left below, and taking opposites, we get the one on the right:
\[
\begin{tikzcd}[row sep=0, column sep=large]
	\cat{C}^\op \ar[functor]{r} & \sfuncat{\cat{C}} \\
	\color{dgray} A \ar[leftarrow,dgray]{dd}[swap]{g^\op} \ar[mapsto,dgray,shorten=2mm]{r} & \color{dgray} \cat{C}(A,-) \ar[leftarrow,dgray]{dd}{-\circ g} \\
	\phantom{A} \ar[mapsto,dgray,shorten=2mm]{r} & \phantom{\cat{C}(-,A)} \\
	\color{dgray} B \ar[mapsto,dgray,shorten=2mm]{r} & \color{dgray} \cat{C}(B,-)
\end{tikzcd}
\qquad\qquad\qquad
\begin{tikzcd}[row sep=0, column sep=large]
	\cat{C} \ar[functor]{r} & \sfuncat{\cat{C}}^\op \\
	\color{dgray} A \ar[dgray]{dd}[swap]{g} \ar[mapsto,dgray,shorten=2mm]{r} & \color{dgray} \cat{C}(A,-) \ar[dgray]{dd}{g_*=\,(-\circ g)^\op} \\
	\phantom{A} \ar[mapsto,dgray,shorten=2mm]{r} & \phantom{\cat{C}(-,A)} \\
	\color{dgray} B \ar[mapsto,dgray,shorten=2mm]{r} & \color{dgray} \cat{C}(B,-)
\end{tikzcd}
\]
Both are again fully faithful. Whenever this does not lead to confusion, we will denote the embedding $\cat{C}\funto\sfuncat{\cat{C}}^\op$ again by $\Yon$, and refer to it again as the Yoneda embedding. 

(Keep in mind that, even if $\cat{C}$ is locally small, the categories $\sfuncat{\cat{C}^\op}$ and $\sfuncat{\cat{C}}^\op$ may fail to be locally small.) 

We can view these embeddings as ways to add ``all virtual objects at once'' to $\cat{C}$, in a certain sense, but let's proceed with a little care. 
First of all, instead of adding new objects to $\cat{C}$, we are rather embedding $\cat{C}$ into a larger category. So we switch our notation as follows. Let's do this for presheaves first:
\[
\begin{tikzcd}[sep=huge,
	blend group=multiply,
	/tikz/execute at end picture={
		\node [cbox, fit=(A) (B), inner sep=5mm] (CL) {};
		\node [catlabel] at (CL.south west) {$\cat{C}$};
	}]
	|[alias=A]| A \ar{r}{g} & |[alias=B]| B \\
	|[xshift=-5mm,alias=F]| \bullet_P & |[xshift=5mm,alias=G]| \bullet_Q
	\ar[virtual, from=A, to=F]
	\ar[virtual, from=B, to=F]
	\ar[virtual, from=A, to=G]
	\ar[virtual, from=B, to=G]
	\ar[virtual, from=F, to=G, "\alpha"']
\end{tikzcd}
\qquad\longrightarrow\qquad 
\begin{tikzcd}[sep=huge,
	blend group=multiply,
	/tikz/execute at end picture={
		\node [cbox, fit=(A) (B) (F) (G), inner sep=5mm] (CL) {};
		\node [catlabel] at (CL.south west) {$\sfuncat{\cat{C}^\op}$};
	}]
	|[alias=A]| \Yon(A) \ar{r}{\Yon(g)} & |[alias=B]| \Yon(B) \\
	|[xshift=-5mm,alias=F]| P & |[xshift=5mm,alias=G]| Q
	\ar[virtual, from=A, to=F]
	\ar[virtual, from=B, to=F]
	\ar[virtual, from=A, to=G]
	\ar[virtual, from=B, to=G]
	\ar[virtual, from=F, to=G, "\alpha"']
\end{tikzcd}
\]
All our virtual arrows have now been promoted as actual arrows of our category. (We will sometimes still write them as dashed.)
Now the \emph{first}most famous result of category theory, the Yoneda lemma, says that we have a natural bijection as follows.
\[
\begin{tikzcd}
	PA \ar{r}{\cong} & \sfuncat{\cat{C}^\op}\big(\Yon(A),P\big)
\end{tikzcd}
\]
In other words to each virtual arrow $A\dashrightarrow\bullet_P$ there corresponds a unique real arrow $\Yon(A)\to P$ in $\sfuncat{\cat{C}^\op}$, as the diagram above suggests. 
Since the Yoneda embedding is fully faithful, moreover, we know that to each arrow $g:A\to B$ there corresponds a unique arrow $\Yon(A)\to\Yon(B)$, the solid one in the diagram. 
However, \emph{in $\sfuncat{\cat{C}^\op}$ there may be arrows $P\to\Yon(A)$ that we did not have before.}
So the picture may look more like the following one:
\[
\begin{tikzcd}[sep=huge,
	blend group=multiply,
	/tikz/execute at end picture={
		\node [cbox, fit=(A) (B), inner sep=5mm] (CL) {};
		\node [catlabel] at (CL.south west) {$\cat{C}$};
	}]
	|[alias=A]| A \ar{r}{g} & |[alias=B]| B \\
	|[xshift=-5mm,alias=F]| \bullet_P & |[xshift=5mm,alias=G]| \bullet_Q
	\ar[virtual, from=A, to=F]
	\ar[virtual, from=B, to=F]
	\ar[virtual, from=A, to=G]
	\ar[virtual, from=B, to=G]
	\ar[virtual, from=F, to=G, "\alpha"']
\end{tikzcd}
\qquad\longrightarrow\qquad 
\begin{tikzcd}[sep=huge,
	blend group=multiply,
	/tikz/execute at end picture={
		\node [cbox, fit=(A) (B) (F) (G), inner sep=5mm] (CL) {};
		\node [catlabel] at (CL.south west) {$\sfuncat{\cat{C}^\op}$};
	}]
	|[alias=A]| \Yon(A) \ar{r}{\Yon(g)} & |[alias=B]| \Yon(B) \\
	|[xshift=-5mm,alias=F]| P & |[xshift=5mm,alias=G]| Q
	\ar[virtual, from=A, to=F,shift left]
	\ar[virtual, from=B, to=F]
	\ar[virtual, from=A, to=G]
	\ar[virtual, from=B, to=G,shift left]
	\ar[virtual, from=F, to=A,shift left]
	\ar[virtual, from=G, to=B,shift left]
	\ar[virtual, from=F, to=G, "\alpha"']
\end{tikzcd}
\]

For functors, the situation is similar, but we have to keep in mind that the natural transformations are now reversed. Equivalently, we are embedding $\cat{C}$ into $\sfuncat{\cat{C}}^\op$, not $\sfuncat{\cat{C}}$. The Yoneda lemma now reads as follows: 
\begin{equation}\label{yon_F}
\begin{tikzcd}
	FA \ar{r}{\cong} & \sfuncat{\cat{C}}\big(\Yon(A),F\big) = \sfuncat{\cat{C}}^\op\big(F,\Yon(A)\big) .
\end{tikzcd}
\end{equation}
In other words, to each virtual arrow $\bullet_F\dashrightarrow A$ there corresponds a unique real arrow $F\to\Yon(A)$. However, as for presheaves, there may be extra arrows $\Yon(A)\to F$ that we did not have before.
\[
\begin{tikzcd}[sep=huge,
	blend group=multiply,
	/tikz/execute at end picture={
		\node [cbox, fit=(A) (B), inner sep=5mm] (CL) {};
		\node [catlabel] at (CL.south west) {$\cat{C}$};
	}]
|[xshift=-5mm,alias=F]| \bullet_F & |[xshift=5mm,alias=G]| \bullet_G \\
	|[alias=A]| A \ar{r}[swap]{g} & |[alias=B]| B 
	\ar[virtual, from=F, to=A]
	\ar[virtual, from=F, to=B]
	\ar[virtual, from=G, to=A]
	\ar[virtual, from=G, to=B]
	\ar[virtual, from=F, to=G, "\alpha^\op"]
\end{tikzcd}
\qquad\longrightarrow\qquad 
\begin{tikzcd}[sep=huge,
	blend group=multiply,
	/tikz/execute at end picture={
		\node [cbox, fit=(A) (B) (F) (G), inner sep=5mm] (CL) {};
		\node [catlabel] at (CL.south west) {$\sfuncat{\cat{C}}^\op$};
	}]
	|[xshift=-5mm,alias=F]| F & |[xshift=5mm,alias=G]| G \\
	|[alias=A]| \Yon(A) \ar{r}[swap]{\Yon(g)} & |[alias=B]| \Yon(B) 
	\ar[virtual, from=F, to=A,shift left]
	\ar[virtual, from=F, to=B]
	\ar[virtual, from=G, to=A]
	\ar[virtual, from=G, to=B,shift left]
	\ar[virtual, from=A, to=F,shift left]
	\ar[virtual, from=B, to=G,shift left]
	\ar[virtual, from=F, to=G, "\alpha^\op"]
\end{tikzcd}
\]

\section{Weighted limits and colimits}\label{wlim}

Our graphical representation of set functors and presheaves can be used to give an intuitive interpretation of weighted limits and colimits. 

In what follows, we will call a \emph{diagram in $\cat{C}$} a functor $D:\cat{J}\funto\cat{C}$ from some small category $\cat{J}$, which we can see as ``indexing'' the diagram. 

Weighted limits and colimits are defined as follows.

\begin{definition}\label{def_wlim}
	Let $D:\cat{J}\funto\cat{C}$ be a diagram, and let $W:\cat{J}\funto\cat{Set}$ be a set functor. 
	A \newterm{weighted limit} of $D$ with \newterm{weighting} $W$, or \newterm{$W$-weighted limit} of $D$, which we denote by 
	\[
	\lim_{J\in\cat{J}} \big\langle WJ, DJ \big\rangle
	\]
	is an object representing the following presheaf.
	\begin{equation}\label{plim}
	\begin{tikzcd}[row sep=0]
		 \cat{C}^\op \ar[functor]{r} & \cat{Set} \\
		 \color{dgray} A \ar[mapsto,dgray]{r} & \color{dgray} \sfuncat{\cat{J}}\big( W-, \cat{C}(A, D-) \big) 
	\end{tikzcd}
	\end{equation}
	(Note that this is a presheaf on $\cat{C}$, while $W$ is a set functor on $\cat{J}$.)
		
	Dually, let $D:\cat{J}\funto\cat{C}$ be a diagram, and let $W:\cat{J}^\op\funto\cat{Set}$ be a presheaf. 
	A \newterm{weighted colimit} of $D$ with \newterm{weighting} $W$, or \newterm{$W$-weighted colimit} of $D$, which we denote by 
	\[
	\colim_{J\in\cat{J}} \big\langle WJ, DJ \big\rangle 
	\]
	is an object representing the following set functor.
	\begin{equation}\label{pcolim}
	\begin{tikzcd}[row sep=0]
		\cat{C} \ar[functor]{r} & \cat{Set} \\
		\color{dgray} A \ar[mapsto,dgray]{r} & \color{dgray} \sfuncat{\cat{J}^\op}\big( W-, \cat{C}(D-, A) \big) 
	\end{tikzcd}
	\end{equation}
	(Note that this is a set functor on $\cat{C}$, while $W$ is a presheaf on $\cat{J}$.)
\end{definition}

Weighted limits and colimits are alternative called \newterm{indexed limits and colimits}. 

At first, this definition may not seem particularly suggestive.
Let's now interpret this graphically, by means of our virtual arrows.

\begin{remark}
	Sometimes one may be interested more generally in situations where the indexing category $\cat{J}$ is not small. In that case, a necessary condition for the (large-set-valued) presheaf \eqref{plim} to be representable, is that for each $A$ the set $\sfuncat{\cat{J}}\big( W-, \cat{C}(A, D-) \big)$ is small.
	Indeed, that set needs to have the same cardinality as the hom-set $\cat{C}(R,A)$, where $R$ is a representing object.
	The same is true for the functor $\eqref{pcolim}$.
\end{remark}

\subsection{Weighted cones and cocones}\label{wcones}

Consider a diagram $D:\cat{J}\funto\cat{C}$. A (usual) limit of $D$, if it exists, is first of all a \emph{cone} over $D$:
\[
\begin{tikzcd}[sep=small,
	blend group=multiply,
	/tikz/execute at end picture={
		\node [cbox, fit=(J) (L) (K), inner sep=1.2mm] (CL) {};
		\node [catlabel] at (CL.south west) {$\cat{J}$};
		\node [cbox, fit=(DJ) (DL) (DK) (T), inner sep=1.2mm] (CR) {};
		\node [catlabel] at (CR.south west) {$\cat{C}$};
	}]
	&&&&&&&&& |[alias=T]| T  \\ \\ \\ \\ 
	|[alias=J, xshift=-5mm]| J \ar{dr} \ar{rr} && |[alias=L, xshift=5mm]| L 
	 &&&&&& |[alias=DJ]| DJ \ar[-, shorten >=-1mm]{r} \ar{dr} & |[alias=BR2]| \ar[shorten <=-1mm]{r} & |[alias=DL]| DL \\
	& |[alias=K]| K \ar{ur}
	 &&&&&&&& |[alias=DK]| DK \ar{ur}
	\ar[from=T, to=DJ]
	\ar[from=T, to=DK]
	\ar[from=T, to=DL]
	\ar[mapsto,color=dgray, from=L, to=DJ, "D", shift right=4mm, shorten=-4mm]
\end{tikzcd}
\]
That is, an object $T$ of $\cat{C}$ (the ``tip'' of the cone) together with an arrow $c_J:T\to DJ$ for each object $J$ of $\cat{J}$, such that for every arrow $g:J\to K$ of $\cat{D}$, $Dg\circ c_J=c_K$. In other words, each side triangle of the cone (involving $T$) commutes:
\[
\begin{tikzcd}[sep=small,
	blend group=multiply,
	/tikz/execute at end picture={
		\node [cbox, fit=(J) (L) (K), inner sep=1.2mm] (CL) {};
		\node [catlabel] at (CL.south west) {$\cat{J}$};
		\node [cbox, fit=(DJ) (DL) (DK) (T), inner sep=1.2mm] (CR) {};
		\node [catlabel] at (CR.south west) {$\cat{C}$};
	}]
	&&&&&&&&& |[alias=T]| T  \\ \\ \\ \\ 
	|[alias=J, xshift=-5mm]| J \ar{dr}[swap]{g} \ar[mgray]{rr} && |[alias=L, xshift=5mm]| \color{mgray} L 
	&&&&&& |[alias=DJ]| DJ \ar[-, shorten >=-1mm, mgray]{r} \ar{dr}[swap]{Dg} & |[alias=BR2]| \ar[shorten <=-1mm, mgray]{r} & |[alias=DL]| \color{mgray} DL \\
	& |[alias=K]| K \ar[mgray]{ur}
	&&&&&&&& |[alias=DK]| DK \ar[mgray]{ur}
	\ar[from=T, to=DJ, "c_J"']
	\ar[from=T, to=DK, "c_K"]
	\ar[from=T, to=DL, color=mgray]
	\ar[mapsto,color=dgray, from=L, to=DJ, "D", shift right=4mm, shorten=-2mm]
\end{tikzcd}
\]
A cone $(T,c)$ is now a \emph{limit} if it is a \emph{universal} or \emph{terminal} cone, meaning that for every cone $(T',c')$ over $D$, there exists a unique arrow between the tips $u:T'\to T$ such that for each $J$ of $\cat{J}$, $c_J\circ u=c'_J$, i.e.\ the following triangle commutes:
\[
\begin{tikzcd}[sep=small,
	blend group=multiply,
	/tikz/execute at end picture={
		\node [cbox, fit=(J) (L) (K), inner sep=1.2mm] (CL) {};
		\node [catlabel] at (CL.south west) {$\cat{J}$};
		\node [cbox, fit=(DJ) (DL) (DK) (TP), inner sep=1.2mm] (CR) {};
		\node [catlabel] at (CR.south west) {$\cat{C}$};
	}]
	&&&&&&&&& |[alias=TP]| T' \ar[virtual]{ddd}[near end,swap]{u} \\ \\ \\
	&&&&&&&&& |[alias=T]| T  \\ \\ \\
	|[alias=J, xshift=-5mm]| J \ar[mgray]{dr}\ar[mgray]{rr} && |[alias=L, xshift=5mm]| \color{mgray} L 
	&&&&&& |[alias=DJ]| DJ \ar[-, shorten >=-1mm, mgray]{r} \ar[mgray]{dr} & |[alias=BR2]| \ar[shorten <=-1mm, mgray]{r} & |[alias=DL]| \color{mgray} DL \\
	& |[alias=K]| \color{mgray} K \ar[mgray]{ur}
	&&&&&&&& |[alias=DK]| \color{mgray} DK \ar[mgray]{ur}
	\ar[from=T, to=DJ, "c_J"]
	\ar[from=T, to=DK, color=mgray]
	\ar[from=T, to=DL, color=mgray]
	\ar[from=TP, to=DJ, "c'_J"', bend right=20]
	\ar[from=TP, to=DK, color=mgray, bend left=20]
	\ar[from=TP, to=DL, color=mgray, bend left=20]
	\ar[mapsto,color=dgray, from=L, to=DJ, "D", shift right=4mm, shorten=-2mm]
\end{tikzcd}
\]
Similarly, a colimit is a universal (initial) co-cone.

We can interpret a cone over a diagram as a way to \emph{extend} our diagram by adding an object, the tip $T$, and arrows from $T$ to the nodes of the diagram, forming commutative triangles. A limit is then a universal way of doing so.
The interpretation of weighted limits, and more generally of what we call \emph{weighted cones}, is very similar, except that \emph{there may be an entire set of arrows} (or better said, an \emph{indexed tuple}) between $T$ and the nodes of the diagram:
\[
\begin{tikzcd}[sep=small,
	blend group=multiply,
	/tikz/execute at end picture={
		\node [cbox, fit=(J) (L) (K), inner sep=1.2mm] (CL) {};
		\node [catlabel] at (CL.south west) {$\cat{J}$};
		\node [cbox, fit=(DJ) (DL) (DK) (T), inner sep=1.2mm] (CR) {};
		\node [catlabel] at (CR.south west) {$\cat{C}$};
	}]
	&&&&&&&&& |[alias=T]| T  \\ \\ \\ \\ 
	|[alias=J, xshift=-5mm]| J \ar{dr} \ar{rr} && |[alias=L, xshift=5mm]| L 
	&&&&&& |[alias=DJ]| DJ \ar{dr} \ar[-, shorten >=-1mm]{r} & |[alias=BR2]| \ar[shorten <=-1mm]{r} & |[alias=DL]| DL \\
	& |[alias=K]| K \ar{ur}
	&&&&&&&& |[alias=DK]| DK \ar{ur}
	\ar[from=T, to=DJ, shift left=1]
	\ar[from=T, to=DJ, shift right=1]
	\ar[from=T, to=DK, shorten <=0.8mm]
	\ar[from=T, to=DL, shift left=1]
	\ar[from=T, to=DL, shift right=1]
	\ar[mapsto,color=dgray, from=L, to=DJ, "D", shift right=4mm, shorten=-4mm]
\end{tikzcd}
\]
We can view this as a more general way of extending a diagram, by adding an extra object $T$ and arrows from $T$ to the diagram: having single arrows as sides of the cone is just a special case. 
We might now wonder: 
\begin{itemize}
	\item How do we specify each tuple of arrows?
	\item How do we know which triangles commute?
\end{itemize}
The answer to both, which might come expected after reading  \Cref{virtual}, is that we can use a functor $W:\cat{J}\funto\cat{Set}$. We call this a \emph{weighting} for the diagram. 
We can see the weighting functor as a ``blueprint'' for how we want these generalized cones to look:
\[
\begin{tikzcd}[sep=small,
	blend group=multiply,
	/tikz/execute at end picture={
		\node [cbox, fit=(J) (L) (K), inner sep=1.2mm] (CL) {};
		\node [catlabel] at (CL.south west) {$\cat{J}$};
		\node [cbox, fit=(DJ) (DL) (DK) (T), inner sep=1.2mm] (CR) {};
		\node [catlabel] at (CR.south west) {$\cat{C}$};
	}]
	& |[alias=E]| \bullet 
	 &&&&&&&& |[alias=T]| T  \\ \\ \\ \\ 
	|[alias=J, xshift=-5mm]| J \ar{dr} \ar[-, shorten >=-1mm]{r} & |[alias=BR1]| \ar[shorten <=-1mm]{r} & |[alias=L, xshift=5mm]| L 
	 &&&&&& |[alias=DJ]| DJ \ar{dr} \ar[-, shorten >=-1mm]{r} & |[alias=BR2]| \ar[shorten <=-1mm]{r} & |[alias=DL]| DL \\
	& |[alias=K]| K \ar{ur}
	 &&&&&&&& |[alias=DK]| DK \ar{ur}
	\ar[virtual, from=E, to=J, shift left=1]
	\ar[virtual, from=E, to=J, shift right=1]
	\ar[virtual, from=E, to=K, shorten <=0.5mm]
	\ar[virtual, from=E, to=L, shift left=1]
	\ar[virtual, from=E, to=L, shift right=1]
	\ar[from=T, to=DJ, shift left=1]
	\ar[from=T, to=DJ, shift right=1]
	\ar[from=T, to=DK, shorten <=0.8mm]
	\ar[from=T, to=DL, shift left=1]
	\ar[from=T, to=DL, shift right=1]
	\ar[mapsto,color=cgray, from=E, to=T]
	\ar[mapsto,color=dgray, from=L, to=DJ, "D", shift right=4mm, shorten=-4mm]
\end{tikzcd}
\]
Before we make this idea precise, in order to have the right intuition, it's worth keeping a few things in mind:
\begin{itemize}
	\item Recall that a set functor comes with a rule for which ``virtual triangles'' commute.
	For example, in the picture above, $W$ may specify that one of the two arrows $\bullet\dashrightarrow L$ (but perhaps not the other one) is the composition of one of the arrows $\bullet\dashrightarrow J$ with $J\to L$. 
	We want the resulting cone in $\cat{C}$ to respect these commutativity relations. More things may commute in $\cat{C}$, but crucially, not fewer.
	\item A way to rephrase the point above is that in a traditional diagram, two arrows may happen to be equal in $\cat{C}$ even if they are written with different symbols in the diagram, that is, if they are indexed by different arrows of $J$ (the functor $J$ is not always faithful). The same is true for these cones: different virtual arrows may be mapped to equal arrows in $\cat{C}$ (but equal virtual arrows cannot be mapped to different arrows). 
	\item Similarly, in a traditional diagram, given any two objects $DJ$ and $DK$, there may be more arrows $DJ\to DK$ of $\cat{C}$ than the ones appearing in the diagram, i.e.\ coming from $\cat{J}$ (the functor $D$ is not always full). The same is true for these cones: there may be arrows $T\to DK$ in $\cat{C}$ which are not considered in the weighted diagram. 
\end{itemize}

With this intuition in mind, let's give the precise definition. 

\begin{definition}\label{def_wcone}
	Let $D:\cat{J}\funto\cat{C}$ be a diagram and $W:\cat{J}\funto\cat{Set}$ be a set functor.
	A \newterm{weighted cone} over $D$ (weighted by $W$), or \newterm{$W$-weighted cone}, is an extension\footnote{Here we mean an extension in the common sense of the word, not a Kan extension. (See however \Cref{ptwise_kan}.)} of $D$ to a functor $D^+:\cat{J}^{+W}\funto\cat{C}$, agreeing with $D$ on $\cat{J}$. We call $D^+\!E$ the \newterm{tip} of the cone.
	\[
	\begin{tikzcd}[sep=small,
		blend group=multiply,
		/tikz/execute at end picture={
			\node [cbox, fit=(J) (L) (K), inner sep=1.2mm] (CL) {};
			\node [catlabel] at (CL.south west) {$\cat{J}$};
			\node [cbox, fit=(DJ) (DL) (DK) (T), inner sep=1.2mm] (CR) {};
			\node [catlabel] at (CR.south west) {$\cat{C}$};
		}]
		& |[alias=E]| E
		&&&&&&&& |[alias=T]| D^+\!E  \\ \\ \\ \\ 
		|[alias=J, xshift=-5mm]| J \ar{dr} \ar[-, shorten >=-1mm]{r} & |[alias=BR1]| \ar[shorten <=-1mm]{r} & |[alias=L, xshift=5mm]| L 
		&&&&&& |[alias=DJ]| DJ \ar{dr} \ar[-, shorten >=-1mm]{r} & |[alias=BR2]| \ar[shorten <=-1mm]{r} & |[alias=DL]| DL \\
		& |[alias=K]| K \ar{ur}
		&&&&&&&& |[alias=DK]| DK \ar{ur}
		\ar[virtual, from=E, to=J, shift left=1]
		\ar[virtual, from=E, to=J, shift right=1]
		\ar[virtual, from=E, to=K, shorten <=0.5mm]
		\ar[virtual, from=E, to=L, shift left=1]
		\ar[virtual, from=E, to=L, shift right=1]
		\ar[from=T, to=DJ, shift left=1]
		\ar[from=T, to=DJ, shift right=1, shorten <=0.5mm]
		\ar[from=T, to=DK, shorten <=0.8mm]
		\ar[from=T, to=DL, shift left=1, shorten <=0.5mm]
		\ar[from=T, to=DL, shift right=1]
		\ar[mapsto,color=cgray, from=E, to=T]
		\ar[mapsto,color=dgray, from=L, to=DJ, "D", shift right=4mm, shorten=-4mm]
	\end{tikzcd}
	\]
	Dually, let $W:\cat{J}^\op\funto\cat{Set}$ be a presheaf. 
	A \newterm{weighted co-cone} over $D$ (weighted by $W$), or \newterm{$W$-weighted co-cone}, is an extension of $D$ to a functor $D_+:\cat{J}_{+W}\funto\cat{C}$.
	We call $D^+(E)$ the \newterm{tip} of the co-cone.
\end{definition}

(Once again, the categories $\cat{C}^{+F}$ and $\cat{C}_{+P}$ are defined in \Cref{CplusF,CplusP}.)

Equivalently, cones and cocones over $D$ are commutative diagrams of functors as follows.
\begin{equation}\label{wlim_ext}
\begin{tikzcd}[row sep=small]
	\cat{J} \ar[functor,hook]{dd}[swap]{I} \ar[functor]{dr}{D} \\
	& \cat{C} \\
	\cat{J}^{+W} \ar[functor]{ur}[swap]{D^+}
\end{tikzcd}
\qquad\qquad\qquad
\begin{tikzcd}[row sep=small]
	\cat{J} \ar[functor,hook]{dd}[swap]{I} \ar[functor]{dr}{D} \\
	& \cat{C} \\
	\cat{J}_{+W} \ar[functor]{ur}[swap]{D_+}
\end{tikzcd}
\end{equation}
(These diagrams may remind the reader of Kan extensions. That is a correct intuition, see \Cref{ptwise_kan}.) 

The main result of this section is now that \emph{weighted limit and colimits are exactly universal weighted cones}.

\begin{theorem}\label{thm_wlim}
	Let $D:\cat{J}\funto\cat{C}$ be a diagram, and let $W:\cat{D}\funto\cat{Set}$ be a weighting for $D$. 
	A $W$-weighted limit of $D$ is, equivalently, a weighted cone $D^+:\cat{D}^{+W}\funto\cat{D}$ (with tip $T$) which is terminal: given any $W$-weighted cone ${D^+}'$ (with tip $T'$), there is a unique morphism $u:T'\to T$ such that for all $J$ of $\cat{J}$ and $w\in WJ$ we have that $D^+\!w\circ u={D^+}\!w$, i.e.\ the following triangle commutes.
	\[
	\begin{tikzcd}[sep=small,
		blend group=multiply,
		/tikz/execute at end picture={
			\node [cbox, fit=(J) (L) (K), inner sep=1.2mm] (CL) {};
			\node [catlabel] at (CL.south west) {$\cat{J}$};
			\node [cbox, fit=(DJ) (DL) (DK) (TP), inner sep=1.2mm] (CR) {};
			\node [catlabel] at (CR.south west) {$\cat{C}$};
		}]
		&&&&&&&&& |[alias=TP]| T' \\ 
		& |[alias=E]| \bullet \\  
		&&&&&&&&& |[alias=T]| T  \\ \\  
		|[alias=J, xshift=-5mm]| J \ar[mgray]{dr} \ar[-, shorten >=-1mm, mgray]{r} & |[alias=BR1]| \ar[shorten <=-1mm, mgray]{r} & |[alias=L, xshift=5mm]| \color{mgray} L 
		&&&&&& |[alias=DJ]| DJ \ar[mgray]{dr} \ar[-, shorten >=-1mm, mgray]{r} & |[alias=BR2]| \ar[shorten <=-1mm, mgray]{r} & |[alias=DL]| \color{mgray} DL \\
		& |[alias=K]| \color{mgray} K \ar[mgray]{ur}
		&&&&&&&& |[alias=DK]| \color{mgray} DK \ar[mgray]{ur}
		\ar[virtual, from=E, to=J, shift left, color=cgray]
		\ar[virtual, from=E, to=J, shift right, "w"']
		\ar[virtual, from=E, to=K, shorten <=0.5mm, color=cgray]
		\ar[virtual, from=E, to=L, shift left=1, color=cgray]
		\ar[virtual, from=E, to=L, shift right=1, color=cgray]
		\ar[from=T, to=DJ, shift left, color=mgray]
		\ar[from=T, to=DJ, shift right, "D^+\!w"'{swap, inner sep=0mm}]
		\ar[from=T, to=DK, shorten <=0.8mm, color=mgray]
		\ar[from=T, to=DL, shift left=1, color=mgray]
		\ar[from=T, to=DL, shift right=1, color=mgray]
		\ar[from=TP, to=DJ, shift left, color=mgray, bend right=20, shorten >=0.7mm]
		\ar[from=TP, to=DJ, shift right, bend right=20, "{D^+}'\!w"{swap, inner sep=0.3mm}]
		\ar[from=TP, to=DK, shorten <=0.8mm, color=mgray, bend left=20]
		\ar[from=TP, to=DL, shift left=1, color=mgray, bend left=20]
		\ar[from=TP, to=DL, shift right=1, color=mgray, bend left=20, shorten >=0.7mm]
		\ar[virtual, from=TP, to=T, "u"', near end, shorten <=0.5mm]
		\ar[mapsto,color=dgray, from=L, to=DJ, "D", shift right=4mm, shorten=-4mm]
	\end{tikzcd}
	\]
	Dually, given a presheaf $W:\cat{J}^\op\funto\cat{Set}$, a $W$-weighted colimit of $D$ with weighting $W$ is equivalently an initial weighted co-cone.
\end{theorem}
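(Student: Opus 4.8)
The plan is to reduce the statement to the Yoneda-lemma fact that a representing object of a presheaf is the same thing as a universal element of that presheaf (equivalently, a terminal object of its category of elements; compare also \Cref{repr_virtual}). By \Cref{def_wlim}, a $W$-weighted limit of $D$ is by definition a representing object of the presheaf \eqref{plim}, which I write as $\Phi:\cat{C}^\op\funto\cat{Set}$, $\Phi A=\sfuncat{\cat{J}}\big(W-,\cat{C}(A,D-)\big)$. The real content of the theorem is that an element of $\Phi A$ is exactly a $W$-weighted cone over $D$ with tip $A$, and that the comparison maps in the statement are exactly the morphisms of such cones; once that is established, the universal property of a weighted limit is just the universal-element property.

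First I would fix an object $T$ of $\cat{C}$ and verify that $W$-weighted cones over $D$ with tip $T$ are the same data as natural transformations $\eta:W\Rightarrow\cat{C}(T,D-)$. Unwinding \Cref{def_wcone} together with the definition of $\cat{J}^{+W}$ (\Cref{CplusF}): an extension $D^+:\cat{J}^{+W}\funto\cat{C}$ agreeing with $D$ on $\cat{J}$ is precisely the choice of $D^+E=T$ and, for each object $J$ and each virtual arrow $w\in WJ=\cat{J}^{+W}(E,J)$, of an arrow $D^+w:T\to DJ$ of $\cat{C}$; the identity axiom at $E$ is automatic, and the only substantive instance of functoriality is
\[
D^+(g_* w)\;=\;Dg\circ D^+w\qquad\text{for all }g:J\to K\text{ in }\cat{J}\text{ and }w\in WJ.
\]
But this is word for word a family of functions $\eta_J:WJ\to\cat{C}(T,DJ)$ satisfying the naturality squares of a transformation $W\Rightarrow\cat{C}(T,D-)$, so $D^+\mapsto\big(J\mapsto(w\mapsto D^+w)\big)$ is a bijection, and it is natural in $T$: reindexing a cone along $u:T'\to T$ (replacing each $D^+w$ by $D^+w\circ u$) corresponds to composing $\eta$ with $\cat{C}(u,D-)$, i.e.\ to the contravariant action of $\Phi$ on $u$. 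Hence the category of $W$-weighted cones over $D$ --- objects $(T,D^+)$, and a morphism $(T',{D^+}')\to(T,D^+)$ given by $u:T'\to T$ with $D^+w\circ u={D^+}'w$ for all $J$ and $w$ --- is isomorphic to the category of elements of $\Phi$.

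Given this, I would conclude in one step. By \Cref{def_wlim} a $W$-weighted limit of $D$ is a representing object $R$ of $\Phi$; by the Yoneda lemma (see e.g.\ \cite[Chapter~2]{riehl2016category}) this is the same as a universal element of $\Phi$, i.e.\ an element $u_R\in\Phi R$ through which every element of $\Phi$ factors uniquely; and by the previous paragraph this is exactly a terminal $W$-weighted cone. Spelling out the universal-element condition reproduces the universal property in the statement: the weighted limit is the tip $T$ of a weighted cone $D^+$ such that for every weighted cone ${D^+}'$ with tip $T'$ there is a unique $u:T'\to T$ with $D^+w\circ u={D^+}'w$ for all $J$ and all $w\in WJ$. (In the spirit of \Cref{repr_virtual}, applied to the presheaf $\Phi$ and the category $\cat{C}_{+\Phi}$ of \Cref{CplusP}: $T$ is the representing object $R$, the universal cone $D^+$ is the one corresponding to $\id_R$, and the comparison $u$ is the value on ${D^+}'$ of the fully faithful retraction $\cat{C}_{+\Phi}\funto\cat{C}$.) The dual statement about weighted colimits follows by applying all of the above to the diagram $D^\op:\cat{J}^\op\funto\cat{C}^\op$, under which the set functor \eqref{pcolim} becomes a presheaf, $W$-weighted co-cones over $D$ become $W$-weighted cones over $D^\op$, and ``initial'' becomes ``terminal''.

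The whole argument is a chain of elementary bijections, so I do not expect a genuine obstacle; the one point that rewards care is keeping track of the two different variances --- $W$ is a \emph{set functor} on the indexing category $\cat{J}$, while $\Phi$ is a \emph{presheaf} on the ambient category $\cat{C}$ --- and, concretely, checking that the functoriality square of $D^+$ on a composite $E\to J\to K$ in $\cat{J}^{+W}$ is literally the naturality square of $W\Rightarrow\cat{C}(T,D-)$, with no direction reversed. Once the identification of the second paragraph is set up correctly, everything else is formal.
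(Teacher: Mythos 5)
Your proposal is correct and follows essentially the same route as the paper: the identification of $W$-weighted cones with tip $T$ with natural transformations $W\Rightarrow\cat{C}(T,D-)$ is exactly the paper's \Cref{wcone_is_nat}, the observation that reindexing along $u:T'\to T$ matches the contravariant action of the presheaf \eqref{plim} is \Cref{psh_weighted}, and the final step via the Yoneda lemma (representing object $=$ universal element $=$ terminal weighted cone) is precisely how the paper concludes. No gaps.
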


Note that the terminality condition must be checked for each of the arrows $T\to DJ$ separately, in case there are more than one. 

Let's now prove this theorem.
The main argument, contained in the next two lemmas, connects weighted cones and cocones with the presheaves appearing in \Cref{def_wlim}.

\begin{lemma}\label{wcone_is_nat}
	Let $D:\cat{J}\funto\cat{C}$ be a diagram with weighting $W:\cat{D}\funto\cat{Set}$. 
	A $W$-weighted cone over $D$ with tip $T$ is equivalently given by a collection of maps 
	\[
	\begin{tikzcd}
		WJ \ar{r}{\phi_J} & \cat{C}(T,DJ)
	\end{tikzcd}
	\]
	natural in $J$.
	
	Dually, given a presheaf $W:\cat{D}^\op\funto\cat{Set}$, a $W$-weighted co-cone with tip $T$ is equivalently given by a collection of maps 
	\[
	\begin{tikzcd}
		WJ \ar{r}{\phi_J} & \cat{C}(DJ,T)
	\end{tikzcd}
	\]
	natural in $J$.
\end{lemma}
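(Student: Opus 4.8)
The plan is to unwind both sides of the claimed equivalence to the same underlying data and check that the axioms match. Recall from \Cref{CplusF} that $\cat{J}^{+W}$ is obtained from $\cat{J}$ by adjoining a single new object $E$, setting $\cat{J}^{+W}(E,J)\coloneqq WJ$, adding only the identity arrow with codomain $E$, and defining the composite of $w\in WJ\cong\cat{J}^{+W}(E,J)$ with $g\colon J\to K$ to be $Wg(w)=g_*w$. Hence a functor $D^+\colon\cat{J}^{+W}\funto\cat{C}$ restricting to $D$ on $\cat{J}$ is completely determined by the object $T\coloneqq D^+E$ together with, for each object $J$ of $\cat{J}$, a function $WJ\to\cat{C}(T,DJ)$, $w\mapsto D^+w$. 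The dual statement for co-cones is handled the same way using $\cat{J}_{+W}$ from \Cref{CplusP}, where the adjoined object receives the arrows and composition is $w\circ g = Wg(w)$.

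Next I would sort out which of the functor axioms for $D^+$ carry content. Preservation of identities at objects of $\cat{J}$ and of composites internal to $\cat{J}$ holds automatically because $D$ is already a functor; preservation of $\id_E$ simply imposes $D^+(\id_E)=\id_T$, and since there are no non-identity morphisms with codomain $E$, nothing further is required there. The only remaining composites in $\cat{J}^{+W}$ are of the form $E\xrightarrow{w}J\xrightarrow{g}K$, and functoriality on these amounts precisely to
\[
D^+(g_*w)\;=\;Dg\circ D^+w\qquad\text{for all }g\colon J\to K,\ w\in WJ ,
\]
with the degenerate case $g=\id_J$ being consistent because $W$ is a functor, so $(\id_J)_*w=w$.

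Then I would match this against the definition of a natural transformation. Writing $\phi_J\coloneqq\bigl(w\mapsto D^+w\bigr)\colon WJ\to\cat{C}(T,DJ)$, the displayed identity is exactly commutativity of the naturality square of $\phi$ at $g\colon J\to K$, namely $\phi_K\circ Wg=(Dg\circ{-})\circ\phi_J$. So a $W$-weighted cone over $D$ with tip $T$ yields a natural transformation $\phi\colon W\Rightarrow\cat{C}(T,D{-})$; conversely, given such a $\phi$ and the object $T$, setting $D^+|_{\cat{J}}=D$, $D^+E=T$, $D^+(\id_E)=\id_T$, and $D^+w=\phi_J(w)$ defines a functor by the bookkeeping of the previous paragraph. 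The two assignments are visibly mutually inverse, giving the equivalence. The dual follows by the same argument in $\cat{J}_{+W}$: functoriality of $D_+$ on $J\xrightarrow{g}K\xrightarrow{w}E$ is the naturality of $\phi\colon W\Rightarrow\cat{C}(D{-},T)$.

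There is no genuine obstacle here; the entire content is bookkeeping. The one point deserving care is to be sure the list of composites in $\cat{J}^{+W}$ (resp.\ $\cat{J}_{+W}$) is exhausted — in particular, that the freely adjoined identity $\id_E$ and the absence of any other arrows into (resp.\ out of) $E$ mean functoriality of the extension imposes no condition beyond the single family of equations above, which is exactly naturality.
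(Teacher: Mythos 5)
Your proposal is correct and follows essentially the same route as the paper's proof: identify the extension $D^+$ with the family $\phi_J(w)=D^+w$, observe that the only non-automatic functoriality condition is on composites $E\xrightarrow{w}J\xrightarrow{g}K$, and recognize that condition as precisely the naturality square for $\phi\colon W\Rightarrow\cat{C}(T,D-)$. The careful bookkeeping of which composites exist in $\cat{J}^{+W}$ is exactly the content the paper relies on as well.
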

The proof is a generalization of the argument in the proof of \Cref{repr_virtual}.
\begin{proof}
	We will prove the case of functors, the case of presheaves is completely analogous and dual. 
	
	First, let $D^+:\cat{J}^{+W}\funto\cat{C}$ be a weighted cone. Its action on morphisms, in particular on the ``virtual arrows'' $E\to J$, induces the following mapping for every $J\in\cat{J}$:
	\[
	\begin{tikzcd}[row sep=0]
		\cat{J}^{+W}(E,J) \ar{r} & \cat{C}(D^+\!E,D^+\!J) \\
		\color{dgray} w \ar[mapsto,dgray]{r} & \color{dgray} D^+\!w .
	\end{tikzcd}
	\]
	Recall now that by definition, the set $\cat{J}^{+W}(E,J)$ is exactly the set of virtual arrows $WJ$.
	Also, note that $D^+\!E=T$, and that since $D^+$ must extend $D$, $D^+\!J=DJ$. Therefore the mapping above can be written equivalently as 
	\[
	\begin{tikzcd}[row sep=0]
		WJ \ar{r} & \cat{C}(T,DJ) \\
		\color{dgray} w \ar[mapsto,dgray]{r} & \color{dgray} D^+\!w ,
	\end{tikzcd}
	\]
	which is in the desired form. Call this mapping $\phi_J$. Naturality in $J$ is now the following commutative diagram for every morphism $g:J\to K$ of $\cat{J}$,
	\begin{equation}\label{nat_is_funct}
	\begin{tikzcd}[sep=small]
		\color{dgray} w \ar[mapsto,dgray]{ddddd} \ar[mapsto,dgray]{rrrr}
		 &&&& \color{dgray} w \ar[mapsto,dgray]{rrrr}
		 &&&& |[xshift=5mm, overlay]| \color{dgray} D^+\!w \ar[mapsto,dgray]{ddddd} \\
		& WJ \ar[equal]{rrr} \ar{ddd}{g_*}
		 &&& \cat{J}^{+W}(E,J) \ar{rrr}{\phi_J} \ar{ddd}{g\circ-}
		 &&& \cat{C}(T,DJ) \ar{ddd}{Dg\circ-} \\ \\ \\
		& WK \ar[equal]{rrr}
		 &&& \cat{J}^{+W}(E,K) \ar{rrr}{\phi_K}
		 &&& \cat{C}(T,DK) \\
		\color{dgray} g_*w \ar[mapsto,dgray]{rrrr} 
		 &&&& \color{dgray} g\circ w \ar[mapsto,dgray]{rrrr} 
		 &&&& |[xshift=5mm, overlay]| \color{dgray} D^+(g\circ w) = D^+\!g\circ D^+\!w \qquad\qquad\quad\;\;
	\end{tikzcd}
	\end{equation}
	which commutes by functoriality of $D^+$ (recalling that $D^+\!g=Dg$ since $g\in\cat{J}$).
	
	Conversely, consider a natural family of maps $\phi_J:WJ\to \cat{C}(T,DJ)$. 
	Define now a weighted cone $D^+:\cat{J}^{+W}\funto\cat{C}$ as follows:
	\begin{itemize}
		\item On the objects and morphisms of $\cat{J}$, it agrees with $D$;
		\item $D^+\!E\coloneqq T$;
		\item For every $J$ and $w:E\to J$ (i.e.~$w\in WJ$), $D^+\!w\coloneqq \phi_J(w)$.
	\end{itemize}
	To show that this is functorial, notice that identities are mapped to identities, and composition of morphisms of $\cat{J}$ is respected by functoriality of $D$. It only remains to show preservation of composition for arrows in the form $E\to J$ and $J\to K$, but this is guaranteed by naturality of $\phi$, since it amounts to a commutative diagram analogous to \eqref{nat_is_funct}.
	
	Finally, as one can readily see, these two procedures are mutually inverse. 
\end{proof}

\begin{lemma}\label{psh_weighted}
	Let $D:\cat{J}\funto\cat{C}$ be a diagram, and let $W:\cat{J}\funto\cat{Set}$ be a functor.
	The presheaf 
	\begin{equation}\label{conew}
	\begin{tikzcd}[row sep=0]
		\cat{C}^\op \ar[functor]{r} & \cat{Set} \\
		\color{dgray} A \ar[mapsto,dgray]{r} & \color{dgray} \sfuncat{\cat{J}}\big( W-, \cat{C}(A, D-) \big) 
	\end{tikzcd}
	\end{equation}
	appearing in \Cref{def_wlim} can be seen as mapping
	\begin{itemize}
		\item Each object $A$ of $\cat{C}$ to the set of $W$-weighted cones over $D$ with tip $A$;
		\item Each arrow $g:A\to B$ in $\cat{C}$ to the function which pre-composes a cone $D^+$ with $g$, to give a cone $g^*(D^+)$ with tip $A$:
	\end{itemize}
	\[
	\begin{tikzcd}[sep=small,
		blend group=multiply,
		/tikz/execute at end picture={
			\node [cbox, fit=(J) (L) (K), inner sep=1.2mm] (CL) {};
			\node [catlabel] at (CL.south west) {$\cat{J}$};
			\node [cbox, fit=(DJ) (DL) (DK) (TP), inner sep=1.2mm] (CR) {};
			\node [catlabel] at (CR.south west) {$\cat{C}$};
		}]
		&&&&&&&&& |[alias=TP]| A \\ 
		& |[alias=E]| \bullet \\  
		&&&&&&&&& |[alias=T]| B  \\ \\  
		|[alias=J, xshift=-5mm]| J \ar[mgray]{dr} \ar[-, shorten >=-1mm, mgray]{r} & |[alias=BR1]| \ar[shorten <=-1mm, mgray]{r} & |[alias=L, xshift=5mm]| \color{mgray} L 
		&&&&&& |[alias=DJ]| DJ \ar[mgray]{dr} \ar[-, shorten >=-1mm, mgray]{r} & |[alias=BR2]| \ar[shorten <=-1mm, mgray]{r} & |[alias=DL]| \color{mgray} DL \\
		& |[alias=K]| \color{mgray} K \ar[mgray]{ur}
		&&&&&&&& |[alias=DK]| \color{mgray} DK \ar[mgray]{ur}
		\ar[virtual, from=E, to=J, shift left, color=cgray]
		\ar[virtual, from=E, to=J, shift right, "w"']
		\ar[virtual, from=E, to=K, shorten <=0.5mm, color=cgray]
		\ar[virtual, from=E, to=L, shift left=1, color=cgray]
		\ar[virtual, from=E, to=L, shift right=1, color=cgray]
		\ar[from=T, to=DJ, shift left, color=mgray]
		\ar[from=T, to=DJ, shift right, "D^+\!w"'{swap, inner sep=0mm}]
		\ar[from=T, to=DK, shorten <=0.8mm, color=mgray]
		\ar[from=T, to=DL, shift left=1, color=mgray]
		\ar[from=T, to=DL, shift right=1, color=mgray]
		\ar[from=TP, to=DJ, shift left, color=mgray, bend right=20, shorten >=0.7mm]
		\ar[from=TP, to=DJ, shift right, bend right=20, "(D^+\!w)\circ g"{swap, inner sep=0.3mm, pos=0.15}]
		\ar[from=TP, to=DK, shorten <=0.8mm, color=mgray, bend left=20]
		\ar[from=TP, to=DL, shift left=1, color=mgray, bend left=20]
		\ar[from=TP, to=DL, shift right=1, color=mgray, bend left=20, shorten >=0.7mm]
		\ar[from=TP, to=T, "g", near end, swap, shorten <=0.5mm]
		\ar[mapsto,color=dgray, from=L, to=DJ, "D", shift right=4mm, shorten=-4mm]
	\end{tikzcd}
	\]
	Dually, given a presheaf $W:\cat{J}^\op\funto\cat{Set}$, the functor 
	\begin{equation}\label{coconew}
	\begin{tikzcd}[row sep=0]
		\cat{C} \ar[functor]{r} & \cat{Set} \\
		\color{dgray} A \ar[mapsto,dgray]{r} & \color{dgray} \sfuncat{\cat{J}^\op}\big( W-, \cat{C}(D-, A) \big) 
	\end{tikzcd}
	\end{equation}
	appearing in \Cref{def_wlim} can be seen as mapping
	\begin{itemize}
		\item On objects, an object $A\in\cat{C}$ to the set of $W$-weighted co-cones over $D$ with tip $A$;
		\item On morphisms, given $g:A\to B$ in $\cat{C}$, the function which post-composes a cone with tip $A$ with $g$ to give a cone with tip $B$.
	\end{itemize}
\end{lemma}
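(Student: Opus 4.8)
The plan is to reduce everything to \Cref{wcone_is_nat}, which already identifies the elements of the set $\sfuncat{\cat{J}}\big(W-,\cat{C}(A,D-)\big)$ — the natural families $\phi_J\colon WJ\to\cat{C}(A,DJ)$ — with the $W$-weighted cones over $D$ having tip $A$. Hence on objects there is nothing left to prove: under this bijection, the object assignment of the presheaf \eqref{conew} is exactly ``$A\mapsto\{$ $W$-weighted cones with tip $A\}$''. It remains only to trace what the presheaf does on morphisms through the same bijection.

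First I would spell out the action of \eqref{conew} on a morphism $g\colon A\to B$ of $\cat{C}$. For each $J\in\cat{J}$, precomposition with $g$ gives a map $\cat{C}(B,DJ)\to\cat{C}(A,DJ)$, $h\mapsto h\circ g$, and this is natural in $J$: for $k\colon J\to K$ both composites send $h$ to $Dk\circ h\circ g$. So $-\circ g$ is a natural transformation $\cat{C}(B,D-)\Rightarrow\cat{C}(A,D-)$, and by the definition of the presheaf \eqref{conew}, the map it assigns to $g$ is postcomposition with this natural transformation. Componentwise this is $\phi_J\mapsto\big(w\mapsto\phi_J(w)\circ g\big)$.

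Next I would translate this back through \Cref{wcone_is_nat}. If $\phi$ corresponds to the weighted cone $D^+$ with tip $B$, i.e.\ $\phi_J(w)=D^+\!w$, then its image under \eqref{conew} applied to $g$ corresponds to the weighted cone with tip $A$ sending $w\in WJ$ to $(D^+\!w)\circ g$. This is precisely ``precompose the cone $D^+$ with $g$'', the cone $g^*(D^+)$ depicted in the displayed diagram; and it genuinely is a weighted cone, either because it equals $(-\circ g)\circ\phi$, a composite of natural families, or by the direct check that it restricts to $D$ on $\cat{J}$ and that for $k\colon J\to K$ we have $D^+(k\circ w)\circ g=(Dk\circ D^+\!w)\circ g=Dk\circ\big((D^+\!w)\circ g\big)$, which is the required functoriality. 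This establishes the description of \eqref{conew} on morphisms, and together with the object part it proves the first half of the lemma.

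Finally, the statement about the functor \eqref{coconew} and $W$-weighted co-cones follows from the same argument read in $\cat{C}^\op$: one uses the co-cone half of \Cref{wcone_is_nat} and replaces precomposition with postcomposition, so I would simply record that it is ``completely analogous and dual''. There is no real obstacle; the only point that genuinely needs the pen is checking the variance — that $-\circ g$ (respectively $g\circ-$) is natural in $J$ and lands on the correct side — which is the short computation carried out above.
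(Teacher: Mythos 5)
Your proposal is correct and follows essentially the same route as the paper: both reduce the object part to \Cref{wcone_is_nat} and then identify the morphism action as the composite of contravariant functoriality of $\cat{C}(-,D-)$ with covariant functoriality of $\sfuncat{\cat{J}}(W-,-)$, which under the bijection of \Cref{wcone_is_nat} is arrow-wise precomposition with $g$. Your extra checks (naturality of $-\circ g$ in $J$ and that $(D^+\!w)\circ g$ still defines a weighted cone) are slightly more explicit than the paper's but amount to the same argument.
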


This motivates the following definition.

\begin{definition}
	Let $D:\cat{J}\funto\cat{C}$ be a diagram, and let $W:\cat{J}\funto\cat{Set}$ be a functor.
	We call the presheaf \eqref{conew} the \newterm{presheaf of $W$-weighted cones}, and denote it by $\mathrm{Cone}^W(D,-):\cat{C}^\op\funto\cat{Set}$.
	
	Dually, given a presheaf $W:\cat{J}^\op\funto\cat{Set}$, we call the functor \eqref{coconew} the \newterm{functor of $W$-weighted co-cones}, and denote it by $\mathrm{Cone}_W(D,-):\cat{C}\funto\cat{Set}$. 
\end{definition}

\begin{proof}[Proof of \Cref{psh_weighted}]
	As always, we will prove the case for cones, the co-cone case is completely analogous and dual.
	
	First of all, on objects, given $A\in\cat{C}$, an element of the set $\sfuncat{\cat{J}}\big( W-, \cat{C}(A, D-) \big)$ is by definition a natural transformation $W\Rightarrow \cat{C}(A, D-)$ between functors $\cat{J}\funto\cat{Set}$. Once again by definition, this is a family of maps $\phi_J:WJ\to \cat{C}(A, DJ)$ natural in $J$. We have seen in \Cref{wcone_is_nat} that this is equivalently a $W$-weighted cone over $D$.
	
	On morphisms, let $g:A\to B$. The mapping 
	\[
	\begin{tikzcd}[row sep=0]
		 \sfuncat{\cat{J}}\big( W-, \cat{C}(B, D-) \big) \ar[Rightarrow]{r}{g^*} & \sfuncat{\cat{J}}\big( W-, \cat{C}(A, D-) \big)
	\end{tikzcd}
	\]
	is induced by (composing):
	\begin{enumerate}
		\item Functoriality of $\cat{C}(-, D-)$ in its first (contravariant) argument, and
		\item Functoriality of $[\cat{J},\cat{Set}](-,-)$ in its second (covariant) argument. 
	\end{enumerate}
	That is:
	\[
	\begin{tikzcd}[row sep=0]
		A \ar{rr}{g} & |[alias=Z0]| & B \\ & \vphantom{\int} \\ & \vphantom{\int} \\
		\cat{C}(A, D-) & |[alias=Z1]| & \cat{C}(B, D-) \ar[Rightarrow]{ll}{g^*} \\
		\color{dgray} \big( A \xrightarrow{g} B \xrightarrow{f} DJ \big)_{J\in\cat{J}} & |[alias=Z2]| & \color{dgray} \ar[mapsto,dgray]{ll} \big( B \xrightarrow{f} DJ \big)_{J\in\cat{J}} \\ & \vphantom{\int} \\ & \vphantom{\int} \\
		\sfuncat{\cat{J}}\big( W-, \cat{C}(B, D-) \big) & |[alias=Z3]| & \sfuncat{\cat{J}}\big( W-, \cat{C}(B, D-) \big) \ar[Rightarrow]{ll}{g^*} \\
		\color{dgray} \Big( w \in WJ \mapsto \big( A \xrightarrow{g} B \xrightarrow{\phi_j(w)} DJ \big) \Big)_{J\in\cat{J}} && \color{dgray} \ar[mapsto,dgray]{ll} \color{dgray} \Big( w \in WJ \mapsto \big( B \xrightarrow{\phi_j(w)} DJ \big) \Big)_{J\in\cat{J}}
		\ar[mapsto,dgray, from=Z0, to=Z1, shorten >=4mm, shorten <=3mm, shift right=2mm, "\mathrm{(i)}"]
		\ar[mapsto,dgray, from=Z2, to=Z3, shorten >=4mm, shorten <=3mm, shift right=2mm, "\mathrm{(ii)}"]
	\end{tikzcd}
	\]
	So, under the correspondence of \Cref{wcone_is_nat}, the functorial action on morphisms corresponds to arrow-wise precomposition of weighted cones.
\end{proof}

With this, we are now ready to prove the theorem.

\begin{proof}[Proof of \Cref{thm_wlim}]
	Once again we focus on the limit case, the colimit case is completely analogous and dual.
	
	Consider the presheaf of $W$-weighted cones:
	\[
	\begin{tikzcd}[row sep=0]
		\cat{C}^\op \ar[functor]{r}{\mathrm{Cone}^W(D,-)} & \cat{Set} \\
		\color{dgray} A \ar[mapsto,dgray]{r} & \color{dgray} \sfuncat{\cat{J}}\big( W-, \cat{C}(A, D-) \big) 
	\end{tikzcd}
	\]
	Applying the Yoneda lemma, an object $T$ representing $\mathrm{Cone}^W(D,-)$ consists first of all of a distinguished (``universal'') element $c\in \sfuncat{\cat{J}}\big( W-, \cat{C}(T, D-) \big)$, the one corresponding to the identity under the bijection characterizing the universal property:
	\[
	\begin{tikzcd}[row sep=0]
		\cat{C}(T,T) \ar[leftrightarrow]{r}{\cong} & \sfuncat{\cat{J}}\big( W-, \cat{C}(T, D-) \big) \\
		\color{dgray} \id_T \ar[mapsto, dgray]{r} & \color{dgray} c
	\end{tikzcd}
	\]
	By \Cref{psh_weighted}, we know that this is equivalently a $W$-weighted cone over $D$ with tip $T$. 
	Moreover, since by naturality of the isomorphism the following diagram must commute for every object $A$ and every $u:A\to T$,
	\[
	\begin{tikzcd}[sep=small]
		\color{dgray} \id_T \ar[mapsto, dgray]{rrrrr} \ar[mapsto,dgray]{ddddd} &&&&& \color{dgray} c \ar[mapsto,dgray]{ddddd} \\
		& \cat{C}(T,T) \ar{ddd}{-\circ u} \ar[leftrightarrow]{rrr}{\cong} &&& \sfuncat{\cat{J}}\big( W-, \cat{C}(T, D-) \big) \ar{ddd}{u^*} \\ \\ \\
		& \cat{C}(A,T) \ar[leftrightarrow]{rrr}{\cong} &&& \sfuncat{\cat{J}}\big( W-, \cat{C}(A, D-) \big) \\
		\color{dgray} u \ar[mapsto, dgray]{rrrrr} &&&&& \color{dgray} u^*c
	\end{tikzcd}
	\]
	and since the bottom arrow is a bijection,
	for every object $A$ and every element $f$ of the set $\sfuncat{\cat{J}}\big( W-, \cat{C}(A, D-) \big)$ (i.e.\ weighted cone over $A$), there exist a unique morphism $u:A\to T$ such that $f=u^*c$. 
	Now again by \Cref{psh_weighted}, $f=u^*c$ means exactly that $f$ is obtained from $c$ by arrow-wise precomposition.
\end{proof}

To summarize, weighted limits and colimits are like ordinary limits and colimits, except that:
\begin{itemize}
	\item The cones have tuples of arrows instead of single arrows as their sides, as specified by the weighting;
	\item Not every side triangle automatically commutes, instead, which triangles commute is again specified by the weighting.
\end{itemize}

\begin{remark}\label{contra_weight}
	Notice that the presheaves of weighted cones and cocones are functorial in $D$. 
	This means that weighted limits and colimits, when enough of them exist, are functorial in the diagram, as it happens for ordinary limits and colimits.
	
	Moreover, the presheaves of weighted cones and cocones (and hence the weighted limits and colimits, if they exist) are also \emph{contravariantly functorial in the weights}. This is a phenomenon that has no analogue in the unweighted case. In what follows we will see some examples.
\end{remark}

\subsection{Basic examples}

Let's see some basic examples of weighted limits used in the literature.
We start with a simple one, which we examine in detail to build some intuition. 

\begin{example}[Power]
	When we form the product of an object $X$ with itself,
	\[
	\begin{tikzcd}
		& A \ar{dl}[swap]{f_1} \ar{dr}{f_2} \ar[virtual]{d}{f} \\
		X & X\times X \ar{l}{\pi_1} \ar{r}[swap]{\pi_2} & X
	\end{tikzcd}
	\]
	we are using the same object, $X$, twice. 
	The universal property reads: we have (universal) maps $\pi_1,\pi_2:X\times X\to X$ such that for every object $A$ and every pair of maps $f_1,f_2:A\to X$, there exists a unique map $f:A\to X\times X$ with $\pi_1\circ f=f_1$ and $\pi_2\circ f=f_2$. 
	
	We can express the same universal property using $X$ only once in the diagram, but replacing the cone $(f_1,f_2)$ by a \emph{weighted} cone:
	\begin{equation}\label{alt_prod}
	\begin{tikzcd}[row sep=huge, column sep=large]
		A \ar[virtual, shorten <=1.5mm]{d}[swap]{f} \ar[shift left]{dr}{f_1} \ar[shift right]{dr}[swap, inner sep=0.5mm]{f_2} \\
		X\times X \ar[shift left, shorten >=1mm]{r}[pos=0.4]{\pi_1} \ar[shift right, shorten >=1mm]{r}[swap, pos=0.4]{\pi_2} & X
	\end{tikzcd}
	\end{equation}
	The price to pay is that this way, not every triangle in the diagram commutes. Indeed, if we want the universal property to read just as before, we want to say that $\pi_1\circ f=f_1$ and $\pi_2\circ f=f_2$, but not, for example, that $\pi_1\circ f=f_2$. 
	(We will use the convention to label the arrows exactly in such a way that matching indices give commutative triangles, whenever possible.)
	
	Let's make this formal. Denote by $\cat{1}$ the one-object, one-arrow category, and denote its single object by $1$. Denote also by $2$ the two-element set, and its elements by $w_1$ and $w_2$. 
	Consider now the following weighted one-object diagram.
	\[
	\begin{tikzcd}[row sep=0]
		\cat{1} \ar[functor]{r}{X} & \cat{C} \\
		\color{dgray} 1 \ar[mapsto,dgray]{r} & \color{dgray} X
	\end{tikzcd}
	\qquad\qquad
	\begin{tikzcd}[row sep=0]
		\cat{1} \ar[functor]{r}{2} & \cat{Set} \\
		\color{dgray} 1 \ar[mapsto,dgray]{r} & |[xshift=9mm]| \color{dgray} 2 = \{w_1, w_2\}
	\end{tikzcd}
	\]
	As a diagram, it has a single object, $X$, and no arrows (except implicitly the identity).
	What's interesting, here, is the weighted \emph{cones}.\footnote{Note that a non-weighted cone over this one-object diagram is just an arrow to $X$, and the ordinary limit is just $X$ (with the identity as limit cone).}
	Indeed, a weighted cone has a tip $A$ and \emph{two} arrows $A\to X$:
	\[
	\begin{tikzcd}[sep=small,
		blend group=multiply,
		/tikz/execute at end picture={
			\node [cbox, fit=(1), inner sep=5mm] (CL) {};
			\node [catlabel] at (CL.south west) {$\cat{1}$};
			\node [cbox, fit=(X) (A), inner sep=5mm] (CR) {};
			\node [catlabel] at (CR.south west) {$\cat{C}$};
		}]
		|[alias=E]| \bullet
		&&&&&&&& |[alias=A]| A  \\ \\ \\ \\ 
		|[alias=1]| 1 
		&&&&&&&& |[alias=X]| X
		\ar[virtual, from=E, to=1, shift left=1, "w_2"]
		\ar[virtual, from=E, to=1, shift right=1, "w_1"']
		\ar[from=A, to=X, shift left=1, "f_2"]
		\ar[from=A, to=X, shift right=1, "f_1"']
		\ar[mapsto,color=dgray, from=1, to=X, shorten=2mm]
	\end{tikzcd}
	\]
	A weighted limit of this diagram is now a universal weighted cone. It is called a \newterm{power}, and is usually denoted as $X^2$. 
	Denote its universal pair by $\pi_1,\pi_2$.
	The universal property reads: for every weighted cone, i.e.\ for every object $A$ and every pair of maps $f_1,f_2:A\to X$, there is a unique map $f:A\to X$ such that $\pi_1\circ f=f_1$ and $\pi_2\circ f=f_2$:
	\[
	\begin{tikzcd}[sep=small,
		blend group=multiply,
		/tikz/execute at end picture={
			\node [cbox, fit=(1), inner sep=5mm] (CL) {};
			\node [catlabel] at (CL.south west) {$\cat{1}$};
			\node [cbox, fit=(X) (A) (X2), inner sep=5mm] (CR) {};
			\node [catlabel] at (CR.south west) {$\cat{C}$};
		}]
		|[alias=E]| \bullet
		&&&& |[alias=A]| A  \\ \\  
 		&&&&& |[alias=X2]| X^2 \\ \\ 
		|[alias=1]| 1 
		&&&& |[alias=X]| X
		\ar[virtual, from=E, to=1, shift left=1, "w_2", color=cgray]
		\ar[virtual, from=E, to=1, shift right=1, "w_1"']
		\ar[from=A, to=X, shift left=1, "f_2"{inner sep=0.5mm}, color=mgray, shorten=1mm]
		\ar[from=A, to=X, shift right=1, "f_1"', shorten=1mm]
		\ar[from=X2, to=X, shift left=1, "\pi_2", color=mgray]
		\ar[from=X2, to=X, shift right=1, "\pi_1"{swap,inner sep=0.5mm}]
		\ar[virtual, from=A, to=X2, shift right=1, "f"]
		\ar[mapsto,color=dgray, from=1, to=X, shorten=2mm]
	\end{tikzcd}
	\qquad\qquad\qquad
	\begin{tikzcd}[sep=small,
		blend group=multiply,
		/tikz/execute at end picture={
			\node [cbox, fit=(1), inner sep=5mm] (CL) {};
			\node [catlabel] at (CL.south west) {$\cat{1}$};
			\node [cbox, fit=(X) (A) (X2), inner sep=5mm] (CR) {};
			\node [catlabel] at (CR.south west) {$\cat{C}$};
		}]
		|[alias=E]| \bullet
		&&&& |[alias=A]| A  \\ \\  
		&&&&& |[alias=X2]| X^2 \\ \\ 
		|[alias=1]| 1 
		&&&& |[alias=X]| X
		\ar[virtual, from=E, to=1, shift left=1, "w_2"]
		\ar[virtual, from=E, to=1, shift right=1, "w_1"', color=cgray]
		\ar[from=A, to=X, shift left=1, "f_2"{inner sep=0.5mm}, shorten=1mm]
		\ar[from=A, to=X, shift right=1, "f_1"', color=mgray, shorten=1mm]
		\ar[from=X2, to=X, shift left=1, "\pi_2"]
		\ar[from=X2, to=X, shift right=1, "\pi_1"{swap,inner sep=0.5mm}, color=mgray]
		\ar[virtual, from=A, to=X2, shift right=1, "f"]
		\ar[mapsto,color=dgray, from=1, to=X, shorten=2mm]
	\end{tikzcd}
	\]
	Notice that this is the same as the universal property of $X\times X$, as expressed through the diagram \eqref{alt_prod}.
	Indeed, $X^2\cong X\times X$, as elementary algebra suggests.
	
	Let's now look at this in general.	
	Let $X$ be an object of $\cat{C}$, and let $S$ be a set. The \newterm{$S$-power} or \newterm{$S$-cartesian power} or sometimes $S$-\newterm{cotensor} of $X$, which we denote by $X^S$, is the weighted limit of the following (one-object) diagram and weight.
	\[
	\begin{tikzcd}[row sep=0]
		\cat{1} \ar[functor]{r}{X} & \cat{C} \\
		\color{dgray} 1 \ar[mapsto,dgray]{r} & \color{dgray} X
	\end{tikzcd}
	\qquad\qquad
	\begin{tikzcd}[row sep=0]
		\cat{1} \ar[functor]{r}{S} & \cat{Set} \\
		\color{dgray} 1 \ar[mapsto,dgray]{r} & \color{dgray} S
	\end{tikzcd}
	\]
	Explicitly, we have a \emph{universal set of $S$-many arrows} $X^S\to X$ (the product projections), establishing a bijection between arrows $A\to X^S$ and $S$-tuples of arrows $A\to X$. 
	
	An alternative way to define $X^S$, which readily generalizes to the enriched context, is as follows. First of all, given a set $T$, denote by $T^S$ the usual set cartesian power. (That is, either the $S$-fold cartesian product of $T$, or equivalently the set of functions $S\to T$. And yes, that is the cartesian power in $\cat{C}=\cat{Set}$.)	
	Now for an object $X$ of an arbitrary category $\cat{C}$, the power $X^S$ is equivalently an object representing the following presheaf,
	\[
	\begin{tikzcd}[row sep=0]
		\cat{C}^\op \ar[functor]{r} & \cat{Set} \\
		\color{dgray} A \ar[mapsto,dgray]{r} & \color{dgray} \cat{C}(A,X)^S
	\end{tikzcd}
	\]
	where $\cat{C}(A,X)^S$ is the set cartesian power. Indeed, notice that its elements are exactly $S$-tuples of arrows $A\to X$. More formally, the presheaf of weighted cones for $W=S:\cat{1}\funto\cat{Set}$ is 
	\[
		\sfuncat{\cat{1}}\big(W-, \cat{C}(A,D-) \big) \;=\; \cat{Set}\big(S, \cat{C}(A,X)\big) \;\cong\; \cat{C}(A,X)^S .
	\]
	A category is called \newterm{powered} or \newterm{cotensored} (over $\cat{Set}$) if powers exist for all objects $X$ and all sets $S$.
	
	In a category with products, the power always exists, and is given by the $S$-fold categorical product. Indeed, notice that they represent naturally isomorphic presheaves:
	\[
	A \quad{\color{dgray}\longmapsto}\quad \underbrace{\vphantom{\prod_{s\in S}}\cat{C}(A,X)^S}_{\text{represented by power}} \cong \underbrace{\prod_{s\in S} \cat{C}(A,X)}_{\text{represented by $S$-fold product}} 
	\]
\end{example}

\begin{example}[Copower]\label{copower}
	Let's now consider the dual case. 
	Let $X$ be an object of $\cat{C}$, and let $S$ be a set. The \newterm{$S$-copower} or sometimes $S$-\newterm{tensor} of $X$, which we denote by $S\cdot X$, is, similarly to the power, the weighted colimit of the one-object diagram $X:\cat{1}\funto\cat{C}$ weighted by $S:\cat{1}^\op\funto\cat{Set}$.
	Explicitly, we have a \emph{universal set of $S$-many arrows} $X\to S\cdot X$ (the coproduct inclusions), establishing a bijection between arrows $S\cdot X \to A$ and $S$-tuples of arrows $X\to A$. 
	
	For example, if $S=2=\{w_1,w_2\}$, we have a \emph{universal pair of arrows} $2\cdot X\to X$ (coproduct inclusions), establishing a bijection between (single) arrows $2\cdot X\to A$ and pairs of arrows $X\to A$,
	\[
	\begin{tikzcd}[sep=large]
		X \ar[shift left, shorten <=0.5mm]{r}{\iota_1} \ar[shift right, shorten <=0.5mm]{r}[swap]{\iota_2} \ar[shift left]{dr}[pos=0.6, inner sep=0.2mm]{f_1} \ar[shift right]{dr}[swap, pos=0.6, inner sep=0.2mm]{f_2}
		 & 2\cdot X \ar[virtual]{d}{f} \\
		& A
	\end{tikzcd}
	\qquad
	\qquad
	\begin{tikzcd}[sep=large]
		X \ar[shift left, shorten <=0.5mm]{r}{\iota_1} \ar[shift right, shorten <=0.5mm]{r}[swap]{\iota_2} \ar[shift left]{dr}[pos=0.6, inner sep=0.2mm]{f_1} \ar[shift right]{dr}[swap, pos=0.6, inner sep=0.2mm]{f_2}
		& X + X \ar[virtual]{d}{f} \\
		& A
	\end{tikzcd}
	\] 
	where again we use the convention that we only require $f\circ\iota_1=f_1$ and $f\circ\iota_2=f_2$, and no other triangle to commute in general.
	Alternatively, $S\cdot X$ can be defined as an object representing the following functor,
	\[
	\begin{tikzcd}[row sep=0]
		\cat{C} \ar[functor]{r} & \cat{Set} \\
		\color{dgray} A \ar[mapsto,dgray]{r} & \color{dgray} \cat{C}(X,A)^S
	\end{tikzcd}
	\]
	where $\cat{C}(X,A)^S$ is the set cartesian \emph{power} (not copower). Indeed, its elements are exactly $S$-tuples of arrows $X\to A$. 
	A category is called \newterm{copowered} or \newterm{tensored} (over $\cat{Set}$) if copowers exist for all objects $X$ and all sets $S$.
	
	In a category with coproducts, the power always exists, and is given by the $S$-fold coproduct.
	In particular, for $\cat{C}=\cat{Set}$, the copower is given by the cartesian product:
	\[
	S\cdot X \;\cong\; \coprod_{s\in S} X \;=\; \underbrace{X + \dots + X}_{S\text{ many times}} \;\cong\; S\times X .
	\]
	Note that $S\cdot X\cong S\times X$ only for $\cat{C}=\cat{Set}$: in general, in the expression ``$S\cdot X$'', $X$ is an object of $\cat{C}$, while $S$ is a set. Therefore, the expression $S\times X$ is not meaningful in a generic category.\footnote{In the enriched case, one can replace $\cat{Set}$ by the enriching category.}
\end{example}

\begin{example}[Weighted sum]\label{wsum}
	In basic algebra, in a sum where some entries are repeated, for example $x+x+y$, we can collect terms, and write for example $2\cdot x + y$. 
	Using copowers, we can do something similar. Indeed, we can express a coproduct with repeated entries, equivalently, as a \emph{coproduct of copowers}, or, as it is sometimes called, a \newterm{weighted sum} or \newterm{weighted coproduct}, $2\cdot X + Y$. (We use the same convention as in algebra, where $\cdot$ is applied before $+$.)
	We can depict its universal property as follows,
	\[
	\begin{tikzcd}
		X \ar[shift left]{dr} \ar[shift right]{dr} \ar[shift left, shorten <=0.5mm]{r} \ar[shift right, shorten <=0.5mm]{r} & 2\cdot X + Y \ar[virtual]{d} & Y \ar{dl} \ar{l} \\
		& A
	\end{tikzcd}
	\]
	where as usual, not all possible triangles on the left commute in general, but only the corresponding ones as in \Cref{copower}.
	This is the colimit of a discrete diagram (indexed by a discrete category with two objects $\cat{2}$, as for a coproduct), but where the weight of $X$ is a two-element set:
	\[
	\begin{tikzcd}[row sep=0]
		& \cat{2} \ar[functor]{r}{D} & \cat{C} \\
		|[xshift=-5mm, overlay]| \color{dgray} \mbox{\small(first object of $\cat{2}$)} & \color{dgray} 1 \ar[mapsto,dgray]{r} & \color{dgray} X \\
		|[xshift=-7mm, overlay]| \color{dgray} \mbox{\small(second object of $\cat{2}$)} & \color{dgray} 2 \ar[mapsto,dgray]{r} & \color{dgray} Y 
	\end{tikzcd}
	\qquad\qquad
	\begin{tikzcd}[row sep=0]
		\cat{2}^\op \ar[functor]{r}{W} & \cat{Set} \\
		\color{dgray} 1 \ar[mapsto,dgray]{r} & \color{dgray} 2 & |[xshift=3mm, overlay]| \color{dgray} \mbox{\small(two-element set)\vphantom{j}} \\
		\color{dgray} 2 \ar[mapsto,dgray]{r} & \color{dgray} 1 & |[xshift=3mm, overlay]| \color{dgray} \mbox{\small(one-element set)\vphantom{j}} 
	\end{tikzcd}
	\]
	As basic algebra suggests, the resulting weighted colimit is isomorphic to the repeated coproduct:
	\[
	2\cdot X + Y \;\cong\; X + X + Y .
	\]
	To see why, notice that the functors they represent are naturally isomorphic:
	\[
	A \quad{\color{dgray}\longmapsto}\quad
	\underbrace{\cat{C}(X,A)^2\funtimes\cat{C}(Y,A)}_{\text{represented by } 2\,\cdot X + Y} \;\cong\; \underbrace{\cat{C}(X,A)\funtimes\cat{C}(X,A)\funtimes\cat{C}(Y,A)}_{\text{represented by } X + X + Y}
	\]
	
	More generally, given a set $T$, denote by $\cat{T}$ the corresponding discrete category. Consider a discrete diagram $D:\cat{T}\funto\cat{C}$, and a weighting $W:\cat{T}^\op\funto\cat{Set}$. The resulting weighted colimit looks indeed like a weighted sum in algebra,
	\[
	\coprod_{t\in T} W(t)\cdot D(t) 
	\]
	and is probably what inspired the terminology ``weighted limit''. 
	It represents the following presheaf,
	\[
	A \quad{\color{dgray}\longmapsto}\quad \prod_{t\in T} \cat{C}(D(t),A)^{W(t)} \;\cong\; \prod_{t\in T} \underbrace{\cat{C}(D(t),A)\times\cdots\funtimes\cat{C}(D(t),A)}_{W(t)\text{ many times}} 
	\]
	and so it is isomorphic to a coproduct with repeated entries.
\end{example}

\begin{example}[Weighted product]\label{wprod}
	Similarly and dually to the previous example, we can form \newterm{weighted products} by means of products and powers. This is analogous to how, in algebra, $x \times x \times y = x^2\times y$. 
	As in the example above, given a set $T$, denote by $\cat{T}$ the corresponding discrete category. Consider a discrete diagram $D:\cat{T}\funto\cat{C}$, and a weighting $W:\cat{T}\funto\cat{Set}$. The resulting weighted limit looks like a monomial,
	\[
	\prod_{t\in T} D(t)^{W(t)}
	\]
	It represents the following presheaf,
	\[
	A \quad{\color{dgray}\longmapsto}\quad \prod_{t\in T} \cat{C}(A,D(t))^{W(t)} \;\cong\; \prod_{t\in T} \underbrace{\cat{C}(A,D(t))\times\cdots\funtimes\cat{C}(A,D(t))}_{W(t)\text{ many times}} 
	\]
	and so it is isomorphic to a product with repeated entries.
\end{example}

\begin{example}[Kernel pair]\label{kernel_pair}\!\!\!\footnote{I learned this example from D.\ J.\ Myers.}
	The \newterm{kernel pair} of a morphism $f:X\to Y$ is usually defined as the pullback of $f$ with itself:
	\[
	\begin{tikzcd}
		A \ar[bend right=10]{ddr}[swap]{g_1} \ar[bend left=10]{drr}{g_2} \ar[virtual]{dr}[near end]{g} \\
		& \mathrm{Ker}(f) \ar{r}[swap]{p_2} \ar{d}{p_1}
		 \arrow[dr, phantom," " {pullback}, pos=0]
		 & X \ar{d}{f} \\
		& X \ar{r}[swap]{f} & Y
	\end{tikzcd}
	\]
	Similarly to the case of powers, also here the object $X$ is used twice in the diagram, and so is $f$. Crucially, however, the arrows $p_1,p_2:\mathrm{Ker}(f)\to X$ should be allowed to be different: for example, in categories such as $\cat{Set}$, the kernel pair induces a relation on $X$ denoting \emph{which elements of $X$ can be distinguished by $f$}, and the two arrows denote the two legs of this relation, which is not always contained in the identity relation.\footnote{Note that the usual notation for pullback of arrows, here, is misleading: both arrows would read as $f^*f$.}
	
	We can express the kernel pair as a universal weighted cone as follows,
	\[
	\begin{tikzcd}[column sep=small]
		& \mathrm{Ker}(f) \ar[shift left]{dl}{p_2} \ar[shift right]{dl}[swap]{p_1} \ar{dr}{q} \\
		X \ar{rr}[swap]{f} && Y
	\end{tikzcd}
	\]
	where this time we require both triangles to commute, $f\circ p_1=f\circ p_2=q$. (And so, just like for pullbacks, we do not need to specify the map $q$.) 
	That is, we have the following weighted diagram, where now $\cat{Arr}$ is the ``walking arrow'' category $\{0\to 1\}$:
	\[
	\begin{tikzcd}[row sep=0]
		\cat{Arr} \ar[functor]{r}{D} & \cat{C} \\
		\color{dgray} 0 \ar[dgray]{dd} \ar[mapsto,dgray,shorten=2.5mm]{r} & \color{dgray} X \ar[dgray]{dd}{f} \\
		\phantom{0} \ar[mapsto,dgray,shorten=2.5mm]{r} & \phantom{X} \\
		\color{dgray} 1 \ar[mapsto,dgray,shorten=2.5mm]{r} & \color{dgray} Y
	\end{tikzcd}
	\qquad\qquad
	\begin{tikzcd}[row sep=0]
		\cat{Arr} \ar[functor]{r}{W} & \cat{Set} \\
		\color{dgray} 0 \ar[dgray]{dd} \ar[mapsto,dgray,shorten=3mm]{r} & \color{dgray} 2 \ar[dgray]{dd}{!} & |[xshift=3mm, overlay]| \color{dgray} \mbox{\small(two-element set)} \\
		\phantom{0} \ar[mapsto,dgray,shorten=3mm]{r} & \phantom{2} \\
		\color{dgray} 1 \ar[mapsto,dgray,shorten=3mm]{r} & \color{dgray} 1 & |[xshift=3mm, overlay]| \color{dgray} \mbox{\small(one-element set)}
	\end{tikzcd}
	\]
	The universal property now reads as follows: 
	\[
	\begin{tikzcd}[column sep=small,
		blend group=multiply,
		/tikz/execute at end picture={
			\node [cbox, fit=(0) (1), inner sep=5mm] (CL) {};
			\node [catlabel] at (CL.south west) {$\cat{Arr}$};
			\node [cbox, fit=(X) (A) (Y), inner sep=5mm] (CR) {};
			\node [catlabel] at (CR.south west) {$\cat{C}$};
		}]
		 &&&&&&&&&& & |[alias=A]| A \ar[out=-135, in=90, shift left, shorten >=0.6mm]{dddl}[inner sep=0.5mm]{g_2} \ar[out=-135, in=90, shift right]{dddl}[swap, inner sep=0.5mm]{g_1} \ar[virtual, shorten <=0.5mm]{dd}[pos=0.55]{g} \ar[out=-45,in=90]{dddr}{h} \\ 
		& \bullet  \ar[virtual, shift left]{ddl} \ar[virtual, shift right]{ddl} \ar[virtual]{ddr} \\
		 &&&&&&&&&&& \mathrm{Ker}(f) \ar[shift left]{dl}{p_2} \ar[shift right]{dl}[swap, inner sep=0.2mm, pos=0.4]{p_1} \ar{dr}[inner sep=0.5mm]{q} \\
		|[alias=0]| 0 \ar{rr} && |[alias=1]| 1	
		 &&&&&&&& |[alias=X]| X \ar{rr}[swap]{f} && |[alias=Y]| Y
		 \ar[mapsto,dgray,from=1,to=X,shorten=2mm,"D"]
	\end{tikzcd}
	\]
	For every triplet of arrows $g_1,g_2:A\to X$ and $h:A\to Y$ such that $f\circ g_1=f\circ g_2=h$ (again, $h$ is determined by $g_1$ or $g_2$), there exists a unique arrow $g:A\to\mathrm{Ker}(f)$ such that $p_1\circ g=g_1$ and $p_2\circ g=g_2$ (and $q\circ g=h$, but that's implied).
	This is the usual universal property of the kernel pair.
	
	The cokernel pair, dually, can be expressed as a weighted colimit.
\end{example}

\subsection{Core results}

Before we look at more advanced examples, let's see some structural results about weighted limits and colimits.

The first result that we look at is known in the literature as ``Yoneda reduction'', ``co-Yoneda lemma'', or sometimes just ``Yoneda lemma'' (and ``ninja Yoneda lemma'' in \cite{fosco}).
It says that the weighted limit with weights given by a representable set functor, is exactly the image of the representing object.

\begin{proposition}[Yoneda reduction]\label{ninja}
	For any functor $D:\cat{J}\funto\cat{C}$ and every object $J$ of $\cat{J}$,
	\[
	\lim_{K\in\cat{J}} \big\langle \cat{J}(J,K), DK \big\rangle \;\cong\; DJ .
	\]
	(Note that this is a limit weighted by the hom-functor $\cat{J}(J,-):\cat{J}\funto\cat{Set}$.) 
	Similarly, and dually,
	\[
	\colim_{K\in\cat{J}} \big\langle \cat{J}(K,J), DK \big\rangle \;\cong\; DJ .
	\]
\end{proposition}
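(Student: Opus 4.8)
The plan is to unwind the definition of the weighted limit as a representing object and then exploit the fact that the weight $W = \cat{J}(J,-)$ is representable, so that the presheaf in \Cref{def_wlim} collapses via the Yoneda lemma. Concretely, by \Cref{def_wlim} the weighted limit $\lim_{K\in\cat{J}}\big\langle \cat{J}(J,K), DK\big\rangle$ is an object representing the presheaf
\[
A \;\longmapsto\; \sfuncat{\cat{J}}\big(\cat{J}(J,-),\,\cat{C}(A,D-)\big),
\]
equivalently, by \Cref{thm_wlim}, the terminal $\cat{J}(J,-)$-weighted cone over $D$. So I would show directly that $DJ$ represents this presheaf.

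The key step is to apply the Yoneda lemma \emph{inside the functor category} $\funcat{\cat{J}}{\cat{Set}}$: for any functor $G:\cat{J}\funto\cat{Set}$, natural transformations $\cat{J}(J,-)\Rightarrow G$ correspond bijectively to elements of $GJ$, via $\alpha\mapsto\alpha_J(\id_J)$. Taking $G=\cat{C}(A,D-)$ yields a bijection
\[
\sfuncat{\cat{J}}\big(\cat{J}(J,-),\,\cat{C}(A,D-)\big)\;\cong\;\cat{C}(A,DJ),
\]
natural in $A$, since the Yoneda bijection is natural in $G$ and $A\mapsto\cat{C}(A,D-)$ is functorial in $A$. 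This exhibits $DJ$ as a representing object for the presheaf of $\cat{J}(J,-)$-weighted cones, which is precisely the assertion $\lim_{K}\big\langle\cat{J}(J,K),DK\big\rangle\cong DJ$. Read through \Cref{wcone_is_nat}, the universal weighted cone with tip $DJ$ is the natural family whose component at $K$ is $\cat{J}(J,K)\to\cat{C}(DJ,DK)$, $g\mapsto Dg$ — i.e.\ the functorial action of $D$ on arrows out of $J$. Graphically, this virtual cone is ``generated'' by the single virtual arrow corresponding to $\id_J$, all the others being its postcompositions; one checks it is the cone corresponding to $\id_{DJ}$ under the bijection above, so that the terminality required by \Cref{thm_wlim} is literally satisfied.

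The dual statement is proved identically with $\cat{J}$ replaced by $\cat{J}^\op$: by \Cref{def_wlim} the colimit $\colim_{K}\big\langle\cat{J}(K,J),DK\big\rangle$ represents the set functor $A\mapsto\sfuncat{\cat{J}^\op}\big(\cat{J}(-,J),\,\cat{C}(D-,A)\big)$, and the Yoneda lemma in $\funcat{\cat{J}^\op}{\cat{Set}}$ identifies natural transformations $\cat{J}(-,J)\Rightarrow H$ with elements of $HJ$; with $H=\cat{C}(D-,A)$ this gives $\sfuncat{\cat{J}^\op}\big(\cat{J}(-,J),\,\cat{C}(D-,A)\big)\cong\cat{C}(DJ,A)$ naturally in $A$, so $DJ$ is the weighted colimit. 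I do not anticipate a real obstacle: the whole content is the Yoneda lemma applied one level up, in the indexing functor category. The only care needed is bookkeeping — invoking Yoneda in the variable $K\in\cat{J}$ while recording representability in the variable $A\in\cat{C}$, checking naturality in $A$, and matching the resulting universal cone $(g\mapsto Dg)$ with the ``identity'' element in the universal-property bijection of \Cref{thm_wlim}.
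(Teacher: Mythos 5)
Your proposal is correct and follows exactly the paper's own argument: unwind the weighted limit as the representing object of the presheaf $A\mapsto\sfuncat{\cat{J}}\big(\cat{J}(J,-),\cat{C}(A,D-)\big)$ and collapse it to $\cat{C}(A,DJ)$ by the Yoneda lemma applied in the functor category, naturally in $A$. Your extra identification of the universal cone as $g\mapsto Dg$ is a welcome elaboration the paper leaves implicit, but the core of the two proofs is identical.
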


A possible way to interpret this the following: if an object $J$ is such that the ``virtual arrows'' of a set functor $W$ correspond exactly to the arrows out of $J$, then $DJ$, together with the induced arrows, is a universal cone over the whole diagram. 
This generalizes the usual fact that, if the indexing category $\cat{J}$ has an initial object, its image is the (unweighted) limit of the diagram (or even, that if a set of numbers has a minimum element, then such element is necessarily the greatest lower bound).

\begin{proof}
	As usual, we will consider the limit case. 
	The limit in question has as weighting the hom functor $\cat{J}(J,-)$.
	Therefore, the presheaf of weighted cones looks as follows,
	\[
	A \quad{\color{dgray}\longmapsto}\quad \sfuncat{\cat{J}}( \cat{J}(J,-), \cat{C}(A,D-) ) \;\cong\; \cat{C}(A,DJ) 
	\]
	where the isomorphism is natural and follows from the Yoneda lemma.
	This means exactly that $DJ$ represents the presheaf, and is hence the weighted limit by definition.
\end{proof}

Another direct consequence of the Yoneda lemma is the following, sometimes stated as ``every presheaf is a colimit of representable ones''. In particular, it is a weighted colimit, \emph{weighted by itself}.
We look at the case of small categories, but if one properly takes care of size issues, similar things can be said more generally.

\begin{proposition}\label{wlim_repr}
	Let $\cat{C}$ be a small category, and let $F$ be a set functor on $\cat{C}$. We can express $F$ as the $F$-weighted limit in $\sfuncat{\cat{C}}^\op$ of the Yoneda embedding $\Yon:\cat{C}\funto\sfuncat{\cat{C}}^\op$. 
	
	Dually, let $P$ be a presheaf on $\cat{C}$. Then $P$ is the $P$-weighted colimit in $\sfuncat{\cat{C}^\op}$ of the Yoneda embedding $\Yon:\cat{C}\funto\sfuncat{\cat{C}^\op}$.
\end{proposition}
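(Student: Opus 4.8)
The plan is to unwind \Cref{def_wlim} for the diagram $D=\Yon\colon\cat{C}\funto\sfuncat{\cat{C}}^\op$ with weighting $W=F$, and then collapse the resulting presheaf of weighted cones by a single application of the Yoneda lemma, in the same spirit as the proof of \Cref{ninja}. First I would write down, for an object $G$ of $\sfuncat{\cat{C}}^\op$, the presheaf of $F$-weighted cones over $\Yon$; by \Cref{def_wlim} (with $\cat{J}=\cat{C}$ and target category $\sfuncat{\cat{C}}^\op$) this is the presheaf on $\sfuncat{\cat{C}}^\op$ given by
\[
G\;\longmapsto\;\sfuncat{\cat{C}}\big(F-,\;\sfuncat{\cat{C}}^\op(G,\Yon(-))\big),
\]
which, being a presheaf on $\sfuncat{\cat{C}}^\op$, is a functor $\sfuncat{\cat{C}}\funto\cat{Set}$. (Equivalently, via \Cref{psh_weighted}, its value at $G$ is the set of $F$-weighted cones over $\Yon$ with tip $G$.)

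Next I would simplify the inner hom. Since $\sfuncat{\cat{C}}^\op(G,\Yon(J))=\sfuncat{\cat{C}}(\Yon(J),G)$ and $\Yon(J)=\cat{C}(J,-)$ is representable, the Yoneda lemma---in the form \eqref{yon_F}---gives a bijection $\sfuncat{\cat{C}}(\Yon(J),G)\cong GJ$, natural in $J$ and in $G$. Hence $\sfuncat{\cat{C}}^\op(G,\Yon(-))\cong G(-)$ as a functor $\cat{C}\funto\cat{Set}$, naturally in $G$, and substituting this in, the presheaf of $F$-weighted cones becomes naturally isomorphic to
\[
G\;\longmapsto\;\sfuncat{\cat{C}}(F-,G-)\;=\;\sfuncat{\cat{C}}(F,G)\;=\;\sfuncat{\cat{C}}^\op(G,F).
\]
This is exactly the representable presheaf $\sfuncat{\cat{C}}^\op(-,F)$ on $\sfuncat{\cat{C}}^\op$, whose representing object is $F$. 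Therefore $F$ represents the presheaf of $F$-weighted cones over $\Yon$, which by \Cref{def_wlim} means precisely that $F$ is the $F$-weighted limit of $\Yon$. Along the way I would note that smallness of $\cat{C}$ makes $\sfuncat{\cat{C}}(F,G)$ an honest small set, so the weighted limit is well-defined even though $\sfuncat{\cat{C}}^\op$ itself need not be locally small. The dual statement follows by running the same argument with $\cat{C}^\op$ in place of $\cat{C}$: the $P$-weighted colimit of $\Yon\colon\cat{C}\funto\sfuncat{\cat{C}^\op}$ represents $G\mapsto\sfuncat{\cat{C}^\op}\big(P-,\sfuncat{\cat{C}^\op}(\Yon(-),G)\big)$, which the Yoneda lemma collapses to $\sfuncat{\cat{C}^\op}(P,G)$, represented by $P$.

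I do not expect a real obstacle; the one genuinely load-bearing point is the bookkeeping of variances---making sure ``presheaf on $\sfuncat{\cat{C}}^\op$'' is read as ``functor $\sfuncat{\cat{C}}\funto\cat{Set}$'', and that the two occurrences of $\op$ on $\Yon$ conspire so that it is the set-functor form \eqref{yon_F} of Yoneda (with its reversed natural transformations) that applies, not the presheaf form---together with checking that the isomorphism $\sfuncat{\cat{C}}(\Yon(J),G)\cong GJ$ is natural in $G$ as well as in $J$, which it is since Yoneda is natural in both arguments. Everything else is a routine unwinding of \Cref{def_wlim} and \Cref{psh_weighted}.
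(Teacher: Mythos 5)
Your proof is correct and follows essentially the same route as the paper: both reduce the presheaf of $F$-weighted cones $G\mapsto\sfuncat{\cat{C}}\big(F-,\sfuncat{\cat{C}}^\op(G,\Yon-)\big)$ to $\sfuncat{\cat{C}}(F,G)\cong\sfuncat{\cat{C}}^\op(G,F)$ via the Yoneda lemma in the form \eqref{yon_F}, and conclude that $F$ is the representing object. Your extra care with the variance bookkeeping and the naturality in $G$ is a welcome elaboration but not a different argument.
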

\begin{proof}
	As usual, let's focus on the functor case.
	By the Yoneda lemma, in the form \eqref{yon_F}, we have an isomorphism
	\begin{align*} \sfuncat{\cat{C}}\big(F-,\sfuncat{\cat{C}}^\op(G,\Yon-)\big)
	&\cong\; \sfuncat{\cat{C}}(F,G) \\
	&\cong\; \sfuncat{\cat{C}}^\op(G,F)
	\end{align*}
	natural in $G$.
	This says exactly that $F$ represents the presheaf of $F$-weighted cones over $\Yon:\cat{C}\funto\sfuncat{\cat{C}}^\op$, and so $F$ is the desired weighted limit.
\end{proof}

Let's now look at how ``weighted limits are limits''.
In all the examples we saw in the previous section, you may have noticed that all the weighted limits and colimits could also be expressed as ordinary, non-weighted limits and colimits. For example, the power could be expressed as an iterated product. 
This is a general phenomenon:

\begin{theorem}\label{not_weighted}
	Every weighted limit in a category $\cat{C}$ can be expressed as an ordinary limit (of a different diagram).
	
	Dually, every weighted colimit can be expressed as an ordinary colimit.
\end{theorem}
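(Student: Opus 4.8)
The plan is to identify the presheaf represented by the $W$-weighted limit --- namely $\mathrm{Cone}^W(D,-)$, by \Cref{def_wlim} and \Cref{psh_weighted} --- with the presheaf of \emph{ordinary} cones over a single larger, unweighted diagram; since a representing object of a presheaf is determined up to isomorphism and exists independently of any presentation, this exhibits the weighted limit, whenever it exists, as an ordinary limit. Concretely, given $D:\cat{J}\funto\cat{C}$ and a weighting $W:\cat{J}\funto\cat{Set}$, I would form the \emph{category of elements} $\El(W)$: its objects are pairs $(J,w)$ with $J$ an object of $\cat{J}$ and $w\in WJ$, and a morphism $(J,w)\to(K,w')$ is a morphism $g:J\to K$ of $\cat{J}$ with $g_*w=w'$. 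Since $\cat{J}$ is small and each $WJ$ is a set, $\El(W)$ is small. Let $\pi_W:\El(W)\funto\cat{J}$ be the projection $(J,w)\mapsto J$ and consider the unweighted diagram $D\circ\pi_W:\El(W)\funto\cat{C}$; the claim is that the $W$-weighted limit of $D$, when it exists, is the ordinary limit of $D\circ\pi_W$.

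The heart of the argument is the natural isomorphism
\[
\mathrm{Cone}^W(D,A)\;=\;\sfuncat{\cat{J}}\big(W-,\cat{C}(A,D-)\big)\;\cong\;\lim_{(J,w)\in\El(W)}\cat{C}(A,DJ),
\]
natural in $A$, whose right-hand side is exactly the set of ordinary cones over $D\circ\pi_W$ with tip $A$. I would prove it by unwinding both sides: a natural transformation $\alpha:W\Rightarrow\cat{C}(A,D-)$ is precisely a family $\big(\alpha_J(w)\in\cat{C}(A,DJ)\big)$, indexed by $(J,w)\in\El(W)$, satisfying $Dg\circ\alpha_J(w)=\alpha_K(g_*w)$ for every $g:J\to K$ of $\cat{J}$ --- and that is exactly the data together with the commutativity conditions of a cone over $D\circ\pi_W$. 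Equivalently, this is the statement that every set functor is a colimit of representables combined with Yoneda reduction (\Cref{ninja}): writing $W\cong\colim_{(J,w)}\cat{J}(J,-)$ turns $\sfuncat{\cat{J}}(W,-)$ into $\lim_{(J,w)}\sfuncat{\cat{J}}(\cat{J}(J,-),-)$, and $\sfuncat{\cat{J}}(\cat{J}(J,-),\cat{C}(A,D-))\cong\cat{C}(A,DJ)$. Naturality in $A$ is just the observation that precomposition with an arrow of $\cat{C}$ acts in the same way on both descriptions.

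With this isomorphism in hand the theorem follows at once: $\mathrm{Cone}^W(D,-)$ and the presheaf of ordinary cones over $D\circ\pi_W$ being naturally isomorphic, a representing object of one is a representing object of the other, so the $W$-weighted limit of $D$ exists if and only if $D\circ\pi_W$ has an ordinary limit, and in that case the two agree. The colimit case is strictly dual: for a presheaf weight $W:\cat{J}^\op\funto\cat{Set}$ one takes the category of elements $\El(W)$ with a morphism $(J,w)\to(K,w')$ being a map $g:J\to K$ with $g^*w'=w$, the projection $\pi_W:\El(W)\funto\cat{J}$, and the dual identification of $\mathrm{Cone}_W(D,-)$ with the functor of ordinary cocones over $D\circ\pi_W$. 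I expect the only real obstacle to be the bookkeeping: keeping the two variances straight, and checking that the weighting's prescription of which ``virtual triangles'' commute translates into precisely the triangles one is required to make commute in the unweighted diagram over $\El(W)$ --- the underlying mathematics is just the standard density argument.
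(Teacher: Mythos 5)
Your proposal is correct and follows essentially the same route as the paper: the paper's proof proceeds via \Cref{lim_el}, which establishes exactly the natural bijection you describe between $W$-weighted cones over $D$ and ordinary cones over $D\circ\Pi:\El(W)\funto\cat{C}$, and then concludes that the representing objects of the two naturally isomorphic (co)cone presheaves must coincide. The only cosmetic difference is that the paper verifies the bijection by passing through the functor $D^+:\cat{J}^{+W}\funto\cat{C}$ rather than directly unwinding the natural transformation $W\Rightarrow\cat{C}(A,D-)$, but these are the same computation.
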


One may then ask, why are weighted limits important? There are two main reasons:
\begin{enumerate}
	\item Often, expressing something as a weighted limit gives more compact expressions which reveal interesting properties. We will see examples of this in the next sections;
	\item In enriched category theory, not all weighted limits can be expressed as ordinary limits, and in most cases, it's the theory of \emph{weighted} limits which generalizes to the enriched context. 
\end{enumerate}

Let's now prove the theorem.
We first start with a definition.

\begin{definition}
	Let $W:\cat{J}\funto\cat{Set}$ be a set functor. The \newterm{category of elements} of $W$, which we denote by $\El(W)$, is defined as follows.
	\begin{itemize}
		\item Objects are pairs $(J,w)$ where $J$ is an object of $\cat{J}$, and $w$ is an element of the set $WJ$;
		\item Morphisms $(J,w)\to (K,u)$ are morphisms $g:J\to K$ of $\cat{J}$ such that the function $g_*:WJ\to WK$ maps $w\in WJ$ to $u\in WK$. 
	\end{itemize}
	
	Dually, for a presheaf $W:\cat{J}^\op\funto\cat{Set}$, its \newterm{category of elements} $\El(W)$ has
	\begin{itemize}
		\item As objects, again pairs $(J,w)$ with $J\in\cat{J}$, and $w\in WJ$;
		\item As morphisms $(J,w)\to (K,u)$, morphisms $g:J\to K$ of $\cat{J}$ such that the function $g^*:WK\to WJ$ maps $u\in WK$ to $w\in WJ$ (notice the directions of the arrows). 
	\end{itemize}
	
	In both cases, denote by $\Pi:\El(W)\funto\cat{J}$ the forgetful functor mapping $(J,w)$ to $J$ and $g:(J,w)\to (K,u)$ to $g:J\to K$.
\end{definition}

We can view the category of elements as a category of all possible ``paths'' from the virtual object to the objects of $J$. Indeed, we can view a pair $(J,w)$, with $w:\bullet\dashrightarrow J$, as a copy of $J$ which ``keeps track of how we got there''. Graphically it is helpful to consider the category $\El(W)$ as equipped with the terminal weight functor (singletons), this way we can view $\El(W)$ as a way of \emph{splitting apart parallel virtual arrows}:
\[
\begin{tikzcd}[column sep=small,
	blend group=multiply,
	/tikz/execute at end picture={
		\node [cbox, fit=(A), inner sep=5mm] (CL) {};
		\node [catlabel] at (CL.south west) {$\cat{J}$};
		\node [cbox, fit=(A1) (A2), inner sep=5mm] (CR) {};
		\node [catlabel] at (CR.south west) {$\El(W)$};
	}]
	& |[alias=E1]| \bullet  
	&&&&&&&&& |[alias=E2]| \bullet \\ \\ 
	& |[alias=A]| J
	&&&&&&&& |[alias=A1]| (J,w_1) && |[alias=A2]| (J,w_2)
	\ar[virtual, from=E1, to=A, shift right, "w_1"']
	\ar[virtual, from=E1, to=A, shift left, "w_2"]
	\ar[virtual, from=E2, to=A1, shift right, "w_1"']
	\ar[virtual, from=E2, to=A2, shift left, "w_2"]
	\ar[mapsto,dgray,from=A,to=A1,shorten=10mm,shift left=8mm,"\El"]
\end{tikzcd}
\]
And so, in particular, it \emph{splits apart weighted cones into their separate commutative triangles}:
\begin{equation}\label{split_open}
	\begin{tikzcd}[column sep=small,
		blend group=multiply,
		/tikz/execute at end picture={
			\node [cbox, fit=(A) (B), inner sep=5mm] (CL) {};
			\node [catlabel] at (CL.south west) {$\cat{J}$};
			\node [cbox, fit=(A1) (A2) (B2), inner sep=7mm] (CR) {};
			\node [catlabel] at (CR.south west) {$\El(W)$};
		}]
		& |[alias=E1]| \bullet  
		&&&&&&&&&& |[xshift=7mm,alias=E2]| \bullet \\ \\
		&&&&&&&&&& |[alias=A1]| (J,w_1) \\ 
		|[alias=A]| J \ar{rr}[swap]{g} && |[alias=B]| K
		&&&&&&&& |[alias=H]| \phantom{(A,w_1)} && |[alias=B2]| (K,w') \\
		&&&&&&&&&& |[alias=A2]| (J,w_2) 
		\ar[from=A1, to=B2, "g^{(1)}"]
		\ar[from=A2, to=B2, "g^{(2)}"']
		\ar[virtual, from=E1, to=A, shift right, "w_1"']
		\ar[virtual, from=E1, to=A, shift left, "w_2"]
		\ar[virtual, from=E1, to=B, "w'\color{dgray}=g_*w_1=g_*w_2"]
		\ar[virtual, from=E2, to=A1, "w_1"']
		\ar[virtual, from=E2, to=A2, "w_2"{pos=0.3}]
		\ar[virtual, from=E2, to=B2, "w'\color{dgray}=g_*w_1=g_*w_2"]
		\ar[mapsto,dgray,from=B,to=H,shorten <=10mm, shorten >=12mm, shift left=8mm,"\El"]
	\end{tikzcd}
\end{equation}
Here is the precise statement.

\begin{lemma}\label{lim_el}
	Let $D:\cat{J}\funto\cat{C}$ be a diagram with weighting $W:\cat{J}\funto\cat{Set}$.
	For every object $A$ of $\cat{C}$, and naturally in $A$, there is a bijective correspondence between 
	\begin{itemize}
		\item $W$-weighted cones over $D$, and
		\item Ordinary cones over the diagram 
		\[
		\begin{tikzcd}
			\El(W) \ar[functor]{r}{\Pi} & \cat{J} \ar[functor]{r}{D} & \cat{C} 
		\end{tikzcd}
		\]
		indexed by the category of elements.
	\end{itemize}
	
	Dually, given a presheaf $W:\cat{J}^\op\funto\cat{Set}$, there is a natural bijection between $W$-weighted co-cones over $D$ and ordinary co-cones over the diagram $D\circ\Pi:\El(W)\funto\cat{J}\funto\cat{C}$.
\end{lemma}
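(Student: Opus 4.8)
The plan is to reduce everything to \Cref{wcone_is_nat} and then match data directly. First I would recall that, by \Cref{wcone_is_nat}, a $W$-weighted cone over $D$ with tip $A$ is precisely a family of functions $\phi_J : WJ \to \cat{C}(A, DJ)$, natural in $J$, where naturality means that for every $g : J \to K$ in $\cat{J}$ and every $w \in WJ$ one has $Dg \circ \phi_J(w) = \phi_K(g_* w)$. On the other side I would unwind the definition of an ordinary cone over $D\circ\Pi : \El(W) \funto \cat{C}$ with tip $A$: it is a family of morphisms $\psi_{(J,w)} : A \to D(\Pi(J,w)) = DJ$, one for each object $(J,w)$ of $\El(W)$, such that for every morphism of $\El(W)$ — that is, every $g : J \to K$ of $\cat{J}$ with $g_* w = u$ — the triangle $Dg \circ \psi_{(J,w)} = \psi_{(K,u)}$ commutes. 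It is worth recording here that the morphisms of $\El(W)$ with domain $(J,w)$ are indexed exactly by the morphisms $g : J \to K$ of $\cat{J}$, with codomain necessarily $(K, g_* w)$; this is the identification that makes the two bookkeeping systems line up.

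Next I would define the correspondence by $\psi_{(J,w)} \coloneqq \phi_J(w)$ in one direction and $\phi_J(w) \coloneqq \psi_{(J,w)}$ in the other. These are manifestly mutually inverse on the level of underlying data, since giving a function $WJ \to \cat{C}(A,DJ)$ for each $J$ is the same as giving an element of $\cat{C}(A,DJ)$ for each pair $(J,w)$ with $w \in WJ$, i.e.\ a morphism $A \to DJ$ for each object of $\El(W)$. The only substantive point is that the compatibility conditions agree: the cone condition on $\psi$ reads $Dg \circ \psi_{(J,w)} = \psi_{(K, g_* w)}$ ranged over the morphisms of $\El(W)$, which under the substitution is literally $Dg \circ \phi_J(w) = \phi_K(g_* w)$ ranged over $g : J \to K$ in $\cat{J}$ and $w \in WJ$ — that is, naturality of $\phi$ in $J$. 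So weighted cones and $\El(W)$-cones are the same family of equations on the same data; this is exactly what the picture in \eqref{split_open} depicts, each commuting side-triangle of the weighted cone being carried by an object of $\El(W)$.

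Finally I would check naturality in $A$: a morphism $h : A' \to A$ of $\cat{C}$ acts on a weighted cone $\phi$ by $J \mapsto (-\circ h)\circ \phi_J$ and on an ordinary cone $\psi$ by $(J,w)\mapsto \psi_{(J,w)}\circ h$, and these visibly correspond under the bijection above; the dual statement about co-cones follows by the same argument applied to $\El(W)$ for a presheaf $W$, using the co-cone half of \Cref{wcone_is_nat}. I do not expect a real obstacle here: the entire content is organizing the definitions so that ``naturality of $\phi$ in $J$'' and ``the $\El(W)$-cone condition on $\psi$'' become the same statement, and the single step that deserves a sentence of care is spelling out the morphisms of $\El(W)$ so that the two index sets over which the equations are quantified are identified correctly — after which the verification collapses to a triviality.
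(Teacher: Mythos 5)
Your proposal is correct and takes essentially the same route as the paper: the paper's map sends a weighted cone $D^+$ to the ordinary cone $c_{(J,w)}\coloneqq D^+\!w$, which under the identification of \Cref{wcone_is_nat} is exactly your $\psi_{(J,w)}\coloneqq\phi_J(w)$, and in both arguments the substantive check is that the $\El(W)$-cone condition coincides with naturality of $\phi$ in $J$ (equivalently, functoriality of $D^+$ on composites $E\to J\to K$). The only cosmetic difference is that the paper verifies injectivity and surjectivity of this assignment separately rather than observing up front that the underlying data are tautologically in bijection.
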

\begin{proof}
	As usual, let's focus on the cone case. 
	
	Let $D^+:\cat{J}^{+W}\funto\cat{C}$ be a $W$-weighted cone over $D$ with tip $A$. 
	Construct now a (traditional) cone $c=\El^*(D^+)$ over $D\circ\Pi:\El(W)\funto\cat{J}\funto\cat{C}$ as follows. First of all, notice that the diagram $D\circ\Pi:\El(W)\funto\cat{J}\funto\cat{C}$ maps an object $(J,w)$ of $\cat{D}$ necessarily to $DJ$. Now set for all $(J,w)\in\El(W)$
	\[
	c_{(J,w)} \;\coloneqq\; D^+\!w : A\to DJ .
	\]
	(It may helpful to look at the diagram in \eqref{split_open}.)
	To see that this is a cone, note that for every morphism $g:(J,w)\to (K,w')$ of $\El(W)$ (such that $g_*w=w'$) we have that, by functoriality of $D^+$, 
	\[
	Dg \circ c_{(J,w)} \;=\; D^+\!g \circ D^+\!w \;=\; D^+\!(g\circ w) \;=\; D^+\!w' \;=\; c_{(K,w')}
	\]
	i.e.\ the following diagram commutes:
	\[
	\begin{tikzcd}[column sep=small,
		blend group=multiply,
		/tikz/execute at end picture={
			\node [cbox, fit=(A) (B), inner sep=5mm] (CL) {};
			\node [catlabel] at (CL.south west) {$\cat{J}$};
			\node [cbox, fit=(A2) (B2) (E2), inner sep=5mm] (CR) {};
			\node [catlabel] at (CR.south west) {$\cat{C}$};
		}]
		& |[alias=E1]| \bullet  
		&&&&&&&&&& |[alias=E2]| A \\ \\
		|[alias=A]| J \ar{rr}[swap]{g} && |[alias=B]| K
		&&&&&&&& |[alias=A2]| DJ  && |[alias=B2]| DK \\
		\ar[from=A2, to=B2, "Dg"']
		\ar[virtual, from=E1, to=A, shift right, "w"']
		\ar[virtual, from=E1, to=B, "w'\color{dgray}=g_*w"]
		\ar[from=E2, to=A2, "c_{(J,w)}"']
		\ar[from=E2, to=B2, "c_{(K,w')}"]
		\ar[mapsto,dgray,from=B,to=A2,shorten=10mm, shift left=8mm]
	\end{tikzcd}
	\]
	This establishes a mapping
	\[
	\begin{tikzcd}
		\mathrm{Cone}^W(D,A) \ar{r}{\El^*} & \mathrm{Cone}^1(D\circ\Pi,A) .
	\end{tikzcd}
	\]
	It remains to show that this map is bijective and natural in $A$.
	
	To show that it is injective, suppose that for cones $D^+$ and ${D^+}'$ we have $c=\El^*(D^+)=\El^*({D^+}')$.
	Then for all $J\in\cat{J}$ and $w\in WJ$, 
	\[
	D^+\!w \;=\; c_{(J,w)} \;=\; {D^+}'\!w ,
	\]
	i.e.\ the weighted cones have exactly the same arrows, so $D^+={D^+}'$.
	
	To show that it is surjective, let $c=(c_{(J,w)}:A\to DJ)_{(J,w)\in\El(W)}$ be a (traditional) cone over $D\circ\Pi$. 
	Then we can write it as $\El^*(D+)$ where for $J\in\cat{J}$ and $w\in WJ$, $D^+\!w=c_{(J,w)}$. To show that this is a weighted cone, i.e.~that it is a functor $D^+:\cat{J}^{+W}\funto\cat{C}$, notice that for every $w\in WJ$ (or $w:E\to J$) and for all $g:J\to K$,
	\[
	D^+\!(g\circ w) \;=\; c_{(K,g_*w)} \;=\; Dg\circ c_{(J,w)} \;=\; D^+\!g\circ D^+\!w ,
	\]
	using the fact that $c$ is a cone.
	
	Finally, to show that it is natural, let $f:A\to B$. Then the following diagram commutes,
	\[
	\begin{tikzcd}
		\mathrm{Cone}^W(D,B) \ar{d}{f^*} \ar{r}{\El^*} & \mathrm{Cone}^1(D\circ\Pi:B) \ar{d}{f^*} \\ 
		\mathrm{Cone}^W(D,A) \ar{r}{\El^*} & \mathrm{Cone}^1(D\circ\Pi,A) 
	\end{tikzcd}
	\]
	since both vertical arrows just precompose the arrows of the cones with $f$.
\end{proof}

\begin{proof}[Proof of \Cref{not_weighted}]
	(As usual, we will focus on the limit case.)
	
	By \Cref{lim_el}, the presheaves $\mathrm{Cone}^W(D,-)$ and $\mathrm{Cone}^1(D\circ\Pi,-):\cat{C}^\op\funto\cat{Set}$ are naturally isomorphic. Therefore one is representable if and only if the other one is, and in that case their representing objects are isomorphic.
	By definition, the representing objects are respectively the $W$-weighted limit of $D$ and the ordinary limit of $D\circ\Pi$, which must then coincide.
\end{proof}

\begin{corollary}
	A category has all (small) weighted limits if and only if it has all (small) ordinary limits, and a functor preserves all (small) weighted limits if and only if it preserves all (small) ordinary limits.
	
	The same can be said about colimits.
\end{corollary}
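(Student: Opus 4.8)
The plan is to deduce everything from \Cref{lim_el}, using the fact that ordinary limits are exactly the weighted limits whose weighting is constant at a one-element set. I will treat the statements about limits; those about colimits are proved dually.

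For the two ``only if'' implications, note that if $W_0:\cat{J}\funto\cat{Set}$ is constant at a one-element set, then $\El(W_0)\cong\cat{J}$ with the forgetful functor $\Pi$ an isomorphism, so \Cref{lim_el} identifies $W_0$-weighted cones over a diagram $D:\cat{J}\funto\cat{C}$ with ordinary cones over $D$; equivalently, a natural transformation from the constant functor into $\cat{C}(A,D-)$ is precisely a compatible family $(c_J:A\to DJ)_J$. Hence $\mathrm{Cone}^{W_0}(D,-)$ is the usual presheaf of cones $\mathrm{Cone}^1(D,-)$, its representing object is the ordinary limit of $D$, and so every ordinary limit is a weighted limit. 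Consequently, if $\cat{C}$ admits all small weighted limits it admits all small ordinary limits, and if a functor $G$ preserves all small weighted limits it preserves all small ordinary limits.

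For the ``if'' direction on existence, let $D:\cat{J}\funto\cat{C}$ be a small diagram and $W:\cat{J}\funto\cat{Set}$ a weighting. The first thing to observe is that $\El(W)$ is again small: its objects form the set $\coprod_{J\in\cat{J}}WJ$, a small coproduct of small sets, and its morphisms likewise. By \Cref{lim_el} the presheaf $\mathrm{Cone}^W(D,-)$ is naturally isomorphic to $\mathrm{Cone}^1(D\circ\Pi,-)$ for the small diagram $D\circ\Pi:\El(W)\funto\cat{C}$ (this is exactly the content of \Cref{not_weighted}), so one is representable if and only if the other is. Therefore, if $\cat{C}$ has all small ordinary limits it has all small weighted limits.

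It then remains to show that a functor $G:\cat{C}\funto\cat{D}$ preserving all small ordinary limits preserves all small weighted limits. Let $(L,c)$ be a $W$-weighted limit of $D$, which by \Cref{lim_el} is the ordinary limit of $D\circ\Pi$ with universal cone $\El^*(c)$. Since post-composing the legs of a cone with $G$ commutes with the relabelling $\El^*$, we have $G\circ\El^*(c)=\El^*(G\circ c)$, where $G\circ c$ denotes the $W$-weighted cone over $G\circ D$ whose legs are $G$ applied to those of $c$. By hypothesis $(GL,\El^*(G\circ c))$ is the ordinary limit of $(G\circ D)\circ\Pi$, so applying \Cref{lim_el} to the diagram $G\circ D$ with weight $W$ shows that $(GL,G\circ c)$ is the $W$-weighted limit of $G\circ D$; that is, $G$ preserves this weighted limit. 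The only points needing any care are the smallness of $\El(W)$ and the compatibility of $G$ with the reindexing of \Cref{lim_el}, and both are routine once one unwinds the definitions.
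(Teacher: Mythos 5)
Your proposal is correct and follows essentially the same route the paper intends: the corollary is meant to be read off from \Cref{not_weighted} and \Cref{lim_el}, namely that weighted limits are ordinary limits of the reindexed diagram over the (small) category of elements, while ordinary limits are weighted limits for the terminal weight. Your additional care about the smallness of $\El(W)$ and the compatibility $G\circ\El^*=\El^*\circ G$ needed for the preservation statement correctly fills in the details the paper leaves implicit.
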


\begin{corollary}
	Just as ordinary limits and colimits, weighted limits and colimits in functor categories are computed pointwise.
\end{corollary}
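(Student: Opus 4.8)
The plan is to reduce the statement to its unweighted counterpart by sandwiching the latter between two applications of \Cref{not_weighted}. Fix a small category $\cat{A}$, a diagram $D:\cat{J}\funto\funcat{\cat{A}}{\cat{C}}$, and a weight $W:\cat{J}\funto\cat{Set}$; for each object $a$ of $\cat{A}$ write $\mathrm{ev}_a:\funcat{\cat{A}}{\cat{C}}\funto\cat{C}$ for evaluation at $a$. As always I would treat the limit case, leaving the dual colimit case (with $W:\cat{J}^\op\funto\cat{Set}$ a presheaf and ``cone'' replaced by ``co-cone'') to the reader.

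The chain of isomorphisms runs as follows. First, by \Cref{lim_el} the $W$-weighted limit of $D$ agrees with the ordinary limit of the composite $D\circ\Pi:\El(W)\funto\cat{J}\funto\funcat{\cat{A}}{\cat{C}}$, in the sense that one exists exactly when the other does and then they are canonically isomorphic (this is \Cref{not_weighted} unpacked). Second, ordinary limits in $\funcat{\cat{A}}{\cat{C}}$ are computed pointwise — the evaluation functors $\mathrm{ev}_a$ jointly create them (classical; see e.g.\ \cite{riehl2016category}) — so $\big(\lim(D\circ\Pi)\big)(a)\cong\lim\big(\mathrm{ev}_a\circ D\circ\Pi\big)$, naturally in $a$. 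Third, since $\mathrm{ev}_a\circ D\circ\Pi=(\mathrm{ev}_a\circ D)\circ\Pi$, a second application of \Cref{lim_el}, now to the ordinary diagram $\mathrm{ev}_a\circ D:\cat{J}\funto\cat{C}$ with the \emph{same} weight $W$, identifies that ordinary limit with the $W$-weighted limit of $\mathrm{ev}_a\circ D$. Chaining the three steps yields
\[
\Big(\lim_{J\in\cat{J}}\big\langle WJ, DJ\big\rangle\Big)(a)\;\cong\;\lim_{J\in\cat{J}}\big\langle WJ, (DJ)(a)\big\rangle ,
\]
naturally in $a$, which is precisely the claim that the weighted limit is computed pointwise. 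The colimit case is identical, run through co-cones and the reduction to an ordinary colimit over $\El(W)$.

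A more self-contained alternative I would mention is to argue straight from the presheaf of weighted cones of \Cref{psh_weighted}. For any $G:\cat{A}\funto\cat{C}$, writing a hom-set of $\funcat{\cat{A}}{\cat{C}}$ as an end over $\cat{A}$ and using that the functor $\sfuncat{\cat{J}}\big(W-,-\big)$ preserves limits in its second argument, one computes
\begin{align*}
	\mathrm{Cone}^W(D,G) \;&=\; \sfuncat{\cat{J}}\big(W-,\funcat{\cat{A}}{\cat{C}}(G,D-)\big) \\
	&\cong\; \End{a\in\cat{A}}\,\sfuncat{\cat{J}}\big(W-,\cat{C}(Ga,(D-)(a))\big) \\
	&=\; \End{a\in\cat{A}}\,\mathrm{Cone}^W(\mathrm{ev}_a\circ D,\,Ga) .
\end{align*}
If the pointwise weighted limits $L_a$ exist and assemble into a functor $L:\cat{A}\funto\cat{C}$, then the right-hand side becomes $\End{a\in\cat{A}}\cat{C}(Ga,L_a)=\funcat{\cat{A}}{\cat{C}}(G,L)$, so $L$ represents $\mathrm{Cone}^W(D,-)$ and is therefore the $W$-weighted limit of $D$ by definition.

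The only step that is not purely formal is the one that is already non-formal in the unweighted case: checking that the pointwise weighted limits $L_a$, together with the morphism assignments induced by the arrows of $\cat{A}$, genuinely assemble into a functor $L:\cat{A}\funto\cat{C}$, and that the limiting weighted cones over the $\mathrm{ev}_a\circ D$ glue to a single weighted cone valued in $\funcat{\cat{A}}{\cat{C}}$. Along the first route this is inherited for free from the classical functor-category fact via \Cref{not_weighted}; along the second it is exactly the naturality in $a$ of the isomorphism above together with the functoriality of $a\mapsto L_a$. Everything else is a mechanical reshuffling of the correspondences from \Cref{wcone_is_nat,psh_weighted,lim_el}, so I expect this gluing point to be the main — albeit mild — obstacle.
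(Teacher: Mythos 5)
Your first route is exactly the argument the paper intends: the corollary is stated without proof as an immediate consequence of \Cref{not_weighted}, i.e.\ reduce the weighted limit to the ordinary limit of $D\circ\Pi$ over $\El(W)$, invoke the classical pointwise computation of ordinary limits in functor categories, and translate back pointwise via \Cref{lim_el} applied to $\mathrm{ev}_a\circ D$. The proposal is correct (and the second, end-based route is a sound bonus), so nothing further is needed.
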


Finally, we present a weighted version of the famous result that hom functors preserve limits.

\begin{theorem}\label{hom_wlim}
	Hom-functors preserve weighted limits and colimits in the same way as they preserve ordinary limits and colimits.
	More in detail, given $D:\cat{J}\funto\cat{C}$ and $W:\cat{J}\funto\cat{Set}$, if the weighted limit exists, then we have a natural isomorphism
	\[
	\cat{C} \left( A , \lim_{J\in\cat{J}} \big\langle WJ, DJ \big\rangle \right) \;\cong\;  \lim_{J\in\cat{J}} \big\langle WJ , \cat{C}(A, DJ) \big\rangle 
	\]
	preserving the universal weighted cone.
	
	Dually, given a presheaf $W:\cat{J}^\op\funto\cat{Set}$, if the weighted colimit of $D$ exists, we have a natural isomorphism
	\[
	\cat{C} \left( \colim_{J\in\cat{J}} \big\langle WJ, DJ \big\rangle , A \right) \;\cong\; \lim_{J\in\cat{J}^\op} \big\langle WJ , \cat{C}(DJ,A) \big\rangle 
	\]
	preserving the universal weighted co-cone.
\end{theorem}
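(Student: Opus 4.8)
The plan is to reduce everything to the defining representation. By \Cref{def_wlim} and \Cref{psh_weighted}, if $L\colonequals\lim_{J}\langle WJ, DJ\rangle$ exists then it represents the presheaf of $W$-weighted cones over $D$, so there is an isomorphism $\cat{C}(A,L)\cong\sfuncat{\cat{J}}\big(W-,\cat{C}(A,D-)\big)$ natural in $A$. It therefore suffices to identify the right-hand side $\lim_{J}\langle WJ,\cat{C}(A,DJ)\rangle$ with the very same functor $\sfuncat{\cat{J}}\big(W-,\cat{C}(A,D-)\big)$, naturally in $A$, and then to check that the two distinguished (universal) cones correspond under the resulting composite.

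First I would observe that the right-hand side is a $W$-weighted limit \emph{in $\cat{Set}$} of the $\cat{Set}$-valued diagram $\cat{C}(A,D-)\colon\cat{J}\funto\cat{Set}$; since $\cat{Set}$ has all small limits it has all small weighted limits (by \Cref{not_weighted}), so this exists, provided $\El(W)$ is small — which holds under the standing convention that $\cat{J}$ is small with small-valued $W$. Again by \Cref{def_wlim}/\Cref{psh_weighted}, this weighted limit is the object of $\cat{Set}$ representing the presheaf $T\mapsto\sfuncat{\cat{J}}\big(W-,\cat{Set}(T,\cat{C}(A,D-))\big)$ on $\cat{Set}$. The key computation is that this presheaf is already representable: by currying (cartesian closedness of $\cat{Set}$, with naturality still taken only in $J$), a natural family $WJ\to\cat{Set}(T,\cat{C}(A,DJ))$ is the same datum as a function $T\to\sfuncat{\cat{J}}\big(W-,\cat{C}(A,D-)\big)$, so
\[
\sfuncat{\cat{J}}\big(W-,\cat{Set}(T,\cat{C}(A,D-))\big)\;\cong\;\cat{Set}\big(T,\,\sfuncat{\cat{J}}(W-,\cat{C}(A,D-))\big),
\]
naturally in $T$. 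Hence the representing object — i.e.\ $\lim_{J}\langle WJ,\cat{C}(A,DJ)\rangle$ — is $\sfuncat{\cat{J}}\big(W-,\cat{C}(A,D-)\big)$. Composing with the isomorphism of the previous paragraph yields the stated isomorphism, and naturality in $A$ is inherited (the iso for $L$ is natural in $A$ by construction, and currying is natural in $A$).

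For ``preserving the universal weighted cone'' I would trace the distinguished element. The universal $W$-weighted cone over $D$ with tip $L$ is the family $c_J\colon WJ\to\cat{C}(L,DJ)$ corresponding to $\id_L$ under $\cat{C}(L,L)\cong\sfuncat{\cat{J}}(W-,\cat{C}(L,D-))$, as in the proof of \Cref{thm_wlim}; the isomorphism $\cat{C}(A,L)\cong\sfuncat{\cat{J}}(W-,\cat{C}(A,D-))$ sends $u\colon A\to L$ to $w\mapsto c_J(w)\circ u$. Applying the hom-functor $\cat{C}(A,-)\colon\cat{C}\funto\cat{Set}$ to $c$ produces the family $w\mapsto\big(u\mapsto c_J(w)\circ u\big)$, a $W$-weighted cone over $\cat{C}(A,D-)$ with tip the set $\cat{C}(A,L)$; unwinding the chain of identifications above (each of which carries an identity arrow to the universal element) shows this family is exactly the universal weighted cone exhibiting $\cat{C}(A,L)\cong\lim_J\langle WJ,\cat{C}(A,DJ)\rangle$. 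The dual colimit statement follows by the identical argument applied to the contravariant hom-functor $\cat{C}(-,A)\colon\cat{C}^\op\funto\cat{Set}$, the diagram $\cat{C}(D-,A)\colon\cat{J}^\op\funto\cat{Set}$, and the defining representation of $\colim_{J}\langle WJ,DJ\rangle$ as the object $L'$ with $\cat{C}(L',A)\cong\sfuncat{\cat{J}^\op}(W-,\cat{C}(D-,A))$.

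The main obstacle is not a single hard step but bookkeeping: one must check that the currying bijection genuinely restricts to a bijection on natural families (i.e.\ respects naturality in $J$) and is natural in both $T$ and $A$, and — for the last clause — that the universal elements are carried to one another rather than merely that both sides are abstractly isomorphic. The only genuinely substantive point beyond definition-chasing is the representability of the presheaf of weighted cones over a $\cat{Set}$-valued diagram, which is precisely the currying isomorphism displayed above.
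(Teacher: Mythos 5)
Your proof is correct, but it takes a genuinely different route from the paper's. The paper verifies the universal property of $\cat{C}(A,L)$ directly and elementwise: given a weighted cone $(d_{J,w}\colon S\to\cat{C}(A,DJ))$ in $\cat{Set}$, it fixes each $s\in S$, observes that the family $(d_{J,w}(s))$ is a $W$-weighted cone in $\cat{C}$ with tip $A$, and applies the universal property of $L$ once per element $s$ to manufacture the comparison map $S\to\cat{C}(A,L)$ and prove it unique. You instead argue entirely by representability: $\cat{C}(A,L)\cong\sfuncat{\cat{J}}\big(W-,\cat{C}(A,D-)\big)$ because $L$ represents the cone presheaf (\Cref{def_wlim}, \Cref{psh_weighted}), while the right-hand side represents the cone presheaf of the $\cat{Set}$-valued diagram $\cat{C}(A,D-)$, which your currying computation shows is already representable by that same hom-set. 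Your key intermediate fact --- that $W$-weighted limits in $\cat{Set}$ are computed as $\sfuncat{\cat{J}}(W,G)$ --- is a useful general principle the paper never isolates (it is the form of the statement that survives enrichment, where the ordinary hom becomes the internal one), and it renders the isomorphism essentially tautological once stated; the cost is exactly the bookkeeping you flag, namely checking that currying restricts to a bijection on families natural in $J$, is natural in $T$ and $A$, and carries universal elements to universal elements, all of which you handle correctly. The paper's argument is more elementary and mirrors the classical unweighted proof; yours is shorter at the price of one extra representability observation. Both are complete.
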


(Note that a presheaf $W:\cat{J}^\op\funto\cat{Set}$ can be equivalently written as $W:(\cat{J}^\op)\funto\cat{Set}$, i.e.\ as a set functor on $\cat{J}^\op$, indexing a weighted limit -- and vice versa.)

The proof follows closely the one for the unweighted case.

\begin{proof}
	As usual, let's consider the case of limits. Denote the weighted limit in $\cat{C}$ by $L$, and denote the arrows of the universal weighted cone by $c_{J,w}:L\to DJ$, for $J\in\cat{J}$ and $w\in WJ$. 
	We have to show that $\cat{C}(A,L)$ is a weighted limit in $\cat{Set}$, with the induced cone:
	\[
	\begin{tikzcd}[sep=small,
		blend group=multiply,
		/tikz/execute at end picture={
			\node [cbox, fit=(J) (L) (K), inner sep=1.2mm] (CL) {};
			\node [catlabel] at (CL.south west) {$\cat{J}$};
			\node [cbox, fit=(DJ) (DL) (DK) (T), inner sep=1.2mm] (CC) {};
			\node [catlabel] at (CC.south west) {$\cat{C}$};
			\node [cbox, fit=(ADJ) (ADL) (ADK) (AT), inner sep=1.2mm] (CR) {};
			\node [catlabel] at (CR.south west) {$\cat{Set}$};
		}]
		& |[alias=E]| \bullet
		&&&&&& |[alias=T]| L
		&&&&&&&&&& |[alias=AT]| \cat{C}(A,L)  \\ \\ \\ \\ 
		|[alias=J, xshift=-5mm]| J \ar{dr} \ar[-, shorten >=-1mm]{r} & |[alias=BR1]| \ar[shorten <=-1mm]{r} & |[alias=L, xshift=5mm]| L 
		&&&& |[alias=DJ, xshift=-5mm]| DJ \ar{dr} \ar[-, shorten >=-1mm]{r} & |[alias=BR2]| \ar[shorten <=-1mm]{r} & |[alias=DL, xshift=5mm,overlay]| DL
		&&&&&&&& |[alias=ADJ,overlay]| \cat{C}(A,DJ) \ar{dr} \ar[-, shorten >=-1mm]{r} & |[alias=ABR2]| \ar[shorten <=-1mm]{r} & |[alias=ADL, overlay]| \cat{C}(A,DL) \\
		& |[alias=K]| K \ar{ur}
		&&&&&& |[alias=DK]| DK \ar{ur}
		&&&&&&&&&& |[alias=ADK]| \cat{C}(A,DK) \ar{ur}
		\ar[virtual, from=E, to=J, shift left=1]
		\ar[virtual, from=E, to=J, shift right=1,"w"{swap,inner sep=0.6mm}]
		\ar[virtual, from=E, to=K, shorten <=0.5mm]
		\ar[virtual, from=E, to=L, shift left=1]
		\ar[virtual, from=E, to=L, shift right=1]
		\ar[from=T, to=DJ, shift left=1]
		\ar[from=T, to=DJ, shift right=1, shorten <=0.5mm, "c_{J,w}"{swap,inner sep=0.3mm}]
		\ar[from=T, to=DK, shorten <=0.8mm]
		\ar[from=T, to=DL, shift left=1, shorten <=0.5mm]
		\ar[from=T, to=DL, shift right=1]
		\ar[from=AT, to=ADJ, shift left=1]
		\ar[from=AT, to=ADJ, shift right=1, shorten <=0.5mm, "c_{J,w}\circ-"{swap,inner sep=0}]
		\ar[from=AT, to=ADK, shorten <=0.8mm]
		\ar[from=AT, to=ADL, shift left=1, shorten <=0.5mm]
		\ar[from=AT, to=ADL, shift right=1]
		\ar[mapsto,color=cgray, from=E, to=T]
		\ar[mapsto,color=dgray, from=L, to=DJ, "D", shift right=4mm, shorten=-4mm]
		\ar[mapsto,color=dgray, from=DL, to=ADJ, "{\cat{C}(A,-)}", shift left=4mm]
	\end{tikzcd}
	\]
	To this end, let $S$ be a cone over $\cat{C}(A,D-)$, and denote by $d_{J,w}:S\to \cat{C}(A,DJ)$ the arrows of the cone (for $J\in\cat{J}$ and $w\in WJ$).
	We have to show that there is a unique function $u:S\to \cat{C}(A,L)$ such that for all $J\in\cat{J}$ and $w\in WJ$, $c_{J,w}\circ u=d_{J,u}$.
	\begin{equation}\label{liminset}
	\begin{tikzcd}[sep=small]
		& |[alias=S]| S \ar[virtual, shorten <=0.5mm]{dddd}[swap]{u} \\ \\ \\ \\
		& |[alias=AT]| \cat{C}(A,L)  \\ \\ \\ \\ 
		|[alias=ADJ,overlay]| \cat{C}(A,DJ) \ar{dr} \ar[-, shorten >=-1mm]{r} & |[alias=ABR2]| \ar[shorten <=-1mm]{r} & |[alias=ADL, overlay]| \cat{C}(A,DL) \\
		& |[alias=ADK]| \cat{C}(A,DK) \ar{ur}
		\ar[from=AT, to=ADJ, shift left=1,mgray]
		\ar[from=AT, to=ADJ, shift right=1, shorten <=0.5mm, "c_{J,w}\circ-"{inner sep=0}]
		\ar[from=AT, to=ADK, shorten <=0.8mm,mgray]
		\ar[from=AT, to=ADL, shift left=1, shorten <=0.5mm,mgray]
		\ar[from=AT, to=ADL, shift right=1,mgray]
		\ar[from=S, to=ADJ, shift left=1,mgray,bend right, shorten >=0.7mm]
		\ar[from=S, to=ADJ, shift right=1, shorten <=0.5mm, "d_{J,w}"{swap,inner sep=0},bend right]
		\ar[from=S, to=ADK, shorten <=0.8mm,mgray,bend left]
		\ar[from=S, to=ADL, shift left=1, shorten <=0.5mm,mgray,bend left]
		\ar[from=S, to=ADL, shift right=1,mgray,bend left,shorten >=0.7mm]
	\end{tikzcd}
	\end{equation}
	Note now that the function $d_{J,w}:S\to \cat{C}(A,DJ)$ is an $S$-indexed tuple of arrows $A\to DJ$. 
	For each $s\in S$, the element $d_{J,w}(s)$ is an arrow $A\to DJ$, and if we keep $s$ fixed and vary $J\in\cat{J}$ and $w\in WJ$, 
	the tuple $(d_{J,w}(s))_{J\in\cat{J},w\in WJ}$ is exactly a weighted cone in $\cat{C}$. 
	By the universal property of $L$, to this tuple there corresponds a unique morphism $\tilde{u}_s:A\to L$, such that for all $J$ and $w$, 
	\begin{equation}\label{utilde}
	c_{J,w}\circ\tilde{u}_s=d_{J,w}(s) .
	\end{equation}
	We therefore define our map $u:S\to \cat{C}(A,L)$ as follows: 
	\[
	u(s) \coloneqq \tilde{u}_s \in\cat{C}(A,L) .
	\]
	This way, by \eqref{utilde}, the triangles as in \eqref{liminset} commutes. 
	To see that the map $u$ is unique, suppose that another map $u':S\to \cat{C}(A,L)$ makes the triangle \eqref{liminset} commute.
	Then for all $s\in S$, the element $u'(s)\in\cat{C}(A,L)$, which is an arrow $A\to L$, is such that for all $J$ and $w$, $c_{J,w}\circ u'(s)=d_{J,w}(s)$. But we know, by the universal property of $L$, that only one possible map $A\to L$ with this property, namely $\tilde{u}_s$. So necessarily $u'(s)=\tilde{u}_s$, which means that $u'=u$.
\end{proof}

\subsection{Weighted (co)limits as (co)ends}

In the last few years it has become very common to write weighted limits and colimits in the following way.
\begin{equation}\label{wlim_end}
\lim_{J\in\cat{J}}\big\langle WJ, DJ \big\rangle \;\cong\; \End{J\in\cat{J}} {DJ\,}^{WJ} \qquad\qquad \colim_{J\in\cat{J}}\big\langle WJ, DJ \big\rangle \;\cong\; \Coend{J\in\cat{J}} WJ \cdot DJ
\end{equation}
Inside the integral signs we can recognize powers and copowers. The integral signs themselves are not actual integrals in the sense of analysis, they denote particular limits and colimits called \emph{ends} and \emph{coends}. 

Let's now see what these symbols mean. There are at least three levels of understanding them:
\begin{enumerate}
	\item At the most shallow level, one could consider the expressions in \eqref{wlim_end} simply as an alternative way of writing weighted limits and colimits in categories such as $\cat{Set}$. 
	\item One can describe ends and coends in terms of weighted limits and colimits. This will be the topic of the present section. 
	\item Ends and coends have their own very deep theory, interesting for its own sake. The standard reference for that is the book \cite{fosco}, to which we refer the interested readers.
\end{enumerate}

Let's get to (ii).
Let $D:\cat{J}\funto\cat{C}$ be a diagram with weighting $W:\cat{J}\funto\cat{Set}$, and consider its weighted limit. 
For every object $J$ of $\cat{J}$, any $W$-weighted cone, for example the universal one, gives a tuple of arrows from the tip $T$ to $DJ$ indexed by the set $WJ$. 
\[
\begin{tikzcd}[sep=small,
	blend group=multiply,
	/tikz/execute at end picture={
		\node [cbox, fit=(J) (K), inner sep=5mm] (CL) {};
		\node [catlabel] at (CL.south west) {$\cat{J}$};
		\node [cbox, fit=(DJ) (DK) (TP), inner sep=5mm] (CR) {};
		\node [catlabel] at (CR.south west) {$\cat{C}$};
	}]
	&&&&&&&&& |[alias=TP]| T' \\ 
	& |[alias=E]| \bullet \\  
	&&&&&&&&& |[alias=T]| T  \\ \\  
	|[alias=J, xshift=-5mm]| J \ar[mgray]{rr}[swap]{g} && |[alias=K, xshift=5mm]| \color{mgray} K
	&&&&&& |[alias=DJ]| DJ \ar[mgray]{rr}[swap]{Dg} && |[alias=DK]| \color{mgray} DK 
	\ar[virtual, from=E, to=J, shift left]
	\ar[virtual, from=E, to=J, shift right]
	\ar[virtual, from=E, to=K, shift left=1, color=cgray]
	\ar[virtual, from=E, to=K, shift right=1, color=cgray]
	\ar[from=T, to=DJ, shift left]
	\ar[from=T, to=DJ, shift right]
	\ar[from=T, to=DK, shift left=1, color=mgray]
	\ar[from=T, to=DK, shift right=1, color=mgray]
	\ar[from=TP, to=DJ, shift left, bend right=20,shorten >=0.7mm]
	\ar[from=TP, to=DJ, shift right, bend right=20]
	\ar[from=TP, to=DK, shift left=1, color=mgray, bend left=20]
	\ar[from=TP, to=DK, shift right=1, color=mgray, bend left=20,shorten >=0.7mm]
	\ar[virtual, from=TP, to=T, shorten <=0.5mm]
	\ar[mapsto,color=dgray, from=K, to=DJ, "D",shorten=2mm]
\end{tikzcd}
\]
Now suppose that the category $\cat{C}$ has powers. By definition of power, a $WJ$-indexed tuple of arrows $T\to DJ$ corresponds naturally to a single arrow $T\to {DJ\,}^{WJ}$:
\[
\begin{tikzcd}[row sep=small, column sep=tiny]
	&& |[alias=T]| T \ar[virtual]{dll} \\ 
	{DJ\,}^{WJ} \ar[shift left]{ddr} \ar[shift right]{ddr} \\ \\  
	&|[alias=DJ]| DJ \ar[mgray]{rr}[swap]{Dg} && |[alias=DK]| \color{mgray} DK 
	\ar[from=T, to=DJ, shift left]
	\ar[from=T, to=DJ, shift right]
	\ar[from=T, to=DK, shift left=1, color=mgray]
	\ar[from=T, to=DK, shift right=1, color=mgray]
\end{tikzcd}
\]
We can now do this for all the objects of $\cat{J}$. We want to express $T$ as a particular limit (an \emph{end}, as we will see) of the ${DJ\,}^{WJ}$, instead of as a weighted limit of the $DJ$. 
Now, for each morphism $g:J\to K$ of $\cat{J}$, we could try to induce from the morphism $Dg:DJ\to DK$ a morphism ${DJ\,}^{WJ}\to {DK\,}^{WK}$, forming a new diagram ``parallel'' to the original one:
\[
\begin{tikzcd}[row sep=small, column sep=tiny]
	&& |[alias=T]| T \ar[virtual]{dll} \ar[virtual]{drr} \\ 
	{DJ\,}^{WJ} \ar[shift left, color=mgray]{ddr} \ar[shift right, color=mgray]{ddr} \ar[virtual]{rrrr}[swap]{?} &&&& {DK\,}^{WK} \ar[shift left, color=mgray]{ddl} \ar[shift right, color=mgray]{ddl} \\ \\  
	&|[alias=DJ]| \color{mgray} DJ \ar[mgray]{rr}[swap]{Dg} && |[alias=DK]| \color{mgray} DK
	\ar[from=T, to=DJ, shift left, color=mgray]
	\ar[from=T, to=DJ, shift right, color=mgray]
	\ar[from=T, to=DK, shift left=1, color=mgray]
	\ar[from=T, to=DK, shift right=1, color=mgray]
\end{tikzcd}
\]
Things are however not so simple. Indeed, from $Dg$ we could induce a morphism ${DJ\,}^{WJ}\to {DK\,}^{WJ}$, with the same exponent, but not ${DJ\,}^{WJ}\to {DK\,}^{WK}$. 
What we \emph{can} do, however, is induce a pair of morphisms (called a \emph{cospan}) as follows:
\[
\begin{tikzcd}[row sep=small, column sep=tiny]
	&& |[alias=T]| T \ar[virtual]{dll} \ar[virtual]{drr} \\ 
	|[alias=ONE]| {DJ\,}^{WJ} \ar[shift left, color=mgray]{ddr} \ar[shift right, color=mgray]{ddr} &&&& |[alias=THREE]| {DK\,}^{WK} \ar[shift left, color=mgray]{ddl} \ar[shift right, color=mgray]{ddl} \\ 
	&& |[alias=TWO]| {DK\,}^{WJ} \\  
	&|[alias=DJ]| \color{mgray} DJ \ar[mgray]{rr}[swap]{Dg} && |[alias=DK]| \color{mgray} DK
	\ar[from=T, to=DJ, shift left, color=mgray]
	\ar[from=T, to=DJ, shift right, color=mgray]
	\ar[from=T, to=DK, shift left=1, color=mgray]
	\ar[from=T, to=DK, shift right=1, color=mgray]
	\ar[from=ONE, to=TWO]
	\ar[from=THREE, to=TWO]
\end{tikzcd}
\]
This is due to the fact that the power ${DK\,}^{WJ}$ is functorial in $K$, and \emph{contravariantly functorial in $J$}.
To see how, in detail, let's first fix some terminology.

\begin{definition}\label{def_bifunctor}
	A \newterm{bifunctor} $B:\cat{C}\funtimes\cat{D}\funto\cat{E}$ is a functorial assignment in both variables, 
	\[
	\forall\,C\in\cat{C} \;\;:\quad 
	\begin{tikzcd}[sep=0]
		\cat{D} \ar[functor]{rr}{B(C,-)} && \cat{E} \\
		\color{dgray} D \ar[dgray]{dd}{d} & \color{dgray} \longmapsto & \color{dgray} B(C,D) \ar[dgray]{dd}{B(C,d)} \\
		\phantom{0} & \color{dgray} \longmapsto & \phantom{2} \\
		\color{dgray} D' & \color{dgray} \longmapsto & \color{dgray} B(C,D') 
	\end{tikzcd}
	\qquad\qquad
	\forall\,D\in\cat{D} \;\;:\quad 
	\begin{tikzcd}[sep=0]
		\cat{D} \ar[functor]{rr}{B(-,D)} && \cat{E} \\
		\color{dgray} C \ar[dgray]{dd}{c} & \color{dgray} \longmapsto & \color{dgray} B(C,D) \ar[dgray]{dd}{B(c,D)} \\
		\phantom{0} & \color{dgray} \longmapsto & \phantom{2} \\
		\color{dgray} C' & \color{dgray} \longmapsto & \color{dgray} B(C',D) 
	\end{tikzcd}
	\]
	where moreover the following diagram commutes for all morphisms $c:C\to C'$ of $\cat{C}$ and $d:D\to D'$ of $\cat{D}$:
	\begin{equation}\label{interchange}
	\begin{tikzcd}[sep=large]
		B(C,D) \ar{d}[]{B(C,d)} \ar{r}{B(c,D)} & B(C',D) \ar{d}{B(C',d)} \\
		B(C,D') \ar{r}[]{B(c,D')} & B(C',D')
	\end{tikzcd}
	\end{equation}
	(Equivalently, it is a functor on the product category $\cat{C}\funtimes\cat{D}$.)
\end{definition}

We will also call a \newterm{bidiagram} a bifunctor in the form $\cat{J}^\op\funtimes\cat{J}\funto\cat{C}$. 
A canonical example is the hom functor $\cat{J}(-,-)$. 

\begin{lemma}\label{pow_functorial}
	Let $\cat{C}$ be a category with powers. 
	\begin{enumerate}
		\item For every set $S$, the assignment $X\mapsto X^S$ extends canonically to a functor $(-)^S:\cat{C}\funto\cat{C}$.
		\item For every object $X$ of $\cat{C}$, the assignment $S\mapsto X^S$ extends canonically to a functor $X^{(-)}:\cat{Set}^\op\funto\cat{C}$.
		\item The two assignment make the following diagram commute for all functions $f:S\to T$ and morphisms $g:X\to Y$, 
		\[
		\begin{tikzcd}
			X^T \ar{r}{g^T} \ar{d}{X^f} & Y^T \ar{d}{Y^f} \\
			X^S \ar{r}{g^S} & Y^S
		\end{tikzcd}
		\]
		giving a bifunctor $P:\cat{Set}^\op\funtimes\cat{C}\funto\cat{C}$.
	\end{enumerate}
	
	Dually, let $\cat{C}$ be a category with copowers. The assignment $(S,X)\mapsto S\cdot X$ is a bifunctor $P:\cat{Set}\funtimes\cat{C}\funto\cat{C}$.
\end{lemma}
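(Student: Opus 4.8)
The plan is to get all three parts at once from the representing property of powers together with the full faithfulness of the Yoneda embedding, exactly the move already used in the proof of \Cref{not_weighted}. Recall that, by the discussion of powers above, $X^S$ is by definition a representing object for the presheaf $A\mapsto\cat{C}(A,X)^S$; equivalently $\Yon(X^S)\cong\cat{C}(-,X)^S$ in $\sfuncat{\cat{C}^\op}$. The first step is to observe that the right-hand side is functorial not merely in $A$ but jointly in $S$ and $X$: the rule $(S,X)\mapsto\cat{C}(-,X)^S$ is a functor
\[
H\colon\cat{Set}^\op\funtimes\cat{C}\funto\sfuncat{\cat{C}^\op},
\]
because $\cat{C}(A,X)^S$ depends covariantly and functorially on $X$ (a morphism $g\colon X\to Y$ acts by post-composition, i.e.\ by the $S$-fold power $(g\circ-)^S$ of the function $\cat{C}(A,X)\to\cat{C}(A,Y)$), contravariantly and functorially on $S$ (a function acts by reindexing tuples), and these two actions commute. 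This last point --- the interchange law for $H$ --- is precisely the interchange law \eqref{interchange} for the $\cat{Set}$-valued bifunctor $(S,X)\mapsto\cat{C}(A,X)^S$, and it holds because composing each entry of a tuple with $g$ and reindexing that tuple are manifestly independent operations; this routine check is all the calculation the proof needs. Since $\cat{C}$ has powers, every value $H(S,X)=\cat{C}(-,X)^S$ is representable, the representing object being $X^S$.

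The second step is transport of structure. Fix, once and for all, a choice of power $X^S$ for each pair $(S,X)$ together with an isomorphism $\eta_{S,X}\colon\Yon(X^S)\xrightarrow{\cong}H(S,X)$ in $\sfuncat{\cat{C}^\op}$; this choice is what makes the extension \emph{canonical}. For a morphism $\phi$ of $\cat{Set}^\op\funtimes\cat{C}$, the composite $\eta^{-1}\circ H(\phi)\circ\eta$ is a morphism between representable presheaves, hence, $\Yon$ being fully faithful, it equals $\Yon(P(\phi))$ for a unique morphism $P(\phi)$ of $\cat{C}$. Because $\Yon$ is faithful, the resulting assignment preserves identities and composition (each such equation may be verified after applying $\Yon$, where it reduces to the corresponding equation for the functor $H$), so $P\colon\cat{Set}^\op\funtimes\cat{C}\funto\cat{C}$ is a bifunctor in the sense of \Cref{def_bifunctor}. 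Its two partial functors are exactly $(-)^S=P(S,-)\colon\cat{C}\funto\cat{C}$ of part (i) and $X^{(-)}=P(-,X)\colon\cat{Set}^\op\funto\cat{C}$ of part (ii), and the square in part (iii), for morphisms $f\colon S\to T$ and $g\colon X\to Y$, is precisely the interchange square \eqref{interchange} for $P$; it commutes because the corresponding square for $H$ does.

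For the dual statement about copowers I would not redo the argument but simply apply what has just been proved to the opposite category: a copower in $\cat{C}$ is a power in $\cat{C}^\op$ (indeed $S\cdot X$ in $\cat{C}$ is $X^S$ in $\cat{C}^\op$), so if $\cat{C}$ has copowers then $\cat{C}^\op$ has powers, and the opposite of the bifunctor $\cat{Set}^\op\funtimes\cat{C}^\op\funto\cat{C}^\op$ produced above is a bifunctor $\cat{Set}\funtimes\cat{C}\funto\cat{C}$, which is exactly $(S,X)\mapsto S\cdot X$.

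As for where the content sits: every step other than the transport of structure is bookkeeping, and the transport step is a purely formal consequence of full faithfulness of $\Yon$, so there is no genuine obstacle --- the only thing to be careful about is keeping the variances straight (contravariant in the set of indices, covariant in the object). If one wanted an even shorter route, parts (i) and (ii) also follow from the observation in \Cref{contra_weight} that weighted limits are covariantly functorial in the diagram and contravariantly functorial in the weight, applied to the one-object diagram $X\colon\cat{1}\funto\cat{C}$ weighted by $S\colon\cat{1}\funto\cat{Set}$; part (iii) then records the compatibility of these two functorialities.
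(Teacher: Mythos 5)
Your proof is correct, but it takes a genuinely different route from the paper's. You transport the bifunctor structure of $(S,X)\mapsto\cat{C}(-,X)^S$ back along the fully faithful Yoneda embedding, using representability of each value; the paper instead gives a direct, element-level construction inside $\cat{C}$, defining $g^S$ as the unique map with $c_{Y,s}\circ g^S=g\circ c_{X,s}$ for all $s$, defining $X^f$ as the unique map with $c_{X,s}\circ X^f=c'_{X,f(s)}$, and proving part (iii) by a diagram chase around a prism. The paper in fact explicitly acknowledges your kind of argument (``The result could be seen as following directly from functoriality of the cone presheaves, as we saw in \Cref{contra_weight}. We give a direct proof here, which will hopefully provide additional clarity.''), so you are carrying out the abstract route the author deliberately set aside. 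What your approach buys is economy and safety on variances: once the bifunctor $H$ into presheaves is checked (which is genuinely routine), functoriality and the interchange square for $P$ are automatic from faithfulness of $\Yon$, and the dual statement is a one-line passage to $\cat{C}^{\op}$. What the paper's approach buys is the explicit characterization of $g^S$ and $X^f$ in terms of the arrows of the universal cones; these formulas are used downstream, e.g.\ in \Cref{bidiag_from_wdiag} and \Cref{wedge_from_wcone}, where the commutativity of \eqref{comp_bidiag} is chased with the labels $c_J(w)$, $c_K(w)$, $c'_K(g_*w)$. If one adopts your proof, one should still record (by unwinding the Yoneda bijection at the universal element) that the transported morphisms satisfy exactly those identities; this is immediate but worth a sentence, since otherwise the later arguments have nothing concrete to chase.
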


The result could be seen as following directly from functoriality of the cone presheaves, as we saw in \Cref{contra_weight}.
We give a direct proof here, which will hopefully provide additional clarity.

\begin{proof}[Proof of \Cref{pow_functorial}]
	First, let's start with the power case. 
	\begin{enumerate}
		\item Let $S$ be a set, and let $g:X\to Y$ be a morphism of $\cat{C}$. For every $s\in S$, denote by $c_{X,s}:X^S\to X$ be the $s$-th arrow of the weighted cone of $X^S$, and by $c_{Y,s}:Y^S\to Y$ be the $s$-th arrow of the weighted cone of $Y^S$.
		Consider now the following diagram.
		\[
		\begin{tikzcd}[sep=large]
			X^S \ar[shift left,mgray]{d} \ar[shift right]{d}[swap]{c_{X,s}} \ar[virtual]{r} & Y^S \ar[shift left,mgray]{d} \ar[shift right]{d}[swap]{c_{Y,s}}  \\
			X \ar{r}[swap]{g} & Y
		\end{tikzcd}
		\]
		We have that for every $s\in S$, the composition $g\circ c_{X,s}$ gives an arrow $X^S\to Y$. We therefore have an $S$-tuple of arrows $X^S\to Y$, which by the universal property of $Y^S$ must factor uniquely through the weighted cone of $Y$. That is, there exists a unique morphism $u:X^S\to Y^S$ such that for all $s\in S$, $g\circ c_{X,s}=c_{Y,s}\circ u$.
		We denote this $u$ by $g^S:X^S\to Y^S$, which gives the action on morphisms of the desired functor $\cat{C}\funto\cat{C}$. Functoriality follows from uniqueness of $u$.
		
		\item Let $X$ be an object of $\cat{C}$, and let $f:S\to T$ be a function. Similarly to above, for $s\in S$ denote by $c_{X,s}:X^S\to X$ be the $s$-th arrow of the weighted cone of $X^S$, and for $t\in T$ denote by $c'_{X,t}:X^T\to X$ be the $t$-th arrow of the weighted cone of $X^T$.
		Construct now an $S$-tuple of arrows $(d_s:X^T\to X)_{s\in S}$ as follows (mind that we used both $S$ and $T$): for each $s\in S$, we set $d_s=c_{X,f(s)}:X^T\to X$. In other words, for $t\in T$,
		\begin{itemize}
			\item If $f^{-1}(t)$ has a single element $s\in S$, the map $c'_{X,t}:X^T\to X$ will appear in the tuple $(d_s)_{s\in S}$ exactly once, namely, as the $s$-th entry;
			\item If $f^{-1}(t)$ is a set with several elements, the map $c'_{X,t}:X^T\to X$ will appear in the tuple $(d_s)_{s\in S}$ exactly as many times as the elements of $f^{-1}(t)$;
			\item If $f^{-1}(t)$ is empty, the map $c'_{X,t}:X^T\to X$ will not appear in the tuple $(d_s)_{s\in S}$.
		\end{itemize}
		\[
		\begin{tikzpicture}[blend group=multiply,scale=0.6]
			\node (S1) at (3,3) {$\bullet$};
			\node (S2) at (2,2) {$\bullet$};
			\node (S3) at (1,1) {$\bullet$};
			\node (S4) at (0,0) {$\bullet$};
			\node (SA) at (0,0) {};
			\node (SB) at (3,3) {};
			\node [cbox, fit=(SA) (SB), inner sep=5mm] (S) {};
			\node [catlabel] at (S.south west) {$S$};
			
			\node (T1) at (12.5,2.5) {$\bullet_{t_1}$};
			\node (T2) at (11.5,1.5) {$\bullet_{t_2}$};
			\node (T3) at (10.5,0.5) {$\bullet_{t_3}$};
			\node (TA) at (10,0) {};
			\node (TB) at (13,3) {};
			\node [cbox, fit=(TA) (TB), inner sep=5mm] (T) {};
			\node [catlabel] at (T.south west) {$T$};
			
			\draw [mapsto,dgray,thin,shorten <=3mm,shorten >=3mm] (S1) -- (T1) ;
			\draw [mapsto,dgray,thin,shorten <=3mm,shorten >=3mm] (S2) -- (T2) ;
			\draw [mapsto,dgray,thin,shorten <=3mm,shorten >=3mm] (S3) -- (T2) ;
			\draw [mapsto,dgray,thin,shorten <=3mm,shorten >=3mm] (S4) -- (T2) ;
			
			\node (X1) at (0,-3) {$X$};
			\node (Y1) at (3,-6) {$Y$};
			\begin{scope}[transform canvas={xshift=6mm,yshift=6mm}]
				\draw [->,thin] (X1) -- (Y1) node[midway,above right,inner sep=-0.5mm] {\scriptsize{$c'_{t_1}$}};
			\end{scope}
			\begin{scope}[transform canvas={xshift=2mm,yshift=2mm}]
				\draw [->,thin] (X1) -- (Y1) node[midway,above right,inner sep=-0.5mm] {\scriptsize{$c'_{t_2}$}};
			\end{scope}
			\begin{scope}[transform canvas={xshift=-2mm,yshift=-2mm}]
				\draw [->,thin] (X1) -- (Y1) node[midway,above right,inner sep=-0.5mm] {\scriptsize{$c'_{t_2}$}};
			\end{scope}
			\begin{scope}[transform canvas={xshift=-6mm,yshift=-6mm}]
				\draw [->,thin] (X1) -- (Y1) node[midway,above right,inner sep=-0.5mm] {\scriptsize{$c'_{t_2}$}};
			\end{scope}
			\node (DA) at (0,-3) {};
			\node (DB) at (3,-6) {};
			\node [cbox, fit=(DA) (DB), inner sep=5mm] (D) {};
			\node [catlabel] at (D.south west) {$(d_s)_{s\in S}$};
			
			\node (X2) at (10,-3) {$X$};
			\node (Y2) at (13,-6) {$Y$};
			\begin{scope}[transform canvas={xshift=4mm,yshift=4mm}]
				\draw [->,thin] (X2) -- (Y2) node[midway,above right,inner sep=-0.5mm] {\scriptsize{$c'_{t_1}$}};
			\end{scope}
			\draw [->,thin] (X2) -- (Y2) node[midway,above right,inner sep=-0.5mm] {\scriptsize{$c'_{t_2}$}};
			\begin{scope}[transform canvas={xshift=-4mm,yshift=-4mm}]
				\draw [->,thin] (X2) -- (Y2) node[midway,above right,inner sep=-0.5mm] {\scriptsize{$c'_{t_3}$}};
			\end{scope}
			\node (CA) at (10,-3) {};
			\node (CB) at (13,-6) {};
			\node [cbox, fit=(CA) (CB), inner sep=5mm] (C) {};
			\node [catlabel] at (C.south west) {$(c'_{X,t})_{t\in T}$};
		\end{tikzpicture}
		\]
		Now by the universal property of $X^S$, there exists a unique map $u:X^T\to X^S$ such that for all $s\in S$, $d_s=c_{X,s}\circ u=c'_{X,f(s)}\circ u$:
		\[
		\begin{tikzcd}[row sep=large]
			X^T \ar[shift left,mgray]{dr} \ar[shift right]{dr}[swap,inner sep=0]{c'_{X,f(s)}} \ar[virtual]{rr} && X^S \ar[shift left]{dl}{c_{X,s}} \ar[shift right,mgray]{dl}  \\
			& X
		\end{tikzcd}
		\]
		We denote this map by $X^f:X^T\to X^S$, which gives the action on morphisms of the desired functor $\cat{Set}^\op\funto\cat{C}$. Again, functoriality follows from the uniqueness of $u$.
		
		\item Consider the following diagram, where we consider $S$-many arrows $X^S\to S$ and $Y^S\to Y$ (the universal weighted cones), as well as also $S$-many arrows $X^T\to X$ and $Y^T\to X$ (the ones we constructed in (ii) above.)
		\begin{equation}\label{prism}
		\begin{tikzcd}[column sep=small]
			X^T \ar[shift left,mgray]{ddr} \ar[shift right]{ddr}[swap,inner sep=0]{c'_{X,f(s)}} \ar{drr}[inner sep=0.5mm]{X^f} \ar{rrrr}{g^T} &&&& Y^T \ar{drr}{Y^f} \ar[shift left,mgray]{ddr} \ar[shift right]{ddr}[swap, pos=0.1,inner sep=0]{c'_{Y,f(s)}} \\
			&& X^S \ar[shift left]{dl}[inner sep=0.5mm]{c_{X,s}} \ar[shift right,mgray]{dl} \ar[crossing over]{rrrr}[swap,pos=0.4]{g^S} &&&& Y^S \ar[shift left]{dl}[inner sep=0.5mm]{c_{Y,s}} \ar[shift right,mgray]{dl} \\
			& X \ar{rrrr}[swap]{g} &&&& Y
		\end{tikzcd}
		\end{equation}
		We have to prove that the top parallelogram commutes. 
		Let's first show that all other faces commute for all $s$:
		\begin{itemize}
			\item First of all, the front (lower) parallelogram commutes, since by definition of $g^S$ in (i), $g\circ c_{X,s}=c_{Y,s}\circ g_S$;
			\item The left and right triangles commute by definition of $X^f$ and $Y^f$ in (ii);
			\item Finally, the back parallelogram commutes, since for every $s\in S$, $c'_{Y,t}\circ g^T = g\circ c'_{X,t}$. This follows from the definition of $g^T$ as in (i), where for every $t\in T$, $c'_{Y,t}\circ g^T = g\circ c'_{X,t}$.
		\end{itemize}
		This means that for all $s\in S$,
		\[
		c_{Y,s} \circ g^S\circ X^f \;=\; c_{Y,s}\circ Y^f \circ g^T .
		\]
		In other words, either path in the top of the diagram, post-composed with $c_{Y,s}$, gives the same map to $Y$. Since this is true for all $s$, we have a weighted cone over $Y$, and hence a unique map $X^T\to Y^S$ making the whole diagram commute. This implies that $g^S\circ X^f=\circ Y^f \circ g^T$.		
	\end{enumerate}
	
	It might be also helpful to sketch (ii) for the copower case. Given $X\in\cat{C}$ and a function $f:S\to T$, from the $T$-tuple of arrows $(c_t:X\to T\cdot X)_{t\in T}$ we can create the $S$-tuple of arrows $(c_{f(s)}:X\to T\cdot X)_{s\in S}$.
	By the universal properties of copowers, this induces a unique map $S\cdot X\to T\cdot X$.
\end{proof}

\begin{corollary}\label{bidiag_from_wdiag}
	Suppose $\cat{C}$ has powers.
	A diagram $D:\cat{J}\funto\cat{C}$ weighted by $W:\cat{J}\funto\cat{Set}$ induces a bidiagram $B_{D,W}:\cat{J}^\op\funtimes\cat{J}\funto\cat{C}$ by the following composition:
	\[
	\begin{tikzcd}[row sep=0]
		\cat{J}^\op\funtimes\cat{J} \ar[functor]{rr}{W^\op\funtimes D} && \cat{Set}^\op\funtimes\cat{C} \ar[functor]{r}{P} & \cat{C} \\
		\color{dgray} (J,K) \ar[mapsto,dgray]{rr} && \color{dgray} (WJ,DK) \ar[mapsto,dgray]{r} & \color{dgray} {DK\,}^{WJ} .
	\end{tikzcd}
	\]
	Moreover, this bidiagram is compatible with the original diagram in the following sense: by setting $X=DJ$, $S=WJ$, $Y=DK$, $T=WK$, $g=Dg$ and $f=Wg$ in \eqref{prism}, we have that for all $g:J\to K$ of $\cat{J}$, the respective paths in the following diagram commute,  
	\begin{equation}\label{comp_bidiag}
	\begin{tikzcd}[column sep=tiny, row sep=large]
		{DJ\,}^{WJ} \ar[shift right]{dr}[swap,pos=0.4]{c_J(w)} \ar[shift left,mgray]{dr} \ar{rr}{{Dg\,}^{WJ}} 
		 && {DK\,}^{WJ} \ar[shift right]{dr}[swap,pos=0.4]{c_K(w)} \ar[shift left,mgray]{dr} 
		 && {DK\,}^{WK} \ar{ll}[swap]{{DK\,}^{Wg}} \ar[shift right,mgray]{dl} \ar[shift left]{dl}[pos=0.4]{c'_K(g_*w)}  \\
		& DJ \ar{rr}[swap]{Dg}
		 && DK
	\end{tikzcd}
	\end{equation}
	where $c_J(w)$, etc.\ denote the arrows of the universal cones given by powers.
	
	Dually, if $\cat{C}$ has copowers, a diagram weighted by $W:\cat{J}^\op\funto\cat{Set}$ induces a bidiagram in the following form,
	\[
	\begin{tikzcd}[row sep=0]
		\cat{J}^\op\funtimes\cat{J} \ar[functor]{rr}{W\funtimes D} && \cat{Set}\funtimes\cat{C} \ar[functor]{r}{P} & \cat{C} \\
		\color{dgray} (J,K) \ar[mapsto,dgray]{rr} && \color{dgray} (WJ,DK) \ar[mapsto,dgray]{r} & \color{dgray} WJ\cdot DK .
	\end{tikzcd}
	\]
	compatible with the original diagram dually to above.
\end{corollary}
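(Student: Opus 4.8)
The plan is to reduce everything to \Cref{pow_functorial} together with the remark in \Cref{def_bifunctor} that a bifunctor is the same datum as an ordinary functor out of the product category. First I would observe that $W^\op$ is a functor $\cat{J}^\op\funto\cat{Set}^\op$ and $D$ a functor $\cat{J}\funto\cat{C}$, so $W^\op\funtimes D$ is a functor $\cat{J}^\op\funtimes\cat{J}\funto\cat{Set}^\op\funtimes\cat{C}$ on the product categories; by \Cref{pow_functorial}(iii), $P$ is a functor on $\cat{Set}^\op\funtimes\cat{C}$, hence the composite $B_{D,W}=P\circ(W^\op\funtimes D)$ is a functor on $\cat{J}^\op\funtimes\cat{J}$, that is, a bidiagram. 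Its interchange square \eqref{interchange} then holds automatically, since a composite of functors on product categories is again such a functor, so nothing extra need be verified. On objects, $B_{D,W}(J,K)=P(WJ,DK)={DK\,}^{WJ}$, as asserted.

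For the compatibility claim I would simply instantiate the prism \eqref{prism} from the proof of \Cref{pow_functorial} at $X=DJ$, $S=WJ$, $Y=DK$, $T=WK$, $g=Dg$, $f=Wg$, for a fixed morphism $g\colon J\to K$ of $\cat{J}$. Under this substitution $X^S={DJ\,}^{WJ}$, $Y^S={DK\,}^{WJ}$, $Y^T={DK\,}^{WK}$, $g^S={Dg\,}^{WJ}$ and $Y^f={DK\,}^{Wg}$, while for $s=w\in WJ$ we have $f(s)=(Wg)(w)=g_*w$, so the relevant cone legs $c_{X,w}$, $c_{Y,w}$, $c'_{Y,g_*w}$ are exactly $c_J(w)$, $c_K(w)$, $c'_K(g_*w)$. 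Now \eqref{comp_bidiag} is assembled from two faces of this prism whose commutativity was already established while proving \Cref{pow_functorial}: the square $Dg\circ c_J(w)=c_K(w)\circ{Dg\,}^{WJ}$ is the defining property of ${Dg\,}^{WJ}$ from part~(i) (the equation $g\circ c_{X,s}=c_{Y,s}\circ g^S$), and the triangle $c_K(w)\circ{DK\,}^{Wg}=c'_K(g_*w)$ is the defining property of ${DK\,}^{Wg}$ from part~(ii) (the equation $c_{Y,s}\circ Y^f=c'_{Y,f(s)}$). These two identities, holding for every $w\in WJ$, are exactly the commutativities asserted by \eqref{comp_bidiag}. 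The dual copower statement follows by the same instantiation applied to the copower bifunctor $\cat{Set}\funtimes\cat{C}\funto\cat{C}$, with $W$ a presheaf and all arrows reversed, using the copower version of part~(ii) sketched at the end of the proof of \Cref{pow_functorial}.

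Since all the substantive work is already contained in \Cref{pow_functorial}, I do not expect a genuine obstacle here; the only thing requiring care is the bookkeeping of the substitution — matching the corollary's index $g_*w$ with the prism's $f(s)$ under $f=Wg$, and being sure to cite the correct leg ($c$ versus $c'$) of each power cone — together with spelling out precisely which faces of \eqref{prism} get reused (only the lower-left parallelogram and the right-hand triangle; the top parallelogram, i.e.\ the identity $g^S\circ X^f=Y^f\circ g^T$, is not needed for \eqref{comp_bidiag} but is exactly what makes $B_{D,W}$ into a functor).
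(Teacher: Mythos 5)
Your proposal is correct and is exactly the argument the paper intends: the corollary is presented as an immediate consequence of \Cref{pow_functorial}, with the bidiagram obtained by composing the power bifunctor with $W^\op\funtimes D$ and the compatibility \eqref{comp_bidiag} read off from the front parallelogram and the right-hand triangle of the prism \eqref{prism} under the stated substitution. Your bookkeeping (matching $f(s)$ with $g_*w$, and identifying which legs are $c$ versus $c'$) is accurate, as is the remark that the top face of the prism is what yields bifunctoriality rather than \eqref{comp_bidiag}.
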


Let's now come back to our original weighted limit, expressed in terms of powers. By the previous corollary we have a pair of arrows as follows.
\begin{equation}\label{cone_diamond2}
\begin{tikzcd}[row sep=small, column sep=tiny]
	&& |[alias=T]| T \ar[virtual]{dll} \ar[virtual]{drr} \\ 
	|[alias=ONE]| {DJ\,}^{WJ} \ar[shift left, color=mgray]{ddr} \ar[shift right, color=mgray]{ddr} &&&& |[alias=THREE]| {DK\,}^{WK} \ar[shift left, color=mgray]{ddl} \ar[shift right, color=mgray]{ddl} \\ 
	&& |[alias=TWO]| {DK\,}^{WJ} \\  
	&|[alias=DJ]| \color{mgray} DJ \ar[mgray]{rr}[swap]{Dg} && |[alias=DK]| \color{mgray} DK
	\ar[from=T, to=DJ, shift left, color=mgray]
	\ar[from=T, to=DJ, shift right, color=mgray]
	\ar[from=T, to=DK, shift left=1, color=mgray]
	\ar[from=T, to=DK, shift right=1, color=mgray]
	\ar[from=ONE, to=TWO, "{Dg\,}^{WJ}"'{inner sep=0.5mm}]
	\ar[from=THREE, to=TWO, "{DK\,}^{Wg}"{inner sep=0mm}]
\end{tikzcd}
\end{equation}
Moreover, as we will show, the diamond above commutes. 
Let's again fix some terminology. 

\begin{definition}\label{def_end1}
	A \newterm{wedge} over a bidiagram $B:\cat{J}^\op\funtimes\cat{J}\funto\cat{C}$ is an object $T$ together with arrows $c_J:T\to B(J,J)$ for all objects $J\in\cat{J}$, such that for every morphism $g:J\to K$ of $\cat{J}$, the following diagram commutes:
	\[
	\begin{tikzcd}[sep=small]
		& T \ar{dl}[swap]{z_J} \ar{dr}{z_K} \\
		B(J,J) \ar{dr}[swap]{B(J,g)} && B(K,K) \ar{dl}{B(g,K)} \\
		& B(J,K)
	\end{tikzcd}
	\]
	A \newterm{co-wedge} is an object $T$ together with arrows $c_J:B(J,J)\to T$ such that for every $g:J\to K$  the following diagram commutes:
	\[
	\begin{tikzcd}[sep=small]
		& B(K,J) \ar{dl}[swap]{B(g,J)} \ar{dr}{B(K,g)} \\
		B(J,J) \ar{dr}[swap]{z_J} && B(K,K) \ar{dl}{z_K} \\
		& T
	\end{tikzcd}
	\]
\end{definition}

\begin{remark}\label{not_a_cone}
	Notice that a wedge over $B$, in general:
	\begin{itemize}
		\item is not a \emph{cone} over $B$ (seeing $B$ as a diagram indexed by $\cat{J}^\op\funtimes\cat{J}$): it only has arrows to the ``diagonal'' objects $T\to B(J,J)$, and not, for example, $T\to B(J,K)$ for $J\ne K$;
		\item is also not exactly a \emph{weighted} cone, weighted by the empty set at $J\ne K$: a weighted cone would need to have, for all morphisms $g:J\to K$ of $\cat{J}$, also an arrow as the one in the middle, making both triangles commute: 
		\[
		\begin{tikzcd}[sep=small]
			& T \ar[virtual]{dl} \ar[virtual]{dr} \ar[virtual]{dd} \\
			B(J,J) \ar{dr}[swap]{B(J,g)} && B(K,K) \ar{dl}{B(g,K)} \\
			& B(J,K)
		\end{tikzcd}
		\]
		Indeed, recall that ``a virtual arrow can be composed with a real arrow and give another virtual arrow''.
		(We would have a single arrow for both triangles, since the diamond commutes.)
		However, if we take into account those induced arrows, we \emph{do} obtain a weighted cone canonically, see \Cref{doch_a_cone}.
	\end{itemize}
\end{remark}

\begin{remark}\label{wedgepsh}
	 Wedges, just as cones and cocones, are closed under precomposition with morphisms of $\cat{C}$. Therefore they define a presheaf on $\cat{C}$, analogous to the one of cones. Let's denote it by $\mathrm{Wedge}(B,-):\cat{C}^\op\funto\cat{Set}$.
\end{remark}

We can now make the diamond in \eqref{cone_diamond2} commute:

\begin{lemma}\label{wedge_from_wcone}
	Let $\cat{C}$ be a category with powers. Let $D:\cat{J}\funto\cat{C}$ be a diagram weighted by $\cat{W}:\cat{J}\funto\cat{Set}$, and consider the bidiagram $B_{D,W}:\cat{J}^\op\funtimes\cat{J}\funto\cat{Set}$ constructed in \Cref{bidiag_from_wdiag}.
	Given an object $T$ of $\cat{C}$, there is a bijection between 
	\begin{enumerate}
		\item $W$-weighted cones over $D$ with tip $T$, and
		\item wedges over $B_{D,W}$ with tip $T$.
	\end{enumerate}
	This bijection is moreover natural in $T$, as it commutes with precomposition by arrows of $\cat{C}$.
\end{lemma}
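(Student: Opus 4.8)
The plan is to reduce the statement to the description of $W$-weighted cones as natural families from \Cref{wcone_is_nat}, and then match that description objectwise against wedges using the universal property of powers. First I would recall that a $W$-weighted cone over $D$ with tip $T$ is the same datum as a family of functions $\phi_J : WJ \to \cat{C}(T,DJ)$ natural in $J$. Fixing $J$ and ignoring naturality for the moment, the universal property of the power ${DJ\,}^{WJ} = B_{D,W}(J,J)$ says that such a function --- equivalently a $WJ$-indexed tuple of arrows $T \to DJ$ --- corresponds to a single arrow $z_J : T \to {DJ\,}^{WJ}$, uniquely determined by $c_J(w)\circ z_J = \phi_J(w)$ for all $w\in WJ$, where $c_J(w)$ denotes the $w$-th projection of the power (in the notation of \eqref{comp_bidiag}). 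So objectwise the data of a $W$-weighted cone and of a wedge over $B_{D,W}$ already agree, and it remains only to match the two side conditions.

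The heart of the argument is then to show that, under this correspondence, naturality of $\phi$ in $J$ is equivalent to the wedge condition on $(z_J)$. For a morphism $g:J\to K$ of $\cat{J}$ the wedge condition is the equation $B_{D,W}(J,g)\circ z_J = B_{D,W}(g,K)\circ z_K$ between two maps $T \to {DK\,}^{WJ}$; by the universal property of ${DK\,}^{WJ}$ it suffices to test it against each projection of that power. Using the defining equations of ${Dg\,}^{WJ} = B_{D,W}(J,g)$ and ${DK\,}^{Wg} = B_{D,W}(g,K)$ from \Cref{pow_functorial}, equivalently the commuting triangles of \eqref{comp_bidiag}, I would compute that composing the left-hand side with the $w$-th projection yields $Dg\circ\phi_J(w)$, while composing the right-hand side with the same projection yields $\phi_K(Wg(w))$. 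Hence the wedge condition holds for all $g$ if and only if $Dg\circ\phi_J(w) = \phi_K(Wg(w))$ for all $g$ and all $w$, which is exactly the naturality square for $\phi$.

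Finally I would check naturality in $T$. Precomposing a $W$-weighted cone with a morphism $h:T'\to T$ of $\cat{C}$ replaces each $\phi_J$ by $w\mapsto\phi_J(w)\circ h$, while precomposing the corresponding wedge replaces each $z_J$ by $z_J\circ h$; since $c_J(w)\circ(z_J\circ h) = (c_J(w)\circ z_J)\circ h = \phi_J(w)\circ h$, these two operations agree under the bijection. Thus the correspondence is natural in $T$, i.e.\ it assembles into an isomorphism of presheaves $\mathrm{Cone}^W(D,-)\cong\mathrm{Wedge}(B_{D,W},-)$ on $\cat{C}$. The dual statement --- a bijection between $W$-weighted co-cones and co-wedges --- is proved in the same way, with copowers in place of powers.

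I expect the only real obstacle to be bookkeeping: one must keep straight the three families of projections --- those of ${DJ\,}^{WJ}$, of ${DK\,}^{WJ}$, and of ${DK\,}^{WK}$ --- and invoke the correct power-functoriality identity at each step. Once \eqref{comp_bidiag} is available the verification is a short, idea-free diagram chase, so no genuinely new difficulty arises beyond what is already packaged in \Cref{pow_functorial} and \Cref{bidiag_from_wdiag}.
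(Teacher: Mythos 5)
Your proposal is correct and follows essentially the same route as the paper's proof: identify each $WJ$-indexed family $\phi_J$ (equivalently, the cone arrows $T\to DJ$) with a single arrow $z_J:T\to {DJ\,}^{WJ}$ via the universal property of powers, then test the wedge equation $B_{D,W}(J,g)\circ z_J=B_{D,W}(g,K)\circ z_K$ against the projections of ${DK\,}^{WJ}$ using the identities of \eqref{comp_bidiag} to see that it is equivalent to naturality of $\phi$, i.e.\ to the weighted-cone condition. The naturality-in-$T$ check is also the same; no gaps.
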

\begin{proof}
	Consider the diagram \eqref{comp_bidiag} involving $D$ and $B_{D,W}$, where the respective paths commute. 
	As we saw in the beginning of this section, a $W$-weighted cone over $D$ consists first of all of, for each $J$, a $WJ$-indexed tuple of arrows $(t_J(w):T\to DJ)_{w\in WJ}$. These are in bijection with single arrows $T\to {DJ\,}^{WJ}$, let's call these arrows $z_J$:
	\[
	\begin{tikzcd}[column sep=tiny]
		&& |[alias=T]| T \ar[virtual]{dll}[swap]{z_J} \ar[virtual]{drr}{z_K} \\ 
		|[alias=ONE]| {DJ\,}^{WJ} \ar[shift left, color=mgray]{ddr} \ar[shift right]{ddr}[swap]{c_J(w)} &&&& |[alias=THREE]| {DK\,}^{WK} \ar[shift left]{ddl}{c_K(g_*w)} \ar[shift right, color=mgray]{ddl} \\ 
		&& |[alias=TWO]| {DK\,}^{WJ} \\ 
		&|[alias=DJ]| DJ \ar{rr}[swap]{Dg} && |[alias=DK]| DK
		\ar[from=T, to=DJ, shift left, color=mgray]
		\ar[from=T, to=DJ, shift right, "t_J(w)"'{pos=0.32,inner sep=0mm}]
		\ar[from=T, to=DK, shift left=1, "t_K(g_*w)"{pos=0.32,inner sep=0mm}]
		\ar[from=T, to=DK, shift right=1, color=mgray]
		\ar[from=ONE, to=TWO, "{Dg\,}^{WJ}"{inner sep=0mm,near start},shorten=-1.5mm,crossing over]
		\ar[from=THREE, to=TWO, "{DK\,}^{Wg}"'{inner sep=-0.2mm,near start},shorten >=-1mm, crossing over]
		\ar[from=TWO, to=DK, shift left, color=mgray]
		\ar[from=TWO, to=DK, shift right, "c_K(w)"'{near start,inner sep=0.2mm}]
	\end{tikzcd}
	\]
	We now have to show that the $(z_J)_{J\in\cat{J}}$ form a wedge over $B_{D,W}$ if and only if the $(t_J(w):T\to DJ)_{w\in WJ, J\in\cat{J}}$ form a weighted cone. That is, chasing the diagram above, we have to show that the top diamond commutes if and only if the respective paths in the back tall triangle commute:
	\begin{itemize}
		\item First of all, suppose that $(t_J(w):T\to DJ)_{w\in WJ, J\in\cat{J}}$ is a weighted cone. That is, for every $g:J\to K$ of $\cat{J}$ and every $w\in WJ$, $t_J(w)=Dg\circ t_K(g_*w)$. Therefore
		\begin{align*}
			c_K(w)\circ {Dg\,}^{WJ} \circ z_J \;&=\; Dg\circ c_J(w)\circ z_J \\
			&=\; Dg\circ t_J(w) \\
			&=\; t_K(g_*w) \\
			&=\; c_K(g_*w)\circ z_K \\
			&=\; c_K(w)\circ{DK\,}^{Wg} \circ z_K .
		\end{align*}
		Since the equality above holds for every $w\in WJ$, we have equal tuples of arrows $T\to DK$, which means equal single arrows $T:{DK\,}^{WK}$. That is,
		\[
		{Dg\,}^{WJ} \circ z_J \;=\; {DK\,}^{Wg} \circ z_K .
		\]
		This holds for all $g$, which means that we have a wedge.
		\item Conversely, suppose that $(z_J)_{J\in\cat{J}}$ is wedge. Then for all $g:J\to K$, ${Dg\,}^{WJ} \circ z_J \;=\; {DK\,}^{Wg} \circ z_K$. Therefore for all $w\in WJ$,
		\begin{align*}
		Dg\circ t_J(w) \;&=\; Dg\circ c_J(w)\circ z_J \\
		&=\; c_K(w)\circ {Dg\,}^{WJ}\circ z_J \\
		&=\; c_K(w)\circ {DK\,}^{Wg}\circ z_K \\
		&=\; c_K(g_*w)\circ z_K \\
		&= t_K(g_*w) ,
		\end{align*}
		which means that we have a weighted cone.
	\end{itemize}
	Naturality, i.e.\ compatibility with precomposition, is immediate.
\end{proof}

Keeping again our main task in mind, we now want to say that if we start with a weighted \emph{limit} cone, the resulting wedge is universal too. Here is the precise term for that.

\begin{definition}
	An \newterm{end} of a bidiagram $B:\cat{J}^\op\funtimes\cat{J}\funto\cat{C}$, if it exists, is a terminal wedge, and is denoted by 
	\[
	\End{J\in\cat{J}} B(J,J) .
	\]
	Explicitly, it means that given any wedge $T'$ over $B$ there exists a unique morphism $u$ making the following diagram commute:
	\[
	\begin{tikzcd}[sep=small]
		& T' \ar[virtual]{dd}[pos=0.6]{u} \ar{dddl} \ar{dddr} \\\\
		& \End{J} B(J,J) \ar{dl} \ar{dr} \\
		B(J,J) \ar{dr}[swap]{B(J,g)} && B(K,K) \ar{dl}{B(g,K)} \\
		& B(J,K)
	\end{tikzcd}
	\]
	Dually, a \newterm{coend} is an initial co-wedge, and is denoted by 
	\[
	\Coend{J\in\cat{J}} B(J,J) .
	\]
\end{definition}

Equivalently, an end is a representing object for the presheaf $\mathrm{Wedge}(B,-)$ of \Cref{wedgepsh}.

Coming back again to our weighted colimit \eqref{cone_diamond2},
we are finally ready to express $T$ as limit of powers: it's their \emph{end}. 
Here is the precise statement.

\begin{theorem}\label{coend_formula}
	Let $\cat{C}$ be a category with (small) powers. Let $D:\cat{J}\funto\cat{C}$ be a functor with weights $W:\cat{J}\funto\cat{Set}$. Then
	\[
	\lim_{J\in\cat{J}}\big\langle WJ, DJ \big\rangle \;\cong\; \End{J\in\cat{J}} {DJ\,}^{WJ} .
	\]
	Dually, let $\cat{C}$ be a category with (small) copowers. Let $D:\cat{J}\funto\cat{C}$ be a functor with weights $W:\cat{J}^\op\funto\cat{Set}$. Then
	\[
	\colim_{J\in\cat{J}}\big\langle WJ, DJ \big\rangle \;\cong\; \Coend{J\in\cat{J}} WJ \cdot DJ .
	\]
\end{theorem}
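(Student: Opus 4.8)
The plan is to reduce the statement to the bijection between weighted cones and wedges already established in \Cref{wedge_from_wcone}, in exactly the same way that \Cref{not_weighted} was reduced to \Cref{lim_el}. First I would recall that, since $\cat{C}$ has powers, the bidiagram $B_{D,W}:\cat{J}^\op\funtimes\cat{J}\funto\cat{C}$ of \Cref{bidiag_from_wdiag} is defined, so that the expression $\End{J\in\cat{J}} {DJ\,}^{WJ}$ is meaningful: by definition it is a representing object for the presheaf $\mathrm{Wedge}(B_{D,W},-):\cat{C}^\op\funto\cat{Set}$ of \Cref{wedgepsh}. On the other hand, by \Cref{def_wlim} together with \Cref{psh_weighted}, the weighted limit $\lim_{J\in\cat{J}}\langle WJ,DJ\rangle$ is, by definition, a representing object for the presheaf $\mathrm{Cone}^W(D,-):\cat{C}^\op\funto\cat{Set}$.

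The core of the argument is then \Cref{wedge_from_wcone}, which provides, for every object $T$ of $\cat{C}$, a bijection between $W$-weighted cones over $D$ with tip $T$ and wedges over $B_{D,W}$ with tip $T$, and asserts that this bijection is natural in $T$, i.e.\ commutes with precomposition by morphisms of $\cat{C}$. Spelling out what ``natural'' means here, this says precisely that $\mathrm{Cone}^W(D,-)$ and $\mathrm{Wedge}(B_{D,W},-)$ are isomorphic as presheaves on $\cat{C}$. Hence one is representable if and only if the other is, and whenever they are representable their representing objects are isomorphic. Since a representing object for $\mathrm{Cone}^W(D,-)$ is by definition the $W$-weighted limit of $D$, and a representing object for $\mathrm{Wedge}(B_{D,W},-)$ is by definition $\End{J\in\cat{J}} {DJ\,}^{WJ}$, the two coincide, which is the first claim.

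The colimit/coend half is handled dually: assuming $\cat{C}$ has copowers, the dual part of \Cref{bidiag_from_wdiag} supplies the bidiagram $(J,K)\mapsto WJ\cdot DK$, the dual part of \Cref{wedge_from_wcone} supplies a natural bijection between $W$-weighted co-cones over $D$ with tip $T$ and co-wedges over this bidiagram with tip $T$, hence an isomorphism of the corresponding set functors on $\cat{C}$; representability then transfers along this isomorphism, identifying the $W$-weighted colimit of $D$ with $\Coend{J\in\cat{J}} WJ\cdot DJ$.

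As for the main obstacle: essentially all the mathematical content already sits inside \Cref{wedge_from_wcone} and \Cref{bidiag_from_wdiag}, so what remains is bookkeeping rather than new work. The step I would be most careful about is checking that the bijection of \Cref{wedge_from_wcone} really is natural \emph{on the nose}, so that it genuinely gives an isomorphism of presheaves (and not merely an isomorphism up to some further mediating natural transformation), and that the two invocations of ``representing object''—one defining the weighted limit, one defining the end—refer to literally the same universal-property notion, so that transporting along the presheaf isomorphism carries the universal weighted cone to the universal wedge. I do not expect any genuine difficulty beyond confirming these identifications.
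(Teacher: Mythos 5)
Your proposal is correct and follows exactly the route the paper takes: it invokes \Cref{wedge_from_wcone} to identify the presheaves $\mathrm{Cone}^W(D,-)$ and $\mathrm{Wedge}(B_{D,W},-)$ naturally, and then transports representability, with the dual argument for coends. The naturality concern you flag is already settled in the statement and proof of \Cref{wedge_from_wcone}, so no further work is needed.
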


\begin{proof}
	\Cref{wedge_from_wcone} gives a natural bijection between  $\mathrm{Wedge}(B_{D,W},-)$ and $\mathrm{Cone}^W(D,-)$.
	Therefore the representing objects, if they exist, are isomorphic, and so a $W$-weighted limit of $D$ is equivalently an end of $B_{D,W}$.
\end{proof}

Because of this theorem, it is very common, in categories such as $\cat{Set}$, to write weighted limits and colimits as ends and coends of powers and copowers. 

\begin{example}
	The Yoneda reduction for a functor $D:\cat{J}\funto\cat{C}$ (\Cref{ninja}) can be expressed as follows, provided $\cat{C}$ has powers and copowers:
	\[
	\End{J\in\cat{J}} {DK\,}^{\cat{J}(J,K)} \;\cong\; DJ \;\cong\;\Coend{J\in\cat{J}} \cat{J}(K,J)\cdot DK .
	\]
\end{example}

\begin{remark}	
	Compare the cones of a weighted product (\Cref{wprod}) and of a kernel pair (\Cref{kernel_pair}):
	\[
	\begin{tikzcd}[column sep=small]
		& X^2\times Y \ar[shift left]{dl} \ar[shift right]{dl} \ar{dr} \\
		X && Y
	\end{tikzcd}
	\qquad\qquad
	\begin{tikzcd}[column sep=small]
		& \ker(f) \ar[shift left]{dl} \ar[shift right]{dl} \ar{dr} \\
		X \ar{rr}[swap]{f} && Y
	\end{tikzcd}
	\]
	We see that in the first case, the cone simply consists of a collection of ``projection'' arrows, while in the second case, not any collection of arrows counts as a cone: it needs to make some triangles commute. 
	(The situation is analogous in the case of ordinary limits: a limit is \emph{like} a product, but some triangles must commute in addition.)
	Let's look at this in general: consider the difference between a weighted product and a generic weighted limit, expressed as an end of powers:
	\[
	\prod_{t\in T} {D(t)\,}^{W(t)} \qquad\qquad\mbox{vs.}\qquad\qquad \End{J\in\cat{J}} {DJ\,}^{WJ}
	\]
	If we replace the end sign by a product, we get exactly a weighted product (indexed by the objects of $\cat{J}$). 
	So, a possible interpretation of an end is that \emph{it is like a product, except that some additional things must commute.} These additional constraints come from the morphisms of $\cat{J}$. (In $\cat{Set}$, as usual for limits, we have a \emph{subset} of a product.)

	Similarly, weighted colimits, expressed as coends of copowers, are similar to weighted sums (\Cref{wsum}):
	\[
	\coprod_{t\in T} W(t)\cdot D(t) \qquad\qquad\mbox{vs.}\qquad\qquad  \Coend{J\in\cat{J}} WJ\cdot DJ
	\]
	Again, the difference is that on the right-hand side some triangles, induced by the morphisms of $\cat{J}$, must commute. (In $\cat{Set}$, as usual for colimits, we have a \emph{quotient} of a disjoint union.)
	This idea that a coend is \emph{almost like a sum}  is possibly the reason why it is customary to write it with an integral sign: after all, an integral is also \emph{almost like a sum}.\footnote{There are deeper analogies between coends and integrals, which are beyond the scope of this work. Some of them were explored in \cite{fosco}, some of them in \cite{perronetholen2022kan}, and some of them still remain to be explored.}
\end{remark}

Ends and coends are also interesting beyond the case of \Cref{coend_formula}: they are more than just a convenient presentation of weighted limits. 
Here is an example of an end which is not (on the nose) in the form of \Cref{coend_formula}. For way more theory and examples, see \cite{fosco}.

\begin{example}[Set of natural transformations]\label{nat_set}
	Let $\cat{C}$ be a small category, and consider functors $F,G:\cat{C}\funto\cat{D}$. 
	The set of natural transformations $F\Rightarrow G$ is given by the following end:
	\[
	\End{c\in\cat{C}} \cat{D}(FC,GC) .
	\]
	To see this, consider a one-point set forming a wedge over the bidiagram $\cat{D}(F-,G-)$: 
	First of all we need for all $C\in\cat{C}$ a function $\alpha_C:1\to \cat{D}(FG,GC)$, i.e.\ a morphism $\alpha_C:FC\to GC$ of $\cat{D}$. 
	Moreover, for every morphism $c:C\to C'$, the following diagram has to commute.
	\[
	\begin{tikzcd}[sep=small]
		& 1 \ar{dl}[swap]{\alpha_C} \ar{dr}{\alpha_{C'}} \\
		\cat{D}(FC,GC) \ar{dr}[swap]{Gc\circ-} && \cat{D}(FC',GC') \ar{dl}{-\circ Fc} \\
		& \cat{D}(FC, GC')
	\end{tikzcd}
	\]
	This means precisely that the arrows $\alpha_C$ need to make the following diagram commute:
	\[
	\begin{tikzcd}
		FC\ar{r}{\alpha_C} \ar{d}{Fc} & GC \ar{d}{Gc} \\
		FC' \ar{r}{\alpha_{C'}} & GC'
	\end{tikzcd}
	\]
	This needs to hold for all $c:C\to C'$, that is, they need to form a natural transformation. The end is then the set of all of them. 
\end{example}

To conclude this section, let's give an equivalent definition of ends and coends as particular weighted limits and colimits. 

\begin{theorem}\label{def_end2}
	Let $B:\cat{J}^\op\funtimes\cat{J}\funto\cat{C}$. 
	The end of $B$, if it exists, is equivalently the weighted limit 
	\[
	\End{J\in\cat{J}} B(J,J) \;\cong\; \lim_{J,K} \big\langle\cat{J}(J,K), B(J,K)\big\rangle .
	\]
	The coend of $B$, if it exists, is equivalently the weighted colimit 
	\[
	\Coend{J\in\cat{J}} B(J,J) \;\cong\; \colim_{J,K} \big\langle\cat{J}(K,J), B(J,K)\big\rangle .
	\]
\end{theorem}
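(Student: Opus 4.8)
The plan is to show that both sides of the claimed identity represent the \emph{same} presheaf on $\cat{C}$ --- the presheaf of wedges $\mathrm{Wedge}(B,-)$ of \Cref{wedgepsh} --- and then invoke uniqueness of representing objects. I focus throughout on the end/limit case; the coend/colimit statement follows by the dual argument (equivalently, by applying the limit case to the bifunctor $\cat{J}\funtimes\cat{J}^\op\funto\cat{C}^\op$). To unwind the weighted-limit side, regard $B$ as a diagram indexed by $\cat{J}^\op\funtimes\cat{J}$ and $\cat{J}(-,-)$ as a weighting $\cat{J}^\op\funtimes\cat{J}\funto\cat{Set}$ (it is covariant in its second slot and contravariant in its first, hence a genuine set functor on $\cat{J}^\op\funtimes\cat{J}$). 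Then \Cref{def_wlim} says that $\lim_{J,K}\big\langle\cat{J}(J,K),B(J,K)\big\rangle$ is an object representing
\[
A \;\longmapsto\; \sfuncat{\cat{J}^\op\funtimes\cat{J}}\big(\cat{J}(-,-),\,\cat{C}(A,B(-,-))\big),
\]
and by \Cref{wcone_is_nat} an element of this set is precisely a $\cat{J}(-,-)$-weighted cone over $B$ with tip $A$, i.e.\ a family of maps $\phi_{J,K}\colon\cat{J}(J,K)\to\cat{C}(A,B(J,K))$ natural in $(J,K)$.

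The key step is a lemma: for each object $A$ of $\cat{C}$, and naturally in $A$, such families $\phi$ correspond bijectively to wedges over $B$ with tip $A$. Given $\phi$, set $z_J\coloneqq\phi_{J,J}(\id_J)$. Applying naturality of $\phi$ to the morphism $(\id_J,g)\colon(J,J)\to(J,K)$ of $\cat{J}^\op\funtimes\cat{J}$ (for $g\colon J\to K$), evaluated at $\id_J$, gives $\phi_{J,K}(g)=B(J,g)\circ z_J$; applying it to $(g,\id_K)\colon(K,K)\to(J,K)$ evaluated at $\id_K$ gives $\phi_{J,K}(g)=B(g,K)\circ z_K$. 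Hence $B(J,g)\circ z_J=B(g,K)\circ z_K$ for all $g$, which is exactly the wedge condition of \Cref{def_end1}, so $(z_J)_J$ is a wedge. Conversely, from a wedge $(z_J)_J$ define $\phi_{J,K}(g)\coloneqq B(J,g)\circ z_J$ (this equals $B(g,K)\circ z_K$ by the wedge condition); naturality of $\phi$ in $(J,K)$ follows from bifunctoriality of $B$, factoring $B(\phi,\psi)=B(\phi,K')\circ B(J,\psi)$ as in \eqref{interchange}, together with one further application of the wedge condition. The two assignments are mutually inverse, since $\phi_{J,J}(\id_J)=B(J,\id_J)\circ z_J=z_J$, while conversely $\phi_{J,K}(g)=B(J,g)\circ\phi_{J,J}(\id_J)$ is exactly the identity just derived from naturality. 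Naturality in $A$ is immediate: a morphism $A'\to A$ acts on both sides by precomposition of all the arrows involved.

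Finally I would conclude. By the lemma, the presheaf represented by $\lim_{J,K}\big\langle\cat{J}(J,K),B(J,K)\big\rangle$ is naturally isomorphic to $\mathrm{Wedge}(B,-)$; but by definition the end of $B$ is a terminal wedge, equivalently a representing object for $\mathrm{Wedge}(B,-)$ (cf.\ \Cref{wedgepsh}). Objects representing naturally isomorphic presheaves are isomorphic, with corresponding universal elements, so $\End{J\in\cat{J}}B(J,J)\cong\lim_{J,K}\big\langle\cat{J}(J,K),B(J,K)\big\rangle$, the isomorphism matching the universal wedge with the universal weighted cone. The dual statement for coends and weighted colimits is obtained by reversing all arrows.

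I expect the main obstacle to be the variance bookkeeping inside the key lemma: keeping track of which factor of $\cat{J}^\op\funtimes\cat{J}$ is contravariant when translating the single naturality square for $\phi$ into the two ``halves'' $B(J,g)\circ z_J$ and $B(g,K)\circ z_K$ of the wedge condition, and, in the reverse direction, reconstructing full naturality of $\phi$ from the one wedge axiom via the interchange law \eqref{interchange}. Everything else is routine unwinding of the definitions of weighted limit, wedge, and end, plus the standard principle that naturally isomorphic representable presheaves have isomorphic representing objects.
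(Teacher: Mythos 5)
Your proposal is correct and follows essentially the same route as the paper: the key lemma you isolate (wedges with tip $A$ correspond naturally to $\cat{J}(-,-)$-weighted cones over $B$ with tip $A$, via $z_J=\phi_{J,J}(\id_J)$ and $\phi_{J,K}(g)=B(J,g)\circ z_J=B(g,K)\circ z_K$) is precisely the paper's \Cref{doch_a_cone}, and the conclusion by matching representing objects of $\mathrm{Wedge}(B,-)$ is the paper's proof of \Cref{def_end2}. Your explicit derivation of the formula $\phi_{J,K}(g)=B(J,g)\circ\phi_{J,J}(\id_J)$ from naturality at $(\id_J,g)$ and $(g,\id_K)$ is a clean way to see that the two assignments are mutually inverse, and the variance bookkeeping you flag is handled correctly.
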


The main argument in the proof is given by the following statement, which we hinted at in \Cref{not_a_cone}.

\begin{lemma}\label{doch_a_cone}
	Let $B:\cat{J}^\op\funtimes\cat{J}\funto\cat{C}$ be a bifunctor.
	There is a bijection between 
	\begin{enumerate}
		\item wedges over $B$ (with tip $T$), and 
		\item cones over $B$ (with tip $T$), seeing $B$ as a diagram, weighted by the hom-functor:
		\[
		\begin{tikzcd}[row sep=0]
			\cat{J}^\op\funtimes\cat{J} \ar[functor]{r}{W} & \cat{Set} \\
			\color{dgray} (J,K) \ar[mapsto,dgray]{r} & \color{dgray} \cat{J}(J,K) .
		\end{tikzcd}
		\]
	\end{enumerate}
	This bijection is moreover natural in $T$.
\end{lemma}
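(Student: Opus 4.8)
The plan is to prove this directly, by exhibiting the bijection and checking it is natural in $T$, in the same spirit as the proof of \Cref{wcone_is_nat} (of which this is essentially a special case, taking as indexing category $\cat{J}^\op\funtimes\cat{J}$ and as weight the hom-bifunctor). First I would recall, via \Cref{psh_weighted} applied to the diagram $B$ with weight $W=\cat{J}(-,-):\cat{J}^\op\funtimes\cat{J}\funto\cat{Set}$, that a $W$-weighted cone over $B$ with tip $T$ is the same datum as a natural transformation $\phi\colon\cat{J}(-,-)\Rightarrow\cat{C}(T,B(-,-))$ of functors $\cat{J}^\op\funtimes\cat{J}\funto\cat{Set}$, i.e.\ a family $\phi_{(J,K)}\colon\cat{J}(J,K)\to\cat{C}(T,B(J,K))$ natural in both $J$ and $K$. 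So the task reduces to matching such families with wedges.

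From a weighted cone $\phi$ to a wedge, set $z_J\coloneqq\phi_{(J,J)}(\id_J)\colon T\to B(J,J)$. To see the $z_J$ form a wedge, fix $g\colon J\to K$ and apply naturality of $\phi$ to the two morphisms $(\id_J,g)\colon(J,J)\to(J,K)$ and $(g,\id_K)\colon(K,K)\to(J,K)$ of $\cat{J}^\op\funtimes\cat{J}$ (here the one arrow $g$ of $\cat{J}$ occupies the covariant slot in the first case and the contravariant slot in the second). The first gives $\phi_{(J,K)}(g)=B(J,g)\circ z_J$, the second $\phi_{(J,K)}(g)=B(g,K)\circ z_K$, and comparing the two yields exactly the wedge identity $B(J,g)\circ z_J=B(g,K)\circ z_K$. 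Conversely, from a wedge $(z_J)_J$ I would define $\phi_{(J,K)}(g)\coloneqq B(J,g)\circ z_J$ for $g\in\cat{J}(J,K)$; one checks this family is natural in $(J,K)$ by expanding a general morphism $(a,b)$ of $\cat{J}^\op\funtimes\cat{J}$ using functoriality of $B$ in each variable, the interchange law \eqref{interchange}, and --- crucially --- the wedge identity for $a$. These two assignments are mutually inverse: cone $\to$ wedge $\to$ cone recovers $\phi$ since $B(J,g)\circ\phi_{(J,J)}(\id_J)=\phi_{(J,K)}(g)$ by naturality of $\phi$, and wedge $\to$ cone $\to$ wedge recovers $(z_J)$ since $B(J,\id_J)\circ z_J=z_J$. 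Naturality in $T$ is immediate: precomposing a wedge, respectively a weighted cone, with $f\colon T'\to T$ amounts in both pictures to postcomposing each $z_J$, respectively each $\phi_{(J,K)}(g)$, with $f$.

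The main obstacle --- really the only non-formal point --- is the bookkeeping with the two variances. One must keep careful track that $W=\cat{J}(-,-)$ and $B$ are both contravariant in the first and covariant in the second argument, that a morphism $(J,J)\to(J,K)$ of $\cat{J}^\op\funtimes\cat{J}$ is a pair consisting of an arrow of $\cat{J}^\op$ and an arrow of $\cat{J}$ (so that the ``same'' $g$ of $\cat{J}$ may populate either slot), and, when verifying naturality of the $\phi$ built from a wedge, that the interchange law lets one factor $B(a,b)$ as $B(a,K')\circ B(J,b)=B(J',b)\circ B(a,K)$ so the wedge condition can be inserted at the right spot. A secondary subtlety worth spelling out is that the wedge identity is precisely what makes the two ``obvious'' definitions $B(J,g)\circ z_J$ and $B(g,K)\circ z_K$ agree, so that the choice made in defining $\phi$ from a wedge is harmless; the genuine content then sits in the naturality check, not in well-definedness. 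Once the variance conventions are pinned down, the remaining verifications are routine, and they establish, as required, a natural isomorphism $\mathrm{Wedge}(B,-)\cong\mathrm{Cone}^{W}(B,-)$ of presheaves on $\cat{C}$.
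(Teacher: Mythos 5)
Your proposal is correct and follows essentially the same route as the paper: both directions use the same formulas $z_J=c_{J,J}(\id_J)$ and $c_{J,K}(g)=B(J,g)\circ z_J=B(g,K)\circ z_K$, with the wedge identity guaranteeing the two expressions agree and naturality in $(J,K)$ following from functoriality of $B$. The only cosmetic difference is that the paper checks naturality in each variable by picking whichever of the two equal expressions makes that check pure functoriality, whereas you fix one expression and compensate with the interchange law plus the wedge identity; both verifications go through.
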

\begin{proof}[Proof of \Cref{doch_a_cone}]
	Let $\big(z_J:T\to B(J,J)\big)_{J\in\cat{J}}$ be a wedge. As hinted at in \Cref{not_a_cone}, we can complete it to a weighted cone.
	In order to have a cone with the desired weighting, we need first of all assignments as follows for all $J,K\in\cat{J}$.
	\[
	\begin{tikzcd}
		\cat{J}(J,K) \ar{r}{c_{J,K}} & \cat{C}\big( T, B(J,K) \big) .
	\end{tikzcd}
	\]
	That is, for every $g:J\to K$ of $\cat{J}$ we need an arrow $c_{J,K}(g):T\to B(J,K)$ of $\cat{C}$.
	We then need to show that this assignment is natural in $J$ and $K$.
	Let's now define $c_{J,K}$ from the wedge $(z_J)$ as the unique vertical arrow making the following diagram commute,
	\begin{equation}\label{cone_diamond}
	\begin{tikzcd}[sep=small]
		& T \ar{dl}[swap]{z_J} \ar{dr}{z_K} \ar[virtual]{dd}{c_{J,K}} \\
		B(J,J) \ar{dr}[swap]{B(J,g)} && B(K,K) \ar{dl}{B(g,K)} \\
		& B(J,K)
	\end{tikzcd}
	\end{equation}
	that is, we set 
	\[
	c_{J,K} \;\coloneqq\; B(J,g)\circ z_J \;=\; B(g,K)\circ z_K,
	\]
	where the last equality holds since $(z_J)$ is a wedge. 
	Note that this gives potentially several arrows $T\to B(J,K)$ (not necessarily distinct), one for each $g:J\to K$.
	In particular, for $J=K$, we potentially get several arrows $T\to B(J,J)$, induced by the endomorphisms $J\to J$. (The arrow of the wedge $c_J$ is the one induced by the identity of $J$.)
	To see that our assignment is natural in $J$, let $j:J\to J'$ be a morphism of $\cat{J}$. Then the following diagram commutes,
	\[
	\begin{tikzcd}[sep=small]
		\color{gray} g \ar[mapsto,dgray]{ddddd} \ar[mapsto,dgray]{rrrrr} &&&&& |[overlay,xshift=5mm]| \color{gray} B(g,K)\circ z_{K} \ar[mapsto,dgray]{ddddd} \\
		& \cat{J}(J',K) \ar{rrr}{c_{J',K}} \ar{ddd}[swap]{-\circ j} &&& \cat{C}\big( T, B(J',K) \big) \ar{ddd}{B(j,K)\circ -} \\ \\ \\ 
		& \cat{J}(J,K) \ar{rrr}[swap]{c_{J,K}} &&& \cat{C}\big( T, B(J,K) \big) \\
		\color{dgray} g\circ j \ar[mapsto,dgray]{rrrrr} &&&&& |[overlay,xshift=5mm]| \color{dgray} B(g\circ j,K)\circ z_K = B(j,K)\circ  B(g,K)\circ z_{K} \qquad\qquad\qquad\qquad
	\end{tikzcd}
	\qquad
	\]
	by (contravariant) functoriality of $B$ in the first variable.	Naturality in $K$ is similar.
	Therefore, $(c_{J,K})$ is a weighted cone. 
	
	Conversely, let $(c_{J,K})_{J,K\in\cat{J}}$ be a weighted cone. We can extract a wedge $(z_J)_{J\in\cat{J}}$ by setting 
	\[
	z_J \;\coloneqq\; c_{J,J}(\id_J) .
	\] 
	To see that this is indeed a wedge, notice that for all $g:J\to K$ of $\cat{J}$, notice that
	\[
	B(J,g)\circ z_J \;=\; B(J,g)\circ c_{J,J}(\id_J) \;=\; c_{J,K}(g) \;=\; B(g,K)\circ c_{K,K}(\id_K) \;=\; B(g,K)\circ z_K,
	\]
	where the middle equalities follow from the fact that $c$ is a weighted cone (i.e.\ it makes triangles analogous to the ones in  \eqref{cone_diamond} commute).
	
	This way we have the desired bijection between wedges and weighted cones with tip $T$.
	Naturality in $T$ now means that this bijection respects precomposition of cones and wedges with morphisms $T'\to T$. 
\end{proof}

\begin{proof}[Proof of \Cref{def_end2}]
	By \Cref{doch_a_cone}, the presheaf $\mathrm{Wedge}(B,-)$ is naturally isomorphic to the one of weighted cones over $B$ (weighted by the hom functor).
	Therefore the representing objects must coincide. 
\end{proof}

\subsection{Pointwise Kan extensions}\label{ptwise_kan}

An important special case of weighted limits and colimits are \emph{pointwise Kan extensions}. 

Let's briefly review ordinary Kan extensions. (For the readers who are unfamiliar with them, we recommend \cite[Chapter~6]{riehl2016category}.)

Let $D:\cat{J}\funto\cat{C}$ and $G:\cat{J}\funto\cat{K}$ be functors. 
A \newterm{right Kan extension} of $D$ along $G$ is a
functor $R:\cat{K}\funto\cat{C}$ together with a natural transformation $\rho:R\circ G\Rightarrow D$
\[
\begin{tikzcd}[column sep=large]
	\cat{J} \ar[functor]{dd}[swap]{G} \ar[functor,""{name=UR,below}]{dr}{D} \\
	& \cat{C} \\
	|[alias=DL]| \cat{K} \ar[functor]{ur}[swap]{R}
	\ar[Rightarrow,"\rho"{swap,inner sep=0.2mm, pos=0.6}, from=DL, to=UR, shorten <=4.5mm, shorten >=2mm]
\end{tikzcd}
\]
such that for every (other) functor $F:\cat{K}\funto\cat{C}$ and natural transformation $\alpha:F\circ I\Rightarrow D$ there exists a unique natural transformation $\nu:F\Rightarrow R$ such that the following equality holds.
\[
\begin{tikzcd}[column sep=large]
	\cat{J} \ar[functor]{dd}[swap]{G} \ar[functor,""{name=UR,below}]{dr}{D} \\
	& \cat{C} \\
	|[alias=DL]| \cat{K} \ar[functor,bend right=45,""{name=F}]{ur}[swap]{F}
	\ar[Rightarrow,"\phi"{swap,inner sep=0.5mm}, from=DL, to=UR, shift right=2mm, shorten <=3.5mm, shorten >=2mm]
\end{tikzcd}
\qquad=\qquad
\begin{tikzcd}[column sep=large]
	\cat{J} \ar[functor]{dd}[swap]{G} \ar[functor,""{name=UR,below}]{dr}{D} \\
	& \cat{C} \\
	|[alias=DL]| \cat{K} \ar[functor,""{name=DP}]{ur}[inner sep=0.2mm,pos=0.75]{R} \ar[functor,bend right=45,""{name=F}]{ur}[swap]{F}
	\ar[Rightarrow,"\rho"{inner sep=0.2mm,pos=0.55}, from=DL, to=UR, shorten <=4.5mm, shorten >=2mm]
	\ar[ds,Rightarrow,"\nu"{inner sep=0.5mm, pos=0.2}, from=F, to=DP, shorten <=-1mm, shorten >=1mm]
\end{tikzcd}
\]
Dually, a left Kan extension is a functor $L:\cat{K}\funto\cat{C}$ natural transformation $\lambda:D\Rightarrow R\circ G$ such that for every $F:\cat{K}\funto\cat{C}$ and $\alpha:D\Rightarrow F\circ G$ there exists a unique $\nu:L\Rightarrow F$ such that the following holds.
\[
\begin{tikzcd}[column sep=large]
	\cat{J} \ar[functor]{dd}[swap]{G} \ar[functor,""{name=UR,below}]{dr}{D} \\
	& \cat{C} \\
	|[alias=DL]| \cat{K} \ar[functor,bend right=45,""{name=F}]{ur}[swap]{F}
	\ar[Rightarrow,"\phi"{inner sep=0.5mm, pos=0.4}, from=UR, to=DL, shift left=2mm, shorten <=2.5mm, shorten >=4mm]
\end{tikzcd}
\qquad=\qquad
\begin{tikzcd}[column sep=large]
	\cat{J} \ar[functor]{dd}[swap]{G} \ar[functor,""{name=UR,below}]{dr}{D} \\
	& \cat{C} \\
	|[alias=DL]| \cat{K} \ar[functor,""{name=DP}]{ur}[inner sep=0.2mm,pos=0.75]{L} \ar[functor,bend right=45,""{name=F}]{ur}[swap]{F}
	\ar[Rightarrow,"\lambda"{swap,inner sep=0.5mm, pos=0.45}, from=UR, to=DL, shorten <=2.5mm, shorten >=4mm]
	\ar[ds,Rightarrow,"\nu"{swap,inner sep=0.5mm, pos=0.7}, from=DP, to=F, shorten <=1.2mm, shorten >=-1.2mm]
\end{tikzcd}
\]

\emph{Pointwise} Kan extensions are now particularly well behaved Kan extensions. 

\begin{definition}\label{def_ptwise_kan}
	Let $D:\cat{J}\funto\cat{C}$ and $G:\cat{J}\funto\cat{K}$ be functors.
	
	\begin{itemize}
		\item A right Kan extension $R$ of $D$ along $G$ is \newterm{pointwise} if for all objects $K$ of $\cat{K}$,
		\begin{equation}\label{ptwise_ran}
		RK \;\cong\; \lim_{J\in\cat{J}} \big\langle \cat{K}(K,GJ) , DJ \big\rangle .
		\end{equation}
		\item A left Kan extension $L$ of $D$ along $G$ is \newterm{pointwise} if for all objects $K$ of $\cat{K}$,
		\begin{equation}\label{ptwise_lan}
		LK \;\cong\; \colim_{J\in\cat{J}} \big\langle \cat{K}(GJ,K) , DJ \big\rangle .
		\end{equation}
	\end{itemize}
\end{definition}

Once again, the definition at first might not look particularly suggestive. So let's interpret this in our usual way. The idea is that the weight functor (resp.\ presheaf), i.e.\ the ``virtual arrows in $\cat{J}$'', come from real arrows of $\cat{K}$, in the form $K\to GJ$:
\[
\begin{tikzcd}[column sep=small,%
	blend group=multiply,
	/tikz/execute at end picture={
		\node [cbox, fit=(J) (JP), inner sep=5mm] (CL) {};
		\node [catlabel] at (CL.south west) {$\cat{J}$};
		\node [cbox, fit=(GJ) (GJP) (K), inner sep=5mm] (CR) {};
		\node [catlabel] at (CR.south west) {$\cat{K}$};
	}]
	&&&& & \bullet \ar[virtual,shift right]{dl} \ar[virtual,shift left]{dl} \ar[virtual,shift right]{dr} \ar[virtual,shift left]{dr} \\
	&&&& |[alias=J]| J \ar{rr}[swap]{g} & |[alias=CJ]| & |[alias=JP]| J' \\ \\
	& |[alias=K]| K \ar[shift right]{dl} \ar[shift left]{dl} \ar[shift right]{dr} \ar[shift left]{dr} \\
	|[alias=GJ]| GJ \ar{rr}[swap]{Gg} & |[alias=CK]| & |[alias=GJP]| GJ'
	\ar[mapsto, dgray, "G", from=CJ, to=CK, shorten <=4mm, shorten >=3.5mm]
\end{tikzcd}
\]
That is, the virtual arrows to $J$ are exactly modeled after the real arrows $K\to GJ$ in $\cat{K}$. 
Now the weighted limit of $D$ with these weights is a universal weighted cone with this shape, i.e.\ we are trying to optimally (terminally) fit arrows \emph{like the ones in $\cat{K}$ from $K$ to the image of $\cat{J}$, except that we are in the category $\cat{C}$}:
\begin{equation}\label{kan_in}
\begin{tikzcd}[column sep=small,%
	blend group=multiply,
	/tikz/execute at end picture={
		\node [cbox, fit=(J) (JP), inner sep=5mm] (CL) {};
		\node [catlabel] at (CL.south west) {$\cat{J}$};
		\node [cbox, fit=(GJ) (GJP) (K), inner sep=5mm] (CT) {};
		\node [catlabel] at (CT.south west) {$\cat{K}$};
		\node [cbox, fit=(DJ) (DJP) (T), inner sep=5mm] (CR) {};
		\node [catlabel] at (CR.south west) {$\cat{C}$};
	}]
	&&&& & \bullet \ar[virtual,shift right]{dl} \ar[virtual,shift left]{dl} \ar[virtual,shift right]{dr} \ar[virtual,shift left]{dr} \\
	&&&& |[alias=J]| J \ar{rr}[swap]{g} & |[alias=CJ]| & |[alias=JP]| J' \\ \\
	& |[alias=K]| K \ar[shift right]{dl} \ar[shift left]{dl} \ar[shift right]{dr} \ar[shift left]{dr}
	&&& &&&& & |[alias=T]| RK \ar[shift right]{dl} \ar[shift left]{dl} \ar[shift right]{dr} \ar[shift left]{dr} \\
	|[alias=GJ]| GJ \ar{rr}[swap]{Gg} & |[alias=CK]| & |[alias=GJP]| GJ'
	&& &&&& |[alias=DJ]| DJ \ar{rr}[swap]{Dg} & |[alias=CC]| & |[alias=DJP]| DJ'
	\ar[mapsto, dgray, "G"', from=CJ, to=CK, shorten <=4mm, shorten >=3.5mm]
	\ar[mapsto, dgray, "D", from=CJ, to=CC, shorten <=4mm, shorten >=3.5mm]
	\ar[mapsto, dgray, "R"', from=K, to=T, shorten <=2mm, shorten >=2mm]
\end{tikzcd}
\end{equation}

Let's now check that this definition indeed gives us a Kan extension in the usual sense.

\begin{theorem}\label{pkan_are_kan}
	Any functor $R:\cat{K}\funto\cat{C}$ satisfying \eqref{ptwise_ran}, and where its action on morphisms is induced by the universal property, is necessarily a right Kan extension.
	
	Dually, any functor $L:\cat{K}\funto\cat{C}$ satisfying \eqref{ptwise_lan}, and where its action on morphisms is induced by the universal property, is necessarily a left Kan extension.
\end{theorem}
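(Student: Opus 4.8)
The plan is to exhibit the structure map $\rho:R\circ G\Rightarrow D$ explicitly and then verify the universal property of the right Kan extension by hand. Write $RK=\lim_{J\in\cat{J}}\langle\cat{K}(K,GJ),DJ\rangle$ and let $c^{K}_{J,h}:RK\to DJ$, one for each $h\in\cat{K}(K,GJ)$, be the legs of its universal weighted cone; by \Cref{wcone_is_nat} these satisfy $Dg\circ c^{K}_{J,h}=c^{K}_{J',\,Gg\circ h}$ for every $g:J\to J'$. The hypothesis that the action of $R$ on a morphism $k:K\to K'$ is ``induced by the universal property'' means, unwinding the contravariant functoriality of weighted cones in the weight (\Cref{contra_weight}, \Cref{psh_weighted}), exactly that $Rk:RK\to RK'$ is the unique arrow with $c^{K'}_{J,h'}\circ Rk=c^{K}_{J,\,h'\circ k}$ for all $J$ and all $h':K'\to GJ$. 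With this in hand I would set $\rho_{J}:=c^{GJ}_{J,\id_{GJ}}:RGJ\to DJ$; its naturality is a two-line check, since for $g:J\to J'$ both $Dg\circ\rho_{J}$ and $\rho_{J'}\circ R(Gg)$ collapse, via the two displayed identities, to $c^{GJ}_{J',\,Gg}$.

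For the universal property, fix $F:\cat{K}\funto\cat{C}$ and $\alpha:FG\Rightarrow D$. By \Cref{wcone_is_nat} and \Cref{psh_weighted}, an arrow $FK\to RK$ is the same datum as a family of maps $\cat{K}(K,GJ)\to\cat{C}(FK,DJ)$ natural in $J$; I would feed in the family $h\mapsto\alpha_{J}\circ Fh$, whose naturality in $J$ is immediate from naturality of $\alpha$ (both composites equal $\alpha_{J'}\circ FGg\circ Fh$). This produces $\nu_{K}:FK\to RK$ characterised by $c^{K}_{J,h}\circ\nu_{K}=\alpha_{J}\circ Fh$. Naturality of $\nu$ in $K$ is then checked by composing $Rk\circ\nu_{K}$ and $\nu_{K'}\circ Fk$ with each leg $c^{K'}_{J,h'}$ and applying the two characterising identities (both sides give $\alpha_{J}\circ Fh'\circ Fk$); and the compatibility $\rho_{J}\circ\nu_{GJ}=\alpha_{J}$ is just the case $K=GJ$, $h=\id_{GJ}$ of the identity defining $\nu$.

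For uniqueness, the key observation is that every leg of the universal cone of $RK$ factors through $\rho$: for $h:K\to GJ$ one has $c^{K}_{J,h}=\rho_{J}\circ Rh$, which follows on putting $k=h$ and $h'=\id_{GJ}$ in the identity that defines $Rh$. Given any $\nu':F\Rightarrow R$ with $\rho_{J}\circ\nu'_{GJ}=\alpha_{J}$, naturality of $\nu'$ at the morphism $h:K\to GJ$ yields $c^{K}_{J,h}\circ\nu'_{K}=\rho_{J}\circ Rh\circ\nu'_{K}=\rho_{J}\circ\nu'_{GJ}\circ Fh=\alpha_{J}\circ Fh=c^{K}_{J,h}\circ\nu_{K}$, and since $RK$ is a terminal weighted cone this forces $\nu'_{K}=\nu_{K}$. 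The dual statement for left Kan extensions is formally identical with all arrows reversed. I expect the only real obstacle to be organisational bookkeeping --- keeping the double index $(J,h)$ of the cone legs straight and, above all, pinning down precisely how the induced morphism action of $R$ behaves: the identity $c^{K}_{J,h}=\rho_{J}\circ Rh$ is the hinge of the whole argument, and it is exactly here that the hypothesis ``action on morphisms induced by the universal property'' is used essentially rather than decoratively. A slicker but essentially equivalent route would instead apply \Cref{hom_wlim} to obtain $\cat{C}(C,R-)\cong\mathrm{Ran}_{G}\cat{C}(C,D-)$ in $\cat{Set}$ naturally in $C$, and then invoke that a representably pointwise Kan extension is a Kan extension; but unwinding that last step costs about as much as the direct verification.
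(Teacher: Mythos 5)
Your proof is correct and follows essentially the same route as the paper's: $\rho_J$ is the identity-indexed leg of the limit cone at $K=GJ$, and $\nu_K$ is the unique arrow induced by the weighted cone $h\mapsto\alpha_J\circ Fh$ over $D$ with tip $FK$. The one place you go beyond the paper is the uniqueness step: the paper only invokes uniqueness of the map $FK\to RK$ compatible with all the cone legs, whereas the Kan-extension universal property requires uniqueness among natural transformations $\nu'$ satisfying merely $\rho\circ(\nu' G)=\phi$. Your identity $c^{K}_{J,h}=\rho_J\circ Rh$, combined with naturality of $\nu'$ at $h:K\to GJ$, is exactly what reduces the weaker hypothesis to the stronger cone-compatibility condition, and this is a genuine (if small) gap in the paper's argument that your write-up closes.
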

\begin{proof}[Proof of \Cref{pkan_are_kan}]
	Let $R:\cat{K}\funto\cat{C}$ be satisfying \eqref{ptwise_ran}.
	To show that $R$ is a right Kan extension in the usual sense, we first of all have to give a natural transformation $\rho:R\circ G\Rightarrow D$. 
	In components, we need maps $\rho_J: RGJ\to DJ$.
	Now instantiating \eqref{kan_in} for $K=GJ$, the identity $GJ\to GJ$ gives a distinguished arrow among the (possibly many) ones of the limit cone:
	\[
	\begin{tikzcd}[column sep=small,%
		blend group=multiply,
		/tikz/execute at end picture={
			\node [cbox, fit=(J) (JP), inner sep=5mm] (CL) {};
			\node [catlabel] at (CL.south west) {$\cat{J}$};
			\node [cbox, fit=(GJ) (GJP) (K), inner sep=5mm] (CT) {};
			\node [catlabel] at (CT.south west) {$\cat{K}$};
			\node [cbox, fit=(DJ) (DJP) (T), inner sep=5mm] (CR) {};
			\node [catlabel] at (CR.south west) {$\cat{C}$};
		}]
		&&&& & \bullet \ar[virtual,shift right]{dl} \ar[virtual,cgray,shift left]{dl} \ar[virtual,cgray,shift right]{dr} \ar[virtual,cgray,shift left]{dr} \\
		&&&& |[alias=J]| J \ar{rr}[swap]{g} & |[alias=CJ]| & |[alias=JP]| J' \\ \\
		& |[alias=K]| GJ \ar[shift right]{dl}[swap]{\id} \ar[mgray,shift left]{dl} \ar[mgray,shift right]{dr} \ar[mgray,shift left]{dr}
		&&& &&&& & |[alias=T]| RGJ \ar[shift right]{dl} \ar[mgray,shift left]{dl} \ar[mgray,shift right]{dr} \ar[mgray,shift left]{dr} \\
		|[alias=GJ]| GJ \ar{rr}[swap]{Gg} & |[alias=CK]| & |[alias=GJP]| GJ'
		&& &&&& |[alias=DJ]| DJ \ar{rr}[swap]{Dg} & |[alias=CC]| & |[alias=DJP]| DJ'
		\ar[mapsto, dgray, "G"', from=CJ, to=CK, shorten <=4mm, shorten >=3.5mm]
		\ar[mapsto, dgray, "D", from=CJ, to=CC, shorten <=4mm, shorten >=3.5mm]
	\end{tikzcd}
	\]
	We take that as our component $\rho_J$. (Naturality holds since the analogous diagram commutes in $\cat{J}$.)
	
	Let's now turn to the universal property of the Kan extension. Consider $F$ and $\phi$ as follows. 
	\[
	\begin{tikzcd}[column sep=large]
		\cat{J} \ar[functor]{dd}[swap]{G} \ar[functor,""{name=UR,below}]{dr}{D} \\
		& \cat{C} \\
		|[alias=DL]| \cat{K} \ar[functor,bend right=45,""{name=F}]{ur}[swap]{F}
		\ar[Rightarrow,"\phi"{swap,inner sep=0.5mm}, from=DL, to=UR, shift right=2mm, shorten <=3.5mm, shorten >=2mm]
	\end{tikzcd}
	\]
	We have to show that there is a unique natural transformation $\nu:F\Rightarrow D^+$ such that $\rho\circ(\nu G)=\phi$.
	We can depict $F$ and $\phi$ as follows.
	\[
	\begin{tikzcd}[column sep=small,%
		blend group=multiply,
		/tikz/execute at end picture={
			\node [cbox, fit=(J) (JP), inner sep=5mm] (CL) {};
			\node [catlabel] at (CL.south west) {$\cat{J}$};
			\node [cbox, fit=(GJ) (GJP) (K), inner sep=5mm] (CT) {};
			\node [catlabel] at (CT.south west) {$\cat{K}$};
			\node [cbox, fit=(DJ) (DJP) (T) (FGJ), inner sep=5mm] (CR) {};
			\node [catlabel] at (CR.south west) {$\cat{C}$};
		}]
		&&&& & \bullet \ar[virtual,shift right]{ddl} \ar[virtual,shift left]{ddl} \ar[virtual,shift right]{ddr} \ar[virtual,shift left]{ddr} \\ \\
		&&&& |[alias=J]| J \ar{rr}[swap]{g} & |[alias=CJ]| & |[alias=JP]| J' \\ \\
		& |[alias=K]| K \ar[shift right]{ddl} \ar[shift left]{ddl} \ar[shift right]{ddr} \ar[shift left]{ddr}
		&&& &&&& & |[alias=T]| RK \ar[shift right]{ddl} \ar[shift left]{ddl} \ar[shift right]{ddr} \ar[shift left]{ddr} \\ 
		&&&& &&& |[alias=FK]| FK \ar[shift right]{ddl} \ar[shift left]{ddl} \ar[shift right]{ddr} \ar[shift left]{ddr} \\
		|[alias=GJ]| GJ \ar{rr}[swap]{Gg} & |[alias=CK]| & |[alias=GJP]| GJ'
		&& &&&& |[alias=DJ]| DJ \ar{rr}[swap]{Dg} & |[alias=CC]| & |[alias=DJP]| DJ' \\
		&&&& && |[alias=FGJ]| FGJ \ar{urr}[swap]{\phi_J} \ar{rr}[swap]{FGg} && |[alias=FGJP]| FGJ' \ar{urr}[swap]{\phi_{J'}}
		\ar[mapsto, dgray, "G"', from=CJ, to=CK, shorten <=2mm, shorten >=3.5mm]
		\ar[mapsto, dgray, "D", from=CJ, to=CC, shorten <=4mm, shorten >=3.5mm]
		\ar[mapsto, dgray, "F"', from=GJP, to=FGJ, shorten <=0mm, shorten >=3mm, shift left=7mm]
	\end{tikzcd}
	\]
	Recall now that $RK$ is a terminal weighted cone for all $K\in\cat{K}$.
	If we postcompose the arrows $FK\to FGJ$ with the components $\phi_J:FGJ\to DJ$, we get a weighted cone over $D$ with tip $FK$. Therefore, since $RK$ is a weighted limit, there is a unique map $FK\to RK$ making the respective paths in the following diagram commute:
	\[
	\begin{tikzcd}[column sep=small]
		&& & |[alias=T]| RK \ar[shift right]{ddl} \ar[shift left]{ddl} \ar[shift right]{ddr} \ar[shift left]{ddr} \\ 
		& |[alias=FK]| FK \ar[shift right]{ddl} \ar[shift left]{ddl} \ar[shift right]{ddr} \ar[shift left]{ddr} \\
		&& |[alias=DJ]| DJ \ar{rr}[swap]{Dg} & |[alias=CC]| & |[alias=DJP]| DJ' \\
		 |[alias=FGJ]| FGJ \ar{urr}[swap]{\phi_J} \ar{rr}[swap]{FGg} && |[alias=FGJP]| FGJ' \ar{urr}[swap]{\phi_{J'}}
		\ar[virtual, from=FK, to=T]
	\end{tikzcd}
	\]
	We take this map as the component at $K$ of the natural transformation $\nu:F\Rightarrow R$. 
	Naturality follows from the universal property, and moreover, setting $K=GJ$ in the diagram above and chasing it,
	\[
	\begin{tikzcd}[column sep=small]
		&& & |[alias=T]| RGJ \ar[shift right]{ddl}{\rho_J} \ar[shift left,mgray]{ddl} \ar[shift right,mgray]{ddr} \ar[shift left,mgray]{ddr} \\ 
		& |[alias=FK]| FGJ \ar[shift right]{ddl}[swap]{F\id=\id} \ar[shift left,mgray]{ddl} \ar[shift right,mgray]{ddr} \ar[shift left,mgray]{ddr} \\
		&& |[alias=DJ]| DJ \ar{rr}[swap]{Dg} & |[alias=CC]| & |[alias=DJP]| DJ' \\
		 |[alias=FGJ]| FGJ \ar{urr}[swap]{\phi_J} \ar{rr}[swap]{FGg} && |[alias=FGJP]| FGJ' \ar{urr}[swap]{\phi_{J'}}
		\ar[virtual, from=FK, to=T, "\nu_{GJ}"]
	\end{tikzcd}
	\]
	we have that $\rho_J\circ\nu_{GJ}=\phi_J\circ\id=\phi_J$. That holds for all $J$, so that
	\[
	\rho\circ(\nu G) = \phi ,
	\]
	making $(R,\rho)$ a right Kan extension.
\end{proof}

To conclude this section, looking at the diagrams \eqref{wlim_ext}, which we re-propose:
\[
\begin{tikzcd}[row sep=small]
	\cat{J} \ar[functor,hook]{dd}[swap]{I} \ar[functor]{dr}{D} \\
	& \cat{C} \\
	\cat{J}^{+W} \ar[functor]{ur}[swap]{D^+}
\end{tikzcd}
\qquad\qquad\qquad
\begin{tikzcd}[row sep=small]
	\cat{J} \ar[functor,hook]{dd}[swap]{I} \ar[functor]{dr}{D} \\
	& \cat{C} \\
	\cat{J}_{+W} \ar[functor]{ur}[swap]{D_+}
\end{tikzcd}
\]
one may suspect that weighted cones and cocones are actually Kan extensions. They are, and pointwise too.

\begin{theorem}\label{wlim_are_pkan}
	Let $D:\cat{J}\funto\cat{C}$ be a diagram weighted by $W:\cat{J}\funto\cat{Set}$.
	A $W$-weighted limit cone of $D$ is exactly a pointwise right Kan extension of $D$ along the inclusion functor $I:\cat{J}\funto\cat{J}^{+W}$:
	\[
	\begin{tikzcd}[column sep=large]
		\cat{J} \ar[functor,hook]{dd}[swap]{I} \ar[functor,""{name=UR,below}]{dr}{D} \\
		& \cat{C} \\
		|[alias=DL]| \cat{J}^{+W} \ar[functor]{ur}[swap]{D^+}
		\ar[Rightarrow,"\rho"{swap,inner sep=0.5mm, pos=0.6}, from=DL, to=UR, shorten <=4.5mm, shorten >=2mm]
	\end{tikzcd}
	\]
	Dually, given $W:\cat{J}^\op\funto\cat{Set}$, a $W$-weighted colimit cone of $D$ is exactly a pointwise left Kan extension of $D$ along the inclusion $I$:
	\[
	\begin{tikzcd}[column sep=large]
		\cat{J} \ar[functor,hook]{dd}[swap]{I} \ar[functor,""{name=UR,below}]{dr}{D} \\
		& \cat{C} \\
		|[alias=DL]| \cat{J}_{+W} \ar[functor]{ur}[swap]{D_+}
		\ar[Rightarrow,"\lambda"{inner sep=0.5mm, pos=0.35}, from=UR, to=DL, shorten <=2mm, shorten >=4.5mm]
	\end{tikzcd}
	\]
\end{theorem}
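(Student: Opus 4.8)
The plan is to reduce the statement to machinery already developed, using that both sides live over the same underlying data. By \Cref{def_wcone}, a functor $D^+:\cat{J}^{+W}\funto\cat{C}$ that agrees with $D$ on $\cat{J}$ is exactly a $W$-weighted cone over $D$, with tip $D^+\!E$; and since $I:\cat{J}\funto\cat{J}^{+W}$ is a subcategory inclusion along which $D^+$ restricts to $D$ on the nose, the structure natural transformation $\rho:D^+\circ I\Rightarrow D$ of the Kan extension is (naturally taken to be) the identity. So on both sides of the claimed equivalence we are looking at functors $\cat{J}^{+W}\funto\cat{C}$ extending $D$, and the task is to show that such a $D^+$ is a pointwise right Kan extension of $D$ along $I$ precisely when the corresponding weighted cone is terminal.

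First I would evaluate the pointwise formula \eqref{ptwise_ran} for $G=I$ at each object of $\cat{J}^{+W}$. At an object $J'$ of $\cat{J}$ we have $\cat{J}^{+W}(J',IJ)=\cat{J}(J',J)$, so the prescribed value is $\lim_{J}\big\langle\cat{J}(J',J),DJ\big\rangle$, which by Yoneda reduction (\Cref{ninja}) equals $DJ'=D^+\!J'$ for \emph{any} extension $D^+$, with its canonical comparison. At the extra object $E$, \Cref{CplusF} gives $\cat{J}^{+W}(E,IJ)=\cat{J}^{+W}(E,J)=WJ$, so the prescribed value is $\lim_{J}\big\langle WJ,DJ\big\rangle$, the $W$-weighted limit of $D$. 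Hence $E$ is the only object at which \eqref{ptwise_ran} says anything nontrivial, and what it says there is exactly that $D^+\!E$, together with the arrows $D^+\!w$ for $w\in WJ$, is a $W$-weighted limit cone — which by \Cref{thm_wlim} is precisely terminality of the weighted cone $D^+$.

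It then remains to match ``satisfies the pointwise formula'' with ``is a pointwise right Kan extension'', in both directions. For ``terminal weighted cone $\Rightarrow$ pointwise right Kan extension'', I would invoke \Cref{pkan_are_kan}: a functor satisfying \eqref{ptwise_ran} whose morphism action is induced by the universal property is a right Kan extension, hence pointwise by \Cref{def_ptwise_kan}. The morphism action of $D^+$ is manifestly of that form — it is $D$ on $\cat{J}$ (matching Yoneda reduction), it sends a virtual arrow $w:E\to J$ to the corresponding leg $D^+\!w$ of the universal cone, and it sends $\id_E$ to the identity. Conversely, if $D^+$ is a pointwise right Kan extension then \Cref{def_ptwise_kan} directly yields $D^+\!E\cong\lim_J\big\langle WJ,DJ\big\rangle$, and since the comparison isomorphism is $D^+$ applied to the tautological cone (composed with $\rho=\id$), it carries the cone $(D^+\!w)_w$ to the universal one, so the weighted cone is terminal. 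The dual statement for colimits, left Kan extensions and $\cat{J}_{+W}$ then follows by the usual dualization.

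The crux I expect is this last point: verifying that the abstract isomorphism $D^+\!E\cong\lim_J\big\langle WJ,DJ\big\rangle$ supplied by the pointwise Kan extension is compatible with the leg structure, so that one genuinely recovers the \emph{universal} weighted cone rather than merely an isomorphic tip. This is bookkeeping rather than a deep difficulty — it unwinds to the fact that the canonical comparison map of a pointwise Kan extension is built by applying $D^+$ to the canonical cone, together with $\rho=\id$ — but it is where the real content of the correspondence lies. I would also note in passing the mild size point that $\cat{J}^{+W}$ is small whenever $\cat{J}$ is small and each $WJ$ is a small set, so that \eqref{ptwise_ran} and \Cref{pkan_are_kan} apply as stated.
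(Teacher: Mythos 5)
Your proposal is correct and follows essentially the same route as the paper's proof: identify $\rho$ with the identity (the paper does this by citing \Cref{pkan_ff}, whose content you re-derive inline via Yoneda reduction at the objects of $\cat{J}$), then observe that the pointwise formula at the extra object $E$ reads $\lim_J\langle\cat{J}^{+W}(E,J),DJ\rangle=\lim_J\langle WJ,DJ\rangle$. You are in fact somewhat more careful than the paper in spelling out both directions of the ``exactly'' (invoking \Cref{pkan_are_kan} one way and the definition plus compatibility of the comparison with the cone legs the other way), which the paper leaves largely implicit.
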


To prove the theorem we use the following result, which might be interesting on its own:
\begin{lemma}\label{pkan_ff}
	The universal natural transformation ($\rho$ or $\lambda$) of a pointwise (left or right) Kan extension along a fully faithful functor is a natural equivalence.
\end{lemma}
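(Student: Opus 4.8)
I will prove the statement for a right Kan extension; the left-hand case follows by formal duality (replace $\cat{C}$, $\cat{J}$, $\cat{K}$ by their opposites throughout). So let $G:\cat{J}\funto\cat{K}$ be fully faithful, $D:\cat{J}\funto\cat{C}$ a diagram, and $R:\cat{K}\funto\cat{C}$ the pointwise right Kan extension, characterised by $RK\cong\lim_{J\in\cat{J}}\langle\cat{K}(K,GJ),DJ\rangle$ with action on morphisms induced by the universal property, as in \Cref{def_ptwise_kan,pkan_are_kan}. The goal is to show that each component $\rho_J:RGJ\to DJ$ of the universal natural transformation is an isomorphism; naturality of $\rho$ is automatic since it is a natural transformation by construction, so this suffices.

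The plan is a three-step chase of the universal cone. \emph{Step 1.} Recall from the proof of \Cref{pkan_are_kan} that, after instantiating the pointwise formula at $K=GJ$, the object $RGJ$ is the tip of the universal $\cat{K}(GJ,G-)$-weighted cone over $D$, and $\rho_J$ is precisely the arrow of this cone indexed by $\id_{GJ}\in\cat{K}(GJ,GJ)$. \emph{Step 2.} Use that $G$ is fully faithful: the maps $\cat{J}(J,J')\xrightarrow{\cong}\cat{K}(GJ,GJ')$ assemble into a natural isomorphism of set functors $\cat{J}(J,-)\cong\cat{K}(GJ,G-)$, which sends $\id_J$ to $\id_{GJ}$. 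Since the presheaf of weighted cones $\mathrm{Cone}^W(D,-)$ of \Cref{psh_weighted} depends on the weight $W$ only by precomposition, a natural isomorphism of weights induces an isomorphism of these presheaves, naturally in the tip, hence an isomorphism $\theta:RGJ\to DJ$ of representing objects which is \emph{compatible with the universal cones}: the arrow of the $RGJ$-cone indexed by $w\in\cat{K}(GJ,GJ')$ equals the arrow of the $DJ$-cone indexed by $G^{-1}(w)$, composed with $\theta$. \emph{Step 3.} Identify the $DJ$-cone via Yoneda reduction (\Cref{ninja}): the $\cat{J}(J,-)$-weighted limit of $D$ is $DJ$, and tracing through the Yoneda lemma exactly as in the proof of \Cref{ninja} shows that its universal weighted cone sends $g\in\cat{J}(J,J')$ to $Dg$; in particular the arrow indexed by $\id_J$ is $\id_{DJ}$. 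Combining the three steps, $\rho_J$ equals the $DJ$-cone arrow at $G^{-1}(\id_{GJ})=\id_J$, namely $\id_{DJ}$, composed with $\theta$; that is, $\rho_J=\theta$, an isomorphism.

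The only genuine obstacle is the bookkeeping in Step 2: one must check that the isomorphism $RGJ\cong DJ$ obtained from ``weighted limits depend on the weight only up to isomorphism'' is really the one that, composed with the universal-cone arrows, reproduces $\rho_J$, rather than some a priori different isomorphism. This is settled by following the universal elements ($\id_{GJ}$, then $\id_J$, then $\id_{DJ}$) through each identification and keeping the universal cones aligned at every stage; once that is done, no computation remains. Having shown $\rho$ componentwise invertible, it is a natural isomorphism, and the dual argument gives the same for $\lambda$.
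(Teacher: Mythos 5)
Your proof is correct and follows essentially the same route as the paper: instantiate the pointwise formula at $K=GJ$, use full faithfulness to replace the weight $\cat{K}(GJ,G-)$ by $\cat{J}(J,-)$, and conclude by Yoneda reduction (\Cref{ninja}). Your Step 2 is in fact somewhat more careful than the paper's version, which simply asserts that $\rho_J$ "is given by this isomorphism" without explicitly tracking the universal cone arrows through the identifications.
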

\begin{proof}[Proof of \Cref{pkan_ff}]
	As usual, let's prove the limit (i.e.\ right) case. 
	Let $G:\cat{J}\funto\cat{K}$ be a fully faithful functor. The pointwise right Kan extension of $D:\cat{J}\funto\cat{C}$ along $G$, on an object $K$, is given by
	\[
	R(K) \;\cong\; \lim_{J'\in\cat{J}} \big\langle \cat{K}(K,GJ'), DJ' \big\rangle .
	\]
	Therefore the composition $R\circ G$, on an object $J$ of $\cat{J}$, is given by 
	\begin{align*}
	RGJ \;&\cong\; \lim_{J'\in\cat{J}} \big\langle \cat{K}(GJ,GJ'), DJ' \big\rangle \\
	&\cong\; \lim_{J'\in\cat{J}} \big\langle \cat{J}(J,J'), DJ' \big\rangle \\
	&\cong\; DJ ,
	\end{align*}
	using the fact that $G$ is fully faithful and the Yoneda reduction (\Cref{ninja}). 
	The arrow $\rho_J:RGJ\to DJ$ is now given by this isomorphism. 
\end{proof}

\begin{proof}[Proof of \Cref{wlim_are_pkan}]
	As usual, let's focus on the limit case.
	By \Cref{pkan_ff}, up to natural isomorphism we have that the natural transformation $\rho$ can be taken to be the identity. That is, the functor $D^+$ is actually an extension of $D$, it agrees with $D$ on the objects of $\cat{J}$. 
	On the only other object $E$ we now have, by definition of pointwise Kan extension,
	\begin{align*}
	D^+(E) \;&\cong\; \lim_{J}\big\langle \cat{J}^{+W}(E,DJ), DJ \big\rangle \\
	&=\; \lim_{J}\big\langle WJ, DJ \big\rangle ,
	\end{align*}
 	i.e.\ exactly the $W$-weighted limit of $D$, with 
	the arrows $E\to DJ$ mapped to the arrows of the universal limit cone.
\end{proof}

\subsection{Functor pairing}

We now turn out attention to the prototypical example of a weighted colimit of sets, the \emph{pairing}. 
As sets have copowers, this weighted colimit is usually expressed as a coend, and several coends appearing in the literature are instances of pairings. We will see it for example in Cauchy completions (\Cref{cauchy}), as well as in the composition of profunctors and in the Day convolution of presheaves (see \Cref{further}).

\begin{definition}\label{pairing}
	Let $\cat{C}$ be a small category. 
	Consider a presheaf $P:\cat{C}^\op\funto\cat{Set}$ and a set functor $F:\cat{C}\funto\cat{Set}$. Their \newterm{pairing} (or sometimes \emph{tensor product}, but this name may mean other things too) is the set 
	\begin{equation}\label{eq_pairing}
	\langle P,F\rangle \;\coloneqq\; \colim_{C\in\cat{C}} \big\langle PC, FC \big\rangle \;\cong\; 
	\Coend{C\in\cat{C}} PC\cdot FC  \;\cong\; 
	\Coend{C\in\cat{C}} PC\times FC ,
	\end{equation}
	where we recall that, in $\cat{Set}$, the copower is equivalently the product. 
\end{definition}

Let's now interpret this in terms of virtual arrows.
So far we have been looking at adding extra arrows to a category $\cat{C}$ only in a ``fixed'' direction.

The pairing of a functor and a presheaf is best interpreted using both directions: virtual arrows \emph{to} our category (the functor $F:\cat{C}\funto\cat{Set}$), and \emph{from} our category (the presheaf $P:\cat{C}^\op\funto\cat{Set}$):
\[
\begin{tikzcd}[sep=small,
	blend group=multiply,
	/tikz/execute at end picture={
		\node [cbox, fit=(A) (B), inner sep=5mm] (CC) {};
		\node [catlabel] at (CC.south west) {$\cat{C}$};
	}]
	 |[alias=E]| \bullet &&&&&& |[alias=F]| \bullet \\ \\ \\ \\
	 &|[alias=A]| A &&&& |[alias=B]| B \\
	 \ar[from=A,to=B,shorten=2mm]
	 \ar[virtual, from=E,to=A,shift left,shorten=2mm]
	 \ar[virtual, from=E,to=A,shift right,shorten=2mm]
	 \ar[virtual, from=E,to=B,shift left,shorten=2mm]
	 \ar[virtual, from=E,to=B,shift right,shorten=2mm]
	 \ar[virtual, from=A,to=F,shift left,shorten=2mm]
	 \ar[virtual, from=A,to=F,shift right,shorten=2mm]
	 \ar[virtual, from=B,to=F,shift left,shorten=2mm]
	 \ar[virtual, from=B,to=F,shift right,shorten=2mm]
\end{tikzcd}
\]
The pairing $\langle P,F\rangle$ can now be seen as the set of ``virtual paths'' $\bullet\dashrightarrow\bullet$ in the diagram above, \emph{with equal paths counted only once}. 

Let's see what we mean. 
The simplest way of forming a path $\bullet\dashrightarrow\bullet$ is to have, at an object $A$ of $\cat{C}$, a virtual arrow $f:\bullet\dashrightarrow A$ (i.e.\ an element $f\in FA$) and a virtual arrow $p:A\dashrightarrow\bullet$ (i.e.\ an element $p\in PA$):
\[
\begin{tikzcd}
	\bullet\ar[virtual]{r}{f} & A \ar[virtual]{r}{p} & \bullet
\end{tikzcd}
\]
So one first way of defining all possible paths $\bullet\dashrightarrow\bullet$ is to take all possible pairs $(f\in FA,p\in PA)$ for all objects $A$:
\[
\begin{tikzcd}[column sep=30mm]
	& A \ar[virtual,shorten <=2mm,shorten >=3mm]{ddr}[near start]{p} \\
	& A \ar[virtual,shorten <=2mm,shorten >=3mm]{dr}[near start]{p'} \\
	\bullet \ar[virtual,shorten <=3mm,shorten >=2mm]{uur}[near end]{f} \ar[virtual,shorten <=3mm,shorten >=2mm]{ur}[near end]{f'} \ar[virtual,shorten <=3mm,shorten >=2mm]{r}[pos=0.7]{f''} \ar[virtual,shorten <=3mm,shorten >=2mm]{dr}[pos=0.6]{f'''} \ar[virtual,shorten <=3mm,shorten >=2mm]{ddr} & B \ar[virtual,shorten <=2mm,shorten >=3mm]{r}[pos=0.3]{p''} & \bullet \\
	& B \ar[virtual,shorten <=2mm,shorten >=3mm]{ur}[pos=0.4,inner sep=0mm]{p'''} \\
	& \cdots \ar[virtual,shorten <=2mm, shorten >=3mm]{uur}
\end{tikzcd}
\]
That would be the set\footnote{Recall that we are assuming $\cat{C}$ is small.}
\[
\coprod_{A\in\cat{C}} PA\times FA .
\]

Things are however not so simple.
Given $g:A\to B$ in $\cat{C}$, consider an element $f\in FA$ (i.e.\ a virtual arrow $f:\bullet\dashrightarrow A$) and an element $p\in PB$ (i.e.\ a virtual arrow $p:B\dashrightarrow\bullet$): the form a path $\bullet\dashrightarrow\bullet$ as follows.
\[
\begin{tikzcd}
	\bullet\ar[virtual]{r}{f} & A \ar{r}{g} & B \ar[virtual]{r}{p} & \bullet
\end{tikzcd}
\]
This ``three-arrow path'' was already counted before, since we can express it as a ``two-arrow path'', but we can do so in two ways, namely, as $p\circ (g_*f)$ and as $(g^*p)\circ f$:
\[
\begin{tikzcd}[sep=large]
	\bullet \ar[virtual, bend left]{rr}{g_*f} \ar[virtual]{r}[swap]{f} & A \ar{r}{g} \ar[virtual, bend right]{rr}[swap]{g^*p} & B \ar[virtual]{r}{p} & \bullet
\end{tikzcd}
\] 
Therefore, the pairs
\[
(g^*p,f) \in PA\times FA \quad\mbox{and}\quad (p,g_*f) \in PB\times FB
\]
should be considered equivalent. 
In other words, we want to take the quotient set 
\begin{equation}\label{quot_sum}
\left(\coprod_{A\in\cat{C}} PA\times FA \right) /_\sim 
\end{equation}
where $\sim$ is the equivalence relation generated by 
\[
(g^*p,f) \in PA\times FA \quad\sim\quad (p,g_*f) \in PB\times FB
\]
for all $f\in FA$ and $p\in PB$ and all morphisms $g:A\to B$ of $\cat{C}$. 
We can see \eqref{quot_sum} as the set of all virtual paths $\bullet\dashrightarrow\bullet$, without double-counting.
We will denote each equivalence class as follows:
\[
[A,g^*p,f] \;=\; [B,p,g_*f] .
\]

\begin{remark}
One may now ask: what about four-arrow paths, and more? These are already accounted for: in the sequence 
\[
\begin{tikzcd}
	\bullet \ar[virtual]{r}{f} & A_0 \ar{r}{g_1} & A_1 \ar{r}{g_2} & \dots \ar{r}{g_n} & A_n \ar[virtual]{r}{p} & \bullet
\end{tikzcd}
\]
one can always replace $g_1,\dots,g_n$ by their composite, which is a single morphism.
\end{remark}

Let's now show that the pairing of $F$ and $P$ is exactly this set. 

\begin{theorem}
	Let $\cat{C}$ be a small category. Given $F:\cat{C}\funto\cat{Set}$ and $P:\cat{C}^\op\funto\cat{Set}$, their pairing $\langle P, F\rangle$ is given up to isomorphism by the set \eqref{quot_sum}.
\end{theorem}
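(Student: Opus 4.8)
The plan is to identify the set \eqref{quot_sum} directly as the coend $\Coend{C\in\cat{C}} PC\times FC$, which by \Cref{coend_formula} (noting that $\cat{Set}$ has copowers, with $S\cdot X\cong S\times X$ as in \Cref{copower}) is isomorphic to $\colim_{C\in\cat{C}}\langle PC,FC\rangle=\langle P,F\rangle$. Recall from \Cref{bidiag_from_wdiag} that the relevant bidiagram is $B\colon\cat{C}^\op\funtimes\cat{C}\funto\cat{Set}$, $B(C,C')\coloneqq PC\times FC'$, contravariant in the first variable through $P$ and covariant in the second through $F$. Writing $Q$ for the set \eqref{quot_sum} and $z_C\colon PC\times FC\to Q$ for the composite of the coproduct inclusion with the quotient map, $(p,f)\mapsto[C,p,f]$, it suffices to show that $(Q,(z_C)_{C\in\cat{C}})$ is an \emph{initial} co-wedge over $B$.

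First, $(Q,(z_C))$ is a co-wedge: for a morphism $g\colon C\to C'$ the co-wedge condition of \Cref{def_end1} applied to $B$ unwinds, for $p\in PC'$ and $f\in FC$, to the equation $z_C(g^*p,f)=z_{C'}(p,g_*f)$, and this holds because $[C,g^*p,f]=[C',p,g_*f]$ is precisely a generating relation of $\sim$. Next, initiality: let $(T,(\zeta_C)_{C\in\cat{C}})$ be any co-wedge over $B$. There is at most one function $u\colon Q\to T$ with $u\circ z_C=\zeta_C$ for all $C$, namely $u([C,p,f])\coloneqq\zeta_C(p,f)$, since every element of $Q$ is of the form $[C,p,f]=z_C(p,f)$. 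This $u$ is well defined: the generating relations of $\sim$ are exactly the equations $\zeta_C(g^*p,f)=\zeta_{C'}(p,g_*f)$ witnessed by the co-wedge $(T,\zeta)$, so $\zeta$ descends along the quotient; and $u$ visibly satisfies $u\circ z_C=\zeta_C$. Hence $(Q,(z_C))$ is the initial co-wedge, so $Q\cong\Coend{C\in\cat{C}}PC\times FC\cong\langle P,F\rangle$.

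There is essentially no obstacle here beyond bookkeeping; the one point that deserves a sentence of care is that $\sim$ is the equivalence relation \emph{generated by} the elementary relations, so that to define a map out of the quotient it is enough to check invariance under those generators (the symmetric and transitive steps are then automatic)---which is exactly what a co-wedge supplies. As an alternative route avoiding coends altogether, one may show directly that $Q$ represents the set functor $A\mapsto\sfuncat{\cat{C}^\op}\big(P-,\cat{Set}(F-,A)\big)$ of \Cref{def_wlim}: a function $Q\to A$ is the same as a family of functions $PC\times FC\to A$ identifying $(g^*p,f)$ with $(p,g_*f)$, which after currying to $PC\to\cat{Set}(FC,A)$ is precisely a natural transformation $P\Rightarrow\cat{Set}(F-,A)$; this bijection is manifestly natural in $A$, so $Q$ is the representing object, i.e.\ the pairing.
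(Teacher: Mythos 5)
Your proof is correct and follows essentially the same route as the paper's: exhibit the quotient set as a co-wedge over $B(C,C')=PC\times FC'$ (the co-wedge condition being exactly the generating relation of $\sim$) and then verify initiality by noting that any co-wedge family is constant on the generators of $\sim$ and hence descends uniquely to the quotient. Your explicit remark that invariance under the \emph{generating} relations suffices is a point the paper glosses over slightly, and your alternative representability argument is a valid bonus, but neither changes the substance of the argument.
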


\begin{proof}
	We can for example show that the set \eqref{quot_sum} gives the coend in \eqref{pairing}. So let's first of all show that if forms a co-wedge, i.e.\ that for all $g:A\to B$ of $\cat{C}$, the following diagram commutes,
	\[
	\begin{tikzcd}[column sep=0]
		& PB\times FA \ar{dl}[swap]{g^*\times FA} \ar{dr}{PA\times g_*} \\
		PA\times FA \ar{dr} && PB\times FB \ar{dl} \\
		& \left(\coprod_{A} PA\times FA \right) /_\sim
	\end{tikzcd}
	\]
	where the unlabeled arrows are the inclusions $PA\times FA\hookrightarrow \coprod_A PA\times FA$ followed by the quotient map.
	Starting with $(p\in PB,f\in FA)$,
	\[
	\begin{tikzcd}
		&& \color{dgray} (p,f) \ar[mapsto,dgray]{ddll} \ar[mapsto,dgray]{ddrr} \\
		&& PB\times FA \ar{dl}[swap]{g^*\times FA} \ar{dr}{PA\times g_*} \\
		\color{dgray} (g^*p,f) \ar[mapsto,dgray]{ddrr} & |[xshift=5mm, overlay]| PA\times FA \ar{dr} && |[xshift=-5mm,overlay]| PB\times FB \ar{dl} & \color{dgray} (p,g_*f) \ar[mapsto,dgray]{ddll} \\
		&& \left(\coprod_{A} PA\times FA \right) /_\sim \\
		&& \color{dgray} {[A,g^*p,f]} \;=\; {[B,p,g_*f]}
	\end{tikzcd}
	\]
	we see that the diagram commutes precisely by how we define the equivalence relation.
	
	To see that this co-wedge is universal, consider a family of functions $(z_A:PA\times FA\to X)_{A\in\cat{C}}$.
	By the universal property of coproducts, such a family is equivalently given by a function on the disjoint union $z:\coprod_A PA\times FA\to X$.
	Suppose now that the $z_A$ form a wedge, i.e.\ that for all $g:A\to B$, the following diagram commutes.
	\[
	\begin{tikzcd}[column sep=0]
		& PB\times FA \ar{dl}[swap]{g^*\times FA} \ar{dr}{PA\times g_*} \\
		PA\times FA \ar{dr}[swap]{z_A} && PB\times FB \ar{dl}{z_B} \\
		& X
	\end{tikzcd}
	\]
	This means precisely that for all $f\in FA$ and $p\in B$, $z_A(g^*p,f)=z_B(p,g_*f)$. That is, the function $z:\coprod_A PA\times FA\to X$ is invariant within each equivalence class. Therefore it factors uniquely through the quotient \eqref{quot_sum}.
\end{proof}

Before we leave this section, it can be helpful to what happens if $P$ and $F$ are representable: in that case, all the ``virtual'' arrows are actually real, and the pairing recovers the usual composition of arrows:

\begin{example}
	Let $X$ and $Y$ be objects of a category $\cat{C}$. Consider the functor $\cat{C}(X,-):\cat{C}\funto\cat{Set}$ (of arrows from $X$) and the presheaf $\cat{C}(-,Z)$ (of arrows into $Z$). Their pairing, up to isomorphism, is the set of arrows from $X$ to $Z$:
	\[
	\langle \cat{C}(X,-), \cat{C}(-,Z) \rangle \;=\; \colim_{Y\in\cat{C}}\big\langle \cat{C}(X,Y), \cat{C}(Y,Z) \big\rangle \;\cong\; \cat{C}(X,Z) .
	\]
	Indeed, the isomorphism is exactly an instance of Yoneda reduction (\Cref{ninja}).
\end{example}

\section{Cauchy completion}\label{cauchy}

After representability and weighted limits, the last concept we see in depth using our diagrams is \emph{Cauchy completion} (also known as \emph{idempotent completion}, \emph{Karoubi envelope}, and \emph{absolute completion}, and other names).

\subsection{Cauchy points}

The idea of Cauchy completion can be understood, using our usual point of view, in terms adding to a category an extra object with virtual arrows possibly going in both directions:
\[
\begin{tikzcd}[row sep=small, column sep=2mm,
	blend group=multiply,
	/tikz/execute at end picture={
		\node [cbox, fit=(A) (B) (C) (D), inner sep=1.2mm] (CC) {};
		\node [catlabel] at (CC.south west) {$\cat{C}$};
	}]
	&& |[alias=E,xshift=2mm]| \bullet \\ \\ \\ \\
	&&& |[alias=D,xshift=-5mm]| D \\
	|[alias=A]| A &&&& |[alias=B]| B \\
	& |[alias=C,xshift=5mm]| C
	\ar[from=A,to=B]
	\ar[from=A,to=C]
	\ar[from=B,to=C]
	\ar[from=D,to=A]
	\ar[from=D,to=B]
	\ar[from=D,to=C]
	\ar[virtual,from=E,to=A,shift left,shorten=2.3mm]
	\ar[virtual,from=A,to=E,shift left,shorten=2.3mm]
	\ar[virtual,from=E,to=C,shift left,shorten=2.3mm]
	\ar[virtual,from=E,to=C,shift right,shorten=2.3mm]
	\ar[virtual,from=E,to=B,shorten=2.3mm]
	\ar[virtual,from=D,to=E,shorten=2.3mm]
\end{tikzcd}
\]
Let's see this in detail. First of all, in order to get virtual arrows in both directions we need both a functor $F$ (giving arrows from $\bullet$ to the category) and a presheaf $P$ (giving arrows from the category to $\bullet$), as we did in \Cref{pairing}. However, compared to \Cref{pairing} we have to imagine the two virtual points identified into a single one.
\[
\begin{tikzcd}[row sep=small, column sep=tiny,
	blend group=multiply,
	/tikz/execute at end picture={
		\node [cbox, fit=(A) (B), inner sep=5mm] (CC) {};
		\node [catlabel] at (CC.south west) {$\cat{C}$};
	}]
	&&& |[alias=TOP]| \phantom{bullet}\\
	|[alias=E]| \bullet &&&&&& |[alias=F]| \bullet \\ \\ \\
	&|[alias=A]| A &&&& |[alias=B]| B \\
	\ar[from=A,to=B,shorten=2mm]
	\ar[virtual, from=E,to=A,shift left,shorten=2mm]
	\ar[virtual, from=E,to=A,shift right,shorten=2mm]
	\ar[virtual, from=E,to=B,shift left,shorten=2mm]
	\ar[virtual, from=E,to=B,shift right,shorten=2mm]
	\ar[virtual, from=A,to=F,shift left,shorten=2mm]
	\ar[virtual, from=A,to=F,shift right,shorten=2mm]
	\ar[virtual, from=B,to=F,shift left,shorten=2mm]
	\ar[virtual, from=B,to=F,shift right,shorten=2mm]
	\ar[dgray,from=E,to=TOP,out=45,in=190, shorten <=2mm, shorten >=6mm]
	\ar[dgray,from=F,to=TOP,out=135,in=-10, shorten <=2mm, shorten >=6mm]
\end{tikzcd}
\qquad{\color{dgray}\longmapsto}\qquad
\begin{tikzcd}[sep=small,
	blend group=multiply,
	/tikz/execute at end picture={
		\node [cbox, fit=(A) (B), inner sep=5mm] (CC) {};
		\node [catlabel] at (CC.south west) {$\cat{C}$};
	}]
	 &&&|[alias=E]| \bullet \\ \\ \\ \\ \\ \\
	&|[alias=A]| A &&&& |[alias=B]| B \\
	\ar[from=A,to=B,shorten=2mm]
	\ar[virtual, from=E,to=A,bend right=30,shift left,shorten=2mm]
	\ar[virtual, from=E,to=A,bend right=30,shift right,shorten=1mm]
	\ar[virtual, from=E,to=B,bend right=50,shift left,shorten=2mm]
	\ar[virtual, from=E,to=B,bend right=50,shift right,shorten=1mm]
	\ar[virtual, from=A,to=E,bend right=50,shift left,shorten=2mm]
	\ar[virtual, from=A,to=E,bend right=50,shift right,shorten=1mm]
	\ar[virtual, from=B,to=E,bend right=30,shift left,shorten=2mm]
	\ar[virtual, from=B,to=E,bend right=30,shift right,shorten=1mm]
\end{tikzcd}
\]
Now:
\begin{enumerate}
	\item\label{endo} As in \Cref{pairing}, if we have virtual arrows $\bullet\dashrightarrow X$ and $X\dashrightarrow\bullet$, their composite $\bullet\dashrightarrow\bullet$ is encoded an element of the pairing $\langle P, F\rangle$;
	\item\label{comp} Moreover, thanks to the fact that we have a single extra object, we can now compose virtual arrows the other way as well: if we have virtual $A\dashrightarrow\bullet$ and $\bullet\dashrightarrow B$, we get an arrow $A\to B$ as their ``composite'':
	\begin{equation}\label{comp_diag}
	\begin{tikzcd}
		& \bullet \ar[virtual]{dr}{f} \\
		A \ar[virtual]{ur}{p} \ar{rr}[swap]{p\circ f} && B
	\end{tikzcd}
	\end{equation}
\end{enumerate}

Similarly to what we did for weighted limits, a Cauchy completion is a sort of ``universal'' or ``minimal way'' to add this object and these arrows to $\cat{C}$: in a certain way, we want this extra object to be ``as close as possible to the original category''. More precisely: 
\begin{itemize}
	\item Every endomorphism of the extra object $\bullet$ must necessarily arise from the pairing $\langle P,F\rangle$. In particular, this must be true for the identity of $\bullet$:
	\item For any two objects $A$ and $B$ of the original category $\cat{C}$, the arrows $A\to B$ are exactly those coming from $\cat{C}$. This means in particular that any composite morphisms arising from \eqref{comp_diag} must be already morphisms of $\cat{C}$. 
\end{itemize}
Therefore, besides the functor $F$ and the presheaf $P$, we need the following extra data:
\begin{itemize}
	\item A distinguished element $i\in\langle P,F\rangle$, playing the role of the identity  $\bullet\dashrightarrow\bullet$;
	\item For every two objects $A$ and $B$ of $\cat{C}$, a function 
	\[
	\begin{tikzcd}[row sep=0]
		PA \times FB \ar{r} & \cat{C}(A,B) \\
		\color{dgray} \left(A\dashrightarrow \bullet \;,\; \bullet\dashrightarrow B\right) \ar[mapsto,dgray]{r} & \color{dgray} \left(A\dashrightarrow \bullet\dashrightarrow B\right)
	\end{tikzcd}
	\]
	which forms composites as in \eqref{comp_diag} and assures they are morphisms of $\cat{C}$. We moreover want this function to be natural in $A$ and $B$ (for why, see the proof of \Cref{iscat});
\end{itemize}

Recall now that the elements of $\langle P,F\rangle$ are equivalence classes of objects $[A,p,f]$ with $A\in\cat{C}$, $p\in PA$ and $f\in FA$. 
So, to play the role of the identity, we need a distinguished equivalence class $[X,\pi,\iota]$ (with $X\in\cat{C}$, $\pi\in PX$ and $\iota\in FX$), satisfying some extra conditions so that it behaves like an identity morphism (see \eqref{id_cond} below). 

Here is the precise definition.
\begin{definition}\label{defcauchypt}
	A \newterm{Cauchy point} or \newterm{point of the Cauchy completion} of a category $\cat{C}$ consists of 
	\begin{itemize}
		\item A functor $F:\cat{C}\funto\cat{Set}$;
		\item A presheaf $P:\cat{C}^\op\funto\cat{Set}$;
		\item For all $A$ and $B$ of $\cat{C}$, a mapping $c_{A,B}:FA\times PB\to \cat{C}(A,B)$
		natural in both $A$ and $B$;
		\item A distinguished equivalence class $i=[X,\pi,\iota]$ such that for all $A\in\cat{C}$, $f\in FA$ and $p\in PA$, 
		\begin{equation}\label{id_cond}
			c_{X,A}(\pi,f)_*\iota \;=\; f, \qquad\qquad c_{A,X}(p,\iota)^*\pi \;=\; p .
		\end{equation}
		In diagrams:
		\[
		\begin{tikzcd}[column sep=small]
			\bullet \ar[virtual]{dr}{\iota} && \bullet \ar[virtual]{dr}{f}\\
			& X \ar[virtual]{ur}{\pi} \ar{rr}[swap]{c(\pi,f)} && A
		\end{tikzcd}
		\;=\;
		\begin{tikzcd}[column sep=small]
			\bullet \ar[virtual]{dr}{f}\\
			& A
		\end{tikzcd}
		\qquad\qquad\qquad
		\begin{tikzcd}[column sep=small]
			& \bullet \ar[virtual]{dr}{\iota} && \bullet\\
			A \ar[virtual]{ur}{p} \ar{rr}[swap]{c(p,\iota)} && X \ar[virtual]{ur}{\pi}
		\end{tikzcd}
		\;=\;
		\begin{tikzcd}[column sep=small]
			& \bullet \\
			A \ar[virtual]{ur}{p}
		\end{tikzcd}
		\]
	\end{itemize}
\end{definition}

We can represent the situation as follows,
\begin{equation}\label{virtual_retract}
\begin{tikzcd}[baseline=-1em,
	row sep=small, column sep=2mm,
	blend group=multiply,
	/tikz/execute at end picture={
		\node [cbox, fit=(A) (B) (C) (D), inner sep=1.2mm] (CC) {};
		\node [catlabel] at (CC.south west) {$\cat{C}$};
	}]
	&& |[alias=E,xshift=2mm]| \bullet \\ \\ \\ \\
	&&& |[alias=D,xshift=-5mm]| \color{cgray} A \\
	|[alias=A]| \color{cgray} B &&&& |[alias=B]| \color{cgray} C \\
	& |[alias=C,xshift=5mm]| X
	\ar[from=A,to=B,color=mgray]
	\ar[from=A,to=C,color=mgray]
	\ar[from=C,to=B,color=mgray]
	\ar[from=D,to=A,color=mgray]
	\ar[from=D,to=B,color=mgray]
	\ar[from=D,to=C,color=mgray]
	\ar[virtual,from=E,to=A,shift left,shorten=2.3mm,color=cgray]
	\ar[virtual,from=A,to=E,shift left,shorten=2.3mm,color=cgray]
	\ar[virtual,from=E,to=C,shift left,shorten=2.3mm,"\iota"{pos=0.45}]
	\ar[virtual,from=C,to=E,shift left,shorten=2.3mm,"\pi"{pos=0.45}]
	\ar[virtual,from=B,to=E,shorten=2.3mm,color=cgray]
	\ar[virtual,from=E,to=D,shorten=2.3mm,color=cgray]
\end{tikzcd}
\qquad\qquad\quad``\pi\circ\iota=\id_\bullet"
\end{equation}
where we see that if $[X,\pi,\iota]$ behaves like an identity, it's as if the virtual composition $\pi\circ\iota$ were the identity of the virtual object. 
In other words, we have a ``virtual retract'' of some object $X$. 

Note that conditions \eqref{id_cond} do not depend on the representative of the class. Given $g:X\to X'$, $\iota'=g_*\iota$ and $\pi'$ such that $\pi=g^*\pi'$, we have that 
\[
c_{X',A}(\pi',f)_*\iota' \;=\; c_{X',A}(\pi',f)_*(g_*\iota) \;=\; (c_{X',A}(\pi',f)\circ g)_*\iota \;=\; c_{X,A}(g^*\pi,f)_*\iota \;=\; c_{X,A}(\pi,f)_*\iota ,
\]
using functoriality of $F$ and naturality of $c_{X,A}$ in $X$, and similarly
\[
c_{A,X'}(p,\iota')^*\pi' \;=\; c_{A,X'}(p,g_*\iota)^*\pi' \;=\; (g\circ c_{A,X'}(p,\iota))^*\pi' \;=\; c_{A,X'}(p,\iota)_*(g^*\pi') \;=\; c_{A,X}(p,\iota)^*\pi ,
\]
using naturality of $c_{A,X}$ in $X$ and functoriality of $P$.

It is helpful, as we did for set functors and presheaves, to explicitly construct the ``category with an extra object'' that the data of \Cref{defcauchypt} encode.

\begin{definition}\label{defcplus}
	Let $(F,P,c,i)$ be a Cauchy point of $\cat{C}$.
	The \emph{extension induced by $(F,P,c,i)$} is a category $\cat{C'}$ where:
	\begin{itemize}
		\item The objects are the ones of $\cat{C}$, plus an extra object $E$;
		\item The morphisms between the objects coming from $\cat{C}$ are the same as in $\cat{C}$ (i.e.\ $\cat{C}$ is embedded fully and faithfully into $\cat{C'}$);
		\item For every object $A$ of $\cat{C}$, the morphisms $A\to E$ are the elements of $PA$ (``virtual arrows out of $A$'');
		\item For every object $A$ of $\cat{C}$, the morphisms $E\to A$ are the elements of $FA$ (``virtual arrows into $A$'');
		\item The morphisms $E\to E$ are the elements of the pairing $\langle P,F\rangle$;
		\item The identities of all objects of $\cat{C}$ are the ones of $\cat{C}$, and the identity of $E$ is given by $i$;
		\item The composition between morphisms of $\cat{C}$ is the one in $\cat{C}$, and the one between morphisms of $\cat{C}$ and morphisms to or from $E$ is specified by functoriality of $F$ and $P$, and the one between two morphisms to and from $E$ is specified by $c$.
	\end{itemize}
\end{definition}

Let's see how this works in detail.

\begin{proposition}\label{iscat}
	The category $\cat{C'}$ of \Cref{defcplus} is indeed a category.
\end{proposition}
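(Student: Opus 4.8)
The plan is to verify the category axioms for $\cat{C'}$ one at a time, checking identity laws and associativity on all combinations of objects. Since there is one extra object $E$, every morphism has its source among $\{E\}\cup\obj{\cat{C}}$ and likewise its target, so a composable triple $X\to Y\to Z\to W$ has each of $X,Y,Z,W$ either in $\cat{C}$ or equal to $E$. When all four are in $\cat{C}$, all axioms hold because $\cat{C}$ is a category. When exactly one leg involves a virtual arrow into or out of $E$ adjacent to arrows of $\cat{C}$, the relevant axioms are the functoriality of $F$ (for arrows $E\to A\to B$) and of $P$ (for arrows $A\to B\to E$), which were already recorded in \Cref{diagrams}: $(\id_A)_*f=f$, $(g'\circ g)_*f=g'_*(g_*f)$, and dually for $P$. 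So the only genuinely new content is the cases that mix a morphism $A\to E$ with a morphism $E\to B$, i.e. the ones governed by $c$ and by $i$.

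First I would dispatch the identity laws. The identity of $E$ is $i=[X,\pi,\iota]$. For a virtual arrow $f:E\to A$, postcomposing with $\id_A$ and precomposing with $i=\id_E$ must return $f$; the first is functoriality of $F$, the second is precisely the left equation in \eqref{id_cond}, $c_{X,A}(\pi,f)_*\iota=f$, read as $f\circ i=f$ after unwinding the definition of composition through a representative of $i$. Dually, for $p:A\to E$, the equation $p\circ\id_A=p$ is functoriality of $P$ and $i\circ p=p$ is the right equation $c_{A,X}(p,\iota)^*\pi=p$. For a morphism $E\to E$, say $[B,q,h]$, I need $i\circ[B,q,h]=[B,q,h]=[B,q,h]\circ i$; this follows by computing the composite through representatives and applying \eqref{id_cond} together with the compatibility of $c$ with the equivalence relation — the displayed computations just before this Proposition already show that conditions \eqref{id_cond} are representative-independent, so the same calculation applies to show the composite is well-defined and equals the original class.

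Next, associativity. The interesting triples are those where two consecutive arrows pass through $E$, since an isolated passage through $E$ reduces to functoriality of $F$ or $P$. There are essentially four shapes: $A\to E\to B\to C$, $A\to B\to E\to C$, $E\to A\to E\to B$ composed with another leg, and $A\to E\to B\to E$, plus their extensions to $E\to E$ at either end. In each, I would expand the composites through chosen representatives of the equivalence classes and the definitions of $c$, then use: functoriality of $F$ and $P$ to move real arrows of $\cat{C}$ past virtual ones; naturality of $c$ in each variable to move real arrows of $\cat{C}$ across the ``turnaround'' at $E$; and the well-definedness of elements of $\langle P,F\rangle$ (the quotient defining the pairing, \Cref{pairing}) to see that the resulting equivalence classes agree. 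Concretely, for $A\xrightarrow{p}E\xrightarrow{f}B\xrightarrow{g}C$ one checks $g\circ(f\circ p)=(g\circ f)\circ p$, where $f\circ p=c_{A,B}(p,f)$ and the identity is naturality of $c_{A,-}$ in its second argument together with functoriality. For $E\xrightarrow{f}A\xrightarrow{p}E\xrightarrow{f'}B$ type triples one needs that the element of $\langle P,F\rangle$ attached to the inner $E\to E$ composite is computed correctly, which is exactly how composition $E\to E$ was defined in \Cref{defcplus}.

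The main obstacle I expect is bookkeeping rather than depth: there are many cases, and each requires carefully unwinding the definition of composition as given by $c$, $F_*$, $P^*$, and the pairing quotient, then invoking the right naturality square. The subtlest point is checking that composites $E\to E$ built from the data are associative and unital with respect to $i$, because here one must simultaneously use representative-independence of the pairing classes, naturality of $c$ in both variables, and the identity conditions \eqref{id_cond}; I would present this case in full and remark that all other cases are analogous or immediate. Throughout, I would lean on the observation that $\cat{C}\hookrightarrow\cat{C'}$ is full and faithful by construction, so no new relations among morphisms of $\cat{C}$ are imposed and the verification genuinely reduces to the cases involving $E$.
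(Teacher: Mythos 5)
Your proposal is correct and follows essentially the same route as the paper: an explicit case analysis of composable strings through $E$, with functoriality of $F$ and $P$ handling the one-sided cases, naturality of $c$ in each variable handling the turnarounds at $E$, the defining relation of the pairing $\langle P,F\rangle$ handling strings of the shape $E\to A\to B\to E$, and the conditions \eqref{id_cond} (checked to be representative-independent) giving unitality of $i$. The only quibble is your framing that an ``isolated passage through $E$'' reduces to functoriality of $F$ or $P$ — the shape $A\to E\to B\to C$ is such a passage yet needs naturality of $c$ — but you then treat exactly that case correctly, so nothing is actually missing.
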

\begin{proof}
	First of all, it is helpful to write down the composition of morphisms explicitly:
	\begin{itemize}
		\item Between morphisms of $\cat{C}$, the composition is as in $\cat{C}$.
		\item Given $f:E\to A$ (i.e.~$f\in FA$) and $g:A\to B$, the composition $g\circ f:E\to B$ is the element of $FB$ given by $g_*f$, as in \Cref{CplusF}.
		\item Given $g:A\to B$ and $p:B\to E$ (i.e.~$p\in PB$), the composition $p\circ g:E\to B$ is the element of $PA$ given by $g^*p$, as in \Cref{CplusP}.
		\item Given $p:A\to E$ and $f:E\to B$ (i.e.~$p\in PA$ and $f\in FB$), the composition $f\circ p:A\to B$ is the (non-virtual) arrow given by $c_{A,B}(p,f)$.
		\item Given an endomorphism $m=[A,p,f]:E\to E$ and a morphism $f':E\to B$, the composition $f'\circ m:E\to B$ is given by $c(p,f')_*f\in FB$, as depicted in the following diagram; 
		\begin{equation}\label{endo_comp}
			\begin{tikzcd}
				E \ar[virtual, bend left]{rr}{m} \ar[virtual]{r}[swap]{f} & A \ar[virtual]{r}{p} \ar[bend right]{rr}[swap]{c(p,f')} & E \ar[virtual]{r}{f'} & B
			\end{tikzcd}
		\end{equation}
		To see why this composition is well defined on equivalence classes, consider and equivalent triplet $(\tilde{A},\tilde{p},\tilde{f})\sim(A,p,f)$ such that $g_*\tilde{f}=f$ and $\tilde{p}=g^*p$. 
		\[
		\begin{tikzcd}[row sep=tiny]
			& \tilde{A} \ar[virtual]{dr}{\tilde{p}} \ar{dd}{g} \ar[bend left]{drr}{c(\tilde{p},f')} \\
			E \ar[virtual]{ur}{\tilde{f}} \ar[virtual]{dr}{f} && 
			E \ar[virtual]{r}{f'} & B \\
			& A \ar[virtual]{ur}[swap]{p} \ar[bend right]{urr}[swap]{c_{p,f'}}
		\end{tikzcd}
		\]
		Then 
		\[
		c(\tilde{p},f')_*\tilde{f} \;=\; c(g_*p,f')_*\tilde{f} \;=\; (c(p,f')\circ g)_*\tilde{f} \;=\; c(p,f')_*(g_*\tilde{f}) \;=\; c(p,f')_* f ,
		\] 
		where we used naturality of $c$ in its first argument, and functoriality of $F$.
		\item Given a morphism $p':A\to E$ and an endomorphism $m=[B,p,f]:E\to E$ , the composition $m\circ p':A\to E$ is given by $c(p',f)^*p\in PA$, as depicted in the following diagram; 
		\[
		\begin{tikzcd}
			A \ar[bend right]{rr}[swap]{c(p',f)} \ar[virtual]{r}{p'} & E \ar[virtual]{r}{f} \ar[virtual, bend left]{rr}{m} & B \ar[virtual]{r}[swap]{p} & E
		\end{tikzcd}
		\]
		This composition is well defined by an argument similar to the point above (using naturality of $c$ in its second argument).
		\item Finally, given two endomorphisms $m=[A,p,f]$ and $m'=[A',p',f']:E\to E$, their composition is given by $[A,c(p,f'),f]=[A',p',c(p,f')^*p]$, as depicted in the following diagram:
		\[
		\begin{tikzcd}
			E \ar[virtual, bend left]{rr}{m} \ar[virtual]{r}[swap]{f} & A \ar[bend right]{rr}[swap]{c(p,f')} \ar[virtual]{r}{p} & E \ar[virtual, bend left]{rr}{m'} \ar[virtual]{r}{f'} & A' \ar[virtual]{r}[swap]{p'} & E
		\end{tikzcd}
		\]
		Note that the two tuples are equivalent under the usual relation (equivalently, the lower path in the diagram is associative).
		Again, by a similar reasoning as above, this composition is well defined.
	\end{itemize}
	
	To prove associativity of composition, we distinguish a few cases:
	\begin{itemize}
		\item For morphisms of $\cat{C}$, associativity holds since it holds in $\cat{C}$.
		\item For arrows in the following form,
		\[
		\begin{tikzcd}
			E \ar[virtual]{r}{f} & A \ar{r}{g} & B \ar{r}{h} & C
		\end{tikzcd}
		\]
		we have that $(h\circ g)\circ f=(h\circ g)_*f=h_*(g_*f)=h\circ(g\circ f)$ by functoriality of $F$. Similarly we have associativity for arrows in the form 
		\[
		\begin{tikzcd}
			A \ar{r}{g} & B \ar{r}{h} & C \ar[virtual]{r}{p} & E
		\end{tikzcd}
		\]
		by functoriality of $P$.
		\item For arrows in the following form,
		\[
		\begin{tikzcd}
			A \ar[virtual]{r}{p} & E \ar[virtual]{r}{f} & B \ar{r}{g} & C
		\end{tikzcd}
		\]
		we have $g\circ c_{A,B}(p,f)=c_{A,C}(p,g_*f)$ by naturality of $c$ in $B$. We can reason similarly for arrows in the form 
		\[
		\begin{tikzcd}
			A \ar{r}{g} & B \ar[virtual]{r}{p} & E \ar[virtual]{r}{f} & C
		\end{tikzcd}
		\]
		using naturality of $c$ in $B$.
		\item For arrows in the following form,
		\[
		\begin{tikzcd}
			E \ar[virtual]{r}{f} & A \ar{r}{g} & B \ar[virtual]{r}{p} & E
		\end{tikzcd}
		\]
		the relation of \eqref{quot_sum} makes the paths automatically equivalent.
		\item Arrows in the following forms,
		\[
		\begin{tikzcd}
			E \ar[virtual]{r}{f} & A \ar[virtual]{r}{p} & E \ar[virtual]{r}{f'} & A' \\
			A \ar[virtual]{r}{p} & E \ar[virtual]{r}{f} & A' \ar[virtual]{r}{p'} & E'
		\end{tikzcd}
		\]
		are already taken care of by composition of endomorphisms of $E$ and virtual arrows (see \eqref{endo_comp}).
		\item All other cases are obtained by iterating the arguments we have just made.
	\end{itemize}
	
	Finally, let's turn to unitality. Again, we distinguish a few cases.
	\begin{itemize}
		\item All identities of objects of $\cat{C}$ behave like identities with all morphisms of $\cat{C}$.
		\item For every object $A$ of $\cat{C}$ and every morphisms $f:E\to A$, we have 
		\[
		\id_A\circ f \;=\; (\id_A)_*f \;=\; f
		\]
		by functoriality of $F$. Similarly, for every morphism $p:A\to E$ we have $p\circ\id_A=p$ by functoriality of $P$.
		\item To show that $i=[X,\pi,\iota]$ is the identity of $E$, let $f:E\to A$. Then using \eqref{endo_comp} and \eqref{id_cond},
		\[
		f\circ [X,\pi,\iota] \;=\; c(\pi,f)_*\iota \;=\; f.
		\]
		Similarly, given $p:A\to E$,
		\[
		[X,\pi,\iota] \circ p \;=\; c(p,\iota)^*\pi \;=\; p.
		\]
	\end{itemize}
	This makes $\cat{C'}$ a category.
\end{proof}

Let's now look at the situation where, in some sense, ``the extra point is already in $\cat{C}$:

\begin{theorem}\label{thm_cauchy}
	Given a Cauchy point $(F,P,c,i)$ of $\cat{C}$, the following conditions are equivalent.
	\begin{enumerate}
		\item\label{F_R} The functor $F$ is representable, and represented by an object $R$;
		\item\label{P_R} The presheaf $P$ is representable, and represented by an object $R$;
		\item\label{I_eq} The inclusion $I:\cat{C}\funto\cat{C'}$ is an equivalence of categories.
	\end{enumerate}
\end{theorem}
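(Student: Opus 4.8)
The plan is to route everything through the inclusion $I\colon\cat{C}\funto\cat{C'}$ of \Cref{defcplus}. Since $I$ is fully faithful by construction, it is an equivalence precisely when it is essentially surjective, and as $E$ is the only object of $\cat{C'}$ not coming from $\cat{C}$, condition~\ref{I_eq} is equivalent to: there is an object $R$ of $\cat{C}$ with $E\cong R$ in $\cat{C'}$. Unwinding the composition rules of \Cref{defcplus}, an isomorphism $E\cong R$ is exactly a pair $u\in FR$, $v\in PR$ (regarded as $u\colon E\to R$ and $v\colon R\to E$) with $c_{R,R}(v,u)=\id_R$, i.e.\ $u\circ v=\id_R$, and $[R,v,u]=i$ in $\langle P,F\rangle$, i.e.\ $v\circ u=\id_E$. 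So the theorem reduces to producing such a pair from the representability of $F$ (or of $P$), and conversely extracting representations from such a pair.

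First I would prove \ref{I_eq}$\Rightarrow$\ref{F_R} and \ref{I_eq}$\Rightarrow$\ref{P_R} together. Given $u,v$ as above, define $\alpha_A\colon FA\to\cat{C}(R,A)$ by $\alpha_A(f)\coloneqq c_{R,A}(v,f)$, i.e.\ $f\circ v$, with candidate inverse $g\mapsto g_*u=g\circ u$. Associativity in $\cat{C'}$ together with $v\circ u=\id_E$ and $u\circ v=\id_R$ shows these maps are mutually inverse, and naturality in $A$ is the one-line identity $\alpha_B(h_*f)=(h\circ f)\circ v=h\circ(f\circ v)=h\circ\alpha_A(f)$. Hence $F\cong\cat{C}(R,-)$, and the symmetric assignment $\beta_A(p)\coloneqq c_{A,R}(p,u)=u\circ p$ shows $P\cong\cat{C}(-,R)$ for the same $R$.

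The substance of the argument is the converse \ref{F_R}$\Rightarrow$\ref{I_eq} (the case \ref{P_R}$\Rightarrow$\ref{I_eq} being entirely dual). Suppose $\alpha\colon F\xrightarrow{\cong}\cat{C}(R,-)$, and let $u\coloneqq\alpha_R^{-1}(\id_R)\in FR$ be the universal element, so that $\alpha_A(f)\circ u=f$ for every $f\in FA$ by the Yoneda computation. Writing the chosen identity of $\langle P,F\rangle$ as $i=[X,\pi,\iota]$, put $g_0\coloneqq\alpha_X(\iota)\colon R\to X$ and $v\coloneqq g_0^{*}\pi\in PR$. I would then compute $u\circ v=c_{R,R}(g_0^{*}\pi,u)=c_{X,R}(\pi,u)\circ g_0$ using naturality of $c$ in its contravariant variable, and combine the identity condition \eqref{id_cond} in the form $c_{X,R}(\pi,u)_*\iota=u$ with naturality of $\alpha$ and $\alpha_R(u)=\id_R$ to get $c_{X,R}(\pi,u)\circ g_0=\alpha_R\bigl(c_{X,R}(\pi,u)_*\iota\bigr)=\alpha_R(u)=\id_R$. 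For the other composite, the defining relation of the pairing gives $v\circ u=[R,g_0^{*}\pi,u]=[X,\pi,(g_0)_*u]$, and $(g_0)_*u=\alpha_X(\iota)\circ u=\iota$ collapses this to $[X,\pi,\iota]=i$. Thus $u,v$ witness $E\cong R$, which is \ref{I_eq}. The dual direction uses the universal element $v\coloneqq\beta_R^{-1}(\id_R)\in PR$, sets $u\coloneqq(\beta_X(\pi))_*\iota$, and invokes the second half of \eqref{id_cond}.

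The main obstacle I anticipate is organisational rather than conceptual: the verification of \ref{F_R}$\Rightarrow$\ref{I_eq} juggles four interacting pieces of data — functoriality of $F$ and of $P$, naturality of $c$ in each of its two variables, the universal-element presentation of the representation of $F$, and the generating relation of $\langle P,F\rangle$ — and each must be applied with the arguments in exactly the right slots (which variable of $c$ is covariant, which composite lives in $\cat{C}$ and which in $\langle P,F\rangle$, etc.). Once that bookkeeping is fixed, every individual step is a short computation, and the remaining implications are either formal (a fully faithful, essentially surjective functor is an equivalence) or strictly dual to ones already carried out.
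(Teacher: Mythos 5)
Your proposal is correct and follows essentially the same route as the paper: the paper factors the work through \Cref{FP_repr}, whose intermediate condition (a representative $(X,\pi,\iota)$ of $i$ with $c(\pi,\iota)=\id_X$) is exactly your ``mutually inverse pair $u,v$ witnessing $E\cong R$'' in $\cat{C'}$, and your computation for \ref{F_R}$\Rightarrow$\ref{I_eq} (universal element $u=\alpha_R^{-1}(\id_R)$, $g_0=\alpha_X(\iota)$, $v=g_0^*\pi$) reproduces the paper's proof of $\ref{F_R2}\Rightarrow\ref{FP_split}$ with $\alpha=\phi^{-1}$. The only difference is organisational: you establish \ref{F_R}$\Leftrightarrow$\ref{P_R} by producing the same representing object $R$ from the pair $(u,v)$ directly, rather than via the separate equivalence in \Cref{FP_repr}, which if anything makes the shared representing object more explicit.
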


\begin{definition}
	We say that a Cauchy point of $\cat{C}$ is \newterm{already in $\cat{C}$} if any of the equivalent conditions of \Cref{thm_cauchy} are satisfied.
	
	A category is called \newterm{Cauchy complete} if every Cauchy point is already in it. 
\end{definition}

The definition is similar to Cauchy completions of metric spaces: in a certain sense, a category, like a metric space, is Cauchy complete if ``all the points that should be there are indeed there''. This analogy can be made mathematically precise in terms of enriched categories, but that is beyond the scope of this exposition. (See for example \cite{borceux_idempotents}.)

To prove the theorem we will make use of the following lemma.

\begin{lemma}\label{FP_repr}
	Given a Cauchy point $(F,P,c,i)$, the following conditions are equivalent.
	\begin{enumerate}
		\item\label{F_R2} $F$ is representable;
		\item\label{P_R2} $P$ is representable;
		\item\label{FP_split} There exists a representative $(X,\pi,\iota)$ of the equivalence class $i$ such that $c(p,\iota)=\id_X$.
	\end{enumerate}
	\[
	\begin{tikzcd}
		& \bullet \ar[virtual]{dr}{\iota} \\
		X \ar[virtual]{ur}{\pi} \ar{rr}[swap]{\id} && X
	\end{tikzcd}
	\]
	Moreover, if the conditions above are satisfied, $X$ is a representing object.  
\end{lemma}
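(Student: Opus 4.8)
The plan is to prove the four implications $\ref{FP_split}\Rightarrow\ref{F_R2}$, $\ref{FP_split}\Rightarrow\ref{P_R2}$, $\ref{F_R2}\Rightarrow\ref{FP_split}$ and $\ref{P_R2}\Rightarrow\ref{FP_split}$; the first two will in fact exhibit the object $X$ of \ref{FP_split} as a representing object for both $F$ and $P$, which also settles the closing ``moreover''. The picture behind it is \eqref{virtual_retract}: the class $i=[X,\pi,\iota]$ always acts as the identity of the virtual object $\bullet$, so the ``virtual composite'' $\pi\circ\iota$ is $\id_\bullet$, and reading \ref{FP_split} as $c_{X,X}(\pi,\iota)=\id_X$ says exactly that the other composite $\iota\circ\pi$ equals $\id_X$, i.e.\ that $\bullet$ and $X$ become isomorphic inside $\cat{C'}$ --- and an isomorphism of objects forces isomorphisms of all the hom-sets that involve them.

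For $\ref{FP_split}\Rightarrow\ref{F_R2}$ I would argue directly, without passing through $\cat{C'}$. Fixing a representative $(X,\pi,\iota)$ with $c_{X,X}(\pi,\iota)=\id_X$, I define maps $\phi_A\colon FA\to\cat{C}(X,A)$ by $\phi_A(f)\coloneqq c_{X,A}(\pi,f)$ and $\psi_A\colon\cat{C}(X,A)\to FA$ by $\psi_A(g)\coloneqq g_*\iota$. Naturality of $\psi$ in $A$ is immediate from functoriality of $F$; that $\psi_A$ and $\phi_A$ are mutually inverse needs only the two ingredients already present: $\psi_A(\phi_A(f))=c_{X,A}(\pi,f)_*\iota=f$ is the first identity of \eqref{id_cond}, while $\phi_A(\psi_A(g))=c_{X,A}(\pi,g_*\iota)=g\circ c_{X,X}(\pi,\iota)=g$ follows from naturality of $c_{X,-}$ in its codomain variable (applied to $g\colon X\to A$) together with \ref{FP_split}. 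Hence $F\cong\cat{C}(X,-)$. The implication $\ref{FP_split}\Rightarrow\ref{P_R2}$ is the formal dual: take $\phi'_A(p)\coloneqq c_{A,X}(p,\iota)$ and $\psi'_A(g)\coloneqq g^*\pi$, and use the second identity of \eqref{id_cond} and the contravariant naturality of $c_{-,X}$ to get $P\cong\cat{C}(-,X)$.

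For $\ref{F_R2}\Rightarrow\ref{FP_split}$: if $F\cong\cat{C}(R,-)$, the Yoneda lemma presents this isomorphism as the transpose of an element $u\in FR$ for which $g\mapsto g_*u$ is a bijection $\cat{C}(R,A)\to FA$ at every $A$. Writing $\iota=a_*u$ with $a\colon R\to X$ (possible and unique, by this bijection at $A=X$), the coend relation of \eqref{quot_sum} applied to $a$ gives $i=[X,\pi,a_*u]=[R,a^*\pi,u]$, so $(R,a^*\pi,u)$ is another representative of $i$. Since whether \eqref{id_cond} holds is independent of the representative (the remark just after \Cref{defcauchypt}), it holds for $(R,a^*\pi,u)$; its first identity, specialized to $A=R$ and $f=u$, reads $c_{R,R}(a^*\pi,u)_*u=u$, and injectivity of $g\mapsto g_*u$ together with $(\id_R)_*u=u$ forces $c_{R,R}(a^*\pi,u)=\id_R$, which is \ref{FP_split}. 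The implication $\ref{P_R2}\Rightarrow\ref{FP_split}$ is once more the formal dual, using the presheaf form of the Yoneda lemma to obtain $v\in PR$, sliding $i$ to the representative $(R,v,b_*\iota)$ with $b\colon X\to R$, and applying the second identity of \eqref{id_cond}.

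I expect the one genuinely fiddly point to be the coend bookkeeping in the two converse directions: one must switch to the representative of $i$ whose underlying object is the representing object $R$ (not the original $X$), and then be careful that the identity conditions \eqref{id_cond} transfer to it. Everything else reduces to short manipulations with functoriality of $F$ and $P$, naturality of $c$, and \eqref{id_cond}, so no single step should take more than a few lines.
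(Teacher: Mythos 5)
Your proof is correct and follows essentially the same route as the paper's: the forward direction builds the mutually inverse maps $f\mapsto c_{X,A}(\pi,f)$ and $g\mapsto g_*\iota$ from \eqref{id_cond} and naturality of $c$, and the converse transports $i$ to a representative based at the representing object via the universal element and the coend relation. Your use of injectivity of the Yoneda bijection $h\mapsto h_*u$ in the converse is just a repackaging of the paper's naturality square for $\phi^{-1}$, so there is no substantive difference.
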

\begin{proof}
	Let's start with $\ref{FP_split}\Rightarrow\ref{F_R2}$:
	Notice that ``virtual precomposition'' with $\pi$ and $\iota$ gives mappings as follows, natural in $A$:
	\[
	\begin{tikzcd}[row sep=0]
		FA \ar{rr}{c(\pi,-)} && \cat{C}(X,A) \\
		\bullet \ar[virtual]{ddr}[swap]{f} && X \ar[virtual,cgray]{ll}[swap]{\pi} \ar{ddl}{c(\pi,f)} \\
		& \color{dgray} \longmapsto\vphantom{\int} \\
		& A 	
	\end{tikzcd}
	\qquad\qquad
	\begin{tikzcd}[row sep=0]
		 \cat{C}(X,A) \ar{rr}{(-)_*\iota} && FA \\
		X \ar{ddr}[swap]{g} && \bullet \ar[virtual,cgray]{ll}[swap]{\iota} \ar[virtual]{ddl}{g_*\iota} \\
		& \color{dgray} \longmapsto\vphantom{\int} \\
		& A 	
	\end{tikzcd}
	\]
	One of the conditions of \eqref{id_cond} says now exactly that $\pi$ is a ``virtual retract'' of $\iota$, as we said:
	\begin{equation}\label{virtual_retract_eq}
	\begin{tikzcd}[row sep=0]
		FA \ar{rr}{c(\pi,-)} && \cat{C}(X,A) \ar{rr}{(-)_*\iota} && FA \\
		\bullet \ar[virtual]{ddrr}[swap]{f} && X \ar[virtual,cgray]{ll}[swap]{\pi} \ar{dd}{c(\pi,f)} && \bullet \ar[virtual,cgray]{ll}[swap]{\iota} \ar[virtual]{ddll}{c(\pi,f)_*\iota \;=\; f} \\
		&& \vphantom{\int} \\
		&& A 	
	\end{tikzcd}
	\end{equation}
	If we moreover have $c(\pi,\iota)=\id_X$, using naturality of $c$,
	\[
	\begin{tikzcd}[row sep=0]
		\cat{C}(X,A) \ar{rr}{(-)_*\iota} && FA \ar{rr}{c(\pi,-)} && \cat{C}(X,A) \\
		X \ar{ddrr}[swap]{g} && \bullet \ar[virtual,cgray]{ll}[swap]{\iota} \ar[virtual]{dd}{g_*\iota} 
		&& X \ar[virtual,cgray]{ll}[swap]{\pi} \ar{ddll}{c(\pi,g_*\iota)\;=\; c(\pi,\iota)\circ g \;=\; g}
		\\
		&& \color{dgray} \vphantom{\int} \\
		&& A 	
	\end{tikzcd}
	\]
	then $\pi$ and $\iota$ are part of a natural isomorphism $\cat{C}(X,-)\cong F$, making $X$ a representing object for $F$.
	
	$\ref{F_R2}\Rightarrow\ref{FP_split}$:
	Let $R$ be a representing object of $F$, together with a natural isomorphism $\phi:\cat{C}(R,-)\Rightarrow F$. Define now the element $\iota'\in FR$ as follows.
	\[
	\begin{tikzcd}[row sep=0]
		\cat{C}(R,R) \ar{r}{\phi_R}[swap]{\cong} & FR \\
		\color{dgray} \id_R \ar[mapsto,dgray]{r} & \color{dgray} \iota' \coloneqq \phi_R(\id_R)
	\end{tikzcd}
	\]
	By the Yoneda lemma, we have that all components of $\phi$ must be in the following form:
	\[
	\begin{tikzcd}[row sep=0]
		\cat{C}(R,A) \ar{rr}{\phi_A}[swap]{\cong} && FA \\
		R \ar{ddr}[swap]{g} && \bullet \ar[virtual,cgray]{ll}[swap]{\iota'} \ar[virtual]{ddl}{g_*\iota'} \\
		& \color{dgray} \longmapsto\vphantom{\int} \\
		& A 	
	\end{tikzcd}
	\]
	Let now $(X,\pi,\iota)$ be a representative of the class $i$, and set $A=X$ in the diagram above. Since we have a bijection, there exists a unique $g:R\to X$ such that $g_*\iota'=\iota$:
	\[
	\begin{tikzcd}[row sep=0]
		\cat{C}(R,X) \ar{rr}{\phi_X}[swap]{\cong} && FX \\
		R \ar{ddr}[swap]{g} && \bullet \ar[virtual,cgray]{ll}[swap]{\iota'} \ar[virtual]{ddl}{g_*\iota' \;=\;\iota} \\
		& \color{dgray} \longmapsto\vphantom{\int} \\
		& X 	
	\end{tikzcd}
	\]
	Define now $\pi'= g^*\pi\in PR$:
	\[
	\begin{tikzcd}
		R \ar{r}{g} & X \ar[virtual]{r}{\pi} & \bullet
	\end{tikzcd}
	\]
	Consider now the triplet $(R,\pi',\iota')$: first of all, since  $g_*\iota' =\iota$ and $\pi'= g^*\pi$, we have 
	\[
	[R,\pi',\iota'] \;=\; [X,\pi,\iota] \;\in\; \langle P,F \rangle .
	\]
	Moreover, 
	\[
	c(\pi',\iota') \;=\; c(g^*\pi,\iota') \;=\; c(\pi,\iota')\circ g \;=\;\id_R ,
	\]
	where we used that, by naturality the inverse $\phi^{-1}$, the following diagram commutes,
	\[
	\begin{tikzcd}[sep=small]
		\color{dgray} \iota \ar[mapsto,dgray]{ddddd} \ar[mapsto,dgray]{rrrrr} &&&&& |[overlay,xshift=10mm]| \color{dgray} g \ar[mapsto,dgray]{ddddd} \\
		& FX \ar{ddd}[swap]{c(\pi,\iota')_*} \ar{rrr}{\phi^{-1}_X} &&& \cat{C}(R,X) \ar{ddd}{c(\pi,\iota')\circ -} \\ \\ \\
		& FR \ar{rrr}[swap]{\phi^{-1}_R} &&& \cat{C}(R,R) \\
		\color{dgray} c(\pi,\iota')_*\iota \;=\; \iota' \ar[mapsto,dgray]{rrrrr} &&&&& |[overlay,xshift=10mm]| \color{dgray} \id_R \;=\; c(\pi,\iota')\circ g
	\end{tikzcd}
	\qquad\qquad
	\]
	as well as \eqref{id_cond}.
	In summary, the representing object $R$ is part of a triplet $(R,\pi',\iota')$ in the class $i$ such that $c(\pi'\iota')=\id_R$.
	
	The proof of $\ref{P_R2}\Leftrightarrow\ref{FP_split}$ is completely analogous and dual.
\end{proof}

We are now ready to prove the main theorem. 

\begin{proof}[Proof of \Cref{thm_cauchy}]
	First of all, the equivalence $\ref{F_R}\Leftrightarrow\ref{P_R}$ was proven in \Cref{FP_repr}.
	
	Second, notice that by definition of $\cat{C'}$, the inclusion $I:\cat{C}\funto\cat{C'}$ is fully faithful. Therefore it is an equivalence if and only if it is essentially surjective.
	
	$\ref{F_R}\Rightarrow\ref{I_eq}$: Suppose that $F$ is representable. By \Cref{FP_repr}, the representing object $X$ is part of a triple $(X,\pi,\iota)$ representing the class $i$, and such that $c(\pi,\iota)=\id_X$. 
	Considering now $\pi$ and $\iota$ as morphisms $\pi:X\to E$ and $\iota:E\to X$ in the category $\cat{C'}$, we have that $\iota\circ\pi=c(\pi,\iota)=\id_X$ by what we just said, as well as $\pi\circ\iota = \id_E$ by definition of $\id_E$ in $\cat{C'}$.
	Therefore $E\cong X$, making the inclusion $I$ essentially surjective.
	
	$\ref{I_eq}\Rightarrow\ref{F_R}$: Suppose that for some object $R$ of $\cat{C}$ we have an isomorphism $f:E\xrightarrow{\cong} R$ in $\cat{C'}$. Precomposition with $f$ gives a mapping
	\[
	\begin{tikzcd}[row sep=0, column sep=large]
		\cat{C}(R,A) \ar{r}{I(-)\circ f} & \cat{C'}(E,A) \;=\; FA \\
		\color{dgray} g \ar[mapsto,dgray]{r} & \color{dgray} Ig\circ f = g_*f
	\end{tikzcd}
	\]
	natural in $A$. This is moreover a bijection, with the following inverse.
	\[
	\begin{tikzcd}[row sep=0, column sep=large]
		\cat{C}(R,A) = \cat{C'}(IR,IA) & \cat{C'}(E,A) \;=\; FA \ar{l}[swap]{-\circ f^{-1}} \\
		\color{dgray} g = Ig & \color{dgray} Ig\circ f = g_* f \ar[mapsto,dgray]{l}
	\end{tikzcd}
	\]
	Therefore $\cat{C}(R,-)$ is naturally isomorphic to $F$, i.e.\ $R$ represents $F$.	
\end{proof}

Before we leave this section, let's also define the morphisms of Cauchy points, which will look as follows.
\[
\begin{tikzcd}[sep=small,
	blend group=multiply,
	/tikz/execute at end picture={
		\node [cbox, fit=(A) (B), inner sep=5mm] (CC) {};
		\node [catlabel] at (CC.south west) {$\cat{C}$};
	}]
	|[alias=E]| \bullet_1 &&&&&& |[alias=F]| \bullet_2 \\ \\ \\ \\
	&|[alias=A]| A &&&& |[alias=B]| B \\
	\ar[from=A,to=B,shorten=2mm, color=mgray]
	\ar[virtual, from=E,to=A,shift left,shorten=2mm, color=cgray]
	\ar[virtual, from=A,to=E,shift left,shorten=2mm, color=cgray]
	\ar[virtual, from=E,to=B,shift left,shorten=2mm, color=cgray]
	\ar[virtual, from=B,to=E,shift left,shorten=2mm, color=cgray]
	\ar[virtual, from=A,to=F,shift left,shorten=2mm, color=cgray]
	\ar[virtual, from=F,to=A,shift left,shorten=2mm, color=cgray]
	\ar[virtual, from=B,to=F,shift left,shorten=2mm, color=cgray]
	\ar[virtual, from=F,to=B,shift left,shorten=2mm, color=cgray]
	\ar[virtual, from=E, to=F, shorten=2mm]
\end{tikzcd}
\]
Intuitively, similar to the endomorphisms of a Cauchy point, we want a morphism to be defined only by the existing virtual arrows, without adding any new ones.

\begin{definition}
	Let $C_1=(F_1,P_1,c_1,i_1)$ and $C_2=(F_2,P_2,C_2,i_2)$ be Cauchy points of $\cat{C}$. A \newterm{morphism of Cauchy points} $C_1\to C_2$ is an element of the pairing 
	\[
	\langle P_2, F_1 \rangle \;=\; \Coend{C\in\cat{C}} P_2(C) \times F_1(C) .
	\]
\end{definition}
That is, every morphism $\bullet_1\dashrightarrow\bullet_2$ is necessarily in the following form:
\[
\begin{tikzcd}
	\bullet_1 \ar[virtual]{dr}[swap]{f_1} && \bullet_2 \\
	& X \ar[virtual]{ur}[swap]{p_2}
\end{tikzcd}
\]

\begin{proposition}\label{c_morph_nat}
	A morphisms of Cauchy points $C_1\to C_2$ is equivalently specified by any of the following.
	\begin{itemize}
		\item A natural transformation $P_1\Rightarrow P_2$;
		\item A natural transformation $F_2\Rightarrow F_1$ (mind the direction).
	\end{itemize}
\end{proposition}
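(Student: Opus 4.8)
The plan is to exhibit two natural bijections: one between morphisms of Cauchy points $C_1 \to C_2$ (elements of $\langle P_2, F_1\rangle$) and natural transformations $P_1 \Rightarrow P_2$, and one between such morphisms and natural transformations $F_2 \Rightarrow F_1$. The key observation is that a morphism of Cauchy points is an equivalence class $[X, p_2, f_1]$ with $f_1 \in F_1 X$ and $p_2 \in P_2 X$, and that the extra data $c_1, i_1$ (resp.\ $c_2, i_2$) provide exactly the ``virtual precomposition'' operations needed to turn such a datum into a natural transformation, just as in the proof of \Cref{FP_repr}. I would prove the first bijection in detail and remark that the second is completely analogous and dual.

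First I would construct the map from $\langle P_2, F_1\rangle$ to natural transformations $P_1 \Rightarrow P_2$. Given a representative $(X, p_2, f_1)$, define $\alpha_A : P_1 A \to P_2 A$ by $\alpha_A(q) \coloneqq c_1(q, f_1)^* p_2$; in diagrammatic terms, this sends a virtual arrow $q : A \dashrightarrow \bullet_1$ to the composite $A \dashrightarrow \bullet_1 \dashrightarrow X \dashrightarrow \bullet_2$, where the middle leg $A \to X$ is the \emph{real} arrow $c_1(q, f_1)$ obtained from the $c_1$-structure of $C_1$, and the last leg is $p_2 \in P_2 X$ acting by functoriality of $P_2$. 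Naturality in $A$ follows from naturality of $c_1$ in its first argument together with functoriality of $P_2$, exactly as in the computations displayed after \Cref{defcauchypt}. I would then check this is independent of the chosen representative: if $(X, p_2, f_1) \sim (X', p_2', f_1')$ via $g : X \to X'$ with $g_* f_1 = f_1'$ and $g^* p_2' = p_2$, then $c_1(q, f_1') = c_1(q, g_* f_1) = g \circ c_1(q, f_1)$ (naturality of $c_1$ in its second argument), whence $c_1(q, f_1')^* p_2' = c_1(q, f_1)^* g^* p_2' = c_1(q, f_1)^* p_2$ (functoriality of $P_2$), so $\alpha_A$ is well defined on the equivalence class.

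Next I would construct the inverse. Given a natural transformation $\beta : P_1 \Rightarrow P_2$, recall that $i_1 = [X, \pi, \iota]$ with $\pi \in P_1 X$, $\iota \in F_1 X$; set the morphism of Cauchy points to be $[X, \beta_X(\pi), \iota] \in \langle P_2, F_1\rangle$. One checks that $\alpha$ built from this datum recovers $\beta$ by using the identity condition \eqref{id_cond} for $i_1$: for $q \in P_1 A$, $\alpha_A(q) = c_1(q,\iota)^* \beta_X(\pi) = \beta_A(c_1(q,\iota)^*\pi) = \beta_A(q)$, where the middle step is naturality of $\beta$ and the last is the second half of \eqref{id_cond}. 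Conversely, starting from $[X, p_2, f_1]$, building $\alpha$, and then forming $[X', \alpha_{X'}(\pi'), \iota']$ for a chosen representative $(X',\pi',\iota')$ of $i_1$, one recovers the original class; this is the one step where I would be careful, since it requires re-expressing $\alpha_{X'}(\pi') = c_1(\pi', f_1)^* p_2$ and showing $[X', c_1(\pi',f_1)^* p_2, \iota'] = [X, p_2, f_1]$ using the defining relation of the pairing \eqref{quot_sum} — the relevant arrow being $c_1(\pi', f_1) : X' \to X$ together with $\iota'$-functoriality, and one of the identity conditions of \eqref{id_cond}. Finally, I would note that both assignments are visibly natural in the sense of respecting the pairing structure, and that the bijection with $F_2 \Rightarrow F_1$ is obtained dually, using $c_2$, $i_2$ and the other half of \eqref{id_cond}. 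The main obstacle is precisely this last round-trip verification: untangling which equivalence-class relation and which half of \eqref{id_cond} is in play, and keeping the variances straight — but it is the same bookkeeping already carried out in \Cref{FP_repr}, so no genuinely new idea is needed.
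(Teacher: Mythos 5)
Your proposal follows essentially the same route as the paper: the forward map $\alpha_A(q)=c_1(q,f_1)^*p_2$, the inverse $\beta\mapsto[X,\beta_X(\pi),\iota]$ evaluated on a representative of $i_1$, and the two round-trip checks using naturality together with the two halves of \eqref{id_cond} are exactly the paper's argument. Your explicit verification that $\alpha$ is well defined on equivalence classes of the pairing is a detail the paper leaves implicit, and is correctly carried out.
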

\begin{proof}
	Let $[X,p_2\in P_2,f_1\in F_1]\in \langle P_2, F_1 \rangle$ be a morphism of Cauchy points. 
	\[
	\begin{tikzcd}
		\bullet_1 \ar[virtual]{r}{f_1} & X \ar[virtual]{r}{p_2} & \bullet_2
	\end{tikzcd}
	\]
	To construct a natural transformation $\alpha:P_1\Rightarrow P_2$ we need mappings $\alpha_A:P_1(A)\to P_2(A)$ for all $A$ and naturally in $A$. We construct them as the following ``virtual precompositions'':
	\[
	\begin{tikzcd}[row sep=0, column sep=large]
		P_1(A) \ar{r}{c(-,f_1)} & \cat{C}(A,X) \ar{r}{(-)^*p_2} & P_2(A) \\
		\bullet_1 \ar[virtual,cgray]{r}{f_1} & X \ar[virtual,cgray]{r}{p_2} & \bullet_2 \\
		\phantom{\int}\\
		& A \ar[virtual]{uul}{p} \ar{uu}[swap]{c(p,f_1)} \ar[virtual]{uur}[swap]{c(p,f_1)^*p_2}
	\end{tikzcd}
	\]
	
	Conversely, given a natural transformation $\alpha:P_1\Rightarrow P_2$, considering a representative $(X_1,\pi_1,\iota_1)$ of $i_1$ we can take the element $[X_1,\alpha(\pi_1)\in P_2(X_2),\iota_1\in F_1(X_1)]\in \langle P_2,F_1\rangle$. 
	
	To show that these assignments are mutually inverse, start first with $[X,p_2,f_1]\in \langle P_2,F_1\rangle$. 
	Forming the natural transformation and the resulting triple we get
	\[
	[X_1,\alpha(\pi_1),\iota_1] \;=\; [X_1,c(\pi_1,f_1)^*p_2,\iota_1] .
	\]
	Now consider the map $c(\pi_1,f_1):X_1\to X$: we have that 
	\[
	c(\pi_1,f_1)_*\iota_1 \;=\; f_1 ,
	\]
	using \eqref{id_cond}. Therefore, as equivalence classes, $[X_1,c(\pi_1,f_1)^*p_2,\iota_1]=[X,p_2,f_1]$.
	
	Conversely, starting with $\alpha:P_1\Rightarrow P_2$, we have that for all $A$ and $p\in PA$,
	\[
	c(p,\iota_1)^*\alpha_{X_1}(\pi_1) \;=\; \alpha_A(c(p,\iota_1)^*\pi_1) \;=\; \alpha_A(p) ,
	\]
	using naturality of $\alpha$ and \eqref{id_cond}.
	
	The construction in terms of natural transformations $F_2\Rightarrow F_1$ is analogous and dual.
\end{proof}

\begin{corollary}
	Cauchy points and their morphisms form a category.
	If we denote it by $\overline{C}$, we have the following commutative diagram of fully faithful embeddings:
	\[
	\begin{tikzcd}
		&&& \sfuncat{\cat{C}^\op} \\
		\cat{C} \ar[functor,hook,shorten <=2mm]{urrr}{\Yon}  \ar[functor,hook,shorten <=2mm]{drrr}[swap]{\Yon} \ar[functor,hook,shorten <=2mm]{rr} && \overline{\cat{C}} \ar[functor,hook]{ur} \ar[functor,hook]{dr} \\
		&&& \sfuncat{\cat{C}}^\op
	\end{tikzcd}
	\]
\end{corollary}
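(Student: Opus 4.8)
The plan is to build $\overline{\cat{C}}$ out of ingredients already in hand and then identify its two projections. For the category structure I would define composition of morphisms of Cauchy points by ``virtual pasting'' of pairing elements, exactly as composition of endomorphisms of $E$ was defined in \Cref{iscat}: given $C_1\xrightarrow{[X,p_2,f_1]}C_2\xrightarrow{[Y,p_3,f_2]}C_3$, paste along $\bullet_1\xrightarrow{f_1}X\xrightarrow{p_2}\bullet_2\xrightarrow{f_2}Y\xrightarrow{p_3}\bullet_3$, contracting the middle $X\dashrightarrow\bullet_2\dashrightarrow Y$ to the arrow $c^{(2)}(p_2,f_2)\colon X\to Y$ supplied by $C_2$. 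Rather than verify the category axioms by hand, I would invoke \Cref{c_morph_nat}: chasing its explicit constructions, this pasting corresponds, under each of the two bijections $\langle P_2,F_1\rangle\cong\sfuncat{\cat{C}^\op}(P_1,P_2)$ and $\langle P_2,F_1\rangle\cong\sfuncat{\cat{C}}^\op(F_1,F_2)$, to composition of natural transformations. Since those functor categories are categories, so is $\overline{\cat{C}}$; and unwinding the construction in \Cref{c_morph_nat} the identity natural transformation of $P_1$ corresponds to the class $[X_1,\pi_1,\iota_1]=i_1$, so the identity of a Cauchy point in $\overline{\cat{C}}$ is its distinguished element $i$ (consistent with $\id_E=i$ in \Cref{defcplus}).

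Next, the two embeddings. Set $\mathcal P\colon\overline{\cat{C}}\to\sfuncat{\cat{C}^\op}$ to be $(F,P,c,i)\mapsto P$ on objects and the corresponding natural transformation $P_1\Rightarrow P_2$ on morphisms, and $\mathcal F\colon\overline{\cat{C}}\to\sfuncat{\cat{C}}^\op$ to be $(F,P,c,i)\mapsto F$ on objects and the corresponding $F_2\Rightarrow F_1$ (an arrow $F_1\to F_2$ of $\sfuncat{\cat{C}}^\op$) on morphisms. By the previous paragraph both preserve composition and identities, so both are functors; that each is full and faithful is literally \Cref{c_morph_nat}, whose bijections are precisely the maps these functors induce on hom-sets. (As is standard, I read ``fully faithful embedding'' in the statement as ``fully faithful functor''; one can check with a small monoid example that $\mathcal P$ and $\mathcal F$ need not be injective on objects.)

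Then I would treat the embedding $\cat{C}\to\overline{\cat{C}}$. To an object $X$ assign the Cauchy point with functor $\cat{C}(X,-)$, presheaf $\cat{C}(-,X)=\Yon(X)$, pairing map $c_{A,B}\colon\cat{C}(A,X)\times\cat{C}(X,B)\to\cat{C}(A,B)$ equal to ordinary composition, and $i=[X,\id_X,\id_X]$; the axioms of \Cref{defcauchypt} are immediate, \eqref{id_cond} collapsing to the unit laws of $\cat{C}$. To a morphism $g\colon X\to Y$ assign the element of $\langle\cat{C}(-,Y),\cat{C}(X,-)\rangle$ it names under the Yoneda-reduction isomorphism $\langle\cat{C}(-,Y),\cat{C}(X,-)\rangle\cong\cat{C}(X,Y)$ (the instance of \Cref{ninja} closing the functor-pairing subsection); functoriality of $X\mapsto(\cat{C}(X,-),\cat{C}(-,X),\circ,i)$ follows because virtual pasting of pairing elements transports to composition in $\cat{C}$ under this isomorphism, and full faithfulness is the isomorphism itself. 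Finally, commutativity: by construction $\mathcal P$ sends the Cauchy point of $X$ to $\Yon(X)$ and, chasing the Yoneda-reduction isomorphism, sends the morphism named by $g$ to $g\circ-=\Yon(g)$; dually $\mathcal F$ composed with $\cat{C}\to\overline{\cat{C}}$ is the Yoneda embedding $\cat{C}\to\sfuncat{\cat{C}}^\op$. Hence both composites equal $\Yon$ and the triangle commutes.

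The part I expect to be the genuine obstacle is organizational rather than conceptual: pinning down that the ``virtual pasting'' composition on the pairings is simultaneously compatible with \emph{both} bijections of \Cref{c_morph_nat}, so that $\mathcal P$ and $\mathcal F$ are honest functors and the two descriptions of $\overline{\cat{C}}$ agree. This amounts to a careful diagram chase through the mutually-inverse assignments in the proof of \Cref{c_morph_nat}, using naturality of the $c$'s and functoriality of the $F$'s and $P$'s, modelled on the well-definedness computations already done in \Cref{iscat}. Once that coherence is settled, the Cauchy-point axioms for $\cat{C}(X,-),\cat{C}(-,X)$, the functoriality of the embedding of $\cat{C}$, and the identification of both composites with $\Yon$ are all routine.
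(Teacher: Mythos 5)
Your proposal is correct and follows exactly the route the paper intends: the corollary is stated without proof as an immediate consequence of \Cref{c_morph_nat}, with composition given by pasting pairing elements (as in \Cref{iscat}), full faithfulness of the two projections given by the bijections of \Cref{c_morph_nat}, and the embedding of $\cat{C}$ via representable Cauchy points together with the Yoneda-reduction isomorphism $\langle\cat{C}(-,Z),\cat{C}(X,-)\rangle\cong\cat{C}(X,Z)$. Your identification of the coherence check (that pasting transports to composition of natural transformations under both bijections, via naturality of the $c$'s) as the only nontrivial step is accurate, and that check does go through.
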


\begin{corollary}
	A category is Cauchy complete if and only if the inclusion $\cat{C}\to\overline{\cat{C}}$ is an equivalence.
\end{corollary}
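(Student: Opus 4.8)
The plan is to reduce the statement to the two fully faithful embeddings established just above---$\cat{C}\hookrightarrow\overline{\cat{C}}$ and $\overline{\cat{C}}\hookrightarrow\sfuncat{\cat{C}^\op}$---together with \Cref{thm_cauchy}. First I would note that, since $\cat{C}\hookrightarrow\overline{\cat{C}}$ is fully faithful, it is an equivalence of categories if and only if it is essentially surjective; that is, if and only if every Cauchy point $(F,P,c,i)$ of $\cat{C}$ is isomorphic in $\overline{\cat{C}}$ to the image of some object $R$ of $\cat{C}$. By commutativity of the diagram in the preceding corollary, the image of $R$ under $\cat{C}\hookrightarrow\overline{\cat{C}}$ is the representable Cauchy point at $R$, whose presheaf is $\cat{C}(-,R)$ (and whose set functor is $\cat{C}(R,-)$).

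Next I would exploit that the embedding $\overline{\cat{C}}\hookrightarrow\sfuncat{\cat{C}^\op}$ is fully faithful and, by \Cref{c_morph_nat} and the commutativity of the diagram, acts on objects by sending a Cauchy point to its presheaf $P$, sending the image of $R$ to $\cat{C}(-,R)$. Since a fully faithful functor reflects isomorphisms, a Cauchy point $(F,P,c,i)$ is isomorphic in $\overline{\cat{C}}$ to the image of $R$ exactly when $P\cong\cat{C}(-,R)$ in $\sfuncat{\cat{C}^\op}$, i.e.\ exactly when $P$ is representable with representing object $R$. By \Cref{thm_cauchy}, $P$ is representable precisely when the Cauchy point is already in $\cat{C}$. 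Chaining these equivalences yields: $\cat{C}\hookrightarrow\overline{\cat{C}}$ is essentially surjective if and only if every Cauchy point of $\cat{C}$ is already in $\cat{C}$, which is by definition the assertion that $\cat{C}$ is Cauchy complete. This would complete the proof.

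The argument is largely bookkeeping once the earlier results are assembled, so I do not expect a genuine obstacle. The one place that deserves care is to confirm that the functor $\overline{\cat{C}}\to\sfuncat{\cat{C}^\op}$ appearing in the diagram is exactly the one induced by \Cref{c_morph_nat} (hence fully faithful and given on objects by $C\mapsto P$), which is what lets ``isomorphic in $\overline{\cat{C}}$'' be transported to ``naturally isomorphic presheaves'' and thence, via \Cref{thm_cauchy}, to representability of $P$. One could equally run the whole argument on the set-functor side, using $\overline{\cat{C}}\hookrightarrow\sfuncat{\cat{C}}^\op$ and representability of $F$.
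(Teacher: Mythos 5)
Your argument is correct and is essentially the one the paper intends (the corollary is stated without proof there): full faithfulness of $\cat{C}\hookrightarrow\overline{\cat{C}}$ reduces the question to essential surjectivity, and the fully faithful embedding $\overline{\cat{C}}\hookrightarrow\sfuncat{\cat{C}^\op}$ together with the commuting triangle of Yoneda embeddings translates ``isomorphic in $\overline{\cat{C}}$ to the image of $R$'' into representability of $P$, which \Cref{thm_cauchy} identifies with the Cauchy point being already in $\cat{C}$. No gaps.
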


In the next few sections, we will look at some equivalent ways of looking at Cauchy points and Cauchy completion.

\subsection{In terms of idempotents}

Another way of looking at Cauchy points of unenriched categories is in terms of idempotents and their splittings.

Recall that an \emph{idempotent} in a category $\cat{C}$ is an endomorphism $e:X\to X$ such that $e\circ e=e$. 
A \emph{splitting} of the idempotent $e$ is an object $E$ together with morphisms $\iota:E\to X$ and $\pi:X\to E$ such that $\pi\circ\iota=\id_E$, i.e.\ forming a section-retraction pair, and such that $\iota\circ\pi=e$:
\[
\begin{tikzcd}
	& X \ar{dr}{\pi} \\
	E \ar{ur}{\iota} \ar{rr}[swap]{\id} && E
\end{tikzcd}
\qquad\qquad 
\begin{tikzcd}
	X \ar{rr}{e} \ar{dr}[swap]{\pi} && X \\
	& E \ar{ur}[swap]{\iota}
\end{tikzcd}
\]
Notice that, somewhat conversely, every section-retraction pair gives rise to an idempotent, since
\[
(\iota\circ\pi)\circ(\iota\circ\pi) \;=\; \iota\circ(\pi\circ\iota)\circ\pi \;=\; \iota\circ\id\circ\pi\;=\;\iota\circ\pi .
\]
One of the most common situations in mathematics where one sees this is in linear algebra, where idempotents are projections onto subspaces $E\subseteq X$. 

A splitting of an idempotent can be seen as a particular limit or colimit.

\begin{proposition}\label{e_split_eq}
	Let $e:X\to X$ be an idempotent in a category $\cat{C}$. The following conditions are equivalent.
	\begin{enumerate}
		\item\label{e_split} $e$ has a splitting $(E,\pi,\iota)$;
		\item\label{e_eq} The parallel pair
		\[
		\begin{tikzcd}
			X \ar[shift left]{r}{e} \ar[shift right]{r}[swap]{\id} & X
		\end{tikzcd}
		\]
		has an equalizer $(E,\iota:E\to X)$;
		\item\label{e_coeq} The parallel pair above has a coequalizer $(E,\pi:X\to E)$. 
	\end{enumerate}
	Moreover, the equalizer and coequalizer above, if they exist, are preserved by every functor.
\end{proposition}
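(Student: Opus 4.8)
The plan is to establish the cycle of equivalences by writing down explicit comparison maps, and then to observe that the whole configuration is purely equational, which makes the absoluteness statement immediate.

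First I would prove $\ref{e_split}\Rightarrow\ref{e_eq}$ and, dually, $\ref{e_split}\Rightarrow\ref{e_coeq}$. Given a splitting $(E,\pi,\iota)$ with $\pi\circ\iota=\id_E$ and $\iota\circ\pi=e$, note first that $e\circ\iota=\iota\circ\pi\circ\iota=\iota=\id\circ\iota$, so $\iota$ equalizes the pair $(e,\id)$. For universality, given $f\colon Z\to X$ with $e\circ f=f$, the arrow $\pi\circ f\colon Z\to E$ satisfies $\iota\circ(\pi\circ f)=(\iota\circ\pi)\circ f=e\circ f=f$, and it is the unique such arrow because $\iota$ is split monic: any $g$ with $\iota\circ g=f$ has $g=\pi\circ\iota\circ g=\pi\circ f$. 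Hence $(E,\iota)$ is an equalizer, and the coequalizer claim is the exact dual (using $\pi\circ e=\pi$ and the induced arrow $f\circ\iota$).

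Next I would prove $\ref{e_eq}\Rightarrow\ref{e_split}$, with $\ref{e_coeq}\Rightarrow\ref{e_split}$ dual. Suppose $(E,\iota\colon E\to X)$ is an equalizer of $(e,\id)$. Since $e\circ e=e=\id\circ e$, the arrow $e$ itself factors through the equalizer, so there is a unique $\pi\colon X\to E$ with $\iota\circ\pi=e$. Then $\iota\circ\pi\circ\iota=e\circ\iota=\iota=\iota\circ\id_E$, and because an equalizing arrow is a monomorphism, $\pi\circ\iota=\id_E$; together with $\iota\circ\pi=e$ this exhibits $(E,\pi,\iota)$ as a splitting of $e$. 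This closes all three equivalences.

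For the last sentence, I would argue that once $e$ splits as $(E,\pi,\iota)$, all the relevant data satisfy the equations $\pi\circ\iota=\id_E$, $\iota\circ\pi=e$, and consequently $e\circ e=e$. Any functor $T\colon\cat{C}\funto\cat{D}$ preserves these equations, so $(TE,T\pi,T\iota)$ is a splitting of the idempotent $Te$ in $\cat{D}$; applying $\ref{e_split}\Rightarrow\ref{e_eq}$ inside $\cat{D}$ shows $(TE,T\iota)$ is an equalizer of $(Te,\id_{TX})$, i.e.\ $T$ carries the equalizer diagram to an equalizer diagram, and dually for the coequalizer. The proof is essentially routine; the only place demanding a moment's care is the step $\pi\circ\iota=\id_E$ in $\ref{e_eq}\Rightarrow\ref{e_split}$, where one must remember to use that equalizers are monic, and the bookkeeping point that the splitting is equational, so that ``preserved by every functor'' needs no hypothesis on the functor at all.
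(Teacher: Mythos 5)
Your proposal is correct and follows essentially the same route as the paper: the comparison arrow $\pi\circ f$ for universality of the equalizer, monicity of the equalizing arrow to get $\pi\circ\iota=\id_E$, and absoluteness via the observation that a splitting is purely equational and hence preserved by any functor, after which one reruns the implication $\ref{e_split}\Rightarrow\ref{e_eq}$ in the target category. The only (welcome) addition is that you explicitly check $e\circ\iota=\iota$ so that $\iota$ is a fork over the parallel pair, a step the paper leaves implicit.
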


\begin{proof}
	For $\ref{e_split}\Rightarrow\ref{e_eq}$, suppose that $(E,\pi,\iota)$ splits $e$. We have to show that $(E,\iota)$ is the equalizer of the pair $(e,\id_X)$. Let $f:A\to X$ be such that $e\circ f=\id\circ f=f$.
	We have to find a unique map $u:A\to E$ making the triangle on the left commute:
	\[
	\begin{tikzcd}
		A \ar[virtual]{dr}[swap]{u} \ar{r}{f} & X \ar[shift left]{r}{e} \ar[shift right]{r}[swap]{\id} & X \\
		& E \ar{u}[swap]{\iota}
	\end{tikzcd}
	\]
	Take now $u=\pi\circ f$. We have that 
	\[
	\iota\circ u \;=\; \iota\circ \pi\circ f \;=\; e\circ f \;=\; f .
	\]
	To see that $u$ is unique, notice that $\iota$ is split monic. 
	
	For $\ref{e_eq}\Rightarrow\ref{e_split}$, suppose that $(E,\iota)$ is the equalizer of the pair $(e,\id_X)$. By idempotency of $e$, we have that $e\circ e=e=\id\circ e$, so that in the following diagram there is a unique $u:X\to E$ making the left triangle commute.
	\[
	\begin{tikzcd}
		X \ar[virtual]{dr}[swap]{u} \ar{r}{e} & X \ar[shift left]{r}{e} \ar[shift right]{r}[swap]{\id} & X \\
		& E \ar{u}[swap]{\iota}
	\end{tikzcd}
	\]
	Set now $\pi=u$. Commutativity of the triangle gives $\iota\circ\pi=e$. To show that $\pi\circ\iota=\id_E$, notice that 
	\[
	\iota\circ \pi\circ\iota \;=\; e\circ\iota \;=\; \iota \;=\; \iota\circ\id_E ,
	\]
	and that $\iota$ is monic (being an equalizer).
	
	The proofs of $\ref{e_split}\Rightarrow\ref{e_coeq}$ and $\ref{e_coeq}\Rightarrow\ref{e_split}$ are analogous and dual.
	
	Finally, let $F:\cat{C}\funto\cat{D}$ be a functor, and suppose that any (hence all) of the conditions above are satisfied. By functoriality, $Fe:FX\to FX$ is an idempotent, split by $F\iota:FE\to FX$ and $F\pi:FX\to FE$. So by the arguments above, $(FE,F\iota)$ is an equalizer and $(FE,f\pi)$ is a coequalizer of $(Fe,\id_{FX})$.
\end{proof}

Let's now connect these concepts to Cauchy completion.

\begin{definition}\label{e_to_cauchy}
	Let $e:X\to X$ be an idempotent of $\cat{C}$. 
	The \newterm{Cauchy point associated to $e$} is constructed as follows.
	\begin{itemize}
		\item We construct, as in \Cref{equalizer}, a functor $\mathrm{Inv}_R:\cat{C}\funto\cat{Set}$ (for ``right-invariant'') by 
		\[
		\mathrm{Inv}_R(A) \;\coloneqq\; \{f:X\to A \;|\; f\circ e = f\} .
		\]
		On morphisms, it just gives postcomposition of morphisms. (It is a subfunctor of the representable functor $\cat{C}(X,-)$.)
		\item Dually, we construct a presheaf $\mathrm{Inv}_L:\cat{C}^\op\funto\cat{Set}$ by
		\[
		\mathrm{Inv}_L(A) \;\coloneqq\; \{p:A\to X \;|\; e\circ p = p\} .
		\]
		On morphisms, it just gives precomposition of morphisms. (It is a subfunctor of the representable presheaf $\cat{C}(-,X)$.)
		\item As mapping $c_{A,B}:FA\times PB\to \cat{C}(A,B)$ we simply take composition of morphisms;
		\item As equivalence class $i$ we take $[X,e,e]$. (Note that by idempotency, $e$ is both in $\mathrm{Inv}_R(X)$ and in $\mathrm{Inv}_L(X)$.)
	\end{itemize}
\end{definition}

To check conditions \eqref{id_cond}, for every $f\in \mathrm{Inv}_R(A)$ and $p\in \mathrm{Inv}_L(A)$,
\begin{equation}\label{id_cond2}
c_{X,A}(e,f)_*e \;=\; (f\circ e)\circ e \;=\; f , \qquad c_{A,X}(p,e)^*e \;=\; e\circ(e\circ p) \;=\; p.
\end{equation}

Conversely, given a Cauchy point $(F,P,c,i)$ with $i=[X,\pi,\iota]$. Recall that, as in \eqref{virtual_retract}, we can see $\pi$ and $\iota$ as forming a ``virtual retract'':
\[
\begin{tikzcd}[baseline=-1em,
	row sep=small, column sep=2mm,
	blend group=multiply,
	/tikz/execute at end picture={
		\node [cbox, fit=(A) (B) (C) (D), inner sep=1.2mm] (CC) {};
		\node [catlabel] at (CC.south west) {$\cat{C}$};
	}]
	&& |[alias=E,xshift=2mm]| \bullet \\ \\ \\ \\
	&&& |[alias=D,xshift=-5mm]| \color{cgray} A \\
	|[alias=A]| \color{cgray} B &&&& |[alias=B]| \color{cgray} C \\
	& |[alias=C,xshift=5mm]| X
	\ar[from=A,to=B,color=mgray]
	\ar[from=A,to=C,color=mgray]
	\ar[from=C,to=B,color=mgray]
	\ar[from=D,to=A,color=mgray]
	\ar[from=D,to=B,color=mgray]
	\ar[from=D,to=C,color=mgray]
	\ar[virtual,from=E,to=A,shift left,shorten=2.3mm,color=cgray]
	\ar[virtual,from=A,to=E,shift left,shorten=2.3mm,color=cgray]
	\ar[virtual,from=E,to=C,shift left,shorten=2.3mm,"\iota"{pos=0.45}]
	\ar[virtual,from=C,to=E,shift left,shorten=2.3mm,"\pi"{pos=0.45}]
	\ar[virtual,from=B,to=E,shorten=2.3mm,color=cgray]
	\ar[virtual,from=E,to=D,shorten=2.3mm,color=cgray]
\end{tikzcd}
\qquad\qquad\quad``\pi\circ\iota=\id_\bullet"
\]
It would therefore be tempting to define an idempotent on $X$ by setting $e=c(\pi,\iota):X\to X$. 
This would indeed be idempotent, since
\begin{equation}\label{would_be_e}
c_{X,X}(\pi,\iota)\circ c_{X,X}(\pi,\iota) \;=\; c_{X,X}(c_{X,X}(\pi,\iota)_*\pi,\iota) \;=\; c_{X,X}(\pi,\iota) ,
\end{equation}
using naturality of $c$ and \eqref{id_cond2}. 
We however need a little care: the resulting construction may not be well defined, since the choice of the object $X$ depends on the chosen representative of $i=[X,\pi,\iota]$. 
Indeed, consider the following situation, where we have a ``cycle'':
\begin{equation}\label{virtual_retract_equiv}
\begin{tikzcd}[baseline=-1em,
	row sep=small, column sep=2mm,
	blend group=multiply,
	/tikz/execute at end picture={
		\node [cbox, fit=(A) (B) (C) (D), inner sep=1.2mm] (CC) {};
		\node [catlabel] at (CC.south west) {$\cat{C}$};
	}]
	&& |[alias=E,xshift=2mm]| \bullet \\ \\ \\ \\
	&&& |[alias=D,xshift=-5mm]| \color{cgray} A \\
	|[alias=A]| \color{cgray} B &&&& |[alias=B]| Y \\
	& |[alias=C,xshift=5mm]| X
	\ar[from=A,to=B,color=mgray]
	\ar[from=A,to=C,color=mgray]
	\ar[from=C,to=B,"g"']
	\ar[from=D,to=A,color=mgray]
	\ar[from=D,to=B,color=mgray]
	\ar[from=D,to=C,color=mgray]
	\ar[virtual,from=E,to=A,shift left,shorten=2.3mm,color=cgray]
	\ar[virtual,from=A,to=E,shift left,shorten=2.3mm,color=cgray]
	\ar[virtual,from=E,to=C,shift left,shorten=2.3mm,"\iota"{pos=0.45}]
	\ar[virtual,from=C,to=E,shift left,shorten=2.3mm,color=cgray]
	\ar[virtual,from=B,to=E,shorten=2.3mm,"\pi'"{swap,pos=0.45}]
	\ar[virtual,from=E,to=D,shorten=2.3mm,color=cgray]
\end{tikzcd}
\qquad\qquad\quad``\pi'\circ g\circ\iota=\id_\bullet"
\end{equation}
(Setting $\pi=\pi'\circ g$ and $\iota'=g\circ\iota$, we see that $(X,\pi,\iota)\sim(Y,\pi',\iota')$).
We could now take as idempotent either $\iota\circ\pi'\circ g:X\to X$ or $g\circ\iota\circ\pi':Y\to Y$.
So, in order to have a well defined mapping, we have to define certain equivalence classes of idempotents. We will do even more: we will define a \emph{category} of idempotents.

\begin{definition}
	The \newterm{Karoubi envelope} or \newterm{idempotent completion} of a category $\cat{C}$ is a category $\cat{K(C)}$ where
	\begin{itemize}
		\item Objects are pairs $(X,e)$, where $X$ is an object of $\cat{C}$ and $e:X\to X$ is an idempotent;
		\item Morphisms $(X,e)\to (X',e')$ are morphisms $g:X\to X'$ of $\cat{C}$ which are bi-invariant, i.e.\ such that $g\circ e = e'\circ g=g$;
		\item The identity $(X,e)\to (X,e)$ is given by the morphism $e$ (\emph{not} by the identity in $\cat{C}$);
		\item The composition of morphisms is the one of $\cat{C}$.
	\end{itemize}
\end{definition}

The original category $\cat{C}$ is embedded into its Karoubi envelope via the fully faithful functor $X\mapsto (X,\id_X)$.

In the category $\cat{K(C)}$, two idempotents $e:X\to X$ and $e':X'\to X'$ are isomorphic if and only if there are morphisms $g:X\to X'$ and $h:X'\to X$ such that the following diagrams commute.
\begin{equation}\label{iso_K}
\begin{tikzcd}
	X \ar{d}[swap]{e} \ar{r}{g} \ar{dr}{g} & X' \ar{d}{e'} \\
	X \ar{r}[swap]{g} & X' 
\end{tikzcd}
\qquad
\begin{tikzcd}
	X' \ar{d}[swap]{e'} \ar{r}{h} \ar{dr}{h} & X \ar{d}{e} \\
	X' \ar{r}[swap]{h} & X
\end{tikzcd}
\qquad
\begin{tikzcd}
	X \ar{r}{g} \ar{dr}[swap]{e} & X' \ar{d}{h} \\
	& X
\end{tikzcd}
\qquad
\begin{tikzcd}
	X' \ar{r}{h} \ar{dr}[swap]{e'} & X \ar{d}{g} \\
	& X'
\end{tikzcd}
\end{equation}
Note that the maps $g$ and $h$ may not be isomorphisms in the category $\cat{C}$, since in the two triangles on the right the idempotents appear instead of the identities of $\cat{C}$.

\begin{proposition}
	Let $e:X\to X$ and $e':X'\to X'$ be isomorphic idempotents.
	Then $e$ splits if and only if $e'$ does, and if so their splittings coincide.
\end{proposition}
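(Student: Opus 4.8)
The plan is to work directly with the isomorphism data supplied by \eqref{iso_K}. Unpacking what it means for $e:X\to X$ and $e':X'\to X'$ to be isomorphic in $\cat{K(C)}$, we have morphisms $g:X\to X'$ and $h:X'\to X$ of $\cat{C}$ satisfying the four relations
\[
g\circ e \;=\; e'\circ g \;=\; g, \qquad h\circ e' \;=\; e\circ h \;=\; h, \qquad h\circ g \;=\; e, \qquad g\circ h \;=\; e' .
\]
Since these relations are symmetric under swapping $(e,g)\leftrightarrow(e',h)$, it suffices to show: if $e$ splits, then $e'$ splits on the \emph{same} object. (An alternative route would be through \Cref{e_split_eq}, transporting an equalizer of $(e,\id_X)$ to one of $(e',\id_{X'})$, but the direct transport of a section--retraction pair is shorter and makes the claim ``the splittings coincide'' manifest.)

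\textbf{The transport.} Suppose $(E,\pi,\iota)$ splits $e$, so $\pi\circ\iota=\id_E$ and $\iota\circ\pi=e$. I claim $(E,\iota',\pi')$ with $\iota'\coloneqq g\circ\iota:E\to X'$ and $\pi'\coloneqq\pi\circ h:X'\to E$ splits $e'$. Indeed,
\[
\pi'\circ\iota' \;=\; \pi\circ h\circ g\circ\iota \;=\; \pi\circ e\circ\iota \;=\; \pi\circ\iota\circ\pi\circ\iota \;=\; \id_E ,
\]
using $h\circ g=e=\iota\circ\pi$, and
\[
\iota'\circ\pi' \;=\; g\circ\iota\circ\pi\circ h \;=\; g\circ e\circ h \;=\; g\circ h \;=\; e' ,
\]
using $\iota\circ\pi=e$, then $e\circ h=h$, then $g\circ h=e'$. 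Thus $(E,\iota',\pi')$ is a splitting of $e'$ carried by the very same object $E$.

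\textbf{Conclusion and caveat.} Running the same computation with the roles of $e$ and $e'$ (and of $g$ and $h$) interchanged shows conversely that a splitting of $e'$ yields a splitting of $e$ on the same object, and in fact recovers $(E,\iota,\pi)$ from $(E,\iota',\pi')$. Hence $e$ splits if and only if $e'$ does, and when they do, the two splittings are literally the same object (with section and retraction adjusted by $g,h$); combined with the uniqueness up to unique isomorphism of splittings of a fixed idempotent, this is exactly the assertion that ``their splittings coincide''. There is no real obstacle; the only point requiring care is bookkeeping — because $g$ and $h$ need not be isomorphisms in $\cat{C}$, one may not cancel them, so every simplification must be routed through $h\circ g=e$, $g\circ h=e'$, and the invariance relations $e\circ h=h$, $g\circ e=g$ rather than through invertibility.
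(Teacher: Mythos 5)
Your proof is correct and is essentially the paper's own argument: you transport the splitting via $\iota'=g\circ\iota$ and $\pi'=\pi\circ h$ and verify the two identities using $h\circ g=e$, $g\circ h=e'$, and the invariance relations, exactly as the paper does. The extra remarks on symmetry and on not cancelling $g,h$ are sound but do not change the route.
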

\begin{proof}
	Consider an isomorphism $(g:X\to X',h:X'\to X)$ as defined above, and let $(E,\pi:X\to E,\iota:E\to X)$ be a splitting of $e$. Consider now the maps $\iota'\coloneqq g\circ\iota:E\to X'$ and $\pi'\coloneqq\pi\circ h: X'\to E$. We have that 
	\[
	\iota'\circ\pi' \;=\; g\circ\iota\circ \pi\circ h \;=\; g\circ e\circ h \;=\; g\circ h \;=\; e' 
	\]
	and 
	\[
	\pi'\circ\iota \;=\; \pi\circ h\circ g\circ\iota \;=\; \pi \circ e \circ \iota \;=\; \pi\circ\iota \;=\; \id_E ,
	\]
	so that $(E,\pi',\iota')$ is a splitting of $e'$.
\end{proof}

Let's now establish the correspondence.

\begin{theorem}\label{c_e}
	Let $\cat{C}$ be a category.
	\Cref{e_to_cauchy} establishes an equivalence of categories $\cat{K(C)}\funto\overline{\cat{C}}$. 
\end{theorem}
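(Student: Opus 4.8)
The plan is to construct a functor $\cat{K(C)}\funto\overline{\cat{C}}$ using \Cref{e_to_cauchy} on objects, and then show it is fully faithful and essentially surjective, invoking \Cref{FP_repr} and \Cref{c_morph_nat} for the two key technical ingredients. On objects, an idempotent $(X,e)$ is sent to the Cauchy point $(\mathrm{Inv}_R,\mathrm{Inv}_L,c,[X,e,e])$ of \Cref{e_to_cauchy}, whose conditions \eqref{id_cond} were already verified there via \eqref{id_cond2}. On morphisms, a bi-invariant $g:(X,e)\to (X',e')$ should be sent to the morphism of Cauchy points given by the class $[X, g\in\mathrm{Inv}_L'(X)??]$ — more carefully, $g$ factors as $e'\circ g\circ e$, so $g\in\mathrm{Inv}_R(X')$ seen from $X$, i.e.\ it determines the element $[X, g, e]\in\langle \mathrm{Inv}_L', \mathrm{Inv}_R\rangle$ where the presheaf-part is $\mathrm{Inv}_L'$ of the target and the functor-part is $\mathrm{Inv}_R$ of the source. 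Functoriality of this assignment (identities $e\mapsto[X,e,e]=i$, composition respected) follows from the fact that $c$ is just composition in $\cat{C}$ together with the coend relations, so it is a short direct check.

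Next I would show the functor is fully faithful. By \Cref{c_morph_nat}, a morphism between the Cauchy points associated to $(X,e)$ and $(X',e')$ is equivalently a natural transformation $\mathrm{Inv}_L \Rightarrow \mathrm{Inv}_L'$ (the presheaves of left-invariant maps into $X$ and $X'$). Since $\mathrm{Inv}_L$ is a subfunctor of the representable $\cat{C}(-,X)$ containing $e$, and dually for $\mathrm{Inv}_L'$, a Yoneda-style argument identifies such natural transformations with bi-invariant morphisms $X\to X'$: the transformation is determined by the image of $e\in\mathrm{Inv}_L(X)$, which must land in $\mathrm{Inv}_L'(X)$, i.e.\ be a map $g:X\to X'$ with $e'\circ g = g$; naturality together with $g\circ e = g$ (which follows from $e\in\mathrm{Inv}_L(X)$ being in the domain) then forces the whole transformation to be precomposition-then-appropriate-postcomposition by $g$. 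This gives the required bijection on hom-sets, and it is compatible with composition by construction.

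For essential surjectivity, let $(F,P,c,i)$ be an arbitrary Cauchy point with $i=[X,\pi,\iota]$. Set $e\coloneqq c_{X,X}(\pi,\iota):X\to X$; by \eqref{would_be_e} this is an idempotent of $\cat{C}$. I claim the Cauchy point associated to $(X,e)$ is isomorphic in $\overline{\cat{C}}$ to $(F,P,c,i)$. To see this, I would exhibit a morphism of Cauchy points in each direction using the "virtual precomposition" maps of \Cref{FP_repr}: the maps $c(\pi,-):FA\to\cat{C}(X,A)$ and $(-)_*\iota:\cat{C}(X,A)\to FA$ restrict (because of the idempotency relation and \eqref{virtual_retract_eq}) to mutually inverse natural isomorphisms between $F$ and $\mathrm{Inv}_R$, and dually $P\cong\mathrm{Inv}_L$; these natural transformations are exactly morphisms of Cauchy points by \Cref{c_morph_nat}, and one checks they compose to the respective identities and that they carry $i$ to $[X,e,e]$. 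Hence every Cauchy point is, up to isomorphism, one coming from an idempotent. Combining full faithfulness with essential surjectivity gives the equivalence $\cat{K(C)}\simeq\overline{\cat{C}}$.

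The main obstacle I anticipate is the bookkeeping around representatives and well-definedness: both the object-assignment on the Cauchy side (the class $i$ only determines $X$ up to the equivalence relation, as the "cycle" \eqref{virtual_retract_equiv} shows) and the morphism-assignment must be checked to be independent of chosen representatives, and one must confirm that the isomorphism built for essential surjectivity does not secretly depend on the choice of $(X,\pi,\iota)$ — i.e.\ that equivalent representatives give isomorphic objects $(X,e)$ in $\cat{K(C)}$, which is precisely where the triangles \eqref{iso_K} and the preceding \Cref{pow_functorial}-style naturality arguments come in. Everything else is a diagram chase using only naturality of $c$ and functoriality of $F$ and $P$, already rehearsed in the proof of \Cref{iscat} and \Cref{FP_repr}.
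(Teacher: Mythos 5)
Your proposal is correct and follows essentially the same route as the paper's proof: the same object and morphism assignments (a bi-invariant $g$ goes to the class $[X,g,e]$, equivalently postcomposition $\mathrm{Inv}_L\Rightarrow\mathrm{Inv}'_L$), full faithfulness by evaluating a natural transformation at $e\in\mathrm{Inv}_L(X)$ and using naturality, and essential surjectivity via $e=c(\pi,\iota)$ together with the natural isomorphism $P\cong\mathrm{Inv}_L$ (resp.\ $F\cong\mathrm{Inv}_R$) coming from \eqref{id_cond}. The representative-independence worry you flag is harmless for essential surjectivity, since any one choice of $(X,\pi,\iota)$ suffices to produce an object of $\cat{K(C)}$ hitting the given Cauchy point up to isomorphism.
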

\begin{proof}
	Let's show that the construction in \Cref{e_to_cauchy} is functorial. Let $g:(X,e)\to (X',e')$ be a morphism of idempotents, and denote the Cauchy points of $(X,e)$ and $(X',e')$ by $\Phi(X,e)=(\mathrm{Inv}_L,\mathrm{Inv}_R,c,i)$ and $\Phi(X',e')=(\mathrm{Inv}'_L,\mathrm{Inv}'_L,c',i')$. 
	By \Cref{c_morph_nat}, we can equivalently specify a morphism of Cauchy points by a natural transformation $\mathrm{Inv}_L\Rightarrow\mathrm{Inv}'_L$. We take the one given by postcomposition with $g$:
	\[
	\begin{tikzcd}[row sep=0]
		\mathrm{Inv}_L(A) \ar{r}{\Phi(g)} & \mathrm{Inv}'_L(A) \\
		\color{dgray} (A \xrightarrow{p} X) \ar[mapsto,dgray]{r} & \color{dgray} (A \xrightarrow{p} X \xrightarrow{g} X')
	\end{tikzcd}
	\]
	Note indeed that $e'\circ g\circ p = g\circ p$. 
	(In terms of elements of the pairing, this morphism is specified by $[X,g,e]=[X',e',g]\in\langle \mathrm{Inv}_L,\mathrm{Inv}_R \rangle$.)
	To show that this assignment preserves identities (recall that the identity of $(X,e)$ is $e$), notice that 
	\[
	\begin{tikzcd}[row sep=0]
		\mathrm{Inv}_L(A) \ar{r}{\Phi(e)} & \mathrm{Inv}'_L(A) \\
		\color{dgray} p \ar[mapsto,dgray]{r} & \color{dgray} e\circ p \;=\; p
	\end{tikzcd}
	\]
	by the fact that $p$ is left-invariant.
	Composition, instead, works as usual.
	This gives a functor $\Phi:\cat{K(C)}\funto\overline{\cat{C}}$. 
	
	To show that $\Phi$ is faithful, let $g,h:(X,e)\to(X',e')$ be morphisms of idempotents. Suppose that for all $A$ and for all left-invariant morphisms $p:A\to X$, $g\circ p=h\circ p$.
	Then setting $A=X$ and $p=e$, we have that $g=g\circ e=h\circ e=h$.
	
	To show that $\Phi$ is full, let $\alpha:\mathrm{Inv}_L\Rightarrow\mathrm{Inv}'_L$ be a natural transformation. Define $g$ by
	\[
	\begin{tikzcd}[row sep=0]
		\mathrm{Inv}_L(X) \ar{r}{\alpha_X} & \mathrm{Inv}'_L(X) \\
		\color{dgray} e \ar[mapsto,dgray]{r} & \color{dgray} g\coloneqq \alpha_X(e) . 
	\end{tikzcd}
	\]
	Notice that, since $g$ is by construction an element of $\mathrm{Inv}'_L(X)$, $e'\circ g=g$. 
	Also, by naturality of $\alpha$, the following diagram commutes.
	\[
	\begin{tikzcd}[sep=small]
		\color{dgray} e \ar[mapsto,dgray]{rrrrr} \ar[mapsto,dgray]{ddddd} &&&&& |[xshift=12mm,overlay]| \color{dgray} g \ar[mapsto,dgray]{ddddd} \\
		& \mathrm{Inv}_L(X) \ar{ddd}[swap]{e^*} \ar{rrr}{\alpha_X} &&& \mathrm{Inv}'_L(X) \ar{ddd}{e^*} \\ \\ \\
		& \mathrm{Inv}_L(X) \ar{rrr}[swap]{\alpha_X} &&& \mathrm{Inv}'_L(X) \\
		\color{dgray} \qquad e\circ e = e \ar[mapsto,dgray]{rrrrr} &&&&& |[xshift=12mm,overlay]| \color{dgray} g=g\circ e \qquad
	\end{tikzcd}
	\qquad\qquad
	\]
	Therefore $g$ is a morphism of idempotents.
	Now again by naturality of $\alpha$, for every left-invariant $p:A\to X$, the following diagram commutes.
	\[
	\begin{tikzcd}[sep=small]
		\color{dgray} e \ar[mapsto,dgray]{rrrrr} \ar[mapsto,dgray]{ddddd} &&&&& |[xshift=12mm,overlay]| \color{dgray} g \ar[mapsto,dgray]{ddddd} \\
		& \mathrm{Inv}_L(X) \ar{ddd}[swap]{p^*} \ar{rrr}{\alpha_X} &&& \mathrm{Inv}'_L(X) \ar{ddd}{p^*} \\ \\ \\
		& \mathrm{Inv}_L(A) \ar{rrr}[swap]{\alpha_A} &&& \mathrm{Inv}'_L(A) \\
		\color{dgray} \qquad e\circ p = p \ar[mapsto,dgray]{rrrrr} &&&&& |[xshift=12mm,overlay]| \color{dgray} \alpha_A(p) = g\circ p \qquad\qquad
	\end{tikzcd}
 	\qquad\qquad
	\]
	In other words, $\alpha$ is in the form $\Phi(g)$ for some $g:(X,e)\to(X',e')$, and so $\Phi$ is full.
	
	To show that $\Phi$ is essentially surjective, let $(F,P,c,i)$ be a Cauchy point, with $i=[X,\pi,\iota]$. Then $e\coloneqq c(\pi,\iota):X\to X$ is idempotent, as shown in \eqref{would_be_e}. We how have to show that $\Phi(X,e)$ is isomorphic to $(F,P,c,i)$.
	It suffices to prove that the set 
	\begin{equation}\label{rinv}
		\mathrm{Inv}_L(A) \;\coloneqq\; \{p:A\to X \;|\; e\circ p=p\}
	\end{equation}
	is isomorphic to $PA$ for all $A$, and naturally in $A$.
	First of all, by the Yoneda lemma, natural transformations $\cat{C}(-,X)\Rightarrow P$ are one-to-one with elements of $PX$. Let's then take the element $\pi\in PX$. For every $A$ the Yoneda lemma gives us a function 
	\[
	\begin{tikzcd}[row sep=0]
		\cat{C}(A,X) \ar{r} & PA \\
		\color{dgray} p \ar[mapsto,dgray]{r} & \color{dgray} p^*\pi
	\end{tikzcd}
	\]
	Let's restrict this function to $\mathrm{Inv}_L(A)$, and prove that we have a bijection.
	To prove injectivity, suppose that $p,p'\in\mathrm{Inv}_L(A)$ satisfy $p^*\pi={p'}^*\pi$.
	Now,
	\[
	p \;=\; e\circ p\;=\; c_{X,X}(\pi,\iota) \circ p \;=\; c(p^*\pi,\iota) 
	\]
	and the same is true for $p'$, so that we have $p=p'$.
	To prove surjectivity, notice that by \eqref{id_cond}, every $\tilde{p}\in PA$ satisfies
	\[
	\tilde{p} \;=\; c(\tilde{p},\iota)^*\pi ,
	\]
	so that setting $p=c(\tilde{p},\iota)$ gives $p^*\pi=\tilde{p}$.
	This shows that $\mathrm{Inv}_L\cong P$, which means that $\Phi$ is essentially surjective, and hence an equivalence of categories.
\end{proof}

Finally, this correspondence preserves splittings of idempotents.

\begin{proposition}\label{repr_split}
	An idempotent $(X,e)$ splits if and only if its corresponding Cauchy point $\Phi(X,e)$ is already in $\cat{C}$ (in the sense of \Cref{thm_cauchy}). In that case, the splitting coincides with the representing object.
\end{proposition}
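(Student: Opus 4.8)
The plan is to recognise the functor and presheaf of the associated Cauchy point as the cocone functor and cone presheaf of a single parallel pair, and then to chain together the characterisations already available. First I would unwind \Cref{e_to_cauchy}: the Cauchy point $\Phi(X,e)$ has $F = \mathrm{Inv}_R$ with $\mathrm{Inv}_R(A) = \{f\colon X\to A \mid f\circ e = f\}$ and $P = \mathrm{Inv}_L$ with $\mathrm{Inv}_L(A) = \{p\colon A\to X \mid e\circ p = p\}$, the pairing map $c$ being ordinary composition and $i = [X,e,e]$. Since ``$f\circ e = f$'' is literally ``$f\circ e = f\circ\id_X$'', the set $\mathrm{Inv}_R(A)$ is exactly the set of cocones under the parallel pair $e,\id_X\colon X\rightrightarrows X$ with tip $A$; dually $\mathrm{Inv}_L(A)$ is the set of cones over the same pair with tip $A$ (this is \Cref{equalizer} specialised to $g = e$ and $h = \id_X$). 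Consequently, since a colimit is by definition a representing object for its cocone functor (and dually), $\mathrm{Inv}_R$ is representable precisely when the pair $e,\id_X$ has a coequaliser, in which case its representing object is that coequaliser, and $\mathrm{Inv}_L$ is representable precisely when the pair has an equaliser, with representing object that equaliser.

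Next I would chain the equivalences. By \Cref{thm_cauchy}, the Cauchy point $\Phi(X,e)$ is already in $\cat{C}$ if and only if $\mathrm{Inv}_R$ is representable (equivalently $\mathrm{Inv}_L$ is, by a common object $R$). By the previous step, this holds if and only if the parallel pair $e,\id_X$ has a coequaliser, equivalently an equaliser. By \Cref{e_split_eq}, this in turn holds if and only if $e$ splits, which establishes the stated equivalence. For the last clause, assume these conditions hold and let $(E,\pi,\iota)$ be a splitting of $e$. By \Cref{e_split_eq}, $(E,\iota)$ is an equaliser and $(E,\pi)$ a coequaliser of $e,\id_X$, so by the first step $E$ represents both $\mathrm{Inv}_L$ and $\mathrm{Inv}_R$; hence, by \Cref{thm_cauchy}, $E$ is the representing object $R$ of $\Phi(X,e)$. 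Thus the splitting object coincides with the representing object, as claimed.

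I do not expect a genuine obstacle: the only point that needs some care is the bookkeeping in the first step, namely checking that the covariant $\mathrm{Inv}_R$ really is the cocone functor of $e,\id_X$ (so that ``$\mathrm{Inv}_R$ representable'' and ``the coequaliser of $e,\id_X$ exists, with that representing object'' become the same statement) and the dual fact for $\mathrm{Inv}_L$; both are immediate from \Cref{equalizer} and the standard description of (co)limits as representing objects. Should one prefer a proof not routed through \Cref{e_split_eq}, the representability can be verified directly instead: given a splitting $(E,\pi,\iota)$, the assignments $h\mapsto h\circ\pi$ and $f\mapsto f\circ\iota$ are mutually inverse natural isomorphisms $\cat{C}(E,-)\cong\mathrm{Inv}_R$ (using $\pi\circ\iota = \id_E$ and $\iota\circ\pi = e$), and dually $\cat{C}(-,E)\cong\mathrm{Inv}_L$; conversely, from a natural isomorphism $\cat{C}(R,-)\cong\mathrm{Inv}_R$ with universal element $\pi\in\mathrm{Inv}_R(R)$, one lets $\iota\colon R\to X$ be the unique morphism with $\iota\circ\pi = e$ (obtained by applying the universal property to $e\in\mathrm{Inv}_R(X)$) and checks $\pi\circ\iota = \id_R$ by uniqueness of factorisations through $\pi$, so that $(R,\pi,\iota)$ splits $e$ with splitting object the representing object $R$.
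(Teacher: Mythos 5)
Your proposal is correct and follows essentially the same route as the paper: identify $\mathrm{Inv}_L$ (resp.\ $\mathrm{Inv}_R$) with the cone presheaf (resp.\ cocone functor) of the parallel pair $e,\id_X$ as in \Cref{equalizer}, so that representability is existence of the (co)equalizer, and then invoke \Cref{e_split_eq} and \Cref{thm_cauchy}. The paper's proof is just a terser version of your second paragraph; your extra direct verification of the natural isomorphisms is a harmless supplement.
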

\begin{proof}
	By \Cref{e_split_eq}, $e$ splits if and only if it admits an equalizer with $\id:X\to X$. As we said in \Cref{equalizer}, such an equalizer, if it exists, is exactly a representing object for $\mathrm{Inv}_L$.
\end{proof}

In other words, we have a commutative diagram of fully faithful functors as follows.
\[
\begin{tikzcd}[row sep=small] 
	& \cat{K(C)} \ar[functor, <->]{dd}{\simeq} \\
	\cat{C} \ar[functor,hook]{ur} \ar[functor,hook]{dr} \\
	& \overline{\cat{C}}
\end{tikzcd}
\]

\begin{corollary}\label{comp_split}
	A category is Cauchy-complete if and only if all its idempotents split.
\end{corollary}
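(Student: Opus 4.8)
The plan is to deduce \Cref{comp_split} formally from the results already assembled: the equivalence $\cat{K(C)}\simeq\overline{\cat{C}}$ of \Cref{c_e}, the characterization of Cauchy completeness of $\cat{C}$ as the inclusion $\cat{C}\to\overline{\cat{C}}$ being an equivalence, and the fact that the embeddings $\cat{C}\to\cat{K(C)}$ and $\cat{C}\to\overline{\cat{C}}$ are fully faithful. No new computation should be needed; everything is an unwinding of definitions.

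First I would use the commutative triangle of fully faithful functors displayed just before the corollary: since $\cat{K(C)}\to\overline{\cat{C}}$ is an equivalence, the inclusion $\cat{C}\to\overline{\cat{C}}$ (which factors as $\cat{C}\to\cat{K(C)}\to\overline{\cat{C}}$) is an equivalence if and only if $\cat{C}\to\cat{K(C)}$ is. Hence, by the stated characterization, $\cat{C}$ is Cauchy complete iff $\cat{C}\to\cat{K(C)}$ is an equivalence. Because this embedding $X\mapsto(X,\id_X)$ is fully faithful, it is an equivalence precisely when it is essentially surjective, i.e.\ when every object $(X,e)$ of $\cat{K(C)}$ is isomorphic, in $\cat{K(C)}$, to one of the form $(Y,\id_Y)$.

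It then remains to observe that $(X,e)\cong(Y,\id_Y)$ in $\cat{K(C)}$ if and only if $e$ splits with splitting object $Y$. Reading off the isomorphism conditions \eqref{iso_K} with $e'=\id_Y$, an isomorphism amounts to maps $g\colon X\to Y$ and $h\colon Y\to X$ with $g\circ h=\id_Y$ and $h\circ g=e$ (the remaining conditions $g\circ e=g$ and $e\circ h=h$ then being automatic), which is exactly the data of a splitting $(Y,g,h)$ of $e$; conversely a splitting gives such an isomorphism. Combining the three steps: $\cat{C}$ is Cauchy complete $\iff$ $\cat{C}\to\cat{K(C)}$ is essentially surjective $\iff$ every idempotent of $\cat{C}$ is isomorphic in $\cat{K(C)}$ to an identity idempotent $\iff$ every idempotent of $\cat{C}$ splits.

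I do not expect a genuine obstacle here. The only point that wants a line of care is the final identification of ``isomorphic to an identity idempotent in $\cat{K(C)}$'' with ``splits''; alternatively one can bypass that direct computation by invoking \Cref{repr_split} together with \Cref{thm_cauchy} and the essential surjectivity of $\Phi$ from \Cref{c_e}, which reduces the claim to the already-recorded fact that the Cauchy point $\Phi(X,e)$ is ``already in $\cat{C}$'' exactly when $(X,e)$ splits.
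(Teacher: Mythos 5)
Your proposal is correct and is essentially the argument the paper intends (the corollary is left unproved there, but it is meant to follow by combining \Cref{c_e}, \Cref{repr_split}, and the characterization of Cauchy completeness via $\cat{C}\to\overline{\cat{C}}$ being an equivalence, exactly as you do). Your hands-on verification that an isomorphism $(X,e)\cong(Y,\id_Y)$ in $\cat{K(C)}$ read off from \eqref{iso_K} is the same data as a splitting is accurate — including the observation that $g\circ e=g$ and $e\circ h=h$ follow from $h\circ g=e$ and $g\circ h=\id_Y$ — and merely makes explicit what \Cref{repr_split} already packages.
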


\subsection{In terms of retracts of representables}

Another equivalent characterization of the Cauchy completion for unenriched categories, often used in the literature, is as retracts of representables.
This is another way of making precise the idea of ``virtual retract'' that we saw in \eqref{virtual_retract}:
\[
\begin{tikzcd}[baseline=-1em,
	row sep=small, column sep=2mm,
	blend group=multiply,
	/tikz/execute at end picture={
		\node [cbox, fit=(A) (B) (C) (D), inner sep=1.2mm] (CC) {};
		\node [catlabel] at (CC.south west) {$\cat{C}$};
	}]
	&& |[alias=E,xshift=2mm]| \bullet \\ \\ \\ \\
	&&& |[alias=D,xshift=-5mm]| \color{mgray} A \\
	|[alias=A]| \color{mgray} B &&&& |[alias=B]| \color{mgray} C \\
	& |[alias=C,xshift=5mm]| X
	\ar[from=A,to=B,color=mgray]
	\ar[from=A,to=C,color=mgray]
	\ar[from=C,to=B,color=mgray]
	\ar[from=D,to=A,color=mgray]
	\ar[from=D,to=B,color=mgray]
	\ar[from=D,to=C,color=mgray]
	\ar[virtual,from=E,to=A,shift left,shorten=2.3mm,color=cgray]
	\ar[virtual,from=A,to=E,shift left,shorten=2.3mm,color=cgray]
	\ar[virtual,from=E,to=C,shift left,shorten=2.3mm,"\iota"{pos=0.45}]
	\ar[virtual,from=C,to=E,shift left,shorten=2.3mm,"\pi"{pos=0.45}]
	\ar[virtual,from=B,to=E,shorten=2.3mm,color=cgray]
	\ar[virtual,from=E,to=D,shorten=2.3mm,color=cgray]
\end{tikzcd}
\qquad\qquad\quad``\pi\circ\iota=\id_\bullet"
\]

\begin{proposition}\label{cauchy_retract}
	A Cauchy point of a category $\cat{C}$ is equivalently specified by a set functor (or presheaf) which is a retract of a representable one.
\end{proposition}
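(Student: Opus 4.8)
The plan is to unwind \Cref{FP_repr} and \Cref{c_e} into a direct correspondence, using the earlier characterization of representability via retractions (\Cref{repr_virtual}). The statement claims that a Cauchy point is the same data as a set functor $F$ which is a retract of a representable functor $\cat{C}(X,-)$ (equivalently, a presheaf which is a retract of $\cat{C}(-,X)$). So first I would make precise what ``retract of a representable'' means: a functor $F:\cat{C}\funto\cat{Set}$ together with natural transformations $\iota^\sharp:F\Rightarrow\cat{C}(X,-)$ and $\pi^\sharp:\cat{C}(X,-)\Rightarrow F$ such that $\pi^\sharp\circ\iota^\sharp=\id_F$ (for some object $X$ of $\cat{C}$).

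The forward direction (Cauchy point $\Rightarrow$ retract of representable): given a Cauchy point $(F,P,c,i)$ with $i=[X,\pi,\iota]$, I would show $F$ is a retract of $\cat{C}(X,-)$. The two maps are exactly the ``virtual pre/postcompositions'' already written down in the proof of \Cref{FP_repr}: the natural transformation $c(\pi,-):F\Rightarrow\cat{C}(X,-)$ sending $f\in FA$ to $c_{X,A}(\pi,f)$, and $(-)_*\iota:\cat{C}(X,-)\Rightarrow F$ sending $g:X\to A$ to $g_*\iota$. Naturality of both is immediate from naturality of $c$ and functoriality of $F$. The composite $F\Rightarrow\cat{C}(X,-)\Rightarrow F$ sends $f\mapsto c(\pi,f)_*\iota$, which equals $f$ by the first condition in \eqref{id_cond}; hence $F$ is a retract. (Dually for $P$, using the second condition.)

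The reverse direction (retract of representable $\Rightarrow$ Cauchy point) is where most of the work sits, and I expect the main obstacle to be reconstructing the whole tuple $(F,P,c,i)$ — not just $F$ — from a bare retract $(\iota^\sharp,\pi^\sharp)$ of $\cat{C}(X,-)$, and checking that different presentations of ``the same'' retract give the same Cauchy point. The clean route is: set $e\coloneqq\pi^\sharp_X(\iota^\sharp_X(\id_X))$, i.e.\ compose $\id_X$ through the retract at component $X$; show $e:X\to X$ is idempotent (this is the computation \eqref{would_be_e}, or equivalently a direct diagram chase using that $\pi^\sharp\iota^\sharp=\id$ and naturality/Yoneda); then invoke \Cref{e_to_cauchy} to obtain the Cauchy point associated to $e$, and \Cref{FP_repr}/\Cref{c_e} to see that its functor $\mathrm{Inv}_R$ is naturally isomorphic to the original $F$ (the element $\iota^\sharp_X(\id_X)\in FX$ corresponds under Yoneda to $\iota^\sharp$, and the retract realizes $F$ as the splitting data of $e$ exactly as in the essential-surjectivity argument of \Cref{c_e}). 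The presheaf $P$ is obtained symmetrically, and the mapping $c$ is plain composition in $\cat{C}$ restricted to the invariant sub(pre)sheaves. Finally, to close the loop I would note these two constructions are mutually inverse up to isomorphism, which follows by combining \Cref{c_e} (equivalence $\cat{K(C)}\simeq\overline{\cat{C}}$) with \Cref{e_split_eq} and the Yoneda lemma — so the proof can simply be: ``a retract of a representable is precisely a splitting-datum for an idempotent, by the Yoneda lemma; and Cauchy points correspond to idempotents by \Cref{c_e}; compose these two correspondences.'' That reduces the proposition to bookkeeping already done in the previous two subsections.

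Concretely, then, the proof I would write is short: state the retract data, define $e$ via the composite through $\cat{C}(X,-)$, check idempotency by the one-line computation analogous to \eqref{would_be_e}, cite \Cref{e_to_cauchy} and \Cref{c_e} for the passage to a Cauchy point, cite \Cref{FP_repr} for the converse direction producing the retract from the identity class $i=[X,\pi,\iota]$, and remark that the two passages invert each other by Yoneda (natural transformations $\cat{C}(X,-)\Rightarrow F$ biject with $FX$, and the retract condition pins down the splitting). The only genuinely delicate point to be careful about is independence of the choice of representative of $i$ and of $X$ in the retract picture — but this is exactly the ``cycle'' subtlety handled by passing through $\cat{K(C)}$ in \Cref{c_e}, so I would explicitly defer to that.
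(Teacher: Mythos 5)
Your proposal matches the paper's proof essentially step for step: the forward direction is the retraction $c(\pi,-)$, $(-)_*\iota$ already exhibited in \eqref{virtual_retract_eq}, and the converse passes through the idempotent natural transformation $\iota^\sharp\circ\pi^\sharp$ on $\cat{C}(X,-)$, transports it to an idempotent $e:X\to X$ by full faithfulness of Yoneda, and invokes \Cref{e_to_cauchy}/\Cref{c_e} before identifying $\mathrm{Inv}_R$ with $F$ as the splitting of that idempotent. The only nitpick is a typing slip in ``$e\coloneqq\pi^\sharp_X(\iota^\sharp_X(\id_X))$'' (the composite runs the wrong way around; you want the element of $\cat{C}(X,X)$ corresponding to $\iota^\sharp\circ\pi^\sharp$, i.e.\ $\iota^\sharp_X(\pi^\sharp_X(\id_X))$), which does not affect the argument.
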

\begin{proof}
	Let's prove the assert for functors, the presheaf case is analogous and dual.
	
	First of all, given a Cauchy point $(F,P,c,i)$ with $i=[X,\pi,\iota]$, the functor $F$ is a retract of $\cat{C}(X,-)$, as shown in \eqref{virtual_retract_eq}. 
	
	Conversely, suppose that a set functor $F$ is a retract of a representable, meaning that for some object $X$ we have natural transformations $\iota:F\Rightarrow\cat{C}(X,-)$ and $\pi: \cat{C}(X,-)\Rightarrow F$ such that $\pi\circ\iota=\id_F$. 
	Then the other composition, $\iota\circ\pi:\cat{C}(X,-)\to \cat{C}(X,-)$, is idempotent, and since the Yoneda embedding is fully faithful, it corresponds to an idempotent morphism $e:X\to X$. By \Cref{c_e}, we have a Cauchy point $\Phi(X,e)$.
	
	To show that this recovers the original functor, i.e.\ that $\mathrm{Inv}_R\cong F$, Recall that $\mathrm{Inv}_R\subseteq\cat{C}(X,-)$, and that we already have a retraction $\iota:F\Rightarrow\cat{C}(X,-)$, $\pi: \cat{C}(X,-)\Rightarrow F$. We need to show that $\mathrm{Inv}_R\subseteq\cat{C}(X,-)$, just like $F$, splits the idempotent natural transformation $\iota\circ\pi$. Now the latter natural transformation acts as follows,
	\[
	\begin{tikzcd}[row sep=0]
		\cat{C}(X,A) \ar{r}{\pi_A} & FA \ar{r}{\iota_A} & \cat{C}(X,A) \\
		\color{dgray} f \ar[mapsto,dgray]{rr} && f\circ e , 
	\end{tikzcd}
	\]
	and so its equalizer with the identity is exactly the set
	\[
	\mathrm{Inv}_R(A) = \{f:A\to R \;|\; f\circ e = f \} .
	\]
	This is true for all objects $A$, and so $\mathrm{Inv}_R$ and $F$ are naturally isomorphic.
\end{proof}

\subsection{In terms of absolute limits}

Another important characterization of Cauchy completion is that it is a completion of a category under all those limits and/or colimits which are preserved by any functor.

\begin{definition}
	Let $D:\cat{J}\funto\cat{C}$ be a diagram, weighted by $W:\cat{J}\funto\cat{Set}$. A $W$-weighted limit of $D$ is called \newterm{absolute} if it is preserved by every functor $F:\cat{C}\funto\cat{D}$. 
	
	Absolute colimits are defined analogously.
\end{definition}

One can interpret absolute limits as cones whose universal property follows directly from the fact that a certain diagram commutes. This tends to look like a ``retraction'' of some kind (and a ``section'' of some kind for absolute colimits).
Before we make this intuition precise, let's see this at work in an example that should be familiar.

\begin{example}
	A splitting $(E,\pi,\iota)$ of an idempotent $e:X\to X$, as we saw in \Cref{e_split_eq}, is both an absolute equalizer and an absolute coequalizer of the parallel pair $e,\id:X\to X$.
	
	Note that, if a map $\iota:E\to X$ is an equalizer of $e$ and $\id_X$ if and only if postcomposition with $\iota$ makes the natural map
	\begin{equation}\label{should_bij}
	\begin{tikzcd}[row sep=0]
		\cat{C}(A,E) \ar{r}{\iota\circ -} & \mathrm{Inv}_L (A) \\
		\color{dgray} (A\xrightarrow{f} E) \ar[mapsto,dgray]{r} & \color{dgray} (A\xrightarrow{f} E\xrightarrow{\iota} X)
	\end{tikzcd}
	\end{equation}
	a natural \emph{bijection}.
	Indeed, that's exactly what the universal property of the equalizer says, usually depicted in a diagram as follows.
	\[
	\begin{tikzcd}
		A \ar{dr}{f} \ar[virtual]{d} \\
		E \ar{r}[swap]{\iota} & X \ar[shift left]{r}{e} \ar[shift right]{r}[swap]{\id} & X		
	\end{tikzcd}
	\]
	
	Now, as we saw in the proof of \Cref{e_split_eq}, the reason why the map \eqref{should_bij} is a bijection is because of the map $\pi$: 
	\[
		\begin{tikzcd}[row sep=0]
			\cat{C}(A,E) \ar[shift left]{r}{\iota\circ -} & \mathrm{Inv}_L (A) \ar[shift left]{l}{\pi\circ -}  \\
			\color{dgray} (A\xrightarrow{f} E) \ar[mapsto,dgray]{r} & \color{dgray} (A\xrightarrow{f} E\xrightarrow{\iota} X) \\
			\color{dgray} (A\xrightarrow{g} X \xrightarrow{\pi} E ) & \color{dgray} (A\xrightarrow{g} X) \ar[mapsto,dgray]{l}
		\end{tikzcd}
	\]
	Indeed, the two assignments above are mutually inverse thanks to the fact that $\pi\circ\iota=\id_E$, i.e.\ that the following diagram commutes.
	\[
	\begin{tikzcd}
		& X \ar{dr}{\pi} \\
		E \ar{ur}{\iota} \ar{rr}[swap]{\id} && X
	\end{tikzcd}
	\]
	In other words, $\iota$ is an equalizer because it has a particular retraction $\pi$. 
\end{example}

Let's now make this ``retraction'' idea mathematically precise.

\begin{definition}\label{ev_retract}
	Consider a diagram $D:\cat{J}\funto\cat{C}$ weighted by $W:\cat{J}\funto\cat{Set}$ and a weighted cone $c$ with tip $T$.
	A \newterm{eventual retraction} of $c$ is an element of the pairing (reversing the arrows of $\cat{J}$)
	\[
	[J,\iota,\pi] \;\in\; \langle W-, \cat{C}(D-,T) \rangle \;=\; \Coend{J'\in\cat{J}^\op} WJ'\times  \cat{C}(DJ',T) .
	\] 
	such that $\pi\circ c_{J,\iota}=\id_T$.

	Dually, given weights $W:\cat{J}^\op\funto\cat{Set}$ and a weighted co-cone with tip $T$, an \newterm{eventual section} is an element of the pairing 
	\[
	[J,\pi,\iota] \;\in\; \langle W-,\cat{C}(T,D-) \rangle \;=\; \Coend{J'\in\cat{J}} WJ'\times \cat{C}(T,DJ') .
	\]
	such that $c_{J,\pi}\circ\iota=\id_T$.
\end{definition}

Let's interpret this graphically (as usual, let's do it for cones, the co-cone case is dual).
An eventual retraction $[J,\iota,\pi]$ (or at least a representative of that class) consists explicitly of 
\begin{itemize}
	\item An object $J$ of $\cat{J}$;
	\item A weight $\iota\in WJ$, i.e.\ a virtual arrow into $J$;
	\item A (real) arrow $\pi:DJ\to T$, seen as a virtual arrow out of $J$, as in \Cref{ptwise_kan}.
\end{itemize}
From these data one can obtain, by functoriality of $D$ and naturality of the cone, a (real) pair of arrows of $\cat{C}$:
\begin{itemize}
	\item The object $DJ$ of the diagram in $\cat{C}$;
	\item The arrow of the cone $c_{J,\iota}:T\to DJ$ (note that, by \Cref{wcone_is_nat}, the assignment $\iota\mapsto c_{J,\iota}$ can be seen as part of the natural transformation of components $WJ\to \cat{C}(T,DJ)$, given by the cone);
	\item The arrow $\pi:DJ\to T$, this time seen as a real arrow.
\end{itemize}
\[
\begin{tikzcd}[baseline=-0.5em,
	row sep=small, column sep=2mm,
	blend group=multiply,
	/tikz/execute at end picture={
		\node [cbox, fit=(A1) (B1) (C1), inner sep=1.2mm] (CJ) {};
		\node [catlabel] at (CJ.south west) {$\cat{J}$};
		\node [cbox, fit=(A) (B) (C) (E), inner sep=1.2mm] (CC) {};
		\node [catlabel] at (CC.south west) {$\cat{C}$};
	}]
	&& |[overlay]| \Coend{J'} WJ'\times \cat{C}(DJ',T) \ar{rrrrrrrrrrrrrrrrrr}{c_*} && &&&&&&&&&&&&&& && |[overlay]| \Coend{J'} \cat{C}(T,DJ')\times \cat{C}(DJ',T) \\ \\
	|[alias=E1]| \bullet &&&& |[alias=E2]| \bullet &&&&&&&&&&&&&& && |[alias=E]| T \\ \\ \\ \\
	|[alias=A1]| \color{mgray} L &&&& |[alias=B1]| \color{mgray} K &&&&&&&&&&&&&& |[alias=A]| \color{mgray} DL &&&& |[alias=B]| \color{mgray} DK \\
	&& |[alias=C1]| J && &&&&&&&&&&&&&& && |[alias=C]| DJ
	\ar[from=A1,to=B1,color=mgray]
	\ar[from=A1,to=C1,color=mgray]
	\ar[from=C1,to=B1,color=mgray]
	\ar[from=A,to=B,color=mgray]
	\ar[from=A,to=C,color=mgray]
	\ar[from=C,to=B,color=mgray]
	\ar[virtual,from=E1,to=A1,shift left,shorten=0.5mm,color=cgray]
	\ar[virtual,from=E1,to=A1,shift right,shorten=0.5mm,color=cgray]
	\ar[virtual,from=E1,to=C1,shorten=0.5mm,"\iota"{pos=0.5}]
	\ar[virtual,from=E1,to=B1,shorten=0.5mm,color=cgray]
	\ar[from=E,to=A,shift left,shorten=1.3mm,color=mgray]
	\ar[from=E,to=A,shift right,shorten=1.3mm,color=mgray]
	\ar[from=E,to=C,shift right,shorten=1.3mm,"c_{J,\iota}"{swap,pos=0.58,inner sep=0.5mm}]
	\ar[from=C,to=E,shift right,shorten=1.3mm,"\pi"{swap,pos=0.42}]
	\ar[from=E,to=B,shorten=1.3mm,color=mgray]
	\ar[from=B1, to=A, mapsto, color=dgray, "D", shorten=-2mm,shift right=5mm]
	\ar[virtual,from=C1,to=E2,shorten=1.3mm,"\pi"{swap,pos=0.63}]
\end{tikzcd}
\]
We moreover ask that $\pi\circ c_{J,\iota}=\id_T$, which makes $T$ a retract of $DJ$.

Now let's keep in mind that, even if we have a pair of arrows $[DJ,c_{J,\iota}:T\to DJ,\pi:DJ\to T]\in\int^{J'} \cat{C}(T,DJ')\times \cat{C}(DJ',T)$, we are taking a coend over $\cat{J}$, not over $\cat{C}$, and so the resulting equivalence class is identifying triplets $[J,\iota,\pi]$ and $[J',\iota',\pi']$ connected as follows,
\begin{equation}\label{mid_J}
\begin{tikzcd}
	& J \ar{dd}{g} \ar[virtual]{dr}{\pi} \\
	\bullet \ar[virtual]{ur}{\iota} \ar[virtual]{dr}[swap]{\iota'} && \bullet \\
	& J' \ar[virtual]{ur}[swap]{\pi'}
\end{tikzcd}
\qquad{\color{dgray}\longmapsto}\qquad
\begin{tikzcd}
	& DJ \ar{dd}{Dg} \ar{dr}{\pi} \\
	T \ar{ur}{c_{J,\iota}} \ar{dr}[swap]{c_{J',\iota'}} && T \\
	& DJ' \ar{ur}[swap]{\pi'}
\end{tikzcd}
\end{equation}
where \emph{the mediating arrows come from $\cat{J}$}, i.e.\ are arrows of the diagram $D$. We are not quotienting under generic arrows of $\cat{C}$.

Note also that eventual retractions are, in a certain sense, closed under precomposition: if $\pi$ is a retraction of $c_{J,\iota}$, for any morphism $h:L\to J$ of $\cat{C}$, and for all $\iota'$ in the preimage of $\iota$ for the map $h_*:WL\to WJ$, the composition $\pi\circ Dh:DL\to T$ is a retraction of $c_{L,\iota'}$. Indeed, by the fact that we have a cone,
\[
\pi\circ Dh \circ c_{L,\iota'} \;=\; \pi\circ c_{J,h_*\iota'} \;=\; \pi\circ c_{J,\iota} \;=\; \id_T.
\]
Hence the adjective ``eventual'': \emph{at some point} in the diagram we have a retraction, and such a retraction can be pulled back along the arrows of the diagram, ``all the way towards the limit''.

Somewhat dually, by functoriality of weighted colimits, eventual retractions are in a certain sense closed under postcomposition with arrows out of $T$. Namely, for every object $X$ and arrow $f:T\to X$ of $\cat{C}$, we get a function as follows:
\[
\begin{tikzcd}[row sep=0]
	\langle W-, \cat{C}(D-,T) \rangle \ar{r}{f_*} & \langle W-, \cat{C}(D-,X) \rangle \\
	\color{dgray} {[J,\iota,\pi]} \ar[mapsto,dgray]{r} & \color{dgray} {[J,\iota,f\circ\pi]} \\
	\color{dgray} (T \xrightarrow{c_{J,\iota}} DJ \xrightarrow{\pi} T ) \ar[mapsto,dgray]{r} & \color{dgray} ( T \xrightarrow{c_{J,\iota}} DJ \xrightarrow{\pi} T \xrightarrow{f} X )
\end{tikzcd}
\]
The fact that $\pi$ is a retraction makes $[J,\iota,f\circ\pi]$ satisfy the following condition,
\begin{equation}\label{lax_retract}
(f\circ \pi)\circ c_{J,\iota} \;=\; f\circ (\pi\circ c_{J,\iota}) \;=\; f .
\qquad\qquad\qquad
\begin{tikzcd}[row sep=small]
	T \ar{dr}{f} \ar[shift right]{dd}[swap]{c_{J,\iota}} \\
	& X \\
	DJ \ar{ur}[swap]{f\circ\pi} \ar[cgray,shift right]{uu}[swap]{\pi}
\end{tikzcd}
\end{equation}

\begin{definition}
	An eventual retraction $[J,\iota,\pi]$ of $c$ is called a \newterm{universal retraction} if for every object $X$ of $\cat{C}$ and arrow $f:T\to X$, any element 
	\[
	[K\in\cat{J},w\in WK,DK\xrightarrow{g} X] \;\in\; \langle W-, \cat{C}(D-,X) \rangle
	\]
	satisfying
	\[
	g\circ c_{K,w}\;=\; f
	\qquad\qquad\qquad
	\begin{tikzcd}[row sep=small]
		T \ar{dr}{f} \ar{dd}[swap]{c_{K,w}} \\
		& X \\
		DK \ar{ur}[swap]{g}
	\end{tikzcd}
	\]
	is in the form 
	\[
	[J',\iota\in WJ',DJ'\xrightarrow{\pi'} X] \;=\; f_*[J,\iota,\pi] \;=\; [J,\iota,f\circ\pi] .
	\]
	
	A \newterm{universal section} of a co-cone is defined dually.
\end{definition}

This in particular means that any object $DK$ in the diagram is connected to $DJ$ via arrows of the cone, meaning that the category $\cat{J}$ is necessarily connected.

Note moreover that a universal retraction, if it exists, must necessarily be unique. Indeed, if $[J,\iota,\pi]$ and $[J',\iota',\pi']$ are universal retractions of $c$, the commutative triangle
\[
\begin{tikzcd}[row sep=small]
	T \ar{dr}{\id} \ar{dd}[swap]{c_{J',\iota'}} \\
	& T \\
	DJ' \ar{ur}[swap]{\pi'}
\end{tikzcd}
\]
gives, by universality of $[J,\iota,\pi]$, an equality $[J',\iota',\pi']=[J,\iota,\id_T\circ\pi]=[J,\iota,\pi]$.

\begin{example}
	Let $e:X\to X$ be an idempotent with splitting $(E,\pi,\iota)$. We can view this as a diagram of two arrows, with $E$ and $\iota$ as limit cone (with unitary weights):
	\[
	\begin{tikzcd}[column sep=small]
		& E \ar{dl}[swap]{\iota} \ar{dr}{\iota} \\
		X \ar[shift left]{rr}{e} \ar[shift right]{rr}[swap]{\id} && X
	\end{tikzcd}
	\]
	The map $\pi:X\to E$ is now a universal retraction. (Or better, the class $[X,\iota,\pi]$ is, where with a slight abuse we write $X$ and $\iota$ instead of the object and element indexing them.)
	Indeed, let $f:E\to Y$ be any function, and consider a function $g:X\to Y$ (or, a triplet $[X,\iota,X\xrightarrow{g}Y]$)
	making the outer triangle below commute:
	\[
	\begin{tikzcd}[row sep=small]
		E \ar{dr}{f} \ar[shift right]{dd}[swap]{\iota} \\
		& Y \\
		X \ar{ur}[swap]{g} \ar[cgray,shift right]{uu}[swap]{\pi}
	\end{tikzcd}
	\]
	In general we cannot assume that $g=f\circ\pi$ necessarily. But it is so \emph{up to composing it with arrows of the diagram}:
	\[
	g\circ e \;=\; g\circ \iota\circ\pi \;=\; f\circ\pi .
	\]
	Therefore, by the usual equivalence relation, and since $e\circ\iota=\iota$,
	\[
	[X,\iota,g] \;=\; [X,e\circ\iota,g] \;=\; [X,\iota,g\circ e] \;=\; [X,\iota,f\circ\pi] \;=\; f_*[X,\iota,\pi] .
	\]
\end{example}

The general situation is given by the following two statements.

\begin{proposition}\label{retract_to_abs}
	Suppose that a weighted cone admits a universal retraction.
	Then it is necessarily a limit cone, and it is moreover preserved by all functors (i.e.\ it is an absolute limit).
	
	The same can be said about co-cones with a universal section.
\end{proposition}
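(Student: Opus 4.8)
The plan is to derive \emph{both} conclusions from the single equation $\pi\circ c_{J,\iota}=\id_T$ together with the universality clause, by packaging the universal property of the cone in an \emph{equational} form that survives application of an arbitrary functor. Throughout, write $(J,\iota,\pi)$ for a chosen representative of the universal retraction, so that $\iota\in WJ$ and $\pi\colon DJ\to T$.

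\textbf{The cone is a limit.} Let $d$ be any $W$-weighted cone over $D$ with tip $T'$ and legs $d_{K,w}\colon T'\to DK$. I would take the candidate factorisation to be $u\coloneqq\pi\circ d_{J,\iota}\colon T'\to T$. To verify $c_{K,w}\circ u=d_{K,w}$ for all $K\in\cat{J}$ and $w\in WK$, I feed the universality of $(J,\iota,\pi)$ the instance $X=DK$, $f=c_{K,w}\colon T\to DK$, together with the witness $[K,w,\id_{DK}]\in\langle W-,\cat{C}(D-,DK)\rangle$ (admissible since $\id_{DK}\circ c_{K,w}=c_{K,w}=f$); universality then yields the identity
\[
[K,w,\id_{DK}]\;=\;[J,\iota,\,c_{K,w}\circ\pi]\qquad\text{in}\quad\langle W-,\cat{C}(D-,DK)\rangle ,
\]
which I will refer to as $(\star)$. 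Applying to $(\star)$ the evaluation map $\langle W-,\cat{C}(D-,DK)\rangle\to\cat{C}(T',DK)$ sending $[K',w',h]\mapsto h\circ d_{K',w'}$ — well-defined precisely because $d$ is a cone — carries the two sides of $(\star)$ to $d_{K,w}$ and to $c_{K,w}\circ(\pi\circ d_{J,\iota})=c_{K,w}\circ u$ respectively, giving the factorisation. Uniqueness is immediate: any $u'$ with $c_{K,w}\circ u'=d_{K,w}$ for all $K,w$ satisfies, taking $K=J$, $w=\iota$ and precomposing with $\pi$, that $u'=(\pi\circ c_{J,\iota})\circ u'=\pi\circ d_{J,\iota}=u$. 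By \Cref{thm_wlim}, $c$ is a $W$-weighted limit cone.

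\textbf{Preservation.} Let $F\colon\cat{C}\funto\cat{D}$ be any functor and set $\tilde D\coloneqq F\circ D$. The image $Fc$ is a $W$-weighted cone over $\tilde D$ with tip $FT$ and legs $F(c_{K,w})$; by the previous part it suffices to exhibit a universal retraction of $Fc$, for which I claim $(J,\iota,F\pi)$ works. It is an eventual retraction because $F\pi\circ F(c_{J,\iota})=F(\pi\circ c_{J,\iota})=\id_{FT}$. For universality, given an object $X'$ of $\cat{D}$, a map $f'\colon FT\to X'$ and an element $[K,w,g']\in\langle W-,\cat{D}(\tilde D-,X')\rangle$ with $g'\circ F(c_{K,w})=f'$, I would push the identity $(\star)$ (which lives in $\cat{C}$) forward along the function $\langle W-,\cat{C}(D-,DK)\rangle\to\langle W-,\cat{D}(\tilde D-,X')\rangle$ obtained by applying $F$ to the hom-component of the pairing and then postcomposing with $g'$ — both operations are legitimate because the pairing is functorial in its functor argument. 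The left-hand side of $(\star)$ is sent to $[K,w,g']$, and the right-hand side to $[J,\iota,\,g'\circ F(c_{K,w})\circ F\pi]=[J,\iota,\,f'\circ F\pi]$, using $g'\circ F(c_{K,w})=f'$; hence the two are equal, which is precisely the universality of $(J,\iota,F\pi)$. So $Fc$ is a limit cone, and the co-cone statement, with ``universal section'' replacing ``universal retraction'', is entirely dual.

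\textbf{The main obstacle.} The delicate point is preservation: one cannot simply apply $F$ to ``$c$ is a limit'', since a cone over $\tilde D$ in $\cat{D}$ need not lie in the image of $F$. The remedy is to observe that the universal property is \emph{witnessed equationally} — the identity $(\star)$ is an honest equality of elements of a pairing, natural in the ``object'' slot — and then to transport that single equation along $F$. The only bookkeeping that needs care is the precise variance in which each pairing is functorial, so that the push-forward maps used above are manifestly legitimate; everything else is routine.
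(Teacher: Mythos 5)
Your proof is correct and follows essentially the same route as the paper's: the candidate map $u=\pi\circ d_{J,\iota}$, the key identity $[K,w,\id_{DK}]=[J,\iota,c_{K,w}\circ\pi]$ extracted from universality, the evaluation of that identity against the competing cone, and the uniqueness argument via $\pi\circ c_{J,\iota}=\id_T$ all match. Your preservation step --- checking that $(J,\iota,F\pi)$ is again a universal retraction by pushing the same identity forward along $F$ and then invoking the first part --- is a more explicit rendering of the paper's one-line ``repeat the same procedure applying a functor to all the arrows''.
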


\begin{proof}
	Let $D:\cat{J}\funto\cat{C}$ be a diagram weighted by $W:\cat{J}\funto\cat{Set}$, let $c$ be a cone with tip $T$, and let $[J,\pi,\iota]$ be a (necessarily unique) universal retraction. 
	To show that the cone is a limit, consider another weighted cone, with tip $T'$ and arrows $(d_{K,w})_{K\in\cat{J},w\in DJ}$. 
	We have to show that there exist a unique arrow $u:T'\to T$ making the respective triangles commute.
	\[
	\begin{tikzcd}[sep=small,
		blend group=multiply,
		/tikz/execute at end picture={
			\node [cbox, fit=(J) (K), inner sep=5mm] (CL) {};
			\node [catlabel] at (CL.south west) {$\cat{J}$};
			\node [cbox, fit=(DJ) (DK) (TP), inner sep=5mm] (CR) {};
			\node [catlabel] at (CR.south west) {$\cat{C}$};
		}]
		&&&&&&&&& |[alias=TP]| T' \\ 
		& |[alias=E]| \bullet \\  
		&&&&&&&&& |[alias=T]| T  \\ \\  
		|[alias=J, xshift=-5mm]| J \ar[mgray]{rr}[swap]{g} && |[alias=K, xshift=5mm]| \color{mgray} K
		&&&&&& |[alias=DJ]| DJ \ar[mgray]{rr}[swap]{Dg} && |[alias=DK]| \color{mgray} DK 
		\ar[virtual, from=E, to=J,"\iota"']
		\ar[virtual, from=E, to=K, shift left=1, color=cgray,"w"]
		\ar[virtual, from=E, to=K, shift right=1, color=cgray]
		\ar[from=DJ, to=T, shift right, "\pi"']
		\ar[from=T, to=DJ, shift right,"c_{J,\iota}"{swap,inner sep=0mm,pos=0.25}]
		\ar[from=T, to=DK, shift left=1, color=mgray,"c_{K,w}"{inner sep=-0.4mm,pos=0.2}]
		\ar[from=T, to=DK, shift right=1, color=mgray]
		\ar[from=TP, to=DJ, bend right=20,shorten >=0.7mm,"d_{J,\iota}"']
		\ar[from=TP, to=DK, shift left=1, color=mgray, bend left=20,"d_{K,w}"]
		\ar[from=TP, to=DK, shift right=1, color=mgray, bend left=20,shorten >=0.7mm]
		\ar[virtual, from=TP, to=T, shorten <=0.5mm, "u"]
		\ar[mapsto,color=dgray, from=K, to=DJ, "D",shorten=2mm]
	\end{tikzcd}
	\]
	Set now $u\coloneqq \pi\circ d_{J,\iota}:T'\to T$. 
	
	To show that $u$ is well defined, consider an arrow $g:J\to J'$, and a triplet $(J',\iota',\pi')\sim(J,\iota,\pi)$ such that $g^*\pi'=\pi$ and $g_*\iota=\iota'$, as in \eqref{mid_J}. 
	Chasing the diagram
	\[
	\begin{tikzcd}
		& J \ar{dd}{g} \ar[virtual]{dr}{\pi} \\
		\bullet \ar[virtual]{ur}{\iota} \ar[virtual]{dr}[swap]{\iota'} && \bullet \\
		& J' \ar[virtual]{ur}[swap]{\pi'}
	\end{tikzcd}
	\qquad{\color{dgray}\longmapsto}\qquad
	\begin{tikzcd}
		& & DJ \ar{dd}{Dg} \ar{dr}{\pi} \\
		T' \ar[out=45,in=180]{urr}{d_{J,\iota}} \ar[out=-45,in=180]{drr}[swap]{d_{J',\iota'}} & T \ar{ur}[inner sep=0.5mm]{c_{J,\iota}} \ar{dr}[swap,inner sep=0.5mm]{c_{J',\iota'}} && T \\
		& & DJ' \ar{ur}[swap]{\pi'}
	\end{tikzcd}
	\]
	we have, since $d$ is a cone, that
	\[
	\pi'\circ d_{J',\iota'} \;=\; \pi'\circ Dg\circ d_{J,\iota} \;=\; \pi\circ d_{J,\iota} .
	\]
	
	To show that $u$ makes the desired triangles commute, let $K$ be an object of $\cat{J}$. We have to show that for all $w\in WK$, $d_{K,w}=c_{K,w}\circ u$. Now by universality of $[J,\iota,\pi]$, the commutative triangle
	\[
	\begin{tikzcd}[row sep=small]
		T \ar{dr}{c_{K,w}} \ar{dd}[swap]{c_{K,w}} \\
		& DK \\
		DK \ar{ur}[swap]{\id}
	\end{tikzcd}
	\]
	gives us an equality $[K,w,\id_{DK}]=[J,\iota,c_{K,w}\circ\pi]$. 
	We can carry the equality under the composition mapping (which is well defined) as follows.
	\[
	\begin{tikzcd}[row sep=0]
		\displaystyle\Coend{J'} WJ'\times \cat{C}(DJ',DK) \ar{r}{c_*} & \Coend{J'} \cat{C}(T,DJ')\times \cat{C}(DJ',DK) \ar{r}{\circ} & \cat{C}(T,DK) \\
		\color{dgray} {[K,w,\id_{DK}]} \ar[equal,dgray,shorten=1mm]{dd} & \color{dgray} {[T\xrightarrow{c_{K,w}} DK\xrightarrow{\id} DK]} \ar[equal,dgray,shorten=0.3mm]{dd} & \color{dgray}  T\xrightarrow{c_{K,w}} DK  \ar[equal,dgray,shorten=0.8mm]{dd} \\
		\phantom{a} \ar[mapsto,dgray,shorten <=15mm, shorten >=25mm]{r} & \phantom{a} \ar[mapsto,dgray,shorten=20mm]{r} & \phantom{a} \\
		\color{dgray} {[J,\iota,c_{K,w}\circ\pi]} & \color{dgray} {[T\xrightarrow{c_{J,\iota}} DJ \xrightarrow{c_{K,w}\circ\pi} DK]} & \color{dgray} T\xrightarrow{c_{J,\iota}} DJ \xrightarrow{\pi} T \xrightarrow{c_{K,w}} DK 
	\end{tikzcd}
	\]
	Therefore
	\[
	c_{K,w}\circ u \;=\; c_{K,w}\circ \pi\circ d_{J,\iota} \;=\; c_{K,w} ,
	\]
	as desired.
	
	To show that $u$ is unique, notice that if $u':T'\to T$ is such that $d_{J,\iota}=c_{J,\iota}\circ u'$, then
	\[
	u' \;=\; \pi\circ c_{J,\iota}\circ u' \;=\; \pi\circ d_{J,\iota} \;=\; u.
	\]
	
	Finally, to show that the limit is absolute, notice that we can repeat the same procedure applying a functor to all the arrows: given $F:\cat{C}\funto\cat{D}$ and a limit cone in $\cat{D}$ with tip $T'$ and arrows $(d_{K,w})_{K\in\cat{J},w\in DJ}$, the unique map $u:T'\to FT$ is given by $F\pi\circ d_{J,\iota}$. 
\end{proof}

\begin{proposition}\label{abs_to_retract}
	Conversely to \Cref{retract_to_abs}, if a weighted cone is an absolute weighted limit, then it has a universal retraction.
	
	Dually, the same holds for universal weighted colimits and universal sections.
\end{proposition}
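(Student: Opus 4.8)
The plan is to exploit absoluteness by pushing the given cone through the Yoneda embedding $\Yon:\cat{C}\funto\sfuncat{\cat{C}}^\op$ and identifying what the limit becomes there. Write $c$ for the weighted limit cone, with tip $T$ and component arrows $c_{J,w}:T\to DJ$ for $J\in\cat{J}$, $w\in WJ$. Since the cone is an \emph{absolute} weighted limit, $\Yon$ preserves it, so $\Yon(T)=\cat{C}(T,-)$ is the $W$-weighted limit of $\Yon\circ D$ in $\sfuncat{\cat{C}}^\op$, with limit cone the $\Yon(c_{J,w})$. A weighted limit in $\sfuncat{\cat{C}}^\op$ is a weighted colimit in $\sfuncat{\cat{C}}$ (over $\cat{J}^\op$); such colimits are computed pointwise, and pointwise they are the coends of copowers of \Cref{coend_formula}, the copower in $\cat{Set}$ being a product as in \Cref{copower}. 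Tracing the colimit cocone through the Yoneda lemma — its component at $(J,w)$ is the natural transformation corresponding to $c_{J,w}$ — I expect this to produce a bijection, natural in $X\in\cat{C}$,
\[
\Psi_X:\ \langle W-,\cat{C}(D-,X)\rangle\ \xrightarrow{\;\cong\;}\ \cat{C}(T,X),\qquad [J,w,g]\longmapsto g\circ c_{J,w}.
\]
Size is not an issue: since $\cat{J}$ is small and $\cat{C}$ locally small, every pairing set appearing here is a genuine set, even though $\sfuncat{\cat{C}}^\op$ need not be locally small.

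Granting this isomorphism, the universal retraction drops out. Taking $X=T$, let $[J,\iota,\pi]$ with $\iota\in WJ$ and $\pi:DJ\to T$ be the unique element of $\langle W-,\cat{C}(D-,T)\rangle$ with $\Psi_T([J,\iota,\pi])=\id_T$; by definition of $\Psi$ this reads $\pi\circ c_{J,\iota}=\id_T$, so $[J,\iota,\pi]$ is an eventual retraction of $c$ in the sense of \Cref{ev_retract}. To see it is universal, fix an object $X$ and an arrow $f:T\to X$, and suppose $[K,w,g]\in\langle W-,\cat{C}(D-,X)\rangle$ satisfies $g\circ c_{K,w}=f$. Then $\Psi_X([K,w,g])=g\circ c_{K,w}=f$, and also $\Psi_X([J,\iota,f\circ\pi])=(f\circ\pi)\circ c_{J,\iota}=f\circ(\pi\circ c_{J,\iota})=f$; injectivity of $\Psi_X$ then gives $[K,w,g]=[J,\iota,f\circ\pi]=f_*[J,\iota,\pi]$, which is exactly the condition defining a universal retraction. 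The colimit statement — that an absolute weighted colimit co-cone has a universal section — follows by the evident dual argument, applying instead the embedding $\Yon:\cat{C}\funto\sfuncat{\cat{C}^\op}$ and taking the preimage of $\id_T$ in $\langle W-,\cat{C}(T,D-)\rangle$.

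I expect the only real work to be the first paragraph: verifying that $\Yon$ carries the absolute \emph{limit} to the colimit with precisely this explicit description. Concretely, one must check that the natural transformations $\beta\mapsto\beta\circ c_{J,w}$ do form a cocone over $(\Yon\circ D)^\op$ with tip $\cat{C}(T,-)$ — which uses that $c$ is a weighted cone, i.e.\ \Cref{wcone_is_nat} — and that, under the canonical comparison with the coend $\int^{J}WJ\times\cat{C}(DJ,-)$, the coend's cocone injections become exactly these transformations; the comparison is then an isomorphism because, by absoluteness, $\cat{C}(T,-)$ equipped with this cocone \emph{is} the colimit. Once that bookkeeping is done, the remaining steps are one-line diagram chases.
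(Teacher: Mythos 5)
Your proposal is correct and follows essentially the same route as the paper: apply the Yoneda embedding $\Yon:\cat{C}\funto\sfuncat{\cat{C}}^\op$ (restricted to small functors to handle size), use absoluteness and pointwise computation of colimits in $\sfuncat{\cat{C}}$ to obtain the natural bijection $\langle W-,\cat{C}(D-,X)\rangle\cong\cat{C}(T,X)$ sending $[J,w,g]$ to $g\circ c_{J,w}$, and take the preimage of $\id_T$ as the universal retraction. The paper phrases the universality step via naturality of the isomorphism in $X$ rather than via injectivity of $\Psi_X$ as you do, but these are the same argument.
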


\begin{proof}
	As usual, we will focus on the limit case. 
	Let $D:\cat{J}\funto\cat{C}$ be a diagram weighted by $W:\cat{J}\funto\cat{Set}$, and let $c$ be an absolute limit cone with tip $T$. 
	Then by definition this limit is preserved by all functors $\cat{C}\funto\cat{D}$. 
	We now take as functor the Yoneda embedding \footnote{Technically, the category $\sfuncat{\cat{C}}^\op$ may fail to be locally small if $\cat{C}$ is not small. So one can, instead, take the category of all \emph{small} functors $\cat{C}\funto\cat{Set}$, i.e.~only those that can be expressed as a small limit of representable ones. The resulting category is locally small, see for example \cite{smallfunctors}.}
	\[
	\begin{tikzcd}[row sep=0]
		\cat{C} \ar[functor]{r}{\Yon} & \sfuncat{\cat{C}}^\op \\
		\color{dgray} X \ar[mapsto,dgray]{r} & \color{dgray} \cat{C}(X,-)
	\end{tikzcd}
	\]
	Since $T$ is the $W$-weighted limit of $\cat{D}$, and since we are assuming it is absolute, then $\Yon(T)=\cat{C}(T,-)$ is the weighted colimit of \[
	\begin{tikzcd}[row sep=0]
		\cat{J} \ar[functor]{r}{D} & \cat{C} \ar[functor]{r}{\Yon} & \sfuncat{\cat{C}}^\op \\
		\color{dgray} J \ar[mapsto,dgray]{r} & \color{dgray} DJ \ar[mapsto,dgray]{r} & \color{dgray} \cat{C}(DJ,-) .
	\end{tikzcd}
	\] 
	Notice now that limits in $\sfuncat{\cat{C}}^\op$ are colimits in $\sfuncat{\cat{C}}$, and that the latter are computed pointwise, as colimits of sets. We therefore have that for all objects $X$ of $\cat{C}$,
	\begin{equation}\label{coend_X}
	\cat{C}(T,X) \;\cong\; \colim_{J\in\cat{J}^\op} \big\langle WJ, \cat{C}(DJ,X) \big\rangle .
	\end{equation}
	This weighted colimit can be expressed as a pairing,
	\[
	\colim_{J\in\cat{J}^\op} \big\langle WJ, \cat{C}(DJ,X) \big\rangle \;=\; \langle W-, \cat{C}(D-,X) \rangle
	\]
	and for $X=T$ it is exactly the one used in \Cref{ev_retract}:
	\[
	\cat{C}(T,T) \;\cong\;  \langle W-, \cat{C}(D-,T) \rangle \;=\; \Coend{J'\in\cat{J}^\op} WJ' \times \cat{C}(DJ',T)
	\]
	Recall now that, since $\Yon$ preserves our limit, the arrows of the universal co-cone above an are given by precomposition with the arrows $c_{J,w}:T\to DJ$ of the original limit cone in $\cat{C}$:
	\[
	\begin{tikzcd}[row sep=0,column sep=large]
		\cat{C}(DJ,T) \ar{r}{-\circ c_{J,w}} & \cat{C}(T,T) \\
		\color{dgray} (DJ\xrightarrow{f} T) \ar[mapsto,dgray]{r} & \color{dgray} (T\xrightarrow{c_{J,w}} DJ\xrightarrow{f} T)
	\end{tikzcd}
	\]
	We can now decompose this map as follows,
	\[
	\begin{tikzcd}[row sep=0,column sep=large]
		\cat{C}(DJ,T) \ar{r} &  \Coend{J'\in\cat{J}^\op} WJ' \times \cat{C}(DJ',T) \ar[leftrightarrow]{r}{\cong} & \cat{C}(T,T) \\
		\color{dgray} (DJ\xrightarrow{f} T) \ar[mapsto,dgray]{r} & \color{dgray} [J,w,f] \ar[mapsto,dgray]{r} & \color{dgray} (T\xrightarrow{c_{J,w}} DJ\xrightarrow{f} T)
	\end{tikzcd}
	\]
	where the first map is the usual inclusion (of the $w$-th arrow in the tuple) followed by quotienting, and the second map is the isomorphism we considered above.
	Denote now by $[J,\pi,\iota]$ the element of the pairing corresponding to the identity $\id_T$ under the isomorphism above.
	Chasing the diagram,
	\begin{equation}\label{coend_triangle}
	\begin{tikzcd}[sep=tiny] 
		&&& |[xshift=2mm,yshift=3mm,overlay]| \color{dgray} [J,\iota,\pi] \ar[mapsto,dgray]{dddd}\\
		&& \int^{J'\in\cat{J}^\op} WJ' \times \cat{C}(DJ',T) \ar[leftrightarrow]{dd}{\cong} \\
		|[xshift=-10mm,overlay]| \color{dgray} \pi \ar[mapsto,dgray,out=30,in=180]{uurrr} \ar[mapsto,dgray,out=-30,in=180]{ddrrr} & \cat{C}(DJ,T) \ar{ur} \ar{dr}[swap]{-\circ c_{J,\iota}} \\
		&& \cat{C}(T,T) \\
		&&& |[xshift=2mm,yshift=-3mm,overlay]| \color{dgray} \pi\circ c_{J,\iota} =\id_T \qquad\qquad
	\end{tikzcd}
	\end{equation}
	we see that $[J,\iota,\pi]$ is an eventual retraction.
	
	To show that $[J,\iota,\pi]$ is universal, consider an object $K$ of $\cat{J}$, a weight $w\in WJ$, and a commutative triangle of $\cat{C}$ as follows.
	\[
	\begin{tikzcd}[row sep=small]
		T \ar{dr}{f} \ar{dd}[swap]{c_{K,w}} \\
		& X \\
		DK \ar{ur}[swap]{g}
	\end{tikzcd}
	\]
	We can form a diagram analogous to \eqref{coend_triangle} for the weighted colimit \eqref{coend_X}, 
	\[
	\begin{tikzcd}[sep=tiny] 
		&&& |[xshift=2mm,yshift=3mm,overlay]| \color{dgray} [K,w,g] \ar[mapsto,dgray]{dddd}\\
		&& \int^{J'\in\cat{J}^\op} WJ' \times \cat{C}(DJ',X) \ar[leftrightarrow]{dd}{\cong} \\
		|[xshift=-10mm,overlay]| \color{dgray} g \ar[mapsto,dgray,out=30,in=180]{uurrr} \ar[mapsto,dgray,out=-30,in=180]{ddrrr} & \cat{C}(DK,X) \ar{ur} \ar{dr}[swap]{-\circ c_{K,w}} \\
		&& \cat{C}(T,X) \\
		&&& |[xshift=2mm,yshift=-3mm,overlay]| \color{dgray} g\circ c_{K,w} = f 
	\end{tikzcd}
	\]
	where the bottom right equality holds now by hypothesis. Now by naturality in $X$ of the isomorphism \eqref{coend_X}, the following diagram commutes.
	\[
	\begin{tikzcd}[sep=small] 
		\color{dgray} \id_T \ar[mapsto,dgray]{rrrrr} \ar[mapsto,dgray]{ddddd} &&&&& |[xshift=5mm,overlay]| \color{dgray} f \ar[mapsto,dgray]{ddddd} \\
		& \cat{C}(T,T) \ar{rrr}{f\circ-} \ar[leftrightarrow]{ddd}[swap]{\cong} &&& \cat{C}(T,X)  \ar[leftrightarrow]{ddd}{\cong} \\ \\ \\
		& \int^{J'\in\cat{J}^\op} WJ' \times \cat{C}(DJ',T) \ar{rrr}[swap]{f_*} &&& \int^{J'\in\cat{J}^\op} WJ' \times \cat{C}(DJ',X) \\
		\color{dgray} [J,\iota,\pi] \ar[mapsto,dgray]{rrrrr} &&&&& |[xshift=5mm,overlay]| \color{dgray} f_*[J,\iota,\pi]\;=\; [K,w,g] \qquad\qquad\quad
	\end{tikzcd}
	\qquad
	\]
	This is exactly the condition making $[J,\iota,\pi]$ universal.
\end{proof}

Let's now connect the theory of absolute limits and colimits to Cauchy completion.
It may be helpful first to fix the following terminology.
\begin{definition}
	We call a diagram $D:\cat{J}\funto\cat{C}$ weighted by $W:\cat{J}\funto\cat{Set}$ an \newterm{absolute diagram} if for every functor $G:\cat{C}\funto\cat{D}$ such that the $W$-weighted limit of $G\circ D$ exists in $\cat{D}$, such a limit in $\cat{D}$ is absolute.
	
	The analogous concept for contravariantly weighted diagrams is defined dually.
\end{definition}

In particular, if a weighted diagram has a limit, then it is necessarily a weighted limit. 
(Note that being absolute is a property of a diagram \emph{and its weight}.)

\begin{definition}
	A (small) set functor $F:\cat{J}\funto\cat{Set}$ is called an \newterm{absolute weight} if and only if every $F$-weighted diagram is absolute.
	
	A (small) presheaf $P:\cat{J}^\op\funto\cat{Set}$ is called an \newterm{absolute weight} if and only if every $P$-weighted diagram is absolute.
\end{definition}

\begin{theorem}\label{pushforward}
A diagram $D:\cat{J}\funto\cat{C}$ weighted by $W:\cat{J}\funto\cat{Set}$
is absolute if and only if the limit set functor in $\sfuncat{\cat{C}}^\op$ of $\Yon\circ D:\cat{J}\funto\cat{Set}$ is part of a Cauchy point.
\end{theorem}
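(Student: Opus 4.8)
The plan is to reduce to a statement about retracts of representables and then feed it into \Cref{retract_to_abs,abs_to_retract,cauchy_retract}. Write $\cat{E}\coloneqq\sfuncat{\cat{C}}^\op$ and let $L$ be the $W$-weighted limit of $\Yon\circ D\colon\cat{J}\funto\cat{E}$ computed in $\cat{E}$; it exists since $\cat{E}$ is complete (with the usual caveat, as in the footnote of \Cref{abs_to_retract}, that one restricts to small functors when $\cat{C}$ is large). Unwinding the universal property of \Cref{def_wlim} together with the Yoneda lemma, $L$ is characterised by $\sfuncat{\cat{C}}(L,H)\cong\sfuncat{\cat{J}}(W,H\circ D)$ naturally in $H$ --- that is, $L$ is the left Kan extension $\mathrm{Lan}_{D}W$ --- and its pointwise value is the pairing $L(X)\cong\Coend{J}\cat{C}(DJ,X)\times WJ$ of \Cref{pairing}, with universal limit cone whose legs $c_{J,w}\colon L\to\Yon(DJ)$ correspond under Yoneda to the generating classes $[\,J,\ \id_{DJ},\ w\,]\in L(DJ)$. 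By \Cref{cauchy_retract}, ``$L$ is part of a Cauchy point'' means exactly ``$L$ is a retract of a representable set functor on $\cat{C}$''. So it suffices to show: $D$ weighted by $W$ is absolute if and only if $L$ is a retract of a representable.

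For the forward implication I would simply instantiate the definition of absolute diagram at $G=\Yon\colon\cat{C}\funto\cat{E}$: the $W$-weighted limit of $\Yon\circ D$ exists in $\cat{E}$ --- it is $L$ --- hence it is absolute, so by \Cref{abs_to_retract} (applied inside $\cat{E}$) the limit cone of $L$ admits a universal retraction. By \Cref{ev_retract} this is, in particular, a morphism $\pi\colon\Yon(DJ_0)\to L$ with $\pi\circ c_{J_0,\iota_0}=\id_L$, which exhibits $L$ as a retract of the representable $\cat{C}(DJ_0,-)$; with the reduction above this is one half.

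For the converse, suppose $L$ is a retract of $\cat{C}(X_0,-)$, with section $\sigma\colon L\Rightarrow\cat{C}(X_0,-)$ and retraction $\tau$. Via Yoneda and the Kan-extension property of $L$ (equivalently \Cref{wcone_is_nat}), $\sigma$ corresponds to a $W$-weighted cone $a=(a_{J,w}\colon X_0\to DJ)$ over $D$ and $\tau$ to a class $\rho=[\,K_0,\ b\colon DK_0\to X_0,\ w_0\,]\in L(X_0)$, and $\tau\sigma=\id_L$ translates into the identities $[\,K_0,\ a_{J,w}\circ b,\ w_0\,]=[\,J,\ \id_{DJ},\ w\,]$ in $L(DJ)$ for all $J,w$. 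Now let $G\colon\cat{C}\funto\cat{D}$ be any functor for which $T\coloneqq\lim^{W}(G D)$ exists in $\cat{D}$, with universal cone $(c^T_{J,w})$; by \Cref{retract_to_abs} it is enough to equip $T$'s cone with a universal retraction. Since $(Ga_{J,w})$ is a $W$-weighted cone over $GD$ with tip $GX_0$ it induces a unique $u\colon GX_0\to T$ with $c^T_{J,w}\circ u=Ga_{J,w}$, and I would introduce the two comparison natural transformations $\xi_Y\colon\langle\cat{C}(D-,Y),W\rangle\to\langle\cat{D}(GD-,GY),W\rangle$ (apply $G$ to the arrow coordinate) and $\lambda_Y\colon\langle\cat{D}(GD-,Y),W\rangle\to\cat{D}(T,Y)$ (compose with the universal cone of $T$), both well defined because they preserve the coend relations. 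Then: using the translated identities one checks $c^T_{J,w}\circ(u\circ Gb)\circ c^T_{K_0,w_0}=c^T_{J,w}$ for all $J,w$, so that $[\,K_0,\ w_0,\ u\circ Gb\,]$ is an eventual retraction of $T$'s cone; and $\kappa_X(h)\coloneqq h_*[\,K_0,\ w_0,\ u\circ Gb\,]$ is a natural retraction of $\lambda_X$, so each $\lambda_X$ is injective, which supplies the uniqueness clause of universality. \Cref{retract_to_abs} then makes $T$ an absolute limit, and since $G$ was arbitrary, $D$ weighted by $W$ is absolute.

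The hard part is this last transfer. Pushing $a$ and $\rho$ along $G$ and trying to verify the eventual-retraction identity directly does not work, because the composites $a_{J,w}\circ b\colon DK_0\to DJ$ are in general not arrows of the diagram $D$, so cone-naturality of $(c^T_{J,w})$ says nothing about them. The fix is to stay inside the ``pairing'' world: the equality $[\,K_0,\ a_{J,w}\circ b,\ w_0\,]=[\,J,\ \id_{DJ},\ w\,]$ lives in a coend over $\cat{J}$, the maps $\xi$ and $\lambda$ are morphisms of such coends, and $\lambda\circ\xi$ carries that equality into $\cat{D}$ as precisely the identities $G(a_{J,w}\circ b)\circ c^T_{K_0,w_0}=c^T_{J,w}$ that are needed; the retraction $\kappa_X$ of $\lambda_X$ then provides the injectivity that promotes the eventual retraction to a universal one.
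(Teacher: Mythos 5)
Your proof is correct. The forward direction coincides with the paper's: instantiate absoluteness at $G=\Yon$, extract a universal retraction of the limit cone of $L$ via \Cref{abs_to_retract}, and read it as exhibiting $L$ as a retract of a representable, so that \Cref{cauchy_retract} applies. The converse is where you genuinely diverge. The paper never works with the limit cone of $G\circ D$ in $\cat{D}$ directly: it rewrites $\lim_J\langle WJ, GDJ\rangle$ as the $L$-weighted limit $\lim_X\langle LX, GX\rangle$ using \Cref{limit_decomp}, and then invokes \Cref{further_ext}, which shows that a limit weighted by the functor part of a Cauchy point is automatically absolute because the cone extends over the added object of $\cat{C'}$. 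You instead stay with the original weight $W$: you decode the section and retraction of $L$ into a $W$-weighted cone $a$ with tip $X_0$ together with a class $[K_0,b,w_0]\in L(X_0)$, push both through $G$, and assemble a universal retraction of the limit cone of $G\circ D$ by hand. The key technical points all check out: the identity $\tau\sigma=\id_L$ does reduce, on the coend generators, to $[K_0,\,a_{J,w}\circ b,\,w_0]=[J,\,\id_{DJ},\,w]$; the comparison maps $\xi$ and $\lambda$ are well defined on the coends (the relations are exactly functoriality of $G$ and the cone conditions for $T$) and transport that identity into $G(a_{J,w}\circ b)\circ c^T_{K_0,w_0}=c^T_{J,w}$, which by the uniqueness clause of $T$'s universal property gives the eventual-retraction equation; and the retraction $\kappa_X$ of $\lambda_X$ correctly supplies the injectivity that upgrades it to a universal retraction, after which \Cref{retract_to_abs} finishes. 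Your route trades the paper's two auxiliary lemmas for explicit coend bookkeeping and is arguably more self-contained; the paper's factorisation isolates reusable statements (\Cref{limit_decomp} is used again in the proof of \Cref{cauchy_limits}) and makes the role of the Cauchy-point structure, rather than just the retract of a representable, more visible.
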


We will use first of all the following auxiliary statement, which a way of decomposing limits of composite functors as a two-step process.

\begin{lemma}\label{limit_decomp}
	Consider functors $D:\cat{J}\funto\cat{C}$, $G:\cat{C}\funto\cat{D}$ and $W:\cat{J}\funto\cat{Set}$. 
	We have that
	\[
	\lim_{J\in\cat{J}} \big\langle WJ, GDJ \big\rangle \;\cong\; \lim_{X\in\cat{C}} \left\langle \colim_{J\in\cat{J}^\op} \big\langle WJ, \cat{C}(DJ,X) \big\rangle , GX \right\rangle ,
	\]
	meaning that one expression exists if and only if the other one does, and if so, they are isomorphic.
\end{lemma}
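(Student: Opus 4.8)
The plan is to recognize both sides as representing objects of presheaves on $\cat{D}$ and to produce a natural isomorphism between those two presheaves; the statement then follows since isomorphic presheaves are representable or not together, with coinciding representing objects.

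First I would unwind the definitions via \Cref{def_wlim}. The left-hand weighted limit is the object representing the presheaf
\[
\Phi_L\colon B \longmapsto \sfuncat{\cat{J}}\big(W-,\,\cat{D}(B,GD-)\big)
\]
on $\cat{D}$. For the right-hand side, write $V\colon\cat{C}\funto\cat{Set}$ for the set functor $V(X)=\colim_{J\in\cat{J}^\op}\big\langle WJ,\cat{C}(DJ,X)\big\rangle$; this is functorial in $X$ because weighted colimits are functorial in the diagram (\Cref{contra_weight}), equivalently because $V$ is the pointwise value of the weighted colimit $\colim_{J\in\cat{J}^\op}\big\langle WJ,\cat{C}(DJ,-)\big\rangle$ formed in $\sfuncat{\cat{C}}$, where such colimits are computed pointwise. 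Then the right-hand weighted limit is the object representing $\Phi_R\colon B\mapsto \sfuncat{\cat{C}}\big(V-,\cat{D}(B,G-)\big)$.

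Next I would compute $\Phi_R$. Here $V$ is the weighted colimit, taken in $\sfuncat{\cat{C}}$, of the diagram $\cat{J}^\op\funto\sfuncat{\cat{C}}$ sending $J\mapsto\cat{C}(DJ,-)$ (covariant on $\cat{J}^\op$), with weight the presheaf $W$ on $\cat{J}^\op$. Applying \Cref{hom_wlim} (colimit case) in the category $\sfuncat{\cat{C}}$ turns the hom out of $V$ into a weighted limit:
\[
\sfuncat{\cat{C}}\big(V,\,\cat{D}(B,G-)\big)\;\cong\;\lim_{J\in\cat{J}}\big\langle WJ,\,\sfuncat{\cat{C}}\big(\cat{C}(DJ,-),\,\cat{D}(B,G-)\big)\big\rangle .
\]
By the Yoneda lemma, $\sfuncat{\cat{C}}\big(\cat{C}(DJ,-),\cat{D}(B,G-)\big)\cong\cat{D}(B,GDJ)$, naturally in $J$ and $B$, so the right-hand side is $\lim_{J\in\cat{J}}\big\langle WJ,\cat{D}(B,GDJ)\big\rangle$, whose underlying set is exactly $\sfuncat{\cat{J}}\big(W-,\cat{D}(B,GD-)\big)=\Phi_L(B)$ (that being the defining description of this weighted limit in $\cat{Set}$). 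As all these isomorphisms are natural in $B$, we get $\Phi_L\cong\Phi_R$, and hence the two weighted limits represent naturally isomorphic presheaves: one exists iff the other does, and in that case they are isomorphic.

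The main obstacle is the variance bookkeeping: one must be careful that $J\mapsto\cat{C}(DJ,-)$ is read as a functor on $\cat{J}^\op$ and $W$ as a presheaf on $\cat{J}^\op$, so that \Cref{hom_wlim} applies with the correct ops, and one must check that the weight $V$ produced this way is literally the set functor appearing on the right-hand side of the statement. There is also a minor size caveat that $\sfuncat{\cat{C}}$ need not be locally small, but since $\cat{J}$ is small this affects neither the existence of $V$ nor the displayed isomorphisms (one may restrict to small functors if desired, as done elsewhere in the text).
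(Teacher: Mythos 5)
Your proof is correct, and it takes a genuinely different route from the one printed in the paper. The paper proves the lemma ``by hand'': it writes the weight $V(X)=\colim_{J}\langle WJ,\cat{C}(DJ,X)\rangle$ as the coend $\int^J WJ\times\cat{C}(DJ,X)$ with elements $[J,w,f]$, and then constructs an explicit, mutually inverse pair of assignments between $V$-weighted cones over $G$ (sending $c_{[J,w,f]}$ to $d_{J,w}\coloneqq c_{[J,w,\id_{DJ}]}$) and $W$-weighted cones over $G\circ D$ (sending $d$ to $c_{[J,w,f]}\coloneqq Gf\circ d_{J,w}$), checking well-definedness on the equivalence classes and the cone conditions directly. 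You instead identify both sides as representing objects and compute the right-hand presheaf abstractly, by applying \Cref{hom_wlim} to the weighted colimit $V$ formed in $\sfuncat{\cat{C}}$, reducing with the Yoneda lemma, and identifying the resulting weighted limit of sets with $\sfuncat{\cat{J}}\big(W-,\cat{D}(B,GD-)\big)$. Both arguments ultimately establish the same natural isomorphism of cone presheaves. Your version is shorter and shows the lemma as a formal consequence of the general machinery already developed (hom-functors preserve weighted (co)limits, Yoneda, weighted limits in $\cat{Set}$ as natural-transformation sets), at the price of stacking several previously proven isomorphisms and of the size caveat you correctly flag (which is harmless here since $V$ is a small colimit of representables). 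The paper's version is longer but self-contained at the level of elements, and it makes the manipulation of the classes $[J,w,f]$ explicit --- which is then reused immediately in the proof of \Cref{further_ext}. Two small points worth making explicit in your write-up: the identification of $\lim_{J}\langle WJ,\cat{D}(B,GDJ)\rangle$ with the set of natural transformations $\sfuncat{\cat{J}}\big(W-,\cat{D}(B,GD-)\big)$ deserves a citation to \Cref{coend_formula} together with \Cref{nat_set} rather than being called ``the defining description''; and one should note that the isomorphism of \Cref{hom_wlim} is natural in its second argument, so that naturality in $B$ follows by applying it to the maps $\cat{D}(B',G-)\to\cat{D}(B,G-)$.
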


Compare for example with sums: given finite sets $X$ and $Y$, and functions $X\xrightarrow{d}Y\xrightarrow{g}\R$,
\[
\sum_{x\in X} g(d(x)) \;=\; \sum_{y\in Y}\sum_{x\in X} \delta_{d(x),y}\,g(y) .
\]

Note moreover that the expression 
\[
\colim_{J\in\cat{J}^\op} \big\langle WJ, \cat{C}(DJ,X) \big\rangle
\]
is the pointwise expression of the weighted \emph{limit} of the functor $\Yon\circ D:\cat{J}\funto\sfuncat{\cat{C}}^\op$, as in \eqref{coend_X}.

\begin{proof}
	Let's first write the weighted colimit as a coend:
	\[
	\colim_{J\in\cat{J}^\op} \big\langle WJ, \cat{C}(DJ,X) \big\rangle \;\cong\; \int^J WJ\times \cat{C}(DJ,X) .
	\]
	Denote this set by $S_X$, with elements $[J,w,f]$, where $J\in\cat{J}$, $w\in WJ$, and $g:DJ\to X$. 
	
	To prove equivalence of the limits, we establish a natural bijection between the respective presheaves of weighted cones. This way, if any of the two presheaves is representable, so is the other one, and the representing objects (i.e.\ the limits) must coincide.
	
	First, let's start with a cone $c$ over $G:\cat{C}\funto\cat{D}$ with weights $S_X$ (for all $X\in\cat{C}$) and tip $T\in\cat{D}$. 
	Its arrows are in the form $c_{[J,w,f]}:T\to GX$.
	To obtain a $W$-weighted cone $d$ over $G\circ D:\cat{J}\funto\cat{D}$ we take $d_{J,w}\coloneqq c_{J,w,\id_{DJ}}$.
	To see it's a cone, notice that for all $g:J\to K$, the following diagram commutes,
	\[\begin{tikzcd}[row sep=small]
		& GDJ \ar{dd}{GDg} \\
		T \ar{ur}{d_{J,w}} \ar{dr}[swap]{d_{K,g_*w}} \\
		& GDK
	\end{tikzcd}
	\qquad=\qquad
	\begin{tikzcd}[row sep=small]
		& GDJ \ar{dd}{GDg} \\
		T \ar{ur}{c_{[J,w,\id_{DJ}]}} \ar{dr}[swap]{c_{[K,g_*w,\id_{DK}]}} \\
		& GDK 
	\end{tikzcd}
	\]
	since, by the fact that $c$ is a cone and by the usual equivalence relation,
	\[
	GDg\circ c_{[J,w,\id_{DJ}]} \;=\; c_{(Dg)_*[J,w,\id_{DJ}]} \;=\; c_{[J,w,Dg]} \;=\; c_{[K,g_*w,\id_{DK}]} .
	\]
	
	Conversely, starting with a $W$-weighted cone $d$ over $G\circ D$, define a weighted cone $c$ over $G$ by $c_{[J,w,f]}:=Gf\circ d_{J,w}$:
	\begin{equation}\label{c_from_d}
	\begin{tikzcd}
		T \ar{r}{d_{J,w}} & GDJ \ar{r}{Gf} & GX
	\end{tikzcd}
	\end{equation}
	To see that it is well defined on equivalence classes, let $g:J\to K$ and suppose $f=f'\circ Dg$ for some $f':DK\to X$. Then the following diagram commutes,
	\[
	\begin{tikzcd}[row sep=small]
		& GDJ \ar{dr}{Gf} \ar{dd}{GDg} \\
		T \ar{ur}{d_{J,w}} \ar{dr}[swap]{d_{K,g_*w}} && GX \\
		& GDK \ar{ur}[swap]{Gf'}
	\end{tikzcd}
	\]
	since the triangle on the left is the cone condition for $d$, and the one on the right is functoriality of $G$. 
	To see that $c$ is a cone, notice that for $h:X\to Y$ of $\cat{C}$,
	\[
	Gh\circ c_{[J,w,f]} \;=\; Gh\circ Gf\circ d_{J,w} \;=\; g(h\circ f) \circ d_{J,w} \;=\; c_{[J,w,h\circ f]} \;=\; c_{h_*[J,w,f]} .
	\]
	
	To see that these assignments are mutually inverse, start first with a $W$-weighted cone $d$ over $G\circ D$. Then setting $f=\id_{DJ}$ in \eqref{c_from_d} we recover exactly $d_{J,w}$.
	Conversely, starting with a weighted cone $c$ over $G$, notice that for all $[J,w,f]\in S_X$,
	\[
	c_{[J,w,f]} \;=\; c_{f_*[J,w,\id_{DJ}]} \;=\; Gf\circ c_{[J,w,\id_{DJ}]} .
	\]
	
	Naturality of this bijection is just invariance under precomposition with arrows of $\cat{D}$.
\end{proof}

The main part of the theorem is contained in the following lemma, which could be interpreted as the fact that Cauchy points have ``virtual universal retractions'' encoded in the class $i=[X,\pi,\iota]$.

\begin{lemma}\label{further_ext} 
	Let $F$ be part of a Cauchy point on $\cat{J}$, let $D:\cat{J}\funto\cat{C}$ be a diagram, and let $d$ be an $F$-weighted cone. 
	We have that $d$ is a weighted limit cone if and only if the functor $D^+:\cat{J}^{+F}\funto\cat{C}$ induced by $d$ (as in \Cref{def_wcone}) extends to a functor $\cat{J'}\funto\cat{C}$. 
	In that case, moreover, the limit is absolute.
\end{lemma}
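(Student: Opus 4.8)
Here is the plan. The whole argument turns on one observation: the identity datum $i=[X,\pi,\iota]$ of the Cauchy point, constrained by \eqref{id_cond}, becomes a \emph{universal retraction} of the cone $d$ once it is pushed forward into $\cat{C}$ by any extension. Throughout I write $D^+w:T\to DJ$ for the legs of the $F$-weighted cone $d$, and I use that in the category $\cat{J'}$ of \Cref{defcplus} we have $\iota\in FX=\cat{J'}(E,X)$ and $\pi\in PX=\cat{J'}(X,E)$ with $\pi\circ\iota=\id_E$ (this is what it means, by \Cref{iscat}, for $i$ to be the identity of $E$), that $\cat{J}^{+F}$ embeds in $\cat{J'}$ by fixing $\cat{J}$ and sending $\id_E$ to $i$, and that $D^+$ is the functor $\cat{J}^{+F}\funto\cat{C}$ attached to $d$ by \Cref{wcone_is_nat,def_wcone}.

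For the easy implication, assuming $D^+$ extends to $\hat D:\cat{J'}\funto\cat{C}$, I would show $d$ is a limit cone directly. Given a competing $F$-weighted cone ${D^+}'$ over $D$ with tip $T'$, set $u\coloneqq\hat D(\pi)\circ{D^+}'\iota:T'\to DX\to T$. Then $D^+w\circ\hat D(\pi)=\hat D(w)\circ\hat D(\pi)=\hat D(w\circ\pi)=D\big(c_{X,J}(\pi,w)\big)$, since in $\cat{J'}$ the composite $w\circ\pi$ is the $\cat{J}$-morphism $c_{X,J}(\pi,w)$; hence the cone law for ${D^+}'$ along $c_{X,J}(\pi,w):X\to J$ together with $c_{X,J}(\pi,w)_*\iota=w$ from \eqref{id_cond} gives $D^+w\circ u={D^+}'w$ for every $J$ and $w\in FJ$. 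Uniqueness is forced: any $u'$ with $D^+w\circ u'={D^+}'w$ satisfies ${D^+}'\iota=\hat D(\iota)\circ u'$, so $u=\hat D(\pi)\circ\hat D(\iota)\circ u'=\hat D(\pi\circ\iota)\circ u'=\hat D(\id_E)\circ u'=u'$.

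For the converse, assume $d$ exhibits $T$ as the $F$-weighted limit of $D$. For each object $A$ of $\cat{J}$ and each $p\in PA$, the family $\big(D(c_{A,J}(p,w)):DA\to DJ\big)_{J,\,w\in FJ}$ is an $F$-weighted cone over $D$ with tip $DA$, its naturality in $w$ being exactly naturality of $c_{A,-}$ (namely $Dg\circ D(c_{A,J}(p,w))=D(c_{A,K}(p,g_*w))$ for $g:J\to K$); so it factors through a unique $u^p:DA\to T$ with $D^+w\circ u^p=D(c_{A,J}(p,w))$ for all $J,w$. I would then define $\hat D$ to be $D$ on $\cat{J}$, $T$ on $E$, $D^+f$ on $f\in FA$, $u^p$ on $p\in PA$, and $u^p\circ D^+f$ on $[A,p,f]\in\langle P,F\rangle$, and check: well-definedness on the classes in $\langle P,F\rangle$ (from uniqueness of $u^{(-)}$ and the cone law); $\hat D(i)=u^\pi\circ D^+\iota=\id_T$ (the very computation of the easy direction); naturality of $p\mapsto u^p$ in $A$; and that $\hat D$ respects composition in each case of the proof of \Cref{iscat} — for instance for $p:A\to E$ then $f:E\to B$ one has $\hat D(f)\circ\hat D(p)=D^+f\circ u^p=D(c_{A,B}(p,f))=\hat D(c_{A,B}(p,f))$ straight from the defining equation of $u^p$. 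By construction $\hat D$ restricts to $D^+$ on $\cat{J}^{+F}$.

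Finally, for absoluteness I would argue cheaply once the equivalence is established: if $d$ is a limit cone, take any $G:\cat{C}\funto\cat{E}$; then $G\circ\hat D$ extends $G\circ D^+$, and $G\circ D^+$ is precisely the functor attached to the $F$-weighted cone $Gd$ over $GD$, so by the easy implication applied to $GD$ the cone $Gd$ is again a weighted limit cone, i.e.\ $G$ preserves the limit. The main obstacle is bookkeeping rather than mathematics: the converse direction requires walking through every composition pattern in $\cat{J'}$ to confirm functoriality of $\hat D$ and its well-definedness over the pairing, and although each case collapses to naturality of $c$ and to \eqref{id_cond}, there are several to dispatch.
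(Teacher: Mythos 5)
Your proposal is correct, and the converse direction (limit $\Rightarrow$ extension) is essentially the paper's own construction: for $p\in PA$ you build the cone $\big(D(c_{A,J}(p,w))\big)_{J,w}$ with tip $DA$ and let the universal property produce $\hat D(p)=u^p$, then check functoriality case by case against \Cref{iscat}. Where you genuinely diverge is the forward direction and the absoluteness claim. The paper shows that an extension $D^{++}$ turns the identity datum $i=[X,\pi,\iota]$ into a \emph{universal retraction} $[X,\iota,D^{++}(\pi)]$ of the cone, and then invokes \Cref{retract_to_abs} to conclude in one stroke that $d$ is a limit cone and that it is absolute. You instead verify the universal property directly: $u\coloneqq\hat D(\pi)\circ{D^+}'\iota$ works because $D^+w\circ\hat D(\pi)=D\big(c_{X,J}(\pi,w)\big)$ and $c_{X,J}(\pi,w)_*\iota=w$ by \eqref{id_cond}, and uniqueness follows from $\hat D(\pi\circ\iota)=\id_T$; then you recover absoluteness by observing that $G\circ\hat D$ extends $G\circ D^+$ and re-running the easy implication for $GD$. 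Both arguments are sound. The paper's route has the advantage of exhibiting explicitly the universal retraction, which is the organizing concept of that section and feeds directly into \Cref{pushforward}; your route is more elementary and self-contained, bypassing \Cref{ev_retract} and \Cref{retract_to_abs} entirely, and your absoluteness argument is arguably the cleanest possible one given the equivalence. You are also slightly more careful than the paper in noting that $\hat D$ must be defined on the endomorphisms $[A,p,f]$ of $E$ (as $u^p\circ D^+f$) and that $\hat D(i)=\id_T$ must be checked.
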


(The category $\cat{J'}$ is defined in \Cref{defcplus}.)

\begin{proof}[Proof of \Cref{further_ext}]
	Let $(F,P,c,i)$ be a Cauchy point of $\cat{J}$ with $i=[J,\pi,\iota]$, and pick a representative $(J,\pi\in PJ,\iota\in FJ)$. 
	As usual, denote by $E$ the extra point of $\cat{C'}$. This way, we can write $\pi:J\to E$ and $\iota:E\to J$.
	Denote moreover the tip of the cone $D^+(E)$ by $T$.
	
	First, suppose that $d$ extends to a functor $D^{++}:\cat{J'}\funto\cat{C}$. Then the arrow $\pi: J\to E$ of $\cat{C'}$ is mapped functorially to an arrow $D^{++}(\pi):DJ\to T$. Moreover, again by functoriality,
	\[
	D^{++}(\pi)\circ d_{J,\iota}\;=\; D^{++}(\pi)\circ D^{++}(\iota) \;=\; D^{++}(\pi\circ\iota) \;=\; D^{++}(\id_E) \;=\; \id_T .
	\]
	Therefore $[J,\iota,D^{++}(\pi)]$ is an eventual retraction. To show that it is universal, given $K\in\cat{J}$ consider a commutative diagram of $\cat{C}$ as follows.
	\[
	\begin{tikzcd}[row sep=small]
		T \ar{dr}{f} \ar{dd}[swap]{d_{K,w}} \\
		& X \\
		DK \ar{ur}[swap]{g}
	\end{tikzcd}
	\]
	We have to prove that, as equivalence classes, $[K,w,g]=[J,\iota,f\circ D^{++}(\pi)]$. Notice now that we can see $d_{K,w}$ as an arrow in the form $D^{++}(w)$ for some ``virtual'' arrow $w:E\to K$ of $\cat{C}$. Therefore, chasing the following commutative diagram,
	\[
	\begin{tikzcd}[row sep=small]
		& T \ar{dd}{d_{K,w}} \ar{dr}{f} \\
		DJ \ar{ur}{D^{++}(\pi)} \ar{dr}[swap]{D^{++}(w\circ\pi)} && X \\
		& DK \ar{ur}[swap]{g}
	\end{tikzcd}
	\]
	\begin{align*}
	[J,\iota,f\circ D^{++}(\pi)] \;&=\; [J,\iota,g\circ d_{K,w}\circ D^{++}(\pi)] \\
	&=\; [J,\iota,g\circ D^{++}(w\circ\pi)] \\
	&=\; [K,w\circ\pi\circ\iota,g] \\
	&=\; [K,w,g] .
	\end{align*}
	This makes $[J,\iota,\pi]$ a universal retraction, and so $d$ is an absolute limit cone.
	
	Conversely, suppose that $d$ is a limit cone. In order to extend $d$ to a functor $D^{++}:\cat{J'}\funto\cat{C}$ we need to specify its action on those arrows in the form $K\to E$ for $K\in\cat{J}$, i.e.\ on the elements of the sets $PK$. 
	So let $p\in PK$. We need to define an arrow $DK\to T$ of $\cat{C}$. Since $T$ is a limit, such an arrow is uniquely specified by a cone with tip $DK$ (or equivalently, we can specify the map by saying what the composition with the arrows of $d$ should be). 
	For all $J'\in\cat{J}$ and $w\in FJ'$ take the following arrow,
	\[
	\begin{tikzcd}[sep=small]
		K \ar[virtual]{r}{p} & E \ar[virtual]{r}{w} & J'
	\end{tikzcd}
	\qquad{\color{dgray}\longmapsto}\qquad 
	\begin{tikzcd}[sep=huge]
		DK \ar{r}{D(c(p,w))} & DJ'
	\end{tikzcd}
	\]
	recalling that $c(p,w)$ gives the composition of the arrows $p$ and $w$ in $\cat{C'}$. To see that these arrows indeed give a cone, notice that for all $g:J'\to K'$, 
	\[
	Dg\circ D(c(p,w)) \;=\; D(f\circ c(p,w)) \;=\; c(p,g_*w) .
	\]
	Therefore we have a uniquely determined map $DK\to T$. Let's denote this map by $D^{++}(p)$. 
	To show that this assignment extends $D^+$ functorially, notice that 
	\begin{itemize}
		\item It respects precomposition: given $g:J'\to K$, the resulting cone is the precomposition
		\[
		D(c(g^*p,w))\;=\; D(c(p,w)\circ g) \;=\; D(c(p,w)) \circ Dg.
		\]
		Therefore, by functoriality of limits (from the uniqueness in the universal property),
		\[
		D^{++}(g^*p) \;=\; D^{++}(p)\circ Dg.
		\]
		\item It respects postcomposition (given in $\cat{C'}$ by the map $c$): given $f:E\to K'$ (or, $f\in FK'$),
		\[
		D(c(p,f)) \;=\; d_{K',f}\circ D^{++}(g^*p) \;=\; D^+(f)\circ D^{++}(g^*p)
		\]
		exactly by definition of $D^{++}(g^*p)$ via the universal property.
	\end{itemize}
	Therefore $D^{++}$ is a functor extending $D^+$.
\end{proof}

\begin{proof}[Proof of \Cref{pushforward}]
	Consider once again the Yoneda embedding $\Yon:\cat{C}\funto\sfuncat{\cat{C}}^\op$ (or the equivalent in the subcategory of small functors).
	We know the $W$-weighted limit of $\Yon\circ D$ exists, denote if by $F$, and denote by $c_{J,w}:F\Rightarrow\cat{C}(DJ,-)$ the arrows of the limit cone (which are natural transformations). 
	
	Now first suppose that the diagram is absolute. This means that it admits a universal retraction $[J,\iota,\pi]$, where $\pi$ is a natural transformation $\cat{C}(DJ)\Rightarrow F$ such that 
	\[
	F \xRightarrow{c_{J,\iota}} \cat{C}(DJ,-) \xRightarrow{\;\pi\;} F \quad=\quad \id_F .
	\]
	This says exactly that $F$ is a retract of the representable functor $\cat{C}(DJ,-)$, and so, by \Cref{cauchy_retract}, it is part of a Cauchy point.
	
	Conversely, suppose that the limit set functor $F$ is part of a Cauchy point. Let $G:\cat{C}\funto\cat{D}$ be a functor, and suppose that the $W$-weighted limit of $G\circ D:\cat{J}\funto\cat{D}$ exists. By \Cref{limit_decomp},
	\[
	\lim_{J} \big\langle WJ, GDJ \big\rangle \;=\; \lim_{X\in\cat{C}} \Big\langle \colim_{J\in\cat{J}} \big\langle WJ, \cat{C}(DJ,X) \big\rangle, GX \Big\rangle \;\cong\; \lim_{X\in\cat{C}} \big\langle FX, GX \big\rangle ,
	\]
	and by \Cref{further_ext} the latter limit is absolute. Therefore $D$ is an absolute diagram. 
\end{proof}

\begin{corollary}\label{cauchy_abs}
	A set functor $F$ is an absolute weight if and only if it is part of a Cauchy point $(F,P,c,i)$. 
	The same can be said about presheaves.
\end{corollary}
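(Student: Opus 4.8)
The plan is to read the statement off the machinery already assembled, namely \Cref{pushforward}, \Cref{wlim_repr}, \Cref{cauchy_retract}, \Cref{abs_to_retract}, the contravariant functoriality of weighted limits in their weight (\Cref{contra_weight}), and Yoneda reduction (\Cref{ninja}). I will treat only set functors; the presheaf statement is the formal dual, obtained by swapping ``limit'' with ``colimit'', ``functor'' with ``presheaf'', and reversing arrows. Throughout, whenever a functor category such as $\sfuncat{\cat{J}}^\op$ or $\sfuncat{\cat{C}}^\op$ fails to be locally small, I silently restrict to its full subcategory of small functors, exactly as in the footnotes to \Cref{wlim_repr} and \Cref{abs_to_retract}.

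For the implication ``absolute weight $\Rightarrow$ Cauchy point'', I would specialise the hypothesis to the single $F$-weighted diagram $\Yon:\cat{J}\funto\sfuncat{\cat{J}}^\op$. By \Cref{wlim_repr} its $F$-weighted limit exists and equals $F$; being the limit of an absolute diagram (and existing), this limit is an absolute weighted limit. Then \Cref{abs_to_retract} supplies a universal retraction $[J_0,\iota,\pi]$ of the limit cone, which unwinds to a pair of morphisms $F \xrightarrow{c_{J_0,\iota}} \Yon(J_0) \xrightarrow{\pi} F$ of $\sfuncat{\cat{J}}^\op$ composing to $\id_F$. This is precisely a presentation of $F$ as a retract of the representable set functor $\cat{J}(J_0,-)$, so by \Cref{cauchy_retract} the functor $F$ is the functor part of a Cauchy point.

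For the converse ``Cauchy point $\Rightarrow$ absolute weight'', I would first use \Cref{cauchy_retract} to obtain $\iota:F\Rightarrow\cat{J}(J_0,-)$ and $\pi:\cat{J}(J_0,-)\Rightarrow F$ with $\pi\circ\iota=\id_F$, and then check that an arbitrary $F$-weighted diagram $D:\cat{J}\funto\cat{C}$ is absolute. By \Cref{pushforward} this reduces to showing that the $F$-weighted limit $G$ of $\Yon\circ D:\cat{J}\funto\sfuncat{\cat{C}}^\op$ (which exists, since functor categories have all small weighted limits) is part of a Cauchy point. Here I would invoke \Cref{contra_weight}: the assignment $W\mapsto\lim_{J\in\cat{J}}\big\langle WJ,(\Yon\circ D)(J)\big\rangle$ is a contravariant functor in the weight $W$ (on the weights for which the limit exists), so applying it to $\pi\circ\iota=\id_F$ exhibits $G$ as a retract of $\lim_{J\in\cat{J}}\big\langle\cat{J}(J_0,J),(\Yon\circ D)(J)\big\rangle$; by Yoneda reduction (\Cref{ninja}) the latter is $(\Yon\circ D)(J_0)=\cat{C}(DJ_0,-)$, a representable set functor on $\cat{C}$. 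Thus $G$ is a retract of a representable, hence part of a Cauchy point by \Cref{cauchy_retract}, and \Cref{pushforward} then yields that $D$ is absolute. Since $D$ was arbitrary, $F$ is an absolute weight, and the presheaf statement follows dually.

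I expect the only delicate points to be bookkeeping rather than substance: getting the composition order right when \Cref{contra_weight} is fed $\pi\circ\iota=\id_F$ (contravariance turns this into $\lim(\iota)\circ\lim(\pi)=\id_G$, which is exactly a retract datum for $G$), and keeping straight which ``representable'' and which ``Cauchy point'' is meant — a retract in $\sfuncat{\cat{C}}^\op$ of a representable object is, self-dually, a retract of a representable set functor on $\cat{C}$, which is the input \Cref{cauchy_retract} asks for. The size issues are dispatched uniformly by passing to small functors. A more computational alternative for the converse would compute $G$ pointwise as the pairing $G(X)\cong\langle\cat{C}(D-,X),F\rangle$ (as in the proof of \Cref{abs_to_retract}) and push the retract structure of $F$ through functoriality of the pairing; going through \Cref{contra_weight} avoids that calculation.
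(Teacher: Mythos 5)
Your proof is correct, but it is noticeably more roundabout than the paper's, which disposes of the whole corollary by a single application of \Cref{pushforward} to the identity diagram $\id:\cat{J}\funto\cat{J}$ weighted by $F$: by \Cref{wlim_repr} the $F$-weighted limit of $\Yon$ is $F$ itself, so \Cref{pushforward} says the identity diagram is absolute iff $F$ is part of a Cauchy point, and absoluteness of the identity diagram is already equivalent to $F$ being an absolute weight (the definition of ``absolute diagram'' for $\id$ quantifies over all functors out of $\cat{J}$, and any composite $G\circ D$ is such a functor). You instead treat the two directions separately. Your forward direction essentially inlines the first half of the proof of \Cref{pushforward} (absolute limit $\Rightarrow$ universal retraction $\Rightarrow$ retract of a representable $\Rightarrow$ Cauchy point, via \Cref{abs_to_retract} and \Cref{cauchy_retract}); this is fine but duplicates work the paper has already packaged. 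Your converse is genuinely different: rather than reducing to the identity diagram, you apply \Cref{pushforward} to an arbitrary $F$-weighted $D$ and transport the retraction $\pi\circ\iota=\id_F$ through the contravariant functoriality of weighted limits in the weight (\Cref{contra_weight}) and Yoneda reduction (\Cref{ninja}) to exhibit the limit of $\Yon\circ D$ as a retract of $\cat{C}(DJ_0,-)$. That argument is sound --- you correctly note that contravariance turns $\pi\circ\iota=\id$ into $\lim(\iota)\circ\lim(\pi)=\id$, and that retracts are self-dual so the ambient $(-)^\op$ is harmless --- but it leans on \Cref{contra_weight}, which in the paper is only an unproved remark; a fully self-contained version would either prove that functoriality or compute the limit pointwise as a pairing and push the retraction through, as you suggest at the end. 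What your route buys is that it avoids the slightly subtle, implicit step ``identity diagram absolute $\Rightarrow$ absolute weight''; what the paper's route buys is brevity.
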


\begin{proof}[Proof of \Cref{cauchy_abs}]
	Recall (\Cref{wlim_repr}) that the $F$-weighted limit of the Yoneda embedding $\Yon:\cat{C}\funto\sfuncat{\cat{C}}^\op$ always exists and is given by $F$ itself. (Once again, we may want to work in the subcategory of small functors instead.)
	
	We can now apply \Cref{pushforward}, setting $D$ to be the identity functor. We get that $F$ is part of a Cauchy point if and only if the identity weighted by $F$ is an absolute diagram, which means exactly that $F$ is an absolute weight.
\end{proof}

\begin{corollary}\label{cauchy_limits}
	The following conditions are equivalent for a category $\cat{C}$:
	\begin{enumerate}
		\item\label{ccomp} $\cat{C}$ is Cauchy-complete;
		\item\label{cabsl} $\cat{C}$ has all absolute limits (meaning, every absolute (weighted) diagram has a limit);
		\item\label{cabsc} $\cat{C}$ has all absolute colimits.
	\end{enumerate}
\end{corollary}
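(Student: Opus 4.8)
The plan is to prove the equivalence $\ref{ccomp}\Leftrightarrow\ref{cabsl}$ directly and then obtain $\ref{cabsc}$ by a duality argument. For $\ref{ccomp}\Rightarrow\ref{cabsl}$, I would take an arbitrary absolute diagram $D:\cat{J}\funto\cat{C}$ weighted by $W:\cat{J}\funto\cat{Set}$ and form the $W$-weighted limit of $\Yon\circ D$ inside $\sfuncat{\cat{C}}^\op$ — this exists since $\sfuncat{\cat{C}}^\op$ is complete, limits there being pointwise colimits of sets (or one passes to the category of small functors as in the footnote to \Cref{abs_to_retract}). Call it $F$. By \Cref{pushforward}, $F$ is part of a Cauchy point of $\cat{C}$, and since $\cat{C}$ is Cauchy-complete this Cauchy point is already in $\cat{C}$, so \Cref{thm_cauchy} yields an object $R$ of $\cat{C}$ with $F\cong\cat{C}(R,-)=\Yon(R)$. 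It then remains to descend this limit along the fully faithful embedding $\Yon$: for every object $A$, the natural isomorphisms $\cat{C}(A,DJ)\cong\sfuncat{\cat{C}}^\op(\Yon A,\Yon DJ)$ identify $W$-weighted cones over $D$ with tip $A$ with $W$-weighted cones over $\Yon\circ D$ with tip $\Yon A$, hence (by universality of $\Yon R$) with $\sfuncat{\cat{C}}^\op(\Yon A,\Yon R)\cong\cat{C}(A,R)$, all naturally in $A$; so $R$ represents the presheaf of $W$-weighted cones over $D$, i.e.\ it is the sought weighted limit in $\cat{C}$.

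For $\ref{cabsl}\Rightarrow\ref{ccomp}$, I would invoke \Cref{comp_split} and reduce to showing that every idempotent of $\cat{C}$ splits. Given an idempotent $e:X\to X$, view the parallel pair $e,\id_X:X\rightrightarrows X$ with terminal weight as a diagram whose weighted limit is the ordinary equalizer. This diagram is absolute: for any functor $G:\cat{C}\funto\cat{D}$ the morphism $Ge$ is again idempotent, so whenever the equalizer of $(Ge,\id_{GX})$ exists in $\cat{D}$ it is a splitting of $Ge$ and is therefore preserved by every functor, by \Cref{e_split_eq}. Hence $\ref{cabsl}$ provides the equalizer of $(e,\id_X)$ in $\cat{C}$, and \Cref{e_split_eq} once more yields a splitting of $e$. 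Finally, for $\ref{cabsl}\Leftrightarrow\ref{cabsc}$, I would note that a Cauchy point $(F,P,c,i)$ of $\cat{C}$ is the same datum as a Cauchy point of $\cat{C}^\op$ with the set functor and the presheaf interchanged (the pairing is insensitive to this swap and the two conditions in \eqref{id_cond} are symmetric), while by \Cref{thm_cauchy} being already in the category is symmetric in $F$ and $P$; thus $\cat{C}$ is Cauchy-complete iff $\cat{C}^\op$ is. Since an absolute colimit in $\cat{C}$ is precisely an absolute limit in $\cat{C}^\op$, applying $\ref{ccomp}\Leftrightarrow\ref{cabsl}$ to $\cat{C}^\op$ closes the loop: $\cat{C}$ has all absolute colimits iff $\cat{C}^\op$ is Cauchy-complete iff $\cat{C}$ is Cauchy-complete iff $\cat{C}$ has all absolute limits.

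I expect the main obstacle to be the first implication, specifically the descent step: one must be careful that the $W$-weighted limit of $\Yon\circ D$ exists so that \Cref{pushforward} genuinely applies, and then check cleanly that a fully faithful functor reflects the weighted limit once the limit of the composite lands — as it trivially does here — in its essential image. The remaining implication and the duality argument are short and rest almost entirely on results already established (\Cref{e_split_eq}, \Cref{comp_split}, \Cref{thm_cauchy}).
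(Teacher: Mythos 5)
Your proof is correct, and its overall skeleton coincides with the paper's: both directions of $\ref{ccomp}\Leftrightarrow\ref{cabsl}$ go through \Cref{pushforward} for the forward implication and the reduction to idempotent splitting via \Cref{e_split_eq} and \Cref{comp_split} for the converse. The one genuine divergence is the concluding step of $\ref{ccomp}\Rightarrow\ref{cabsl}$. The paper, having obtained the representing object $R$ for the limit functor $F$, finishes with the computation
\[
\lim_J\big\langle WJ, DJ\big\rangle \;\cong\; \lim_{X\in\cat{C}}\Big\langle \colim_J\big\langle WJ,\cat{C}(DJ,X)\big\rangle, X\Big\rangle \;\cong\; \lim_{X\in\cat{C}}\big\langle \cat{C}(R,X),X\big\rangle\;\cong\;R,
\]
i.e.\ it re-uses \Cref{limit_decomp} together with Yoneda reduction (\Cref{ninja}). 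You instead descend the limit along the fully faithful Yoneda embedding: via \Cref{wcone_is_nat}, $W$-weighted cones over $D$ with tip $A$ are the same as $W$-weighted cones over $\Yon\circ D$ with tip $\Yon A$, hence correspond to maps $\Yon A\to\Yon R\cong F$, hence to maps $A\to R$, naturally in $A$. Both arguments are sound; yours is arguably more transparent (it amounts to ``fully faithful functors reflect weighted limits whose image limit lands in the essential image''), while the paper's keeps all the work inside the machinery it has already set up. You also make the colimit case $\ref{cabsc}$ explicit via the self-duality of Cauchy points (swapping $F$ and $P$ passes to $\cat{C}^\op$), where the paper merely declares it ``analogous and dual''; your observation that the pairing and the two conditions in \eqref{id_cond} are symmetric under this swap is exactly what is needed to justify that declaration.
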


\begin{proof}
	$\ref{ccomp}\Rightarrow\ref{cabsl}$:
	Suppose that $\cat{C}$ is Cauchy-complete. Consider an absolute diagram $D:\cat{J}\funto\cat{C}$ with weight $W:\cat{D}\funto\cat{Set}$.
	Then by \Cref{pushforward}, the set functor 
	\[
	F \;=\; \lim_J \big\langle WJ, \cat{C}(DJ,-) \big\rangle
	\]
	whose pointwise expression is 
	\[
	FX \;=\; \colim_J \big\langle WJ, \cat{C}(DJ,X) \big\rangle
	\]
	is part of a Cauchy point. Since $\cat{C}$ is Cauchy complete, then $F$ is representable. Let $R$ be the representing object. Using \Cref{limit_decomp} and Yoneda reduction (\Cref{ninja}), we see that the limit exists and is equal to $R$:
	\[
	\lim_J\big\langle WJ, DJ \big\rangle \;\cong\; \lim_{X\in\cat{C}} \Big\langle \colim_J \big\langle WJ, \cat{C}(DJ,X) \big\rangle, X \Big\rangle \;\cong\; \lim_{X\in\cat{C}} \big\langle \cat{C}(R,X), X \big\rangle \;\cong R .
	\]
	
	$\ref{cabsl}\Rightarrow\ref{ccomp}$: Suppose that $\cat{C}$ has all absolute limits. Then in particular all idempotents split (\Cref{e_split_eq}), and so by \Cref{comp_split}, it is Cauchy complete.
	
	The proof of $\ref{ccomp}\Leftrightarrow\ref{cabsc}$ is completely analogous and dual.
\end{proof}

So, to conclude this section, let's sum up all the equivalent conditions that we found for Cauchy points.

\begin{theorem}
	A Cauchy point of a category $\cat{C}$ is specified up to isomorphism by any of the following:
	\begin{itemize}
		\item A tuple $(F,P,c,i)$ as in \Cref{defcauchypt} (up to isomorphism);
		\item An idempotent $e:X\to X$ on $X$ (up to isomorphism as in \Cref{iso_K});
		\item A set functor or a presheaf which is a retract of a representable one;
		\item An absolute weight (for limits or for colimits).
	\end{itemize}
\end{theorem}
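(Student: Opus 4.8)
The plan is to assemble this final theorem purely as a bookkeeping result, collecting the equivalences that were already established piece by piece in the preceding sections. The statement asserts that four descriptions of a Cauchy point coincide up to isomorphism, so I would organize the proof as a cycle (or a hub-and-spoke diagram) of equivalences, each arrow of which is a citation to an earlier result, together with a remark that the translations are mutually inverse and hence give an isomorphism of data, not merely a bijection of isomorphism classes.

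First I would fix the tuple $(F,P,c,i)$ of \Cref{defcauchypt} as the reference notion, since it is the most structured one and the category $\overline{\cat{C}}$ is built directly out of it. The equivalence with an idempotent $e:X\to X$ on $X$ (up to the isomorphism relation spelled out in \eqref{iso_K}) is exactly the content of \Cref{c_e}, which gives an equivalence of categories $\cat{K(C)}\funto\overline{\cat{C}}$; I would note that an object of $\cat{K(C)}$ is precisely an idempotent up to the stated isomorphism, and that the functor $\Phi$ of \Cref{e_to_cauchy} and the essential-surjectivity construction in the proof of \Cref{c_e} are mutually inverse, so the two sets of data determine each other up to canonical isomorphism. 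Next, the equivalence with a set functor (or presheaf) which is a retract of a representable one is \Cref{cauchy_retract}, whose proof already exhibits both directions: from $(F,P,c,i)$ one reads off the retraction of $\cat{C}(X,-)$ displayed in \eqref{virtual_retract_eq}, and conversely a retract of a representable yields an idempotent natural transformation, hence by Yoneda an idempotent $e:X\to X$, hence a Cauchy point via \Cref{c_e}. Finally, the equivalence with an absolute weight (for limits, and dually for colimits) is \Cref{cauchy_abs}, which states precisely that $F$ is an absolute weight if and only if it is part of a Cauchy point $(F,P,c,i)$.

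With these four links in hand, the proof is essentially a one-paragraph argument: $(F,P,c,i)$ corresponds to an idempotent up to \eqref{iso_K} by \Cref{c_e}; to a retract of a representable by \Cref{cauchy_retract}; and to an absolute weight by \Cref{cauchy_abs}. In each case I would point out that the correspondence is realized by functors or explicit constructions that are inverse to one another up to natural isomorphism — for the functor case by the equivalence $\cat{K(C)}\simeq\overline{\cat{C}}$, for the retract case by the fact (used in \Cref{cauchy_retract}) that $\mathrm{Inv}_R\cong F$ recovers the original functor, and for the absolute-weight case because \Cref{cauchy_abs} is derived from \Cref{pushforward} with $D$ the identity, so the passage from $F$ back to a Cauchy point $(F,P,c,i)$ returns the same $F$. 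Hence all four descriptions pick out the same object of $\overline{\cat{C}}$ up to isomorphism, and also carry the same morphisms, since \Cref{c_morph_nat} identifies morphisms of Cauchy points with natural transformations $P_1\Rightarrow P_2$ (equivalently $F_2\Rightarrow F_1$), matching the morphisms of $\cat{K(C)}$ under $\Phi$.

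I do not expect a genuine obstacle here, since every implication is already proved; the only thing requiring a little care is making the phrase ``specified up to isomorphism'' precise, i.e.\ being explicit that we are claiming an equivalence of the relevant (large) categories of such data rather than a mere bijection on objects. The mild subtlety — and the one place I would slow down — is the size issue flagged in the footnotes of \Cref{abs_to_retract} and \Cref{cauchy_abs}: if $\cat{C}$ is not small one must interpret ``set functor'' as ``small functor'' (a small colimit of representables) throughout, so that $\sfuncat{\cat{C}}^\op$ may be replaced by the locally small category of small functors; with that convention the retract-of-representable characterization and the absolute-weight characterization remain valid verbatim, and the stated fourfold equivalence holds as a corollary of \Cref{c_e}, \Cref{cauchy_retract}, and \Cref{cauchy_abs}.
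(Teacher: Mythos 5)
Your proposal matches the paper's intent exactly: this theorem is stated as a summary with no separate proof, and is meant to follow directly from \Cref{c_e}, \Cref{cauchy_retract}, and \Cref{cauchy_abs}, which are precisely the three results you cite. Your additional care about the "up to isomorphism" phrasing and the small-functor size caveat is sound and consistent with the paper's footnotes.
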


\begin{theorem}
	For a Cauchy point, the following conditions are equivalent:
	\begin{itemize}
		\item It is already in $\cat{C}$;
		\item Any (hence all) of the corresponding idempotents has a splitting;
		\item The corresponding set functor and/or presheaf is representable;
		\item The (absolute) limit or colimit of the identity diagram weighted by the corresponding functor or presheaf exists.
	\end{itemize}
\end{theorem}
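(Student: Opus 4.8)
The plan is to treat this theorem as a \emph{roundup}: each of the four conditions has already been shown, in earlier sections, to be equivalent to $F$ (or $P$) being representable, so the proof amounts to assembling those equivalences and filling in the one piece that was not yet spelled out. First I would record the routine identifications. The equivalence of ``already in $\cat{C}$'' with ``$F$ representable'' with ``$P$ representable'' is precisely \Cref{thm_cauchy} (conditions \ref{F_R}, \ref{P_R}, \ref{I_eq}), and the same theorem, together with \Cref{FP_repr}, explains the ``and/or'' in the statement: $F$ is representable if and only if $P$ is, with the same representing object. The equivalence with ``the corresponding idempotent splits'' I would obtain from the chain \Cref{c_e} (the equivalence $\Phi:\cat{K(C)}\funto\overline{\cat{C}}$, so every Cauchy point is isomorphic to $\Phi(X,e)$ for some idempotent $(X,e)$) followed by \Cref{repr_split} ($\Phi(X,e)$ is already in $\cat{C}$ iff $e$ splits); the clause ``any (hence all)'' is then justified by the proposition just after \eqref{iso_K}, that isomorphic idempotents in $\cat{K(C)}$ split together, so the splitting condition depends only on the Cauchy point and not on the chosen representative.

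The genuinely new step is the equivalence of representability with the fourth condition, existence of the $F$-weighted limit (resp.\ $P$-weighted colimit) of the identity diagram $\id_\cat{C}:\cat{C}\funto\cat{C}$. For one direction I would simply invoke Yoneda reduction (\Cref{ninja}) with $D=\id_\cat{C}$: if $F\cong\cat{C}(R,-)$ then $\lim_{A\in\cat{C}}\langle\cat{C}(R,A),A\rangle\cong R$, so the weighted limit exists, and it is automatically absolute because $F$ is then an absolute weight (\Cref{cauchy_abs})---this is exactly why ``(absolute)'' appears only parenthetically. For the converse I would argue as follows. Suppose $R$ is the $F$-weighted limit of $\id_\cat{C}$ with universal cone $\eta:F\Rightarrow\cat{C}(R,-)$, and, using \Cref{cauchy_retract}, write $F$ as a retract of $\cat{C}(X,-)$ via $\iota:F\Rightarrow\cat{C}(X,-)$ and $\pi:\cat{C}(X,-)\Rightarrow F$ with $\pi\circ\iota=\id_F$. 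By \Cref{wcone_is_nat}, $\iota$ is itself an $F$-weighted cone over $\id_\cat{C}$ with tip $X$, so the universal property of $R$ yields a unique $u:X\to R$ with $\iota=\cat{C}(u,-)\circ\eta$; set $\rho:=\pi\circ\cat{C}(u,-):\cat{C}(R,-)\Rightarrow F$. Then $\rho\circ\eta=\pi\circ\iota=\id_F$, while $\eta\circ\rho$ is precomposition by $u\circ v$, where $\cat{C}(v,-)=\eta\circ\pi$ for the arrow $v:R\to X$ supplied by the Yoneda lemma. Feeding $\eta$ through the two sides of $\eta\circ\rho\circ\eta$ gives $\cat{C}(u\circ v,-)\circ\eta=\eta$, so by uniqueness in the universal property of $R$ we get $u\circ v=\id_R$; hence $\eta\circ\rho=\id_{\cat{C}(R,-)}$, $\eta$ is a natural isomorphism, and $F$ is representable. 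The dual argument handles $P$ and the $P$-weighted colimit of $\id_\cat{C}$.

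The main obstacle, and the place where care is genuinely needed, is this last converse: representability does \emph{not} follow merely from the existence of the $F$-weighted limit of $\id_\cat{C}$ for an arbitrary set functor $F$, and the argument must use that $F$ is a retract of a representable (that is, part of a Cauchy point) in order to manufacture the inverse $\rho$ of the universal cone. An alternative route to the same converse, which I would mention but not carry out in full, is through \Cref{further_ext} applied with $\cat{J}=\cat{C}$ and $D=\id_\cat{C}$: existence of the weighted limit forces the induced functor $\cat{C}^{+F}\funto\cat{C}$ to extend to a retraction $\cat{C'}\funto\cat{C}$ of the inclusion, and from this retraction one extracts the same natural isomorphism. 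Either way, the nontrivial content is the bookkeeping that promotes a one-sided retraction of the weighted cone into a two-sided natural inverse; everything else is citation of \Cref{thm_cauchy}, \Cref{FP_repr}, \Cref{c_e}, \Cref{repr_split}, \Cref{ninja}, \Cref{cauchy_abs}, and \Cref{cauchy_retract}.
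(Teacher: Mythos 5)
Your proposal is correct. The paper states this theorem as an unproved summary at the end of \Cref{cauchy}, so the intended "proof" is exactly the kind of assembly you carry out: the first three bullets are \Cref{thm_cauchy}, \Cref{FP_repr}, \Cref{c_e}, \Cref{repr_split}, and the unnamed proposition after \eqref{iso_K}, and you cite all of these in the right places. The one point where a genuine argument is needed is the fourth bullet, and there your route differs from the one the paper's machinery suggests. The paper would most naturally argue: by \Cref{further_ext} (with $\cat{J}=\cat{C}$, $D=\id$) the $F$-weighted limit of the identity, if it exists, is absolute, hence preserved by the Yoneda embedding; since by \Cref{wlim_repr} the $F$-weighted limit of $\Yon$ is $F$ itself, uniqueness of weighted limits forces $F\cong\cat{C}(R,-)$. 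Your argument instead manufactures the inverse of the universal cone $\eta$ by hand from the retraction $\pi\circ\iota=\id_F$ supplied by \Cref{cauchy_retract}, using the Yoneda lemma and the uniqueness clause of the universal property to get $u\circ v=\id_R$. This is more elementary (it avoids \Cref{further_ext} and the discussion of absoluteness entirely) at the cost of being less conceptual; both are valid, and your explicit warning that the converse fails for an arbitrary set functor $F$ --- so that the retract-of-representable structure is genuinely needed --- is a worthwhile observation that the paper leaves implicit. The forward direction via Yoneda reduction (\Cref{ninja}) matches the computation in the proof of \Cref{cauchy_limits}.
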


\begin{theorem}
 	A category $\cat{C}$ is Cauchy complete if any of the following equivalent conditions hold:
 	\begin{itemize}
 		\item Every Cauchy point is already in $\cat{C}$;
 		\item Every idempotent has a splitting;
 		\item Every set functor or presheaf which is a retract of a representable one is itself representable;
 		\item All absolute limits and/or colimits exist.
 	\end{itemize}
 \end{theorem}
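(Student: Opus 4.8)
The plan is to observe that this theorem is essentially a bookkeeping consequence of the results already established, so the proof consists of threading together the equivalences proved in the preceding subsections. The first condition is the definition of Cauchy completeness given after \Cref{thm_cauchy}, so there is nothing to prove there; the task is to show that each of the remaining three conditions is equivalent to it.

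First I would dispatch the equivalence with ``every idempotent has a splitting''. This is precisely \Cref{comp_split}, which itself rests on the equivalence of categories $\cat{K(C)}\simeq\overline{\cat{C}}$ of \Cref{c_e} together with \Cref{repr_split}: an idempotent splits if and only if the Cauchy point it determines is already in $\cat{C}$, and the equivalence of categories makes this correspondence a bijection on isomorphism classes, so ``all idempotents split'' matches ``all Cauchy points are already in $\cat{C}$'' exactly.

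Next I would treat the equivalence with ``every set functor or presheaf which is a retract of a representable one is representable''. By \Cref{cauchy_retract}, a Cauchy point of $\cat{C}$ is the same data, up to isomorphism, as a retract of a representable set functor (or, dually, presheaf); and by \Cref{thm_cauchy}, being already in $\cat{C}$ is equivalent to the representability of the functor (or presheaf) in question. So ``every Cauchy point is already in $\cat{C}$'' translates verbatim into ``every retract of a representable is representable''. Finally, the equivalence with ``all absolute limits and/or colimits exist'' is \Cref{cauchy_limits}, using in addition \Cref{cauchy_abs}, which identifies absolute weights with the weights arising from Cauchy points, so that closure under absolute (co)limits is literally closure under the relevant Cauchy points.

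I do not expect a genuine obstacle here, since every implication is already in hand. The one point that deserves care --- rather than difficulty --- is to make sure all the matchings between isomorphism classes are bijective and not merely surjective, and that the ``and/or'' in the last bullet is legitimate: bijectivity is guaranteed by the fact that the three embeddings involved, namely $\cat{C}\hookrightarrow\overline{\cat{C}}$, $\cat{C}\hookrightarrow\cat{K(C)}$ and the Yoneda embedding, are all fully faithful, while the equivalence between the limit and colimit versions is the content of \Cref{cauchy_limits}. With these remarks assembled, the four conditions form a single cycle of equivalences.
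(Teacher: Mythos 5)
Your proposal is correct and matches the paper's treatment: the theorem is stated there as a summary with no separate proof, precisely because it is the conjunction of \Cref{comp_split}, \Cref{cauchy_retract} together with \Cref{thm_cauchy}, and \Cref{cauchy_limits} (with \Cref{cauchy_abs}), which are exactly the results you cite. Nothing further is needed.
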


\section{Further topics}\label{further}

Our idea of adding virtual arrows to diagrams can also be used to study more advanced topics, and in those settings, sometimes, it recovers known constructions.
We will briefly look at two of them, profunctors and the Day convolution. For more details on them, we refer the reader to the references.

\subsection{Profunctors}\label{profunctors}

\begin{definition}
	Let $\cat{C}$ and $\cat{D}$ be categories. A \newterm{profunctor} $\Phi:\cat{C}\profunto\cat{D}$ is a bifunctor 
	\[
	\begin{tikzcd}
		\cat{D}^\op\funtimes\cat{C} \ar[functor]{r}{\overline{\Phi}} & \cat{Set} .
	\end{tikzcd}
	\]
\end{definition}

Alternative names for profunctors appearing in the literature are \newterm{bimodules}, \newterm{distributors}, \newterm{relators}, and others.

As for set functors and presheaves, given objects $C$ of $\cat{C}$ and $D$ of $\cat{D}$, the set elements of the set $\overline{\Phi}(D,C)$ are things that we can \emph{postcompose with arrows of $\cat{C}$, and precompose with arrows of $\cat{D}$, but not the other way around}. 
It is therefore natural to draw them as ``virtual arrows'' between $D$ and $C$ as follows:
\[
\begin{tikzcd}[row sep=4em, column sep=5em,
	blend group=multiply,
	/tikz/execute at end picture={
		\node [cbox, fit=(C1) (C2), inner sep=5mm] (CC) {};
		\node [catlabel] at (CC.south west) {$\cat{C}$};
		\node [cbox, fit=(D1) (D2), inner sep=5mm] (CD) {};
		\node [catlabel] at (CD.south west) {$\cat{D}$};
	}]
	|[alias=D1]| D_1 \ar{r} & |[alias=D2]| D_2 \\
	|[alias=C1]| C_1 \ar{r} & |[alias=C2]| C_2 
	\ar[virtual, from=D1, to=C1]
	\ar[virtual, from=D1, to=C2]
	\ar[virtual, from=D2, to=C1]
	\ar[virtual, from=D2, to=C2, shift left]
	\ar[virtual, from=D2, to=C2, shift right]
\end{tikzcd}
\]
As we can see, this picture is a generalization of what we did before. Indeed, when either $\cat{C}$ or $\cat{D}$ has a single object and a single arrow, we recover the ``virtual arrows'' of functors and presheaves:
\begin{itemize}
	\item A profunctor $\cat{C}\profunto\cat{1}$ is a functor $\cat{1}^\op\funtimes\cat{C}\cong\cat{C}\funto\cat{Set}$, i.e.\ a set functor on $\cat{C}$. 
	\item A profunctor $\cat{1}\profunto\cat{D}$ is a functor $\cat{C}^\op\funtimes\cat{1}\cong\cat{C}^\op\funto\cat{Set}$, i.e.\ a presheaf on $\cat{C}$.
\end{itemize}

\begin{remark}
	Mind the possible confusion: in a profunctor $\cat{C}\profunto\cat{D}$, i.e.\ \emph{from $\cat{C}$ to $\cat{D}$}, the virtual arrows go \emph{from the objects of $\cat{D}$ to the ones of $\cat{C}$}. This convention, which seems backwards, is motivated by the special case of functors: in a set functor $\cat{C}\funto\cat{Set}$, i.e.\ \emph{from} $\cat{C}$, the virtual arrows go \emph{to} the objects of $\cat{C}$. (And for presheaves they go \emph{from} the objects of $\cat{C}$.)
\end{remark}

As we did for functors, presheaves, and Cauchy points, it is sometimes helpful to ``promote the virtual arrows to real arrows''. The resulting concept is well known: 

\begin{definition}
	The \newterm{collage} of a profunctor $\Phi:\cat{C}\profunto\cat{D}$ is the category where
	\begin{itemize}
		\item The objects are the disjoint union of those of $\cat{C}$ and those of $\cat{D}$, with their identities;
		\item Between two objects of $\cat{C}$, the arrows are those of $\cat{C}$, with their composition;
		\item Between two objects of $\cat{D}$, the arrows are those of $\cat{D}$, with their composition;
		\item Given objects $C$ of $\cat{C}$ and $D$ of $\cat{D}$, the arrows $D\to C$ are the elements of the set $\overline{\Phi}(D,C)$, with precomposition and postcomposition by arrows of $\cat{C}$ and $\cat{D}$ given by functoriality;
		\item There are no arrows in the form $C\to D$.
	\end{itemize}
\end{definition}
Similarly to what happened the categories $\cat{C}^{+F}$, $\cat{C}_{+P}$ and $\cat{C'}$ of \Cref{CplusF}, \Cref{CplusP} and \Cref{defcplus}, the collage of a profunctor is indeed a category thanks to the functoriality of $\Phi$. In particular, associativity of arrows in the form 
\[
\begin{tikzcd}
	C_1 \ar{r}{c} & C_2 \ar[virtual]{r}{\phi} & D_1 \ar{r}{d} & D_2
\end{tikzcd}
\]
is guaranteed by \emph{bi}functoriality, in the sense of \Cref{def_bifunctor} and \eqref{interchange}.
	
The ``virtual arrows'' connecting $\cat{C}$ and $\cat{D}$ are sometimes called \newterm{heteromorphisms}, to distinguish them from the morphisms (or \emph{homomorphisms}) within $\cat{C}$ and within $\cat{D}$. (Compare for example with the words \emph{homogeneous} and \emph{heterogeneous}.)

Consider now two profunctors as follows. 
\[
\begin{tikzcd}[row sep=4em, column sep=5em,
	blend group=multiply,
	/tikz/execute at end picture={
		\node [cbox, fit=(C1) (C2), inner sep=5mm] (CC) {};
		\node [catlabel] at (CC.south west) {$\cat{C}$};
		\node [cbox, fit=(D1) (D2), inner sep=5mm] (CD) {};
		\node [catlabel] at (CD.south west) {$\cat{D}$};
		\node [cbox, fit=(E1) (E2), inner sep=5mm] (CE) {};
		\node [catlabel] at (CE.south west) {$\cat{E}$};
	}]
	|[alias=E1]| E_1 \ar{r} & |[alias=E2]| E_2 \\
	|[alias=D1]| D_1 \ar{r} & |[alias=D2]| D_2 \\
	|[alias=C1]| C_1 \ar{r} & |[alias=C2]| C_2 
	\ar[virtual, from=D1, to=C1]
	\ar[virtual, from=D1, to=C2]
	\ar[virtual, from=D2, to=C1]
	\ar[virtual, from=D2, to=C2, shift left]
	\ar[virtual, from=D2, to=C2, shift right]
	\ar[virtual, from=E1, to=D1]
	\ar[virtual, from=E1, to=D2]
	\ar[virtual, from=E2, to=D2]
\end{tikzcd}
\]
We can compose them by \emph{counting the paths by means of pairings}, as in \Cref{pairing}:

\begin{definition}
	Let $\Phi:\cat{C}\profunto\cat{D}$ and $\Psi:\cat{D}\profunto\cat{E}$ be profunctors.
	The \emph{composite profunctor} $\Psi\boldsymbol{\circ}\Phi:\cat{C}\profunto\cat{E}$ is given on object, up to isomorphism, by
	\[
	\overline{\Psi\boldsymbol{\circ}\Phi} (E,C) \;\coloneqq\; \langle  \overline{\Phi}(-,C), \overline{\Psi}(E,-) \rangle \;=\; \Coend{D\in\cat{D}} \overline{\Phi}(D,C)\times \overline{\Psi}(E,D) ,
	\]
	and induced on morphisms by the universal property of weighted colimits.
\end{definition}
In other words, we are taking as virtual arrows $E\to C$ all possible paths via $\cat{D}$, 
\[
\begin{tikzcd}[row sep=4em, column sep=5em,
	blend group=multiply,
	/tikz/execute at end picture={
		\node [cbox, fit=(C1) (C2), inner sep=5mm] (CC) {};
		\node [catlabel] at (CC.south west) {$\cat{C}$};
		\node [cbox, fit=(D1) (D2), inner sep=5mm] (CD) {};
		\node [catlabel] at (CD.south west) {$\cat{D}$};
		\node [cbox, fit=(E1) (E2), inner sep=5mm] (CE) {};
		\node [catlabel] at (CE.south west) {$\cat{E}$};
	}]
	|[alias=E1]| E \ar[mgray]{r} & |[alias=E2]| \color{mgray} E_2 \\
	|[alias=D1]| D_1 \ar{r} & |[alias=D2]| D_2 \\
	|[alias=C1]| \color{mgray} C_1 \ar[mgray]{r} & |[alias=C2]| C 
	\ar[virtual, from=D1, to=C1,mgray]
	\ar[virtual, from=D1, to=C2]
	\ar[virtual, from=D2, to=C1,mgray]
	\ar[virtual, from=D2, to=C2, shift left]
	\ar[virtual, from=D2, to=C2, shift right]
	\ar[virtual, from=E1, to=D1]
	\ar[virtual, from=E1, to=D2]
	\ar[virtual, from=E2, to=D2,mgray]
\end{tikzcd}
\]
as usual, counting only once those paths that only differ by associativity.

With this notion of composition, profunctors form \emph{almost} a category: it is a \emph{bi}category, two-dimensional and weak. (Indeed, the pairing only specifies the composite profunctor up to natural isomorphism.) It can be considered a higher analogue of the category of relations, between categories instead of sets.
The identities of this bicategory are given by the hom-functors $\cat{C}(-,-):\cat{C}^\op\funtimes\cat{C}\funto\cat{Set}$, which we can see as profunctors. 
Indeed, by Yoneda reduction (\Cref{ninja}),
\[
\Coend{Y} \cat{C}(Y,X) \times \overline{\Phi}(A,Y) \;\cong\; \overline{\Phi}(A,X) .
\]
and a condition on the other side holds analogously.

\begin{remark}
	One can define adjunctions in a bicategory similarly to how one does in $\cat{Cat}$. This way, a Cauchy point $(F,P,c,i)$ on a category $\cat{C}$ (\Cref{defcauchypt}) is exactly a pair of adjoint profunctors as follows.
	\[
	\begin{tikzcd}[blend mode=multiply]
		\cat{1} \ar[profunctor,bend left]{rr}[inner sep=2mm]{P} & \perp & \cat{C} \ar[profunctor,bend left]{ll}[inner sep=2mm]{F}
	\end{tikzcd}
	\]
	Recall indeed that profunctors $\cat{C}\funto\cat{1}$ are set functors, and profunctors $\cat{1}\funto\cat{C}$ are presheaves.
	Moreover the maps
	\[
	c_{A,B} : PA \times FB \to \cat{C}(A,B)
	\]
	and 
	\[
	i : 1\to \Coend{X} PX\times FX
	\]
	can be seen as the unit $i:\id_\cat{1}\Rightarrow F\boldsymbol{\circ} P$ and counit $c:P\boldsymbol{\circ} F\Rightarrow\id_\cat{C}$ of an adjunction in this profunctor sense. (The conditions \eqref{id_cond} can be seen as the triangle identities.)
\end{remark}

\subsection{String diagrams and Day convolution}

In a monoidal category $(\cat{C},\otimes,I)$ we have, besides the usual notion of composition of morphisms, a notion of \emph{parallel composition}, or \emph{tensor product}. 
Because of that, it is often useful to move from ordinary diagrams to \emph{string diagrams}, where objects are ``wires'' and morphisms are ``boxes''. 
In this document we will orient string diagrams from left to right. 
A morphism $f:X\to Y$ is represented as follows,
\[
\begin{tikzpicture}[scale=0.6]
	\begin{pgfonlayer}{nodelayer}
		\node [style=none] (0) at (-1.5, 0) {};
		\node [style=none] (1) at (1.5, 0) {};
		\node [style=none] (2) at (-2, 0) {$X$};
		\node [style=none] (3) at (2, 0) {$Y$};
		\node [style=medium box] (4) at (0, 0) {$f$};
	\end{pgfonlayer}
	\begin{pgfonlayer}{edgelayer}
		\draw (0.center) to (1.center);
	\end{pgfonlayer}
\end{tikzpicture}
\]
the composition of two morphisms is represented as follows,
\[
\begin{tikzpicture}[scale=0.6]
	\begin{pgfonlayer}{nodelayer}
		\node [style=none] (0) at (-3, 0.05) {};
		\node [style=none] (1) at (3, 0.05) {};
		\node [style=none] (3) at (-3.5, 0.05) {$X$};
		\node [style=none] (4) at (0, 0.55) {$Y$};
		\node [style=medium box] (5) at (-1.5, 0.05) {$f$};
		\node [style=none] (6) at (3.5, 0.05) {$Z$};
		\node [style=medium box] (7) at (1.5, 0.05) {$\vphantom{f}g$};
	\end{pgfonlayer}
	\begin{pgfonlayer}{edgelayer}
		\draw [style=none] (0.center) to (1.center);
	\end{pgfonlayer}
\end{tikzpicture}
\]
and the tensor product as follows.
\[
\begin{tikzpicture}[scale=0.6]
	\begin{pgfonlayer}{nodelayer}
		\node [style=none] (7) at (-2, 0.875) {};
		\node [style=none] (10) at (2, 0.875) {};
		\node [style=none] (11) at (-2.5, 0.875) {$X$};
		\node [style=none] (12) at (2.5, 0.875) {$Y$};
		\node [style=medium box] (13) at (0, 0.875) {$f$};
		\node [style=none] (14) at (-2, -0.875) {};
		\node [style=none] (15) at (2, -0.875) {};
		\node [style=none] (16) at (-2.5, -0.875) {$A$};
		\node [style=none] (17) at (2.5, -0.875) {$B$};
		\node [style=medium box] (18) at (0, -0.875) {$h$};
	\end{pgfonlayer}
	\begin{pgfonlayer}{edgelayer}
		\draw (7.center) to (10.center);
		\draw (14.center) to (15.center);
	\end{pgfonlayer}
\end{tikzpicture}
\]

Now given a functor $F:\cat{C}\funto\cat{Set}$, we can represent the elements $f\in FX$, i.e.\ the ``virtual arrows to $X$'', as dashed morphisms from an ``extra wire'' to $X$:
\[
\begin{tikzpicture}[scale=0.6,baseline=-0.25em]
	\begin{pgfonlayer}{nodelayer}
		\node [style=none] (1) at (2, 0) {};
		\node [style=none] (2) at (2.5, 0) {$X$};
		\node [style=none] (3) at (-2, 0) {};
		\node [style=ds medium box] (4) at (0.25, 0) {$f$};
	\end{pgfonlayer}
	\begin{pgfonlayer}{edgelayer}
		\draw [style=ds] (3.center) to (4);
		\draw (4) to (1.center);
	\end{pgfonlayer}
\end{tikzpicture}
\qquad\mbox{or}\qquad
\begin{tikzpicture}[scale=0.6,baseline=-0.25em]
	\begin{pgfonlayer}{nodelayer}
		\node [style=ds horiz state] (0) at (0, 0) {$f$};
		\node [style=none] (1) at (2, 0) {};
		\node [style=none] (2) at (2.5, 0) {$X$};
		\node [style=none] (3) at (-2, 0) {};
	\end{pgfonlayer}
	\begin{pgfonlayer}{edgelayer}
		\draw (0) to (1.center);
		\draw [style=ds] (3.center) to (0);
	\end{pgfonlayer}
\end{tikzpicture}
\]
We could label the extra wire, in case we have several functors, and we can also not display the extra wire,
\[
\begin{tikzpicture}[scale=0.6,baseline=-0.25em]
	\begin{pgfonlayer}{nodelayer}
		\node [style=none] (1) at (2, 0) {};
		\node [style=none] (2) at (2.5, 0) {$X$};
		\node [style=ds medium box] (4) at (0, 0) {$f$};
	\end{pgfonlayer}
	\begin{pgfonlayer}{edgelayer}
		\draw (4) to (1.center);
	\end{pgfonlayer}
\end{tikzpicture}
\qquad\mbox{or}\qquad
\begin{tikzpicture}[scale=0.6,baseline=-0.25em]
	\begin{pgfonlayer}{nodelayer}
		\node [style=ds horiz state] (0) at (0, 0) {$f$};
		\node [style=none] (1) at (2, 0) {};
		\node [style=none] (2) at (2.5, 0) {$X$};
	\end{pgfonlayer}
	\begin{pgfonlayer}{edgelayer}
		\draw (0) to (1.center);
	\end{pgfonlayer}
\end{tikzpicture}
\]
if one keeps in mind not to confuse these with morphisms from the monoidal unit, which are usually represented similarly (but in solid lines).
As usual, the functorial action of $F$ on morphism is a ``virtual postcomposition'':
\[
\begin{tikzpicture}[scale=0.6]
	\begin{pgfonlayer}{nodelayer}
		\node [style=none] (1) at (4, 0) {};
		\node [style=none] (2) at (4.5, 0) {$X$};
		\node [style=ds horiz state] (4) at (0, 0) {$f$};
		\node [style=medium box] (5) at (2.25, 0) {$g$};
	\end{pgfonlayer}
	\begin{pgfonlayer}{edgelayer}
		\draw (4) to (1.center);
	\end{pgfonlayer}
\end{tikzpicture}
\]

Similarly, and dually, given a presheaf $P:\cat{C}^\op\funto\cat{Set}$ we can represent the elements $p\in PX$, i.e.\ the ``virtual arrows from $X$'' as dashed morphisms from $X$ to an ``extra wire'',
\[
\begin{tikzpicture}[scale=0.6]
	\begin{pgfonlayer}{nodelayer}
		\node [style=none] (0) at (-2.5, 0) {$X$};
		\node [style=none] (1) at (-2, 0) {};
		\node [style=ds horiz effect] (2) at (-0.25, 0) {$\vphantom{f}p$};
		\node [style=none] (3) at (2, 0) {};
	\end{pgfonlayer}
	\begin{pgfonlayer}{edgelayer}
		\draw (1.center) to (2);
		\draw [style=ds] (2) to (3.center);
	\end{pgfonlayer}
\end{tikzpicture}
\]
or once again, omitting the wire:
\[
\begin{tikzpicture}[scale=0.6]
	\begin{pgfonlayer}{nodelayer}
		\node [style=none] (0) at (-1.5, 0) {$X$};
		\node [style=none] (1) at (-1, 0) {};
		\node [style=ds horiz effect] (2) at (1, 0) {$\vphantom{f}p$};
	\end{pgfonlayer}
	\begin{pgfonlayer}{edgelayer}
		\draw (1.center) to (2);
	\end{pgfonlayer}
\end{tikzpicture}
\]
The functorial action is ``virtual precomposition'':
\[
\begin{tikzpicture}[scale=0.6]
	\begin{pgfonlayer}{nodelayer}
		\node [style=none] (0) at (-2.5, 0) {$W$};
		\node [style=none] (1) at (-2, 0) {};
		\node [style=ds horiz effect] (2) at (2, 0) {$\vphantom{f}p$};
		\node [style=medium box] (3) at (-0.25, 0) {$g$};
	\end{pgfonlayer}
	\begin{pgfonlayer}{edgelayer}
		\draw (1.center) to (2);
	\end{pgfonlayer}
\end{tikzpicture}
\]

\paragraph{Cauchy completion.}
Recall from \Cref{cauchy} that Cauchy points can be seen as particular ``extra objects'' equipped with ``virtual maps'' $\iota$ and $\pi$ forming a ``virtual retraction''. We can draw them for example as follows,
\[
\begin{tikzpicture}[scale=0.6,baseline=-0.25em]
	\begin{pgfonlayer}{nodelayer}
		\node [style=ds horiz state] (1) at (0.25, 0) {$\vphantom{f}\iota$};
		\node [style=none] (6) at (2.5, 0) {$X$};
		\node [style=none] (7) at (-2, 0) {};
		\node [style=none] (8) at (2, 0) {};
	\end{pgfonlayer}
	\begin{pgfonlayer}{edgelayer}
		\draw [style=ds] (1) to (7.center);
		\draw (8.center) to (1);
	\end{pgfonlayer}
\end{tikzpicture}
\qquad\mbox{and}\qquad
\begin{tikzpicture}[scale=0.6,baseline=-0.25em]
	\begin{pgfonlayer}{nodelayer}
		\node [style=none] (0) at (-2, 0) {};
		\node [style=none] (1) at (2, 0) {};
		\node [style=ds horiz effect] (3) at (-0.25, 0) {$\vphantom{f}\pi$};
		\node [style=none] (4) at (-2.5, 0) {$X$};
	\end{pgfonlayer}
	\begin{pgfonlayer}{edgelayer}
		\draw (0.center) to (3);
		\draw [style=ds] (1.center) to (3);
	\end{pgfonlayer}
\end{tikzpicture}
\]
with the condition that
\[
\begin{tikzpicture}[scale=0.6]
	\begin{pgfonlayer}{nodelayer}
		\node [style=ds horiz effect] (0) at (-2.25, 0) {$\vphantom{f}\pi$};
		\node [style=ds horiz state] (1) at (-5, 0) {$\vphantom{f}\iota$};
		\node [style=none] (3) at (0, 0) {};
		\node [style=none] (4) at (0, 0) {};
		\node [style=none] (6) at (-3.5, 0.5) {$X$};
		\node [style=none] (7) at (-7, 0) {};
		\node [style=none] (8) at (1.5, 0) {$=$};
		\node [style=none] (9) at (7.25, 0) {};
		\node [style=none] (10) at (3.25, 0) {};
	\end{pgfonlayer}
	\begin{pgfonlayer}{edgelayer}
		\draw (1) to (0);
		\draw [style=ds] (4.center) to (0);
		\draw [style=ds] (1) to (7.center);
		\draw [style=ds] (10.center) to (9.center);
	\end{pgfonlayer}
\end{tikzpicture}
\]
so that the resulting morphism
\[
\begin{tikzpicture}[scale=0.6]
	\begin{pgfonlayer}{nodelayer}
		\node [style=ds horiz effect] (0) at (-1.5, 0) {$\vphantom{f}\pi$};
		\node [style=ds horiz state] (1) at (1.5, 0) {$\vphantom{f}\iota$};
		\node [style=none] (2) at (-3.5, 0) {};
		\node [style=none] (3) at (3.5, 0) {};
		\node [style=none] (4) at (3.5, 0) {};
		\node [style=none] (5) at (-4, 0) {$X$};
		\node [style=none] (6) at (4, 0) {$X$};
	\end{pgfonlayer}
	\begin{pgfonlayer}{edgelayer}
		\draw (2.center) to (0);
		\draw (4.center) to (1);
		\draw [style=ds] (0) to (1);
	\end{pgfonlayer}
\end{tikzpicture}
\]
is idempotent.
(Again we can possibly label the dashed wire in case we are considering more than one point.) 

Diagrams of this kind (with slightly different notation) are already widely used in categorical quantum mechanics, where idempotents play a major role, for example, in modeling measurements and decoherence. See for example \cite[Chapters 6 and 7]{dodo} and references therein.

\paragraph{Yoneda embedding and Day convolution.}

Just as we did with non-string diagrams, we can visualize morphisms between presheaves (natural transformations) as virtual morphisms between (labeled) virtual objects:
\[
\begin{tikzpicture}[scale=0.6]
	\begin{pgfonlayer}{nodelayer}
		\node [style=none] (0) at (-2, 0) {};
		\node [style=none] (1) at (2, 0) {};
		\node [style=ds medium box] (2) at (0, 0) {$\alpha$};
		\node [style=none] (3) at (-2.5, 0) {$P$};
		\node [style=none] (4) at (2.5, 0) {$Q$};
	\end{pgfonlayer}
	\begin{pgfonlayer}{edgelayer}
		\draw [style=ds] (0.center) to (1.center);
	\end{pgfonlayer}
\end{tikzpicture}
\]
Similarly, morphisms of functors $\alpha:F\Rightarrow G$ are \emph{opposite} morphisms, once again ($\cat{C}$ is embedded in $\sfuncat{\cat{C}}^\op$).
\[
\begin{tikzpicture}[scale=0.6]
	\begin{pgfonlayer}{nodelayer}
		\node [style=none] (0) at (-2, 0) {};
		\node [style=none] (1) at (2, 0) {};
		\node [style=ds medium box] (2) at (0, 0) {$\alpha^\op$};
		\node [style=none] (3) at (-2.5, 0) {$F$};
		\node [style=none] (4) at (2.5, 0) {$G$};
	\end{pgfonlayer}
	\begin{pgfonlayer}{edgelayer}
		\draw [style=ds] (0.center) to (1.center);
	\end{pgfonlayer}
\end{tikzpicture}
\]
This way we can represent via string diagrams the entire categories $\sfuncat{\cat{C}}^\op$ and $\sfuncat{\cat{C}^\op}$, as extensions of the category $\cat{C}$. 

At this point, a question might arise: \emph{Do we have the parallel composition for virtual arrows?}
The answer is affirmative, and the canonical way to do so is via \emph{Day convolution}. (For this to work, we need small functors or presheaves.)

\begin{definition}\label{def_day}
	Let $(\cat{C},\otimes,I)$ be a monoidal category, and let $F,G:\cat{C}\funto\cat{Set}$ be (small) set functors. 
	The \newterm{Day convolution} of $F$ and $G$ is the functor $F\otimes G$ given on objects up to isomorphism by the following pairing:
	\[
	(F\otimes G) (Z) \;\coloneqq\; \langle \cat{C}(-\otimes -, Z), {F-}\times{G-} \rangle \;\cong\; \Coend{X,Y\in\cat{C}} \cat{C}(X\otimes Y, Z)\times FX\times GY
	\]
	On morphisms it is induced by the universal property.
	
	Dually, given (small) presheaves $P$ and $Q$, the Day convolution $P\otimes Q$ is the presheaf defined on objects by the following pairing:
	\[
	(P\otimes Q) (X) \;\coloneqq\; \langle {P-}\times{Q-}, \cat{C}(Z,-\otimes-) \rangle \;\cong\; \Coend{Y,Z\in\cat{C}} PY\times QZ \times \cat{C}(X,Y\otimes Z)
	\]
\end{definition}

Let's now interpret this. As usual, let's focus on set functors. 
The idea is that we want to define a product $F\otimes G$. In string diagrams, it should look like the parallel composition of the following virtual objects:
\[
\begin{tikzpicture}[scale=0.6]
	\begin{pgfonlayer}{nodelayer}
		\node [style=none] (0) at (-1.5, 1) {};
		\node [style=none] (1) at (1.5, 1) {};
		\node [style=none] (2) at (-1.5, -1) {};
		\node [style=none] (3) at (1.5, -1) {};
		\node [style=none] (4) at (2, 1) {$F$};
		\node [style=none] (5) at (2, -1) {$G$};
		\node [style=none] (6) at (-2, 1) {$F$};
		\node [style=none] (7) at (-2, -1) {$G$};
	\end{pgfonlayer}
	\begin{pgfonlayer}{edgelayer}
		\draw [style=ds] (0.center) to (1.center);
		\draw [style=ds] (3.center) to (2.center);
	\end{pgfonlayer}
\end{tikzpicture}
\]
Now $F\otimes G$ is a \emph{set functor}. What are its elements? Given an object $Z$, what are the elements of the set $(F\otimes G)(X)$? 
Or, equivalently (by the Yoneda lemma), what are the natural transformations $\Yon(Z)\Rightarrow F\otimes G$, i.e.\ the morphisms $F\otimes G\to\Yon(Z)$ of $\sfuncat{\cat{C}}^\op$?
\[
\begin{tikzpicture}[scale=0.6]
	\begin{pgfonlayer}{nodelayer}
		\node [style=none] (0) at (-1.5, 1) {};
		\node [style=none] (1) at (0, 1) {};
		\node [style=none] (2) at (-1.5, -1) {};
		\node [style=none] (3) at (0, -1) {};
		\node [style=none] (6) at (-2, 1) {$F$};
		\node [style=none] (7) at (-2, -1) {$G$};
		\node [style=none] (8) at (2, 0) {$Z$};
		\node [style=none] (9) at (0, 0) {};
		\node [style=none] (10) at (1.5, 0) {};
		\node [style=ds medium box,minimum height=16mm] (11) at (0, 0) {$?$};
	\end{pgfonlayer}
	\begin{pgfonlayer}{edgelayer}
		\draw [style=ds] (0.center) to (1.center);
		\draw [style=ds] (3.center) to (2.center);
		\draw (10.center) to (9.center);
	\end{pgfonlayer}
\end{tikzpicture}
\]
The pairing in \Cref{def_day} has, as elements, equivalence classes of tuples $(X,Y,h,f,g)$ with $h:X\otimes Y\to Z$, $f\in FX$ and $g\in GY$,
\[
\begin{tikzpicture}[scale=0.6,baseline=-0.25em]
	\begin{pgfonlayer}{nodelayer}
		\node [style=ds horiz state] (0) at (0.25, 0) {$f$};
		\node [style=none] (1) at (-1.25, 0) {};
		\node [style=none] (2) at (1.5, 0) {};
		\node [style=none] (3) at (2, 0) {$X$};
		\node [style=none] (4) at (-1.75, 0) {$F$};
	\end{pgfonlayer}
	\begin{pgfonlayer}{edgelayer}
		\draw [style=ds] (1.center) to (0);
		\draw (0) to (2.center);
	\end{pgfonlayer}
\end{tikzpicture}
\qquad
\begin{tikzpicture}[scale=0.6,baseline=-0.25em]
	\begin{pgfonlayer}{nodelayer}
		\node [style=ds horiz state] (0) at (0.25, 0) {$\vphantom{f}g$};
		\node [style=none] (1) at (-1.25, 0) {};
		\node [style=none] (2) at (1.5, 0) {};
		\node [style=none] (3) at (2, 0) {$Y$};
		\node [style=none] (4) at (-1.75, 0) {$G$};
	\end{pgfonlayer}
	\begin{pgfonlayer}{edgelayer}
		\draw [style=ds] (1.center) to (0);
		\draw (0) to (2.center);
	\end{pgfonlayer}
\end{tikzpicture}
\qquad
\begin{tikzpicture}[scale=0.6,baseline=-0.25em]
	\begin{pgfonlayer}{nodelayer}
		\node [style=none] (1) at (0, 1) {};
		\node [style=none] (3) at (0, -1) {};
		\node [style=none] (8) at (2, 0) {$Z$};
		\node [style=none] (9) at (0, 0) {};
		\node [style=none] (10) at (1.5, 0) {};
		\node [style=medium box,minimum height=16mm] (11) at (0, 0) {$h$};
		\node [style=none] (12) at (-1.5, 1) {};
		\node [style=none] (13) at (-1.5, -1) {};
		\node [style=none] (14) at (-2, 1) {$X$};
		\node [style=none] (15) at (-2, -1) {$Y$};
	\end{pgfonlayer}
	\begin{pgfonlayer}{edgelayer}
		\draw (10.center) to (9.center);
		\draw (12.center) to (1.center);
		\draw (3.center) to (13.center);
	\end{pgfonlayer}
\end{tikzpicture}
\]
which we can imagine arranged as follows:
\[
\begin{tikzpicture}[scale=0.6]
	\begin{pgfonlayer}{nodelayer}
		\node [style=ds horiz state] (0) at (-1, 1) {$f$};
		\node [style=none] (1) at (-2.5, 1) {};
		\node [style=none] (2) at (1, 1) {};
		\node [style=none] (4) at (-3, 1) {$F$};
		\node [style=medium box,minimum height=16mm] (5) at (1, 0) {$h$};
		\node [style=ds horiz state] (6) at (-1, -1) {$\vphantom{f}g$};
		\node [style=none] (7) at (-2.5, -1) {};
		\node [style=none] (8) at (1, -1) {};
		\node [style=none] (9) at (-3, -1) {$G$};
		\node [style=none] (10) at (2.5, 0) {};
		\node [style=none] (11) at (3, 0) {$Z$};
	\end{pgfonlayer}
	\begin{pgfonlayer}{edgelayer}
		\draw [style=ds] (1.center) to (0);
		\draw (0) to (2.center);
		\draw [style=ds] (7.center) to (6);
		\draw (6) to (8.center);
		\draw (10.center) to (5);
	\end{pgfonlayer}
\end{tikzpicture}
\]
Moreover, the equivalence relation identifies tuples that differ by any morphisms $a:X\to X'$ and $b:Y\to Y'$ as follows:
\[
\left(
\begin{tikzpicture}[scale=0.6,baseline=-0.25em]
	\begin{pgfonlayer}{nodelayer}
		\node [style=ds horiz state] (0) at (0.25, 0) {$f$};
		\node [style=none] (1) at (-1.25, 0) {};
		\node [style=none] (2) at (1.5, 0) {};
		\node [style=none] (3) at (2, 0) {$X$};
		\node [style=none] (4) at (-1.75, 0) {$F$};
	\end{pgfonlayer}
	\begin{pgfonlayer}{edgelayer}
		\draw [style=ds] (1.center) to (0);
		\draw (0) to (2.center);
	\end{pgfonlayer}
\end{tikzpicture}
\qquad\qquad
\begin{tikzpicture}[scale=0.6,baseline=-0.25em]
	\begin{pgfonlayer}{nodelayer}
		\node [style=ds horiz state] (0) at (0.25, 0) {$\vphantom{f}g$};
		\node [style=none] (1) at (-1.25, 0) {};
		\node [style=none] (2) at (1.5, 0) {};
		\node [style=none] (3) at (2, 0) {$Y$};
		\node [style=none] (4) at (-1.75, 0) {$G$};
	\end{pgfonlayer}
	\begin{pgfonlayer}{edgelayer}
		\draw [style=ds] (1.center) to (0);
		\draw (0) to (2.center);
	\end{pgfonlayer}
\end{tikzpicture}
\qquad\qquad
\begin{tikzpicture}[scale=0.6,baseline=-0.25em]
	\begin{pgfonlayer}{nodelayer}
		\node [style=none] (2) at (1, 1) {};
		\node [style=medium box,minimum height=16mm] (4) at (1.25, 0) {$h$};
		\node [style=none] (9) at (2.75, 0) {};
		\node [style=none] (10) at (3.25, 0) {$Z$};
		\node [style=none] (11) at (-2.5, 1) {};
		\node [style=none] (12) at (-2.5, -1) {};
		\node [style=none] (13) at (1, -1) {};
		\node [style=medium box] (14) at (-1, 1) {$a$};
		\node [style=medium box] (15) at (-1, -1) {$b$};
		\node [style=none] (16) at (-3, 1) {$X$};
		\node [style=none] (17) at (-3, -1) {$Y$};
	\end{pgfonlayer}
	\begin{pgfonlayer}{edgelayer}
		\draw (9.center) to (4);
		\draw (12.center) to (13.center);
		\draw (11.center) to (2.center);
	\end{pgfonlayer}
\end{tikzpicture}
\right)
\]
\[
\sim
\]
\[
\left(
\begin{tikzpicture}[scale=0.6,baseline=-0.25em]
	\begin{pgfonlayer}{nodelayer}
		\node [style=ds horiz state] (0) at (-1, 0) {$f$};
		\node [style=none] (1) at (-2.5, 0) {};
		\node [style=none] (2) at (2.5, 0) {};
		\node [style=none] (4) at (-3, 0) {$F$};
		\node [style=medium box] (12) at (1, 0) {$a$};
		\node [style=none] (13) at (3, 0) {$X'$};
	\end{pgfonlayer}
	\begin{pgfonlayer}{edgelayer}
		\draw [style=ds] (1.center) to (0);
		\draw (0) to (2.center);
	\end{pgfonlayer}
\end{tikzpicture}
\qquad
\begin{tikzpicture}[scale=0.6,baseline=-0.25em]
	\begin{pgfonlayer}{nodelayer}
		\node [style=ds horiz state] (0) at (-1, 0) {$\vphantom{f}g$};
		\node [style=none] (1) at (-2.5, 0) {};
		\node [style=none] (2) at (2.5, 0) {};
		\node [style=none] (4) at (-3, 0) {$G$};
		\node [style=medium box] (12) at (1, 0) {$b$};
		\node [style=none] (13) at (3, 0) {$Y'$};
	\end{pgfonlayer}
	\begin{pgfonlayer}{edgelayer}
		\draw [style=ds] (1.center) to (0);
		\draw (0) to (2.center);
	\end{pgfonlayer}
\end{tikzpicture}
\qquad
\begin{tikzpicture}[scale=0.6,baseline=-0.25em]
	\begin{pgfonlayer}{nodelayer}
		\node [style=none] (1) at (0, 1) {};
		\node [style=none] (3) at (0, -1) {};
		\node [style=none] (8) at (2, 0) {$Z$};
		\node [style=none] (9) at (0, 0) {};
		\node [style=none] (10) at (1.5, 0) {};
		\node [style=medium box,minimum height=16mm] (11) at (0, 0) {$h$};
		\node [style=none] (12) at (-1.5, 1) {};
		\node [style=none] (13) at (-1.5, -1) {};
		\node [style=none] (14) at (-2, 1) {$X'$};
		\node [style=none] (15) at (-2, -1) {$Y'$};
	\end{pgfonlayer}
	\begin{pgfonlayer}{edgelayer}
		\draw (10.center) to (9.center);
		\draw (12.center) to (1.center);
		\draw (3.center) to (13.center);
	\end{pgfonlayer}
\end{tikzpicture}
\right)
\]
Indeed, by the Yoneda lemma, we can view $f$ and $g$ as morphisms of $\sfuncat{\cat{C}}^\op_{\rm small}$, and both sides above give rise to the following composition, which is now well defined.
\[
\begin{tikzpicture}[scale=0.6]
	\begin{pgfonlayer}{nodelayer}
		\node [style=none] (2) at (1.75, 1) {};
		\node [style=medium box,minimum height=16mm] (4) at (1.75, 0) {$h$};
		\node [style=none] (9) at (3.25, 0) {};
		\node [style=none] (10) at (3.75, 0) {$Z$};
		\node [style=medium box] (14) at (-0.5, 1) {$a$};
		\node [style=ds horiz state] (18) at (-2.5, 1) {$f$};
		\node [style=none] (19) at (-4, 1) {};
		\node [style=none] (21) at (-4.5, 1) {$F$};
		\node [style=none] (22) at (1.75, -1) {};
		\node [style=medium box] (23) at (-0.5, -1) {$b$};
		\node [style=ds horiz state] (24) at (-2.5, -1) {$\vphantom{f}g$};
		\node [style=none] (25) at (-4, -1) {};
		\node [style=none] (26) at (-4.5, -1) {$G$};
	\end{pgfonlayer}
	\begin{pgfonlayer}{edgelayer}
		\draw (9.center) to (4);
		\draw [style=ds] (19.center) to (18);
		\draw (2.center) to (18);
		\draw [style=ds] (25.center) to (24);
		\draw (22.center) to (24);
	\end{pgfonlayer}
\end{tikzpicture}
\]
In particular, in line with everything we did so far, the only elements of $(F\otimes G)(Z)$ are the ones which one can obtain purely via the elements of $F$, the elements of $G$, and the morphisms of $\cat{C}$.

With these definitions, one can prove that the categories $\sfuncat{\cat{C}}^\op_{\rm small}$ and $\sfuncat{\cat{C}}^\op_{\rm small}$ are monoidal categories, with the unit given by $\Yon(I)$, where $I$ is the unit of $\cat{C}$. 
Moreover, the Yoneda embedding is not just fully faithful, but also strong monoidal:

\begin{proposition}\label{day_strong}
	Let $(\cat{C},\otimes,I)$ be a monoidal category, and assume $\sfuncat{\cat{C}}^\op_{\rm small}$ equipped with the Day convolution tensor product.
	The Yoneda embedding $\cat{C}\ffunto\sfuncat{\cat{C}}^\op_{\rm small}$ is a strong monoidal functor.
	The same thing can be said for presheaves.
\end{proposition}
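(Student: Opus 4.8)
The plan is to follow the pattern used throughout the paper: build the comparison isomorphisms by Yoneda reduction, check naturality, and then reduce the coherence conditions to ones already holding in $\cat{C}$. I will treat the case of set functors, $\Yon\colon\cat{C}\ffunto\sfuncat{\cat{C}}^\op_{\rm small}$; the presheaf case is completely analogous and dual. Recall that $\Yon(X)=\cat{C}(X,-)$, and that the monoidal unit of $\sfuncat{\cat{C}}^\op_{\rm small}$ under Day convolution is $\Yon(I)$, so the unit comparison morphism can be taken to be $\id_{\Yon(I)}$; the real work is in producing a coherent natural isomorphism $\phi_{X,Y}\colon\Yon(X)\otimes\Yon(Y)\xrightarrow{\ \cong\ }\Yon(X\otimes Y)$.

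\emph{First} I would construct $\phi$. Substituting representables into \Cref{def_day}, for every object $Z$ one gets
\[
\big(\Yon(X)\otimes\Yon(Y)\big)(Z)\;\cong\;\Coend{A,B\in\cat{C}}\cat{C}(A\otimes B,Z)\times\cat{C}(X,A)\times\cat{C}(Y,B),
\]
which we recognise as the pairing over the (small) category $\cat{C}\funtimes\cat{C}$ of the presheaf $\cat{C}(-\otimes-,Z)$ with the functor represented by $(X,Y)$. By Yoneda reduction (\Cref{ninja}) — pairing a presheaf with a representable functor evaluates the presheaf at the representing object — this is naturally isomorphic to $\cat{C}(X\otimes Y,Z)=\Yon(X\otimes Y)(Z)$. (Alternatively one may iterate the coend over $\cat{C}$, applying \Cref{ninja} once in each variable.) This produces a natural isomorphism $\phi_{X,Y}$ of set functors, natural in $Z$ by construction.

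\emph{Next}, naturality of $\phi_{X,Y}$ in $X$ and $Y$ should be routine, following from functoriality of the pairing (cf.\ \Cref{contra_weight}), functoriality of $\otimes$, and the naturality of \Cref{ninja} in the diagram and the weight, so that $\phi$ becomes a natural transformation of functors $\cat{C}\funtimes\cat{C}\funto\sfuncat{\cat{C}}^\op_{\rm small}$. Then I would verify the associativity hexagon and the unit triangles for $(\Yon,\phi,\id)$. The idea is to evaluate every functor occurring in these diagrams at an arbitrary object $W$ and to unwind the iterated Day coends by repeated use of \Cref{ninja}: each vertex becomes a hom-set $\cat{C}(T,W)$ with $T$ a parenthesisation of $X\otimes Y\otimes Z$ (resp.\ of $X\otimes I$, $I\otimes X$), and each edge — the $\phi$'s, the images under $\Yon$ of the associator and unitors of $\cat{C}$, and the Day associator and unitors of $\sfuncat{\cat{C}}^\op_{\rm small}$ — becomes precomposition with a canonical isomorphism of $\cat{C}$. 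Commutativity of the hexagon and triangles for $\Yon$ then reduces to the pentagon and triangle identities in $\cat{C}$, which hold by hypothesis.

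\emph{The main obstacle} is that last identification: showing that the Day associator and unitors of $\sfuncat{\cat{C}}^\op_{\rm small}$, transported to hom-sets of $\cat{C}$ through the isomorphisms above, agree with $\Yon(a_{\cat{C}})$, $\Yon(\ell_{\cat{C}})$, $\Yon(r_{\cat{C}})$. This is ultimately forced by the universal property of the coend — a morphism out of $\Coend{C}PC\cdot FC$ is determined by its composites with the coprojections — so chasing the coprojections through the chain of Yoneda reductions pins down the transported structure maps; but making this bookkeeping clean and avoiding hidden sign/side choices is where the genuine effort lies. Once it is done, every coherence diagram commutes and $\Yon$ is strong monoidal, and the presheaf statement follows by the dual argument, using the presheaf version of \Cref{def_day}.
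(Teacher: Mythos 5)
Your proposal is correct and follows essentially the same route as the paper's own (sketch of) proof: the structure isomorphism $\Yon(X)\otimes\Yon(Y)\cong\Yon(X\otimes Y)$ is obtained by applying Yoneda reduction (\Cref{ninja}) to the defining coend of the Day convolution, the unit comparison is the identity on $\Yon(I)$, and the coherence diagrams are reduced to those of $\cat{C}$ via the universal property of the coend. Your write-up is in fact more explicit than the paper's about naturality in $X,Y$ and about how the hexagon and triangle identities are transported, but there is no substantive difference in method.
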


Because of this, the parallel composition of virtual arrows extends faithfully the one of ordinary arrows. 
Note moreover that, since $\cat{C}$ is embedded, we can also form diagrams, for example, as follows
\[
\begin{tikzpicture}[scale=0.6,baseline=-0.25em]
	\begin{pgfonlayer}{nodelayer}
		\node [style=none] (0) at (-2, 1) {};
		\node [style=none] (1) at (2, 1) {};
		\node [style=none] (2) at (-2.5, 1) {$X$};
		\node [style=none] (3) at (2.5, 1) {$Y$};
		\node [style=medium box] (4) at (0, 1) {$g$};
		\node [style=none] (5) at (-2, -1) {};
		\node [style=none] (6) at (-2.5, -1) {$Y$};
		\node [style=none] (7) at (2.5, -1) {$P$};
		\node [style=ds horiz effect] (8) at (0, -1) {$\vphantom{f}p$};
		\node [style=none] (9) at (2, -1) {};
	\end{pgfonlayer}
	\begin{pgfonlayer}{edgelayer}
		\draw (0.center) to (1.center);
		\draw (5.center) to (8);
		\draw [style=ds] (8) to (9.center);
	\end{pgfonlayer}
\end{tikzpicture}
\qquad\mbox{i.e.}\qquad  \Yon(X)\otimes\Yon(A) \xrightarrow{\Yon(g)\otimes p}\Yon(Y) \otimes P 
\]
(recalling that by the Yoneda lemma, $p$ is an element of $PX$ or, equivalently, a natural transformation $\Yon(X)\Rightarrow P$), or even as follows.
\[
\begin{tikzpicture}[scale=0.6,baseline=-0.25em]
	\begin{pgfonlayer}{nodelayer}
		\node [style=none] (0) at (-1, 1) {};
		\node [style=none] (1) at (3, 1) {};
		\node [style=none] (2) at (-3.5, 0) {$X$};
		\node [style=none] (3) at (3.5, 1) {$Y$};
		\node [style=none] (5) at (-1, -1) {};
		\node [style=none] (7) at (3.5, -1) {$P$};
		\node [style=ds horiz effect] (8) at (1, -1) {$\vphantom{f}p$};
		\node [style=none] (9) at (3, -1) {};
		\node [style=none] (10) at (-1, 0) {};
		\node [style=none] (11) at (-3, 0) {};
		\node [style=medium box,minimum height=16mm] (12) at (-1, 0) {$h$};
	\end{pgfonlayer}
	\begin{pgfonlayer}{edgelayer}
		\draw (0.center) to (1.center);
		\draw (5.center) to (8);
		\draw [style=ds] (8) to (9.center);
		\draw (11.center) to (10.center);
	\end{pgfonlayer}
\end{tikzpicture}
\qquad\mbox{i.e.}\qquad  \Yon(X) \xrightarrow{\Yon(h)} \Yon(Y)\otimes\Yon(A) \xrightarrow{\id\otimes p}\Yon(Y) \otimes P 
\]

\begin{proof}[Sketch of proof]
	As usual, let's focus on the functor case.
	The condition on the unit holds by definition.
	For products, using Yoneda reduction (\Cref{ninja}), 
	\begin{align*}
		\big(\Yon(A)\otimes\Yon(B)\big)(Z) \;&\cong\; \Coend{X,Y\in\cat{C}} \cat{C}(X\otimes Y,Z) \times \cat{C}(A,X)\times \cat{C}(B,Y) \\
		&\cong\; \cat{C}(A\otimes B,Z) \\
		&=\; \Yon(A\otimes B)(Z) . 
	\end{align*}
	The associativity and unitality conditions hold by the universal property.
\end{proof}

\subsection{Additional material}

Here are some references where one can learn more about the material presented here.
\begin{itemize}
	\item For more ideas on these ``virtual arrows'', see \cite{ellerman3}.
	\item For Kan extensions, we recommend Chapter 6 of \cite{riehl2016category}.
	\item For ends and coends, as well as Day convolution, see \cite{fosco}.
	\item For Cauchy completion, idempotent splitting and absolute limits, \cite{absolutecolimits} and \cite{borceux_idempotents}.
	\item For general notions of limits and weighted limits, \cite{pizero}, \cite{borceux1994handbook2}, \cite{kelly-enriched}, and \cite{morphcolimits}.
	\item For profunctors, see again \cite[Chapter~5]{fosco}, and \cite{distributeurs} (in French).
\end{itemize}

\bibliographystyle{alpha}
\bibliography{markov}

\end{document}